\algnewcommand\algorithmicreturn{\textbf{return}}
\algnewcommand\RETURN{\State \algorithmicreturn}
\crefname{figure}{Figure}{Figures}
\newtheorem{corollary}{Corollary}[section]
\newtheorem{theorem}{Theorem}
\newtheorem{remark}[corollary]{Remark}
\newtheorem{lemma}[corollary]{Lemma}
\newtheorem{proposition}[corollary]{Proposition}
\newtheorem{definition}[corollary]{Definition}
\newtheorem{example}[corollary]{Example}
\newtheorem{notation}[corollary]{Notation}
\newcommand{\Ffunc}          	{\mathsf{F}} 
\newcommand{\Gfunc}          	{\mathsf{G}} 
\newcommand{\Hfunc}          	{\mathsf{H}} 
\newcommand{\fnc}          	    {\mathsf{Fnc}} 
\newcommand{\Fil}          	    {\mathsf{Fil}} 
\newcommand{\Zfunc}          	{\mathsf{Z}} 
\newcommand{\Bfunc}          	{\mathsf{B}} 
\newcommand{\Mfunc}          	{\mathsf{M}} 
\newcommand{\Nfunc}          	{\mathsf{N}}
\newcommand{\Lfunc}          	{\lambda} 
\newcommand{\ZB}          	    {\mathsf{ZB}} 
\newcommand{\LK}                {\mathrm{LK}} 
\newcommand{\gr}          	    {\mathsf{Gr}} 
\newcommand{\pd}          	    {\mathsf{PD}} 
\newcommand{\Seg}          	    {\mathsf{Seg}} 
\newcommand{\diag}          	{\mathsf{diag}} 
\newcommand{\rhi}          	    {\mathsf{LOI}_\supseteq} 
\newcommand{\prhi}          	{\mathsf{LOI}_\times} 
\newcommand{\mobeq}          	{\simeq_{\mathsf{M\ddot{o}b}}} 
\newcommand{\parti}          	{\mathsf{Part}} 
\newcommand{\spart}          	{\mathsf{SubPart}} 
\newcommand{\conn}          	{\mathsf{Conn}} 
\newcommand{\inn}          	{\cap\text{-}\mathsf{Mon}} 
\newcommand{\inndgm}          	{\mathsf{GrPD}} 
\newcommand{\cost}          	{\mathsf{cost}} 
\newcommand{\subcx}          	{\mathsf{SubCx}} 
\newcommand{\HB}          	{\mathsf{HB}} 
\newcommand{\VR}          	{\mathsf{VR}} 
\newcommand{\goi}          	{\mathsf{OI}} 
\newcommand{\Ffrak}          	{\mathfrak{F}}
\newcommand{\lp}                     {\mathbb{L}}
\newcommand{\dgr}                     {\rho}
\newcommand\ladj[1]{#1_{\diamond}} 
\newcommand\radj[1]{#1^{\diamond}} 
\newcommand\mmi[2]{\partial_{#1}^\mathsf{Mon}\left(#2\right)} 
\newcommand{\dis}{\mathrm{dis}}  
\newcommand{\R}{\mathbb{R}}
\newcommand{\N}{\mathbb{N}}
\newcommand{\Z}{\mathbb{Z}}
\newcommand{\incord}{\leq_{\mathsmaller{\supseteq}}}
\newcommand{\prodord}{\leq_{{\times}}}
\DeclareMathOperator{\proj}{proj}
\DeclareMathOperator{\Ima}{im}
\DeclareMathOperator{\spn}{span}
\DeclareMathOperator{\rank}{rank}
\title{Grassmannian Persistence Diagrams:\\ Special Properties in the 1-Parameter Setting}
\author[1]{Aziz Burak G\"ulen\footnote{\href{mailto:aziz.burak.guelen@duke.edu}{aziz.burak.guelen@duke.edu}, 120 Science Dr, Durham, NC, 27708}}
\author[2]{Facundo M\'emoli\footnote{\href{mailto:facundo.memoli@gmail.com}{facundo.memoli@gmail.com}, 110 Frelinghuysen Road, Piscataway, NJ 08854}}
\author[3]{Zhengchao Wan\footnote{\href{mailto:zwan@missouri.edu}{zwan@missouri.edu}, 810 Rollins St, Columbia, MO 65201}}
\affil[1]{Department of Mathematics, Duke University}
\affil[2]{Department of Mathematics, Rutgers University}
\affil[3]{Department of Mathematics, University of Missouri}
\date{}
\begin{document}

\maketitle

\begin{abstract}
In this paper, we explore the discriminative power of Grassmannian persistence diagrams of $1$-parameter filtrations, examine their relationships with other related constructions, and study their computational aspects. Grassmannian persistence diagrams are defined through Orthogonal Inversion, a notion analogous to M\"obius inversion. We focus on the behavior of this inversion for the poset of segments of a linear poset. We demonstrate how Grassmannian persistence diagrams of 1-parameter filtrations are connected to persistent Laplacians via a variant of orthogonal inversion tailored for the reverse-inclusion order on the poset of segments. Additionally, we establish an explicit isomorphism between Grassmannian persistence diagrams and Harmonic Barcodes via a projection. Finally, we show that degree-0 Grassmannian persistence diagrams are equivalent to treegrams, a generalization of dendrograms. Consequently, we conclude that finite ultrametric spaces can be recovered from the degree-0 Grassmannian persistence diagram of their Vietoris-Rips filtrations.
\end{abstract}

\newpage
\printnomenclature[2cm]
\addcontentsline{toc}{section}{Nomenclature}
\newpage
\tableofcontents

\section{Introduction}
Persistence diagrams are fundamental invariants in topological data analysis, providing concise summaries of the birth and death of homological features in filtered topological spaces \cite{edelsbrunner2010computational,carlsson2009topology,rabadan2019topological}.
Traditionally, i.e. in the case of filtrations over a linear poset, persistence diagrams can be obtained through different ways:
\begin{enumerate}
    
    \item via M\"obius inversion of the rank function \cite{landi1997new,cohen-steiner2007, Patel2018},
    \item via decomposition theorems of the quiver representations of the linear quiver \cite{zomorodian2005, crawley2015decomposition} (see also \cite{Abeasis1981}).
\end{enumerate}

Notably, persistence diagrams can be computed efficiently using polynomial-time algorithms \cite{Edelsbrunner2002,edelsbrunner2010computational,Bauer2021Ripser}. However, despite their efficiency, they do not always serve as complete invariants of the underlying filtrations. In particular, when these filtrations arise as the Vietoris-Rips filtration of finite metric spaces, persistence diagrams may fail to distinguish non-isometric spaces. For example, it is well-known that there are infinitely many pairs of non-isometric ultrametric spaces which are confounded the persistence diagrams of their Vietoris-Rips filtrations; see \Cref{fig:ums} and \cite[Example 3.18]{zhou2024ephemeral}.

\begin{figure}
    \centering
\includegraphics[width=\linewidth]{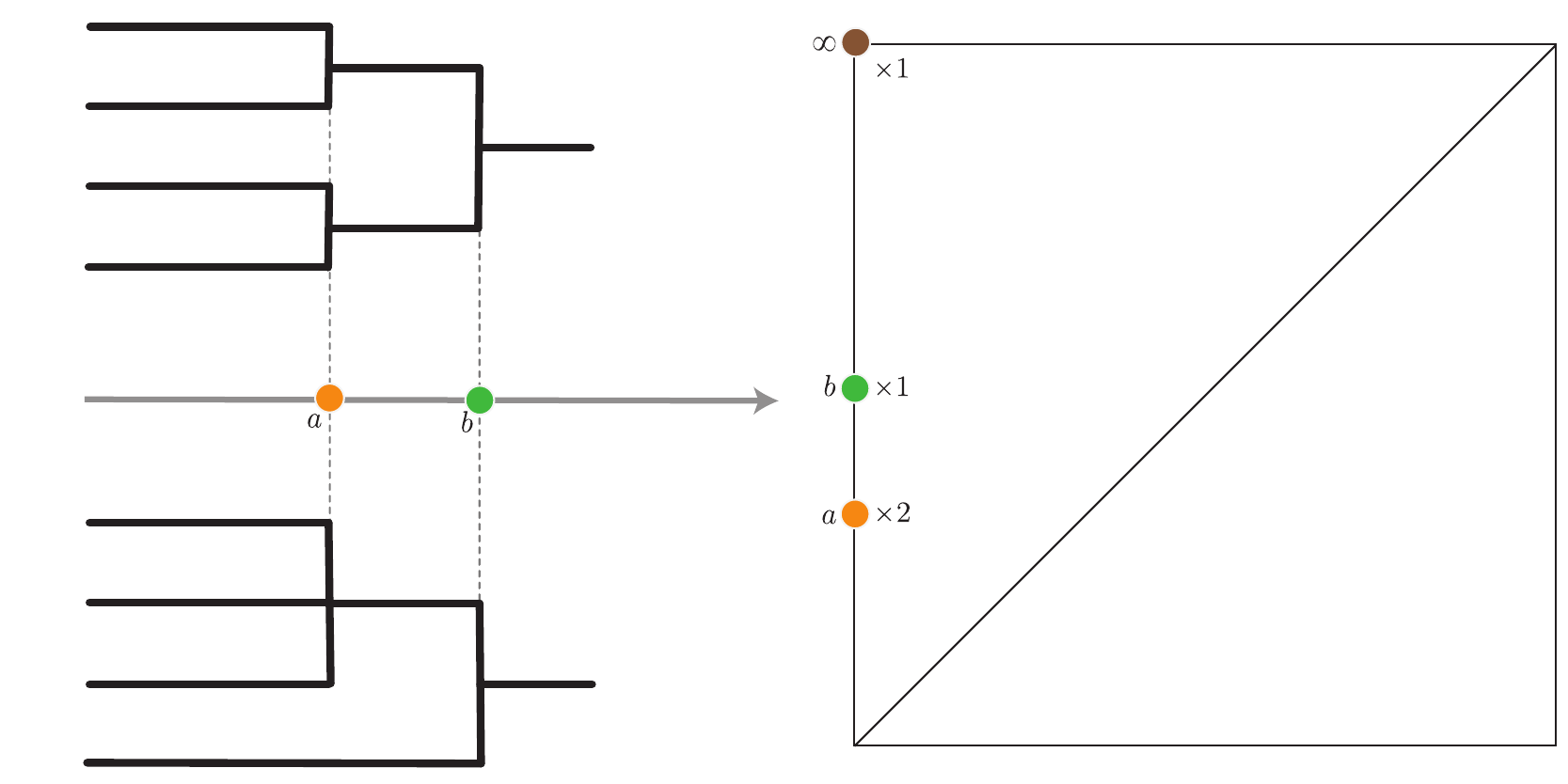}

    \caption{Two families of ultrametric spaces (represented via their corresponding dendrograms and parametrized by $a,b\geq 0$ s.t. $b>a$) having the same Vietoris-Rips persistence diagrams for all degrees. The figure shows the common degree-0 diagram as all other ones are trivial; see \cite[Example 3.18]{zhou2024ephemeral} for details.}
    \label{fig:ums}
\end{figure}

Motivated by this limitation, it is therefore tempting to try to enrich persistence diagrams in order to strengthen their distinguishing power while maintaining computational tractability. Several approaches have been explored in this direction:

\begin{enumerate}
    \item Algebraic decomposition of filtered chain complexes and ephemeral barcodes~\cite{usher, ephemral} and augmented persistence diagrams \cite{fasy2019faithful},
    \item Following~\cite{Patel2018}, M\"obius inverting functions beyond the usual rank function, such as cup-length induced functions~\cite{cup-persistent}, birth-death functions~\cite{edit, saecular, gal-conn}.
\end{enumerate}

In this paper, we study an alternative enrichment of persistence diagrams based on the M\"obius inversion approach. As explained in \cite{edit,gal-conn}, the classical degree-$\dgr$ persistence diagram, $\pd_\dgr^\Ffunc$, of a filtration $\Ffunc$ indexed by the linear poset $\{1<\cdots<n \}$ can be recast as the M\"obius inversion of the birth-death functions as follows:
\footnotesize
\begin{equation}\label{eqn: mob inv of dim zb}
    \pd_\dgr^\Ffunc((i,j)) = \dim \left( \ZB_\dgr^\Ffunc ((i,j)) \right) - \dim \left( \ZB_\dgr^\Ffunc ((i,j-1)) \right) + \dim \left( \ZB_\dgr^\Ffunc ((i-1,j-1)) \right) - \dim \left( \ZB_\dgr^\Ffunc ((i-1,j)) \right),
\end{equation}
\normalsize
where $\ZB_\dgr^\Ffunc$ denotes the degree-$\dgr$ \emph{birth-death space} of the filtration $\Ffunc$; see~\cref{defn: bd space}.\footnote{However, see \cite{landi1997new} for the first manifestation of such a formula in the context of topological persistence (in degree 0).}

Instead of restricting the Möbius formula (\cref{eqn: mob inv of dim zb}) to dimensions of birth-death spaces, we apply it directly at the level of these vector spaces, carefully reinterpreting the sums and differences through the constructions introduced in~\cref{defn: x-harmonic inversion}. This reinterpretation is achieved through orthogonal complementation, assuming an inner product structure at the chain level. As a result, we arrive at the notion of \emph{Grassmannian persistence diagrams}; see~\cref{defn: 1p gpd from zb}.

Grassmannian persistence diagrams provide a strict enrichment of classical persistence diagrams. In classical persistence diagrams, each segment $(i,j)$ of the undelying poset indexing the filtration is assigned a numerical multiplicity, whereas in Grassmannian persistence, each segment is endowed with \emph{a canonically assigned subspace of the chain space}; see~\cref{fig:3d-gpd}. These subspaces correspond to generators of the underlying persistent homology classes, ensuring a direct correspondence between points in the classical persistence diagrams and their associated cycle representatives. Notably, when the multiplicity of a segment $(i,j)$ is $1$, the subspace assigned to this segment by the Grassmannian persistence diagram is $1$-dimensional. Consequently, this subspace has a unique generator up to rescaling, providing a canonical representative for the corresponding point $(i,j)$ in the persistence diagram. We formalize this idea in~\cref{lem: linear varphi isom} by showing that there is a \emph{bijection} between subspaces determined by Grassmannian persistence diagram and the space of homology classes that are born at $i$ and die at $j$.\footnote{Cf. \cite[Theorem 6]{gpd-multi} where, for general posets, we only conclude the existence of a surjection.}

\begin{figure}
    \centering

\includegraphics[width=0.1\linewidth]{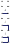}
\includegraphics[width=0.55\linewidth]{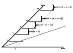}

    \caption{Grassmannian persistence diagram of the 1-parameter filtration depicted on the left. Grassmanian persistence diagrams  retain information about cycle spaces associated to different segmemts. For example, for the segment $(1,2)$ the Grassmanian persistence diagram not only captures the multiplicity of that interval as the dimension of the space $\mathrm{span}\{a-c\}$ but also provides cycles that are precisely born at $1$ and die at $2$.}
    \label{fig:3d-gpd}
\end{figure}

We prove that our notion of Grassmannian persistence diagrams not only remains polynomial-time computable but also exhibits stability in the sense given by a suitably defined non-trivial edit distance. Importantly, Grassmannian persistence diagrams significantly enhance the distinguishing power of classical persistence: we show that they can differentiate any two non-isometric finite ultrametric spaces via their Vietoris-Rips filtrations; see~\cref{cor: finite ums recovered}. This result marks the first known instance, to the best of our knowledge, in which a persistence-based invariant fully reconstructs ultrametric spaces up to isometry.

Additionally, we establish a deep connection between Grassmannian persistence diagrams and the notion of the persistent Laplacian, a concept introduced in~\cite{lieutier-pdf} and developed in~\cite{persSpecGraph,persLap}. It is known that the kernel of the persistent Laplacian is isomorphic to the persistent homology space of a given filtration, implying that the nullity of the persistent Laplacian agrees with the rank function~\cite[Theorem 2.7]{persLap}. By extending our M\"obius inversion approach to the kernels of the persistent Laplacian, we prove that the resulting generalized persistence diagram coincides with our Grassmannian persistence diagram outside of diagonal points.

Finally, we construct an explicit isomorphism linking Grassmannian persistence diagrams with the notion of \emph{Harmonic Barcodes} introduced by Basu and Cox in~\cref{subsec: harmonicph}. This connection provides a deeper understanding of recent efforts to leverage inner product structures on the chain space to enhance classical persistence diagrams.

\subsection{Contributions and Organization of the Paper}
In~\cref{sec: orthogonal inversion}, we introduce the notion of \emph{$\times$-Linear Orthogonal Inversion} (denoted $\prhi$), a notion analogous to classical M\"obius inversion on the poset of segments of a finite linear poset (with the product order). We establish that $\prhi$ serves a functor between two categories; see~\cref{prop: prhi is functor}. And, as a result of this functoriality, we prove its stability; see~\cref{thm: general stability}.

In~\cref{sec: applications}, we build upon the notion of $\times$-Linear Orthogonal Inversion to introduce, in~\cref{defn: 1p gpd from zb}, the notion of degree-$\dgr$ Grassmannian persistence diagram of a $1$-parameter filtration as the $\times$-orthogonal Inverse of the $\dgr$-th birth-death spaces. In~\cref{subsec: orthogonal inversion of bd}, we prove stability of these diagrams in by utilizing the functoriality of $\prhi$; see \cref{thm: stability of prhi of zb}. In~\cref{subsec: inter and canon}, we explore the interpretability and canonicality of Grassmannian persistence diagrams. Specifically, in~\cref{prop: born and dies exactly} we prove  that for a $1$-parameter filtration $\Ffunc : \{ 1< \cdots < n \} \to \subcx(K)$, the subspace determined by Grassmannian persistence diagram of $\Ffunc$ at the segment $(i,j)$ consists of cycles that are born exactly at $i$ and die exactly at $j$. In~\cref{subsec: computation of loi zb}, we present a polynomial-time algorithm for computing the Grassmannian persistence diagram of a $1$-parameter filtration.

In~\cref{sec: relation to other}, we explore the relationship between our construction of Grassmannian of persistence diagrams and the other known constructions in the literature. 
\begin{itemize}
    \item In~\cref{subsec: relation to classical pd}, we show that the classical persistence diagram of a $1$-parameter filtration can be derived from its Grassmannian persistence diagram; see~\cref{prop: dim of loi is classical pd}. Moreover, in~\cref{thm: classical pd lower bound}, we provide a lower bound for the edit distance between two Grassmannian persistence diagrams through the edit distance between their respective classical persistence diagrams. By showing that this lower bound can be $0$ while the edit distance between the Grassmannian persistence diagrams remain positive, we conclude that Grassmannian persistence diagrams are strictly more discriminative than the classical persistence diagrams; see~\cref{example: same pd different 0-prhi}.
    \item In~\cref{subsec: harmonicph}, we establish an isomorphism between the subspaces determined by Grassmannian persistence diagrams and those defined by Harmonic Barcodes, a concept introduced in~\cite{harmonicph}. This isomorphism is realized through a projection, as shown in~\cref{thm: proj isom}.
    \item In~\cref{subsec: orthogonal inversion of Laplacian kernels}, we establish a deep connection between Grassmannian persistence diagrams and persistent Laplacians. Specifically, we introduce a variant of orthogonal inversion in~\cref{defn: sup-harmonic inversion}, called \emph{$\supseteq$-Linear Orthogonal Inversion}, which is designed to be compatible with the reverse-inclusion order on segments. Using this framework, we show that the Grassmannian persistence diagram of a $1$-parameter filtration can be recovered as the $\supseteq$-Linear Orthogonal Inverse of the persistent Laplacian kernels; see~\cref{thm: equality of different HIs}.
    \item In~\cref{subsec: treegrams}, we examine degree-$0$ Grassmannian persistence diagrams and establish their equivalence to the notion of \emph{treegrams}, a generalization of \emph{dendrograms}. This equivalence is formally proven in~\cref{thm: equivalence of treegrams and degree 0 orthogonal inversions}, with an algorithmic and constructive proof provided in~\cref{appendix: construction}. As a direct consequence of this result, we conclude in~\cref{cor: finite ums recovered} that finite ultrametric spaces can be fully reconstructed from the degree-$0$ Grassmannian persistence diagram of their Vietoris-Rips filtrations.
\end{itemize}

In \cref{sec:disc} we collect a few questions that might motivate further research.

\medskip
Finally, we point the reader to the companion paper \cite{gpd-multi} where we  study Grassmannian persistence diagrams in the case of multi-parameter filtrations.

\subsection{Related work}
At a high level, our work can be regarded as providing a bridge between the ideas from \cite{edit} and \cite{harmonicph}, further integrating these concepts with the notion of the persistent Laplacian \cite{lieutier-pdf,persSpecGraph,persLap}. We provide a detailed study of the relationship, in the case of 1-parameter filtrations, between our constructions and the ones in~\cite{edit} and~\cite{harmonicph} in \cref{subsec: relation to classical pd}, \cref{appendix: edit}, and \cref{subsec: harmonicph},
respectively.

\paragraph{Beyond Persistence Modules: Invariants of Filtered Chain Complexes.}
Persistent homology applies the homology functor to simplicial filtrations, implicitly passing through \emph{filtered chain complexes} (FCCs). Several chain-level invariants have been studied, including: (i) algebraic decompositions and verbose barcodes~\cite{usher, ephemral,landi2021invariants,chacholski2023decomposing}; (ii) birth-death functions and spaces~\cite{saecular, edit, morozov-patel}; and (iii) persistent Laplacians~\cite{lieutier-pdf, persSpecGraph, persLap}.
\
In \cite{edit}, persistence diagrams derived via Möbius inversion rely solely on the dimensions of birth-death spaces, leaving unexplored their richer geometric content, such as information about cycle representatives. Persistent Laplacians, defined using inner products on chain groups~\cite{lieutier-pdf, persSpecGraph, persLap}, assign inner product spaces (0-eigenspaces) to intervals. This enriched structure enables canonical representative selection~\cite{harmonicph} and, save for~\cite{harmonicph}, current approaches capture only kernel dimensions (persistent Betti numbers)~\cite[Theorem 2.7]{persLap}, without exploiting the full geometric information available.

 \paragraph{Edit Distance Stability of Persistence Diagrams.}
The notions of edit distance between filtrations and persistence diagrams were introduced by McCleary and Patel in~\cite{edit}, along with an associated stability theorem. Their approach utilizes an \emph{algebraic} Möbius inversion—defined via group completion—which is functorial precisely when the poset of intervals is equipped with the product order. Our stability results (\cref{thm: general stability}) similarly rely on a functorial property, but we replace algebraic Möbius inversion applied to dimension functions with $\times$-Linear Orthogonal Inversion applied directly to interval-associated birth-death vector spaces endowed with inner products. In particular, we prove that our stability result improves upon that of \cite{edit}; see   \cref{thm: classical pd lower bound}, \cref{rmk: gpd more discriminative} and~\cref{example: same pd different 0-prhi}.

\paragraph{Harmonic Persistent Homology.} 
Basu and Cox~\cite{harmonicph} introduced the concept of harmonic persistence diagram of a 1-parameter filtration, which assigns a subspace of the cycle space to every interval in the classical persistence diagram of a filtration. Under this assignment, Basu and Cox prove that the multiplicity of each interval in the persistence diagram and the dimension of the subspace assigned to that point are the same. Furthermore, Basu and Cox also establish the stability of their construction.

Our Grassmannian persistence diagrams (produced via $\times$-Linear Orthogonal Inversion) also assign a subspace to every interval in the persistence diagram whose dimension agrees with the multiplicity of the point. We show that their construction and ours are isomorphic through a certain explicit projection map (see~\cref{thm: proj isom}) that we identify with the caveat that this projection map is however not an isometry; see~\cref{rmk: non-isometry}.

Our construction, differs from theirs in several senses:
\begin{itemize}
\item First, for an interval $(b,d)$ in the standard persistence diagram of a filtration, every nonzero cycle in the subspace assigned to this point through the Grassmannian persistence diagram is guaranteed to be born exactly at time $b$ and to become a boundary exactly at time $d$; see~\cref{prop: born and dies exactly}. Their construction does not satisfy this property, as can be verified in~\cite[Example 1.1]{harmonicph}. 
The reason is that, 
whereas we operate at the level of chain groups where ``death" of a cycle can be directly encoded as the condition that said cycle becomes a boundary, the notions of birth and death considered in \cite{harmonicph}
arise at (harmonic) homology level, where death of a homology class is necessarily encoded as the condition that the class ``merges" with an ``older" one.

\item Consistent with the fact that our construction takes place at chain level, Grassmanian persistence diagrams capture ephemeral features.  Such features are not captured by \cite{harmonicph} since Basu and Cox carry our they constructions at the level of homology.

\item Their stability result requires certain genericity conditions for the persistence diagrams whereas our edit distance stability (\cref{thm: stability of prhi of zb}), assumes no such genericity condition on the persistence diagrams.

\item Finally, even though Basu and Cox  only consider 1-parameter  filtrations, our framework can deal with  the more general setting of filtrations defined on any finite poset. In particular, we generalize their specific and complete description of birth and death of homology classes (see \cite[Definition 3.5]{harmonicph} and \cite[Definition 4.8]{gpd-multi} to the general poset setting and we utilize this new description to further discover exhaustiveness property of our Grasmannian persistence diagrams in the homology level; see \cite[Theorem 6]{gpd-multi} and \cref{lem: linear varphi isom}. 

\end{itemize}

Inspired by \cite{harmonicph}, a  very  recent paper by Parsa and Wang explores yet another notion of harmonic barcodes; see \cite{parsa2024harmonic}.

\subsection*{Acknowledgements}
This work was partially supported by NSF DMS \#2301359, NSF CCF \#2310412, NSF CCF \#2112665, NSF CCF \#2217058, and NSF RI \#1901360.

\section{Preliminaries}\label{sec: prelim}
In this section, we recall the background concepts that will be used in the upcoming sections of the paper.

\paragraph{Grassmannian.} Let $V$ be a finite-dimensional vector space. The set of all $d$-dimensional linear subspaces of $V$ is a well-studied topological space, called the~\emph{$d$-Grassmannian of $V$} and denoted $\gr(d,V)$. As we will be working with linear subspaces with varying dimensions, we consider the disjoint union
\[
\gr(V) := \coprod_{0\leq d\leq \dim(V)} \gr(d,V)
\]
\noindent which is called the~\emph{Grassmannian of $V$}. Note that $\gr(V)$ is closed under the sum of subspaces. That is, for two linear subspaces $W_1, W_2 \subseteq V$, their sum $W_1 + W_2 := \{w_1 + w_2 \mid w_1 \in W_1, w_2 \in W_2 \} \subseteq V$ is also a linear subspace. Thus, the triple $(\gr(V), +, \{ 0\})$ forms a commutative monoid. Moreover, $(\gr(V), \subseteq)$ is a poset. 
\nomenclature[01]{$\gr(V)$}{Grassmannian of $V$}

\begin{framed}
In this paper, we only utilize the monoidal structure and  the natural partial order on $\gr(V)$, without referring to its topology.
\end{framed}

\subsection{Edit Distance} 
The notion of \emph{edit distance} employed in this paper is closely related to the one considered in~\cite{edit}, where the authors introduce their version as the categorification of the Reeb graph edit distance discussed in~\cite{DiFabio2012, DiFabio2016, bauer-edit-2016, Bauer2020}. Let $\mathcal{C}$ be a category and assume that for every morphism $f : A \to B$ in $\mathcal{C}$, there is a cost $c_\mathcal{C} (f) \in \R^{\geq 0}$ associated to it. For two objects $A$ and $B$ in this category, a~\emph{path}, $\mathcal{P}$, is a finite sequence of morphisms
\[
\mathcal{P} : \; \; A \xleftrightarrow{\makebox[1cm]{$f_1$}} D_1 \xleftrightarrow{\makebox[1cm]{$f_2$}} \cdots \xleftrightarrow{\makebox[1cm]{$f_{k-1}$}} D_{k-1} \xleftrightarrow{\makebox[1cm]{$f_k$}} B
\]
where $D_i$s are objects in $\mathcal{C}$ and $\leftrightarrow$ indicates a morphism in either direction. The~\emph{cost of a path} $\mathcal{P}$, denoted $c_\mathcal{C} (\mathcal{P})$, is the sum of the cost of all morphisms in the path.
\[
c_\mathcal{C} (\mathcal{P}) := \sum_{i=1}^k c_\mathcal{C} (f_i).
\]

\begin{definition}[Edit distance]\label{defn: edit dist}
    The~\emph{edit distance} $d_{\mathcal{C}}^E (A,B)$ between two objects $A$ and $B$ in $\mathcal{C}$ is the infimum, over all paths between $A$ and $B$, of the cost of such paths.
    \[
    d_\mathcal{C}^E(A,B) := \inf_{\mathcal{P}} c_\mathcal{C}(\mathcal{P}).
    \]
\end{definition}
\nomenclature[02]{$d_{\mathcal{C}}^E(A,B)$}{Edit distance between objects $A$ and $B$ in a category $\mathcal{C}$}

While the definition of edit distance may appear rather abstract, it is worth noticing that in~\cite[Theorem 9.1]{edit}, the authors constructed a category of persistence diagrams  and established that the edit distance within this context is bi-Lipschitz equivalent to the well-known bottleneck distance~\cite{cohen-steiner2007}.

\paragraph{Poset(s) of Segments.}\label{parag: poset of int}
Let $(P, \leq)$ be a poset. For any $a\leq b$, we refer to pair $(a,b) \in P\times P$ as a \emph{bounded segment}. For every $a\in P$, we introduce a distinguished pair, denoted $(a,\infty)$, and we refer to these pairs as \emph{unbounded segments}. Unbounded segments, as defined here, enable us to conceptualize the absence of a maximum element in a segment, thus allowing the identification of cycles in a filtration that never die (or, die at infinity). We denote the collection of bounded and unbounded segments in $P$ by $\Seg(P)$, which we refer to as the \emph{set of segments} of $P$. We denote by $\diag(P):= \{ (a,a) \in \Seg(P) \mid a \in P \}$ the~\emph{diagonal} of $\Seg(P)$. 
\nomenclature[03]{$\Seg(P)$}{The set of segments of a poset $P$}
\nomenclature[04]{$\diag(P)$}{The diagonal of $\Seg(P)$}

The \emph{product order} on $\Seg (P)$, $\prodord$, is given by the restriction of the product order on $P\times P$ to $\Seg(P)$. More precisely,  
\[
    (b_1,d_1) \prodord (b_2,d_2) \iff b_1\leq b_2 \text{ and } d_1\leq d_2 
\]
where we assume $p < \infty$ for all $p \in P$. We will also use another order on $\Seg (P) \setminus \diag(P)$, called the \emph{reverse inclusion order}. The reverse inclusion order on $\Seg (P)\setminus \diag(P)$, denoted $\incord$, is given by
\[
    (b_1,d_1) \incord (b_2,d_2) \iff b_1\leq b_2 \text{ and } d_1\geq d_2. 
\]
We denote by
\begin{itemize}
\item $\overline{P}^\times := (\Seg (P), \prodord)$ the poset of segments with the product order;
\item  $\overline{P}^\supseteq := (\Seg(P)\setminus \diag(P), \incord)$ the poset of non-diagonal segments with the reverse inclusion order. 
\end{itemize}
\nomenclature[05]{$\prodord$}{The product order on the segments of a poset}
\nomenclature[06]{$\incord$}{The reserve inclusion order on the segments of a poset}
\nomenclature[07]{$\overline{P}^\times$}{The poset of segments of $P$ with the product order}
\nomenclature[08]{$\overline{P}^\supseteq$}{The poset of segments of $P$ with the reverse inclusion order}

\begin{framed}
    In this paper, we exclusively work with finite linear posets, which we denote by $\lp := \{ \ell_1 < \cdots < \ell_n \}$, along with their posets of segments, $\overline{\lp}^\times$ and $\overline{\lp}^\supseteq$, where each segment is denoted as $(\ell_i, \ell_j)$. 
\end{framed}

Notice that if $f: P \to Q$ is an order-preserving map between two posets $P$ and $Q$, then $f$ induces an order-preserving map between the posets of segments $\overline{P}^\times$ and $\overline{Q}^\times$. We denote by $\overline{f} : \overline{P}^\times \to \overline{Q}^\times$ the map induced by $f$ that acts to segments component-wisely. That is, $\overline{f} ((b,d)) := (f(b), f(d))$ and $\overline{f} ((b,\infty)) := (f(b), \infty)$ for every $b\leq d \in P$. Note that with the convention described above, this condition means that $\overline{f}$ preserve the type of segments, i.e., it maps (un)bounded segments to (un)bounded segments. 

\paragraph{Metric Posets.} 
A finite~\emph{(extended) metric poset} is a pair $(P, d_P)$ where $P$ is a finite poset and $d_P : P \times P \to \R \cup \{ \infty \}$ is an (extended) metric
 such that for every $p_1\leq p_2 \leq p_3 \in P$, $d_P (p_1,p_2) \leq d_P (p_1,p_3)$ and $d_P(p_2,p_3) \leq d_P (p_1,p_3)$. 
A \emph{morphism of finite metric posets} $\alpha: (P, d_P) \to (Q, d_Q)$ is an order-preserving map $\alpha: P \to Q$. The \emph{distortion} of a morphism $\alpha : (P, d_P) \to (Q,d_Q)$, denoted $\dis(\alpha)$, is
\[
\dis(\alpha) := \max_{p_1,p_2 \in P} |d_P (p_1,p_2) - d_Q (\alpha(p_1), \alpha(p_2))|.
\]
\nomenclature[09]{$\dis(\cdot )$}{Distortion of a morphism between two metric posets}

For every finite metric poset $(P, d_P)$, its poset of segments $\overline{P}^\times$ is also a metric poset with 
$$d_{\overline{P}^\times} ((b_1,d_1), (b_2,d_2)) := \max \{d_P(b_1,b_2), d_P(d_1,d_2) \}.$$ A morphism of finite metric posets $\alpha : (P, d_P) \to (Q, d_Q)$ induces a morphism of finite metric posets $\overline{\alpha} : \left(\overline{P}^\times ,d_{\overline{P}^\times}\right) \to \left(\overline{Q}^\times, d_{\overline{Q}^\times}\right)$ via $\overline{\alpha} ((b,d)) = (\alpha(b), \alpha(d))$, with $\dis(\alpha) = \dis(\overline{\alpha})$; see  \cite[Proposition 3.4]{edit}.

\paragraph{M\"obius Inversion.} Let $P$ be a poset and $\mathcal{M}$ be a commutative monoid. Let $ \kappa (\mathcal{M})$ be the Grothendieck group completion of $\mathcal{M}$ (see~\cref{appendix:details}). The abelian group $\kappa (\mathcal{M})$ consists of equivalence classes of pairs $(m,n) \in \mathcal{M} \times \mathcal{M}$ under the equivalence relation described in~\cref{appendix:details}. Let $\varphi_\mathcal{M} : \mathcal{M} \to \kappa (\mathcal{M})$ denote the canonical morphism that maps $m\in \mathcal{M}$ to the equivalence class of $(m,0)$ in $\kappa (\mathcal{M})$. Let $m :P \to \mathcal{M}$ be a function. Whenever it exists, we define the~\emph{algebraic M\"obius inverse} of $m$ to be the unique function $\partial_P (m) : P \to \kappa (\mathcal{M}) $ satisfying
    \[
    \sum_{p'\leq p} \partial_P (m) (p') = \varphi_\mathcal{M} (m(p))
    \]
for all $p\in P$. 

\nomenclature[10]{$\partial_P(\cdot)$}{M\"obius inverse of a function defined on a poset $P$}

Let $\mathcal{G}$ be an abelian group. Then, the group completion of $\mathcal{G}$ is isomorphic to $\mathcal{G}$. That is, $\mathcal{G}$ and $\kappa (\mathcal{G})$ can be identified and the canonical map $\varphi_\mathcal{G}$ can be taken as the identity map. In this case, if $g : P \to \mathcal{G}$ is a function, then its algebraic M\"obius inverse, whenever it exists, is the unique function $\partial_P g : P \to \mathcal{G}$ satisfying
    \[
    \sum_{p'\leq p} \partial_P (g) (p') = g(p)
    \]
for all $p\in P$.

\begin{proposition}[{\cite[Proposition 2.6]{gpd-multi}}]\label{prop: algebraic mobius inversion formulas}
    Let $\lp = \{ \ell_1 < \cdots < \ell_n \}$ be a finite linearly ordered set, let $m : \overline{\lp}^\times \to \mathcal{G}$ be any function and let $m' := m|_{\overline{\lp}^\supseteq} : \overline{\lp}^\supseteq \to \mathcal{G}$ be its restriction to non-diagonal points. Then, the algebraic M\"obius inverses of $m$ and $m'$, $\partial_{\overline{\lp}^\times}(m) : \overline{\lp}^\times \to \mathcal{G}$ and $\partial_{\overline{\lp}^\supseteq}(m') : \overline{\lp}^\supseteq \to \mathcal{G}$, are given by
\begin{align}
    \partial_{\overline{\lp}^\times}(m) ((\ell_i, \ell_j)) &= m((\ell_i, \ell_{j})) - m((\ell_i, \ell_{j-1})) + m((\ell_{i-1}, \ell_{j-1})) - m((\ell_{i-1}, \ell_{j})), \label{eqn: times algebraic mob inv1} \\
    \partial_{\overline{\lp}^\times}(m) ((\ell_i, \infty)) &= m((\ell_i, \infty)) - m((\ell_i, \ell_{n})) + m((\ell_{i-1}, \ell_{n})) - m((\ell_{i-1}, \infty)), \label{eqn: times algebraic mob inv2} \\
    \partial_{\overline{\lp}^\times}(m) ((\ell_i, \ell_i)) &= m ((\ell_i, \ell_i)) - m ((\ell_{i-1}, \ell_{i})), \label{eqn: times algebraic mob inv3} \\
    \nonumber \text{and}\\
    \partial_{\overline{\lp}^\supseteq}(m') ((\ell_i, \ell_j)) &= m((\ell_i, \ell_{j})) - m((\ell_i, \ell_{j+1})) + m((\ell_{i-1}, \ell_{j+1})) - m((\ell_{i-1}, \ell_{j})),\label{eqn: times algebraic mob inv4} \\
    \partial_{\overline{\lp}^\supseteq}(m') ((\ell_i, \infty)) &= m((\ell_i, \infty)) - m((\ell_{i-1}, \infty))\label{eqn: times algebraic mob inv5}.
\end{align}
for $1\leq i  < j \leq n$, where we follow the convention that the expressions of the form $(\ell_i, \ell_{n+1})$ are considered to be the segments $(\ell_i ,\infty)$ and the expressions of the form $m((\ell_0, \ell_j))$ and $m((\ell_0 , \infty))$ are assumed to be $0$. 
\end{proposition}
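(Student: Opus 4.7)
The algebraic Möbius inverse, whenever it exists, is uniquely determined by the defining summation identity $\sum_{q \leq p} \partial_P(m)(q) = m(p)$. Hence my plan is to verify that the explicit functions proposed in~\cref{eqn: times algebraic mob inv1,eqn: times algebraic mob inv2,eqn: times algebraic mob inv3,eqn: times algebraic mob inv4,eqn: times algebraic mob inv5} satisfy this identity on $\overline{\lp}^\times$ and $\overline{\lp}^\supseteq$ respectively; uniqueness then forces them to coincide with the algebraic Möbius inverses. Both verifications reduce to elementary two-dimensional telescoping computations, which I carry out separately for each order.

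For the product order, fix a segment $p = (\ell_i, \ell_j) \in \overline{\lp}^\times$. Its lower set under $\prodord$ consists of all segments $(\ell_{i'}, \ell_{j'})$ with $i' \leq i$, $j' \leq j$, and $i' \leq j'$ (together with the unbounded segments $(\ell_{i'}, \infty)$ with $i' \leq i$ if $p$ is unbounded). The right-hand side of \cref{eqn: times algebraic mob inv1} is a signed sum of $m$ at the four corners of a $2 \times 2$ block, i.e.\ the discrete 2D second difference of $m$. Summing these second differences over the lower set yields a telescoping double sum, which collapses---using the boundary convention $m((\ell_0, \cdot)) = 0$ and noting that the diagonal formula \cref{eqn: times algebraic mob inv3} consists of only two terms rather than four, precisely accommodating the absence of non-segments of the form $(\ell_{i'}, \ell_{i'-1})$---to the single value $m(p)$. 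The unbounded case \cref{eqn: times algebraic mob inv2} proceeds identically, with the roles of $\ell_n$ and $\infty$ playing out in the second coordinate of the telescoping.

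For the reverse-inclusion order, the lower set of $(\ell_i, \ell_j) \in \overline{\lp}^\supseteq$ under $\incord$ is a ``rectangle'' extending upward in the second coordinate, together with all unbounded segments $(\ell_{i'}, \infty)$ with $i' \leq i$. The analogous telescoping now proceeds inward in the second coordinate: summing \cref{eqn: times algebraic mob inv4} over the bounded part produces boundary terms $m((\ell_{i'}, \ell_{n+1})) = m((\ell_{i'}, \infty))$, which cancel exactly against the telescoped contribution of \cref{eqn: times algebraic mob inv5} from the unbounded part, leaving $m((\ell_i, \ell_j))$. The main obstacle---essentially bookkeeping---lies in carefully tracking the interfaces between bounded and unbounded segments and, for the $\times$-order, between non-diagonal and diagonal segments; the proposition handles these through the conventions specified in its statement. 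A more conceptual alternative would be to observe that the ambient poset $\lp \times (\lp \cup \{\infty\})$ is a product of two chains, whose Möbius function factors as a product of the chain Möbius functions $\mu(k-1,k) = -1$, $\mu(k,k) = 1$; the 4-term structure of each formula is then an immediate consequence of this factorization, and the segment/diagonal adjustments are precisely accounted for by the stated conventions.
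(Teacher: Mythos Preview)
The paper does not actually prove this proposition: it is stated with a citation to the companion paper \cite[Proposition 2.6]{gpd-multi} and no proof is given here. So there is no ``paper's own proof'' to compare against.

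That said, your proposal is correct. Verifying that the candidate formulas satisfy the defining summation identity $\sum_{q \leq p} \partial_P(m)(q) = m(p)$, and then invoking uniqueness, is the standard and most direct way to establish such explicit M\"obius inversion formulas. The two-dimensional telescoping you describe is exactly right, and your remark that the four-term structure reflects the factorization of the M\"obius function on a product of two chains is the conceptually clean explanation. Your handling of the boundary cases (diagonal segments for $\prodord$, unbounded segments for $\incord$, and the conventions $m((\ell_0,\cdot))=0$, $(\ell_i,\ell_{n+1})=(\ell_i,\infty)$) is also correct, though in a fully written proof you would want to spell out at least one of these interface computations explicitly rather than leaving it as ``bookkeeping.''
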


\begin{remark}\label{remark: convention edge cases}
    Breaking down the conventions of the proposition above, we have:
    \begin{align*}
        \partial_{\overline{\lp}^\times}(m) ((\ell_1, \ell_1)) &= m((\ell_1, \ell_{1})), \\
        \partial_{\overline{\lp}^\times}(m) ((\ell_1, \infty)) &= m((\ell_1, \infty)) - m((\ell_1 , \ell_n)), \\
        \partial_{\overline{\lp}^\times}(m) ((\ell_1, \ell_j)) &= m((\ell_1, \ell_{j})) - m((\ell_1, \ell_{j-1})) \text{ for } 1 < j \leq n, \\
        \text{and}\\
        \partial_{\overline{\lp}^\supseteq}(m') ((\ell_1, \infty)) &= m((\ell_1, \infty)), \\
        \partial_{\overline{\lp}^\supseteq}(m') ((\ell_1, \ell_n)) &= m((\ell_1, \ell_{n})) - m((\ell_1,\infty)), \\
        \partial_{\overline{\lp}^\supseteq}(m') ((\ell_1, \ell_j)) &= m((\ell_1, \ell_{j})) - m((\ell_1, \ell_{j+1})) \text{ for } j < n.
    \end{align*}

\end{remark}

\subsection{Galois Connections}

\begin{definition}[Galois connections]\label{defn: galois connections}
    Let $P$ and $Q$ be any two posets (not necessarily finite). A pair, $(\ladj{f}, \radj{f})$, of order-preserving maps, $\ladj{f} : P \to Q$ and $\radj{f} : Q \to P$, is called a~\emph{Galois connection} if they satisfy
    \[
    \ladj{f}(p) \leq q \iff p \leq \radj{f} (q)
    \]
    for every $p\in P$, $q\in Q$. We refer to $\ladj{f}$ as the~\emph{left adjoint} and refer to $\radj{f}$ as the~\emph{right adjoint}. We will also use the notation $\ladj{f} : P \leftrightarrows Q : \radj{f}$ to denote a Galois connection.
\end{definition}

The left and right adjoints of a Galois connection can be expressed in terms of each
other as follows.
\begin{align*}
    \ladj{f} (p) &= \min \{ q\in Q \mid p \leq \radj{f}(q) \} \\
    \radj{f} (q) &= \max \{ p\in P \mid \ladj{f}(p) \leq q \}
\end{align*}

\begin{example}
    Consider the inclusion $\iota: \Z \hookrightarrow \R$, the ceiling function $\lceil \cdot \rceil : \R \to \Z$, and the floor function $\lfloor \cdot \rfloor : \R \to \Z$. The pairs $\left(\lceil \cdot \rceil, \iota\right)$ and $\left(\iota, \lfloor \cdot \rfloor\right)$ are Galois connections.
\end{example}

\begin{remark}
    Notice that the composition of Galois connections $\ladj{f} : P \leftrightarrows Q : \radj{f}$ and $\ladj{g} : Q \leftrightarrows R : \radj{g}$ is also a Galois connection $$\ladj{g} \circ \ladj{f} : P \leftrightarrows R : \radj{f} \circ \radj{g}.$$ Also, note that a Galois connection $\ladj{f} : P \leftrightarrows Q : \radj{f}$ induces a Galois connection on the poset of segments $\overline{\ladj{f}} : \overline{P}^\times \leftrightarrows \overline{Q}^\times :\overline{\radj{f}}$. These properties of Galois connections will later be utilized in defining morphisms in certain categories that we will introduce in~\cref{sec: orthogonal inversion} .
\end{remark}

\begin{definition}[Pushforward and pullback]
Let $f: P \to Q$ be any order-preserving map between two posets, and let $m : P \to \mathcal{G}$ be any function. The~\emph{pushforward} of $m$ along $f$ is the function $f_\sharp m : Q \to \mathcal{G}$ given by
$$f_\sharp m (q) := \sum_{p \in f^{-1}(q)} m(p).$$
Let $h : Q \to \mathcal{G}$ be any function. The~\emph{pullback} of $h$ along $f$ is the function $f^\sharp h : P \to \mathcal{G}$ given by
$$(f^\sharp h )(p) := h( f(p)).$$
\end{definition}
\nomenclature[11]{$(\cdot)_\sharp$}{Pushforward along a map}
\nomenclature[12]{$(\cdot)^\sharp$}{Pullback along a map}

The following theorem, Rota's Galois Connection Theorem (RGCT), describes how M\"obius inversion behaves when a Galois connection exists between two posets.

\begin{theorem}[RGCT~{\cite[Theorem 3.1]{gal-conn}}]\label{thm:rgct}
    Let $P$ and $Q$ be finite posets and $(\ladj{f}, \radj{f})$ be a Galois connection. Then,
    \begin{equation}\label{eqn: rgct push vs pull}
        (\ladj{f})_\sharp \circ \partial_P = \partial_Q \circ (\radj{f})^\sharp.
    \end{equation}
\end{theorem}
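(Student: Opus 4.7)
The plan is to establish the operator identity pointwise by invoking the uniqueness of Möbius inversion. Given any function $m : P \to \mathcal{G}$, I would show that $(\ladj{f})_\sharp (\partial_P m)$ satisfies the defining summation identity of $\partial_Q((\radj{f})^\sharp m)$; namely, that for every $q \in Q$,
\[
\sum_{q' \leq q} (\ladj{f})_\sharp (\partial_P m)(q') \;=\; ((\radj{f})^\sharp m)(q) \;=\; m(\radj{f}(q)).
\]
By the uniqueness clause in the definition of $\partial_Q$, this forces $(\ladj{f})_\sharp (\partial_P m) = \partial_Q((\radj{f})^\sharp m)$; since $m$ is arbitrary, the operator identity \eqref{eqn: rgct push vs pull} follows.

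The key computation will unfold the definitions of pushforward and Möbius inversion and then apply the Galois equivalence. Specifically, I would fix $q \in Q$ and expand the left-hand side as
\[
\sum_{q' \leq q} (\ladj{f})_\sharp (\partial_P m)(q') \;=\; \sum_{q' \leq q} \; \sum_{p \in \ladj{f}^{-1}(q')} \partial_P m(p) \;=\; \sum_{p \,:\, \ladj{f}(p) \leq q} \partial_P m(p),
\]
where the last equality merges the double sum by re-indexing over the preimage of the downset of $q$ under $\ladj{f}$. Now I would invoke the Galois connection: $\ladj{f}(p) \leq q$ holds if and only if $p \leq \radj{f}(q)$, so the sum equals $\sum_{p \leq \radj{f}(q)} \partial_P m(p)$, which by the defining summation property of $\partial_P$ evaluates to $m(\radj{f}(q))$. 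This gives exactly the required identity.

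I do not expect a serious obstacle, as the argument is essentially bookkeeping: two sum manipulations (rewriting a nested sum as a single sum over a preimage, and collapsing a downset sum via the defining property of $\partial_P$) together with one appeal to the characteristic equivalence of the Galois pair. The only subtle point worth flagging is that the Möbius inverses involved must exist for the statement to be meaningful; this is automatic because $P$ and $Q$ are finite and $\mathcal{G}$ is an abelian group, so the standard triangular recursion always yields a unique solution.
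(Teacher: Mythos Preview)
The paper does not supply its own proof of this theorem; it is quoted verbatim as a cited result from \cite[Theorem 3.1]{gal-conn}. Your argument is the standard (and correct) proof: verify that $(\ladj{f})_\sharp(\partial_P m)$ satisfies the defining summation identity for $\partial_Q((\radj{f})^\sharp m)$ by re-indexing the double sum via the Galois equivalence $\ladj{f}(p)\leq q \iff p\leq \radj{f}(q)$, then invoke uniqueness of the algebraic M\"obius inverse over a finite poset.
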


\begin{figure}
    \centering
    \includegraphics[scale=20]{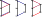}
    \caption{An illustration of RGCT.}
    \label{fig:RGCTdemo}
\end{figure}

\begin{example}\label{ex:gal conn demo}
    Let $P = \{ p_1 < p_2 < p_3 \}$ and $Q = \{ q_1 < q_2\}$ be two posets. Let
    \begin{align*}
        \ladj{f} : P &\to Q & \radj{f}: Q &\to P \\
                   p_1 &\mapsto q_1 & q_1 &\mapsto p_1\\
                   p_2 &\mapsto q_2 & q_2 &\mapsto p_3\\
                   p_3 &\mapsto q_2
    \end{align*}  
    Then, $(\ladj{f}, \radj{f})$ is a Galois connection. In~\cref{fig:RGCTdemo}, $\ladj{f}$ is depicted with the red arrows and $\radj{f}$ is depicted with the blue arrows. In the middle of~\cref{fig:RGCTdemo}, we illustrate two functions. First, $m : P \to \Z$ is a function on $P$, whose values are given by $m(p_1) =1$, $m(p_2) =2$ and $m(p_3) =5$. Second, $(\radj{f})^\sharp m : Q \to \Z$ is a function on $Q$, whose values are given by $(\radj{f})^\sharp m (q_1) := m(\radj{f}(q_1)) = m(p_1) =1$ and similarly $(\radj{f})^\sharp m (q_2):= m(\radj{f}(q_2)) = m(p_3) =5$. On the right of~\cref{fig:RGCTdemo}, we illustrate the M\"obius inverses of these functions, namely, $\partial_P (m)$ and $\partial_Q ((\radj{f})^\sharp m)$. Notice that the pushforwad of $\partial_P (m)$ along $\ladj{f}$ is equal to $\partial_Q ((\radj{f})^\sharp m)$. That is, $\partial_Q ((\radj{f})^\sharp m) = (\ladj{f})_\sharp (\partial_P (m))$ as stated in~\cref{thm:rgct}.
\end{example}

\subsection{Simplicial Complexes and Filtrations}

In this section, we introduce fundamental concepts and definitions, including simplicial complexes, filtrations, persistent Betti numbers, the concepts of birth and death of cycles, and birth-death spaces.

\paragraph{Simplicial Complexes and Chain Spaces.} An (abstract) \emph{finite simplicial complex} $K$ over a finite ordered vertex set $V$ is a non-empty collection of non-empty subsets of $V$ with the property that for every $\sigma \in K$, if $\tau\subseteq \sigma$, then $\tau \in K$. An element $\sigma \in K$ is called a \emph{$\dgr$-simplex} if the cardinality of $\sigma$ is $\dgr+1$. An \emph{oriented simplex}, denoted $[\sigma]$, is a simplex $\sigma \in K$ whose vertices are ordered. We always assume that ordering on simplices is inherited from the ordering on $V$. Let $\mathfrak{s}_\dgr^K$ denote the set of all oriented $\dgr$-simplices of $K$. 
\nomenclature[13]{$\mathfrak{s}_\dgr^K$}{set of all oriented $\dgr$-simplices of a simplicial complex $K$}

The \emph{$\dgr$-th chain space of $K$}, denoted $C_\dgr^K$, is the vector space over $\R$ with basis $\mathfrak{s}_\dgr^K$. Let $n_\dgr^K := |\mathfrak{s}_\dgr^K| = \dim_\R (C_\dgr^K)$. The \emph{$\dgr$-th boundary operator} $\partial_\dgr^K : C_\dgr^K \to C_{\dgr-1}^K$ is defined by
\[
    \partial_\dgr^K ([v_0,\ldots,v_\dgr]) := \sum_{i=0}^{\dgr}(-1)^i[v_0,\ldots,\hat{v}_i,\ldots,v_\dgr]
\]
for every oriented $\dgr$-simplex $[\sigma] = [v_0,\ldots,v_\dgr]\in \mathfrak{s}_\dgr^K$, where $[v_0,\ldots,\hat{v}_i,\ldots,v_\dgr]$ denotes the omission of the $i$-th vertex, and extended linearly to $C_\dgr^K$. We denote by $\Zfunc_\dgr(K)$ the space of $\dgr$-cycles of $K$, that is $$\Zfunc_\dgr(K) := \ker \left(\partial_\dgr^K\right),$$ and we denote by $\Bfunc_\dgr(K)$ the space of $\dgr$-boundaries of $K$, that is $$\Bfunc_\dgr(K):= \Ima \left(\partial_{\dgr+1}^K\right).$$ Additionally, we denote by $H_\dgr(K)$ the $\dgr$-th homology group of $K$, that is $$H_\dgr(K) := \frac{\Zfunc_\dgr(K)}{ \Bfunc_\dgr(K)}.$$
\nomenclature[14]{$C_\dgr^K$}{$\dgr$-th chain group of $K$}
\nomenclature[15]{$\partial_\dgr^K$}{$\dgr$-th boundary operator of a simplicial complex $K$}
\nomenclature[16]{$\Zfunc_\dgr(\cdot)$}{Space of $\dgr$-cycles}
\nomenclature[17]{$\Bfunc_\dgr(\cdot)$}{Space of $\dgr$-boundaries}
\nomenclature[18]{$H_\dgr(\cdot)$}{$\dgr$-th homology group}

For each integer $\dgr\geq 0$, we define an inner product, $\langle \cdot , \cdot \rangle_{C_\dgr^K}$, on $C_\dgr^K$ as follows:
\[
\langle [\sigma] , [\sigma'] \rangle_{C_\dgr^K} := \delta_{[\sigma], [\sigma']} \text{, for all } [\sigma],[\sigma'] \in \mathfrak{s}_\dgr^K,
\]
where $\delta_{\bullet, \bullet}$ is the Kronecker delta.
That is, we declare that $\mathfrak{s}_\dgr^K$ is an orthonormal basis for $C_\dgr^K$. We will refer to $\langle \cdot , \cdot \rangle_{C_\dgr^K}$ as the~\emph{standard} inner product on $C_\dgr^K$. We will omit the subscript from the notation $\langle \cdot , \cdot \rangle_{C_\dgr^K}$ when the context is clear. We denote by $\left(\partial_\dgr^K\right)^* : C_{\dgr-1}^K \to C_\dgr^K$ the adjoint of $\partial_\dgr^K$ with respect to the standard inner products  on $C_\dgr^K$ and $C_{\dgr-1}^K$.
\nomenclature[19]{$\langle \cdot , \cdot \rangle_{C_\dgr^K}$}{Standard inner product on $C_\dgr^K$}

\paragraph{Simplicial Filtrations.}\label{parag: simplicial filtration} For a finite simplicial complex $K$, let $\subcx(K)$ denote the poset of subcomplexes of $K$, ordered by inclusion. A \emph{simplicial filtration of $K$} is an order-preserving map $\Ffunc : P \to \subcx (K)$, where $P$ is a finite poset. 
A \emph{$1$-parameter} filtration of $K$ is a filtration $\Ffunc : \lp \to \subcx(K)$ where $\lp = \{\ell_1<\cdots<\ell_n \}$ is a finite linearly ordered set and $\Ffunc(\ell_n) = K$. When $\Ffunc : \{ \ell_1<\cdots<\ell_n\} \to \subcx (K)$ is a $1$-parameter filtration, we use the notation $K_i$ for the simplicial complex $\Ffunc(\ell_i)$ and succinctly write $\Ffunc = \{K_i \}_{i=1}^n$ to denote the simplicial filtration. Note that $K_n = K$. 
\nomenclature[20]{$\subcx(\cdot)$}{Poset of subcomplexes of a simplicial complex, ordered by inclusion}

\begin{definition}[Persistent Betti numbers / rank invariant \cite{Edelsbrunner2002}]\label{defn: persistent betti rank inv}
    Let $\Ffunc : P \to \subcx (K)$ be a filtration. For $(p,p')\in \Seg(P)$, let $\iota_\dgr^{p,p'} : H_\dgr (\Ffunc(p)) \to H_\dgr (\Ffunc(p'))$ denote the homomorpshim induced by the inclusion $\Ffunc(p) \hookrightarrow \Ffunc(p')$. We define the \emph{$\dgr$-th persistent Betti number} for the segment $(p,p')$ as
    \[
    \beta_\dgr^{p,p'} := \rank \left(\iota_\dgr^{p,p'}\right).
    \]
    As is customary in applied algebraic topology, we also use the term \emph{rank invariant} to refer to persistent Betti numbers. 
\end{definition}
\nomenclature[21]{$\beta_\dgr^{\cdot, \cdot}$}{$\dgr$-th persistent Betti numbers}

Let $\Ffunc = \{K_i \}_{i=1}^n$ be a $1$-parameter filtration of $K$. Observe that, for any dimension $\dgr\geq 0$, the inclusion of simplicial complexes $K_i \subseteq K_j$, for $i \leq j$, induces canonical inclusions on the cycle and boundary spaces. In particular, for any $i = 1,\ldots,n$, the $\dgr$-th cycle and boundary spaces of $K_i$ can be identified with subspaces of $C_\dgr^{K_n} = C_\dgr^K$:

\begin{center}
    \begin{tikzcd}
\Zfunc_\dgr(K_i) \arrow[r, hook]                 & \Zfunc_\dgr(K_j) \arrow[r, hook]                 & \Zfunc_\dgr(K) \arrow[r, hook] & C_\dgr^K \\
\Bfunc_\dgr(K_i) \arrow[r, hook] \arrow[u, hook] & \Bfunc_\dgr(K_j) \arrow[r, hook] \arrow[u, hook] & \Bfunc_\dgr(K) \arrow[u, hook] &      
    \end{tikzcd}
\end{center}

\begin{definition}[Birth-death spaces]\label{defn: bd space}
    Let $\Ffunc : P \to \subcx (K)$ be a filtration. For any degree $\dgr\geq 0$, the $\dgr$-th~\emph{birth-death spaces} associated to $\Ffunc$ is defined as the function $\ZB_\dgr^\Ffunc : \overline{P}^\times\to \gr\left(C_\dgr^K\right)$ given by
    \begin{align*}
        \ZB_\dgr^\Ffunc ((b, d)) &:= \Zfunc_\dgr \big(\Ffunc(b)\big)\cap \Bfunc_\dgr \big(\Ffunc(d)\big), \\
        \ZB_\dgr^\Ffunc ((b,\infty)) &:= \Zfunc_\dgr \big(\Ffunc(b)\big).
    \end{align*} 

\end{definition}
\nomenclature[22]{$\ZB_\dgr^\Ffunc$}{$\dgr$-th birth-death spaces associated to a filtration $\Ffunc$}

Informally, when $b\leq d$, for a cycle $z$ to be in the birth-death space $\ZB_\dgr^\Ffunc((b,d))$ means that $z$ becomes ``alive" at or before $b$ and that it ``dies''  (i.e. it becomes a boundary) at or before $d$.  

\begin{remark}
The classical definition of persistence diagrams, as in~\cite{cohen-steiner2007}, utilizes persistent Betti numbers. Birth-death spaces were introduced in~\cite{edit, saecular} as an alternative way to define persistence diagrams. For a $1$-parameter filtration, classical persistence diagrams and persistence diagrams obtained through utilizing the dimension of birth-death spaces coincide as shown in~\cite[Section 9.1]{edit}. McCleary and Patel showed that the use of birth-death spaces for defining persistence diagrams enables us to organize the persistent homology pipeline in a functorial way~\cite{edit}. Additionally, this functoriality leads to the edit distance stability of persistence diagrams as in~\cite[Theorem 8.4]{edit}.
\end{remark}

We now recall the definition of \emph{lifetime of cycles} (i.e., birth time and death time) and \emph{ephemeral cycles} from~\cite{gpd-multi}, which are formulated using the birth-death spaces $\ZB_\dgr^\Ffunc$. Note that in TDA, the death time is typically used in reference to  homology classes as opposed to cycles and 
    the death time of a homology class refers to the first time when the class merges with an ``older" one, following the ``elder rule" \cite{edelsbrunner2010computational,curry2018fiber}. We refer to~\cite[Remark 2.16]{gpd-multi} for the motivation behind the following definition.

\begin{definition}[Lifetime of cycles / ephemeral cycles]\label{def:bd-cycles} 
    Let $P$ be any finite poset and let $\Ffunc : P \to \subcx(K)$ be a filtration. Let $(b,d) \in \Seg(P)$. We say that a nonzero cycle $z \in C_\dgr^K$ has \emph{a lifetime $(b,d)$} if the following two conditions are met:

    \begin{itemize}
        \item $z \in \ZB_\dgr^\Ffunc((b,d))$, and
        \item $z \notin  \sum_{(a,c)<_\times (b,d)} \ZB_\dgr^\Ffunc((a,c))$. 
    \end{itemize}

    When a cycle $z$ is born at $b$ and dies at $b$ (i.e., $b=d$), we say that $z$ is an \emph{ephemeral} cycle.

\end{definition}

\begin{remark}
    For a $1$-parameter filtration $\Ffunc : \lp = \{ \ell_1 < \cdots < \ell_n \} \to \subcx(K)$, we will see in~\cref{prop: dim of loi is classical pd} that the number of linearly independent cycles that are born at $\ell_j$ and die at $\ell_i$ (with $i<j$), which is given by $$\dim \left(\frac{\ZB_\dgr^\Ffunc((\ell_i,\ell_j))}{\sum_{(\ell_k, \ell_l) <_\times (\ell_i, \ell_j)}\ZB_\dgr^\Ffunc((\ell_k, \ell_l))}\right)$$
    is precisely the the multiplicity of the segment $(\ell_i,\ell_j)$ in the classical degree-$\dgr$ persistence diagram of $\Ffunc$.
\end{remark}

\subsection{Monoidal M\"obius Inverses and Orthogonal Inversion}\label{sec: monoidal mobious inversion}

The ``algebraic" M\"obius inverse of a function $m : P \to \mathcal{M}$, where $\mathcal{M}$ is a commutative monoid, involves the group completion of $\mathcal{M}$, denoted $\kappa(\mathcal{M})$. This is required in order to ``make sense'' of the minus operations that may appear in M\"obius inversion formulas such as the one in~\cref{prop: algebraic mobius inversion formulas}. However, the group completion of a commutative monoid could be the trivial group, yielding a trivial algebraic M\"obius inverse. In particular, for any vector space $V$, the group completion of $\gr(V)$ is the trivial group; see~\cref{appendix:details}. In this case, the algebraic M\"obius inverse of any map $m : P \to \gr(V)$ is the trivial map $ \partial_P (m) : P \to \{ 0 \} = \kappa (\gr(V))$. This suggests considering a notion of M\"obius inverse that does not involve group completion.

\begin{definition}[Monoidal M\"obius inverses~{\cite[Definition 2.20]{gpd-multi}}]\label{defn: monoidal mobius inversion}
    Let $\mathcal{M}$ be a commutative monoid. Let $m : P \to \mathcal{M}$ be a function, then a function $m' : P \to \mathcal{M}$ is called a~\emph{monoidal M\"obius inverse} of $m$ if it satisfies 
    \[
    \sum_{p'\leq p} m'(p') = m(p)
    \]
    for all $p \in P$. We denote by $\mmi{P}{m}$ the set of all monoidal M\"obius inverses of $m$.
\end{definition}
\nomenclature[23]{$\mmi{P}{\cdot}$}{Set of all monoidal inverses of a function defined on a poset $P$}

Notice that if $\mathcal{G}$ is an abelian group and $g : P \to \mathcal{G}$ is any function, then the algebraic M\"obius inverse of $g$ is a monoidal M\"obius inverse of $g$. Indeed, the algebraic M\"obius inverse of $g$ is the unique monoidal M\"obius inverse of $g$ in this case. However, if $\mathcal{M}$ is a commutative monoid that is not an abelian group and $m : P \to \mathcal{M}$ is a function, then the algebraic M\"obius inverse of $m$ and a monoidal M\"obius inverse of $m$ have different codomains as functions. 

While the algebraic M\"obius inverse of $m$ is always guaranteed to exist when $P$ is finite, a monoidal M\"obius inverse of $m$ might not exist even in this case~\cite[Example 2.21]{gpd-multi}. Moreover, in the case when both inverses exist,  there might be more than one monoidal M\"obius inverse of $m$ whereas the algebraic M\"obius inverse is necessarily unique~\cite[Example 2.22]{gpd-multi}. 

The fact that the monoidal M\"obius inverse may not be unique, as demonstrated in~\cite[Example 2.22]{gpd-multi}, motivates the following definition, which introduces an equivalence relation linking all functions serving as M\"obius inverses of the same function.

\begin{definition}[M\"obius equivalence~{\cite[Definition 2.23]{gpd-multi}}]
    Two functions $m_1, m_2 : P \to \mathcal{M}$ are said to be \emph{M\"obius equivalent} if
    \[
    \sum_{p'\leq p} m_1 (p') = \sum_{p'\leq p} m_2 (p')
    \]
    for all $p\in P$. In this case, we write $m_1 \mobeq m_2$.
\end{definition}
\nomenclature[24]{$\mobeq$}{M\"obius equivalence}

The following definition, introduced in \cite{gpd-multi}, is used to construct a monoidal Möbius inverse for order-preserving functions from a finite poset to the Grassmannian of an inner product space.

\begin{definition}[Difference of subspaces~{\cite[Definition 3.2]{gpd-multi}}]\label{defn: difference of spaces}
    Let $V$ be an inner product space and let $W_1, W_2 \subseteq V$ be subspaces. We define the~\emph{difference} of two subspaces as
    \[
    W_1 \ominus W_2 := W_1 \cap W_2^\perp.
    \]
\end{definition}
\nomenclature[25]{$\ominus$}{Difference of subspaces}

\begin{remark}\label{remark: dimension difference}
    In the definition above, if $W_2 \subseteq W_1$, then $W_1 \ominus W_2 = W_1 \cap W_2^\perp$ is the orthogonal complement of $W_2$ inside of $W_1$. In this case, $\dim (W_1 \ominus W_2) = \dim W_1 - \dim W_2$. 
    In general (i.e. when $W_2 \nsubseteq W_1$), we have that 
    \[
    W_1 \ominus W_2 = W_1 \ominus \proj_{W_1} (W_2),
    \]
    where $\proj_{W_1} : V \to W_1$ is the orthogonal projection. This can be informally interpreted as expressing that $\proj_{W_1} (W_2)$ and $W_2$ are treated as being ``quasi-isomorphic'' with respect to $W_1$. See~\cref{appendix: details for orthogonal inversion} for the proof of the equality $W_1 \ominus W_2 = W_1 \ominus \proj_{W_1} (W_2) $.
\end{remark}

Although this paper focuses on a notion of M\"obius inversion on the poset of segments of a linear poset, it is, in fact, a special case of a broader construction, which we now recall.

\begin{definition}[Orthogonal Inversion~{\cite[Definition 3.4]{gpd-multi}}]\label{defn: goi}
    Let $R$ be a finite poset and let $\Ffrak: R \to\gr(V)$ be an order-preserving function. We define the \emph{Orthogonal Inverse of $\Ffrak$} to be the function $\goi( \Ffrak) : R \to \gr(V)$ given by
    \[
    \goi (\Ffrak) (r) := \Ffrak(r) \ominus \left(\sum_{r'< r} \Ffrak(r')\right).
    \]
\end{definition}
\nomenclature[26]{$\goi$}{Orthogonal Inversion}

As we will see in \cref{prop: loi and goi agrees}, one of the main constructions in this paper, namely $\times$-Linear Orthogonal Inversion (\cref{defn: x-harmonic inversion}), is actually a special case of Orthogonal Inversion (\cref{defn: goi}). We will therefore leverage some of the properties that Orthogonal Inversion satisfies, one of which is the following.

\begin{proposition}[{\cite[Proposition 3.6]{gpd-multi}}]\label{prop: goi is monoidal mi}
    Let $R$ be a finite poset and let $\Ffrak : R \to \gr(V)$ be an order-preserving function. Then, $\goi (\Ffrak)$ is a monoidal M\"obius inverse of $\Ffrak$, i.e., $\goi (\Ffrak) \in \mmi{R}{\Ffrak}$. That is,
    \[
    \sum_{r' \leq r} \goi (\Ffrak)(r') = \Ffrak(r)
    \]
    for every $r\in R$.
\end{proposition}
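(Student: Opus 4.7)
The plan is to prove the identity by induction along a linear extension of the finite poset $R$, since the formula $\goi(\Ffrak)(r) = \Ffrak(r) \ominus \sum_{r'<r}\Ffrak(r')$ is defined recursively in terms of strictly smaller elements. For the base case, a minimal element $r \in R$ has $\sum_{r' < r}\Ffrak(r') = \{0\}$, so $\goi(\Ffrak)(r) = \Ffrak(r) \cap V = \Ffrak(r)$, and the identity $\sum_{r'\leq r}\goi(\Ffrak)(r') = \Ffrak(r)$ holds trivially.

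For the inductive step, fix $r \in R$ and assume $\sum_{r''\leq r'}\goi(\Ffrak)(r'') = \Ffrak(r')$ for every $r'<r$. The first key step is to establish the identity
\[
\sum_{r' < r} \goi(\Ffrak)(r') \;=\; \sum_{r'<r}\Ffrak(r').
\]
The inclusion $\subseteq$ follows because $\goi(\Ffrak)(r') \subseteq \Ffrak(r')$ directly from the definition of $\ominus$. The reverse inclusion follows from the inductive hypothesis: for any $r'<r$ we have $\Ffrak(r') = \sum_{r''\leq r'}\goi(\Ffrak)(r'') \subseteq \sum_{r''<r}\goi(\Ffrak)(r'')$, since every $r''\leq r'$ satisfies $r''<r$.

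The second key step is to exploit that $\Ffrak$ is order-preserving. Let $W := \sum_{r'<r}\Ffrak(r')$. Order-preservation gives $\Ffrak(r')\subseteq \Ffrak(r)$ for all $r'<r$, hence $W \subseteq \Ffrak(r)$. By \textbf{Remark~\ref{remark: dimension difference}}, when $W \subseteq \Ffrak(r)$ the space $\goi(\Ffrak)(r) = \Ffrak(r)\ominus W = \Ffrak(r)\cap W^\perp$ is precisely the orthogonal complement of $W$ inside $\Ffrak(r)$, so $\Ffrak(r) = W + \goi(\Ffrak)(r)$ (in fact an internal orthogonal direct sum). Combining this with the first step gives
\[
\sum_{r'\leq r}\goi(\Ffrak)(r') \;=\; \goi(\Ffrak)(r) + \sum_{r'<r}\goi(\Ffrak)(r') \;=\; \goi(\Ffrak)(r) + W \;=\; \Ffrak(r),
\]
completing the induction.

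There is no serious obstacle; the only subtlety worth flagging is the use of the order-preserving hypothesis on $\Ffrak$ in the second step, which guarantees $W\subseteq \Ffrak(r)$ and thereby lets us interpret $\Ffrak(r)\ominus W$ as a genuine orthogonal complement inside $\Ffrak(r)$ (rather than having to pass through the projection $\proj_{\Ffrak(r)}(W)$ as in the general case of \textbf{Remark~\ref{remark: dimension difference}}). Without this containment, the decomposition $\Ffrak(r) = W + \goi(\Ffrak)(r)$ could fail.
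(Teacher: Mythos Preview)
Your proof is correct. The induction along a linear extension is the natural approach: the key observations are exactly the ones you isolate, namely (i) that the inductive hypothesis forces $\sum_{r'<r}\goi(\Ffrak)(r') = \sum_{r'<r}\Ffrak(r')$, and (ii) that order-preservation of $\Ffrak$ gives $W \subseteq \Ffrak(r)$, so $\Ffrak(r) = W \oplus (\Ffrak(r) \cap W^\perp)$ as an internal orthogonal decomposition.

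Note that the paper itself does not supply a proof of this proposition; it is imported verbatim from the companion paper \cite{gpd-multi} (Proposition~3.6 there), so there is no in-paper argument to compare against. Your argument is the standard one and is essentially what one would expect the companion paper to contain.
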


\subsection{A Monoidal Rota's Galois Connection Theorem}

Rota's Galois Connection Theorem (RGCT)~\cite{gal-conn} describes how algebraic
M\"obius inversion behaves when a Galois connection exists between two posets. The functoriality of algebraic M\"obius inversion~\cite{edit, gal-conn} is indeed a direct consequence of the RGCT.
Now, we recall a monoidal analog of the RGCT. The Monoidal RGCT has been utilized to conclude functoriality of certain constructions in~\cite{gpd-multi} 
 and will allow us to establish the functoriality of our constructions in~\cref{sec: orthogonal inversion}.

\begin{theorem}[Monoidal RGCT~{\cite[Theorem 2]{gpd-multi}}]\label{thm: monoidal rgct}
    Let $P$ and $Q$ be finite posets, $\ladj{f} : P \leftrightarrows Q : \radj{f}$ be a Galois connection, and $m: P \to \mathcal{M}$ be any function. Assume that $m' : P \to \mathcal{M}$ is a monoidal M\"obius inverse of $m$. Then, $(\ladj{f})_\sharp (m')$ is a monoidal M\"obius inverse of $(\radj{f})^\sharp m$, i.e., $(\ladj{f})_\sharp (m') \in \mmi{Q}{(\radj{f})^\sharp m}$.

\end{theorem}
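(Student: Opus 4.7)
The plan is to verify the defining identity of a monoidal Möbius inverse directly by a simple double-sum rearrangement, using only (i) the definition of the pushforward, (ii) the defining adjunction property of the Galois connection, and (iii) the hypothesis that $m'$ is a monoidal Möbius inverse of $m$. No group-completion is needed, which is precisely why the statement lives at the monoidal (rather than algebraic) level.

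Fix $q \in Q$. I would first unfold the left-hand side of the desired identity using the definition of pushforward:
\[
\sum_{q' \leq q} (\ladj{f})_\sharp(m')(q') \;=\; \sum_{q' \leq q} \sum_{p \in \ladj{f}^{-1}(q')} m'(p).
\]
Since the fibers $\ladj{f}^{-1}(q')$ for varying $q'$ are disjoint (each $p \in P$ has a unique image $\ladj{f}(p) \in Q$), this double sum collapses to a single sum over the set of $p \in P$ whose image $\ladj{f}(p)$ is $\leq q$, namely
\[
\sum_{p \in P : \, \ladj{f}(p) \leq q} m'(p).
\]

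The second step is where the Galois connection enters: the adjunction identity $\ladj{f}(p) \leq q \iff p \leq \radj{f}(q)$ (\cref{defn: galois connections}) lets me rewrite the index set as $\{p \in P : p \leq \radj{f}(q)\}$. Hence
\[
\sum_{q' \leq q} (\ladj{f})_\sharp(m')(q') \;=\; \sum_{p \leq \radj{f}(q)} m'(p).
\]
Finally, applying the hypothesis that $m'$ is a monoidal Möbius inverse of $m$ at the element $\radj{f}(q) \in P$ yields
\[
\sum_{p \leq \radj{f}(q)} m'(p) \;=\; m(\radj{f}(q)) \;=\; (\radj{f})^\sharp m\,(q),
\]
which is exactly the defining identity for $(\ladj{f})_\sharp(m')$ to be a monoidal Möbius inverse of $(\radj{f})^\sharp m$, i.e., to lie in $\mmi{Q}{(\radj{f})^\sharp m}$.

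The argument is essentially an index-set manipulation, so there is no real obstacle; the only subtle point worth stating explicitly is the disjointness of the fibers $\ladj{f}^{-1}(q')$, which is what makes the partition of $\{p : \ladj{f}(p) \leq q\}$ legitimate and thus lets the double sum collapse without double-counting. Conceptually, this is the monoidal analog of \cref{thm:rgct}, except that we do not assert uniqueness of the inverse on either side — indeed monoidal Möbius inverses need not be unique — so the conclusion is only a membership statement in $\mmi{Q}{(\radj{f})^\sharp m}$, not an equality of functions.
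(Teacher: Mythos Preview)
Your proof is correct and is exactly the standard argument: unfold the pushforward, collapse the double sum using disjointness of fibers, apply the Galois adjunction to rewrite the index set, and then invoke the monoidal M\"obius inverse hypothesis at $\radj{f}(q)$. The paper does not include its own proof of this theorem (it cites the companion paper \cite{gpd-multi}), but the identical computation appears in the proof of \cref{lem: mobeq respects pushforward}, so your approach matches what the authors use.
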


\begin{example}[Monoidal RGCT]
    Let $P = \{p_1 < p_2 <p_3\}$, $Q = \{ q_1 <q_2\}$ and $\ladj{f} : P \leftrightarrows Q : \radj{f}$ be as in~\cref{ex:gal conn demo} (which are illustrated in~\cref{fig:RGCTdemo}). Let $m$ be the function defined by
    \begin{align*}
        m : P &\to \gr(\R^3) \\
            p_1 &\mapsto \spn \{e_1 \} \\
            p_2 &\mapsto \spn \{e_1, e_2 \} \\
            p_3 &\mapsto \spn \{e_1, e_2, e_3 \}.
    \end{align*}
    Observe that the function defined by 
    \begin{align*}
        m' : P &\to \gr(\R^3) \\
                                    p_1 &\mapsto \spn \{e_1 \} \\
                                     p_2 &\mapsto \spn \{ e_2 \} \\
                                     p_3 &\mapsto \spn \{e_3 \}.
    \end{align*}
    is a monoidal M\"obius inverse of $m$, i.e. $m' \in \mmi{P}{m}$. Also, the function defined by
    \begin{align*}
        n : Q &\to \gr(\R^3) \\
                                    q_1 &\mapsto \spn \{e_1 \} \\
                                     q_2 &\mapsto \spn \{ e_2+e_1, e_3+e_1 \}
    \end{align*}
    is a monoidal M\"obius inverse of $(\radj{f})^\sharp m$, i.e., $n\in \mmi{Q}{(\radj{f})^\sharp m}$. Observe that $(\ladj{f})_\sharp m' $ and $ n$ do not coincide as functions because
    \[
    (\ladj{f})_\sharp (\radj{f})^\sharp m (q_2) = \spn \{ e_2, e_3 \} \neq \spn \{ e_2+e_1, e_3+e_1 \} = n (q_2).
    \]
    Nevertheless, it holds that 
    \[
    (\ladj{f})_\sharp m \in \mmi{Q}{(\radj{f})^\sharp m},
    \]
    as stated in~\cref{thm: monoidal rgct}.
\end{example}

\section{~\texorpdfstring{$\times$}{}-Linear Orthogonal Inversion on ~\texorpdfstring{$\Seg(\lp)$}{}}\label{sec: orthogonal inversion}

In this section, we introduce the notion of \emph{$\times$-Linear Orthogonal Inversion}, a notion analogous to classical M\"obius inversion on the poset of segments of a finite linear poset (with the product order). Let $\lp$ be a finite linear poset, $\overline{\lp}^\times = (\Seg(\lp), \prodord)$ be the poset of segments of $\lp$ with the product order, and $V$ be a finite-dimensional inner product space. The $\times$-Linear Orthogonal Inversion, denoted $\prhi$, takes an order-preserving function $\overline{\Ffunc} : \overline{\lp}^\times \to \gr(V)$, subject to an intersection property described in~\cref{def: int-monotone space function}, as input. It then produces an output function $\prhi \left(\overline{\Ffunc}\right) : \overline{\lp}^\times \to \gr(V)$ which satisfies a certain transversality condition specified in~\cref{defn: grassmannian persistence diagrams}.

Our main results in this section are the functoriality (\cref{prop: prhi is functor}) and stability (\cref{thm: general stability}) of $\times$-Linear Orthogonal Inversion. In~\cref{subsec: int-monotone and gpd categories}, we introduce the source and the target categories on which $\prhi$ operates, while in~\cref{subsec: times orthogonal inversion functor}, we delve into the construction of $\prhi$ and provide proofs of its functoriality and stability.

\begin{remark}
    The key distinction between the more general notion of Orthogonal Inversion, $\goi$, and the approach we take in this section with $\prhi$ lies in the structure of 1-parameter filtrations. In the setting of $1$-parameter filtrations, the birth-death spaces satisfy a specific intersection property, which we abstract and formalize in~\cref{def: int-monotone space function}. This intersection property, in turn, ensures a notion of transversality for $1$-parameter Grassmannian persistence diagrams. While a weaker form of transversality applies to the general Orthogonal Inversion for order-preserving functions from arbitrary posets to the Grassmannian of an inner product space (\cite[Proposition 3.9]{gpd-multi}), the transversality condition satisfied by 1-parameter Grassmannian persistence diagrams establishes a crucial connection to classical persistence diagrams. In fact, lower bounding the edit distance between 1-parameter Grassmannian persistence diagrams using the edit distance between classical persistence diagrams (\cref{thm: classical pd lower bound}) relies on this stronger transversality property enjoyed by 1-parameter Grassmannian persistence diagrams.
\end{remark}

\subsection{Source and Target Categories}\label{subsec: int-monotone and gpd categories}

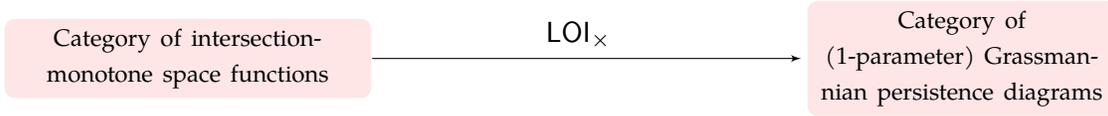
\begin{figure}[ht]

    \centering
\tikzstyle{l} = [draw, -latex',thick]
\begin{tikzpicture}[auto,
 block/.style ={rectangle, thick, fill=red!10, text width=10.0em,align=center, rounded corners},
 longblock/.style ={rectangle, thick, fill=red!10, text width=12.0em,align=center, rounded corners},
 boundblock/.style ={rectangle, thick, fill=red!10, text width=5.5em,align=center, rounded corners},
 longboundblock/.style ={rectangle, thick, fill=red!10, text width=7em,align=center, rounded corners},
 line/.style ={draw, -latex', shorten >=2pt},
  ]
    \node [longblock] (inputofmob)  {\footnotesize Category of intersection-monotone  space functions };
    \node [block, right =15em of inputofmob] (outputofmob) {\footnotesize Category of \\ ($1$-parameter) Grassmannian persistence diagrams};
    \path [line, black] (inputofmob) -- node [text width=3em, above] {{$\prhi$} }(outputofmob); 
\end{tikzpicture}
\caption{Source and target categories of $\times$-Linear Orthogonal Inversion.}
\label{fig: times-orthogonal-pipeline}
\end{figure}

In this section, we introduce two categories: the category of intersection-monotone space functions, and the category of $1$-parameter Grassmannian persistence diagrams over a fixed finite-dimensional inner product space $V$. These categories, as depicted in~\cref{fig: times-orthogonal-pipeline}, will serve as the source and the target of a functor, namely $\prhi$, that we will construct in~\cref{subsec: times orthogonal inversion functor}.

\begin{definition}[Intersection-monotone space functions]\label{def: int-monotone space function}
    Let $(\lp,d_\lp)$ be a metric poset where $\lp$ is a finite linear poset.  Write $\lp = \{ \ell_1 < \cdots < \ell_n \}$.  A function $\overline{\Ffunc} : \overline{\lp}^\times \to \gr(V)$ is called an~\emph{intersection-monotone space function} if 
    \begin{enumerate}
        \item $\overline{\Ffunc}$ is order preserving. That is, for every $I \prodord J \in \overline{\lp}^\times$, it holds that $\overline{\Ffunc}(I) \subseteq \overline{\Ffunc}(J)$ \label{cond: order-pres}
        \item For all $1 \leq i < j \leq n$, it holds that
    \[
    \overline{\Ffunc} ((\ell_{i+1}, \ell_j)) \cap \overline{\Ffunc} ((\ell_i, \ell_{j+1})) = \overline{\Ffunc} ((\ell_{i}, \ell_{j})) 
    \] \label{cond: intersecion}
    \end{enumerate}
    where $(\ell_i, \ell_{n+1}) := (\ell_i, \infty)$ by convention.
    
\end{definition}

\begin{figure}[th]
    \centering
    \includegraphics[scale=20]{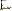}
    \caption{Two ``natural and independent'' directions in the poset of segments are depicted.}
    \label{fig:directonsInt}
\end{figure}

    The conditions 1 and 2 in~\cref{def: int-monotone space function} represent properties of birth-death spaces of a $1$-parameter filtration. We abstract and formalize these concepts to define intersection-monotone space functions. Therefore, intersection-monotone space functions can be seen as a generalization of birth-death spaces of a $1$-parameter filtration.

\begin{remark}[Note about the metric on the poset]
    For the several results that follow, the metric $d_\lp$ on the poset $\lp$ is irrelevant. We only require a metric structure on $\lp$ to ensure that the relevant functions associated with these statements fall within  appropriate categories.  We eventually exploit the metric structure in proving the stability result; see~\cref{thm: general stability}.
\end{remark}

\begin{remark}
    The intersection condition in~\cref{def: int-monotone space function} can be interpreted as follows. In the poset of segments $\overline{\lp}^\times$, there are two ``natural and independent'' directions towards which the order increases, up and right. Namely, for a segment $(\ell_i, \ell_j) \in \overline{\lp}^\times$, the segment $(\ell_i, \ell_{j+1})$ is one unit above $(\ell_i, \ell_j)$. Similarly, the segment $(\ell_{i+1}, \ell_{j})$ is one unit to the right of $(\ell_i, \ell_j)$, see~\cref{fig:directonsInt}. When $\overline{\Ffunc} : \overline{\lp}^\times \to \gr(V)$ is an order-preserving map, it already holds that 
    \[
    \overline{\Ffunc} ((\ell_{i+1}, \ell_j)) \cap \overline{\Ffunc} ((\ell_i, \ell_{j+1})) \supseteq \overline{\Ffunc} ((\ell_{i}, \ell_{j})) 
    \]
    as $\overline{\Ffunc} ((\ell_{i+1}, \ell_j)) \supseteq \overline{\Ffunc} ((\ell_{i}, \ell_{j}))$ and $\overline{\Ffunc} ((\ell_{i}, \ell_{j+1})) \supseteq \overline{\Ffunc} ((\ell_{i}, \ell_{j}))$. Then, the condition
    \[
    \overline{\Ffunc} ((\ell_{i+1}, \ell_j)) \cap \overline{\Ffunc} ((\ell_i, \ell_{j+1})) = \overline{\Ffunc} ((\ell_{i}, \ell_{j})) 
    \]
    can be interpreted as expressing the property that the two enlargements of $\overline{\Ffunc} ((\ell_{i}, \ell_{j}))$ in two independent directions, up and right, are also independent.
\end{remark}

\begin{definition}[Intersection-monotone space preserving morphism]
    An~\emph{intersection-monotone space preserving morphism} from an intersection-monotone space function $\overline{\Ffunc} : \overline{\lp_1}^\times \to \gr(V)$ to another intersection-monotone space function $\overline{\Gfunc} : \overline{\lp_2}^\times \to \gr(V)$ is any Galois connection $f = (\ladj{f}, \radj{f})$, $\ladj{f} : \lp_1 \leftrightarrows \lp_2 : \radj{f}$ such that
    \[
    \overline{\Ffunc} \circ \overline{\radj{f}} = \overline{\Gfunc},
    \]
    where $\overline{\radj{f}} : \overline{\lp_2}^\times \to \overline{\lp_1}^\times$ is the order-preserving map on the poset of segments induced by $\radj{f}$. 
\end{definition}

\begin{notation}[$\inn (V)$]
    We denote by $\inn (V)$ the category where
    \begin{itemize}
        \item Objects are intersection-monotone space functions,
        \item Morphisms are intersection-monotone space preserving morphisms. 
    \end{itemize} 
\end{notation}
\nomenclature[27]{$\inn(\cdot)$}{Category of intersection-monotone space functions}

\begin{definition}[Cost of a morphism in $\inn (V)$]
    The \emph{cost} of a morphism $f = (\ladj{f},\radj{f})$ in $\inn(V)$, denoted $\cost_{\inn(V)} (f)$, is defined to be $\cost_{\inn(V)} (f) := \dis(\ladj{f})$, the distortion of the left adjoint $\ladj{f}$. 
\end{definition}

\begin{remark}
    In an analogous situation, in~\cite{edit}, the authors define the cost of a morphism $f = (\ladj{f},\radj{f})$ via the distortion of $\overline{\ladj{f}}$ and repeatedly rely on the equality $\dis(\ladj{f}) = \dis(\overline{\ladj{f}})$ (see~\cite[{Proposition 3.4}]{edit}). In this paper, however, we consistently define the cost of a morphism directly through the distortion of $\ladj{f}$, thereby eliminating the need for repeated references to~\cite[{Proposition 3.4}]{edit}.
\end{remark}

\begin{figure}[t]
    \centering
    \includegraphics[scale=12]{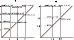}
    \caption{Two intersection-monotone space functions $\overline{\Ffunc}$ and $\overline{\Gfunc}$ are shown. The domain of $\overline{\Ffunc}$ is the poset of segments of $\{ 1 < 2 < 3 \} $, and the domain of $\overline{\Gfunc}$ is the poset of segments of $\{ 1.5 < 2.5 \}$. The values of $\overline{\Ffunc}$ and $\overline{\Gfunc}$ are displayed on top of the corresponding points on the diagrams.}
    \label{fig:monotone space functions}
\end{figure}

\begin{example}[Intersection-monotone space functions]\label{ex: monotone space functions and morphism}
    Let $V = \R^4$ with the standard inner product and let $\{e_i\}_{i=1}^4$ denote the canonical basis elements for $\R^4$ and let $\lp_1 = \{1<2<3 \}$ and $\lp_2 = \{1.5<2.5 \}$ both with the restriction of the Euclidean distance on the real line. In~\cref{fig:monotone space functions}, we illustrate two intersection-monotone space functions $\overline{F}$ and $\overline{G}$. The diagram on the left of~\cref{fig:monotone space functions} is a visualization of an intersection-monotone space function $\overline{\Ffunc} : \overline{\lp_1}^\times \to \gr(\R^4)$. The subspace at each point on the diagram represents the value of $\overline{\Ffunc}$ on the corresponding segment. The diagram on the right of~\cref{fig:monotone space functions} is another intersection-monotone space function $\overline{\Gfunc} : \overline{\lp_2}^\times \to \gr(\R^4)$. Both $\overline{\Ffunc}$ and $\overline{\Gfunc}$ satisfy the intersection condition of~\cref{def: int-monotone space function}. Let 
    \begin{align*}
        \ladj{f} : \lp_1 &\to \lp_2 & \radj{f}: \lp_2 &\to \lp_1 \\
                   1 &\mapsto 1.5 & 1.5 &\mapsto 1\\
                   2 &\mapsto 2.5 & 2.5 &\mapsto 3\\
                   3 &\mapsto 2.5
    \end{align*}    
    Then, $f:= (\ladj{f}, \radj{f})$ is Galois connection that determines an intersection-monotone space preserving morphism from $\overline{\Ffunc}$ to $\overline{\Gfunc}$ as it holds that $\overline{\Ffunc} \circ \overline{\radj{f}} = \overline{\Gfunc}$. And, we have that $\cost_{\inn(V)} (f) = \dis(\ladj{f}) = 1$.
\end{example}

Below, we introduce the category of \emph{$1$-parameter Grassmannian persistence diagrams}. This category will be the target category of the functor that we will define in~\cref{subsec: times orthogonal inversion functor}. We first introduce the notion of \emph{transversity} --- a notion that will be utilized to define $1$-parameter Grassmannian persistence diagrams.

\begin{definition}[Transversity]\label{defn: transversity}
    Let $V$ be any finite-dimensional vector space.
    Two families $\{ W_i\}_{i=1}^m$ and $\{ U_j \}_{j=1}^m$ of subspaces of $V$ are said to be \emph{transversal to each other} (or that $\{ W_i\}_{i=1}^n$ is transversal to $\{ U_j \}_{j=1}^m$, and vice versa) if 
    \[
    \dim \left ( \sum_{i=1}^n W_i + \sum_{j=1}^m U_j \right ) = \sum_{i=1}^n \dim (W_i) + \sum_{j=1}^m \dim (U_j).
    \]
    A family $\{W_i \}_{i=1}^n$ of subspaces of $V$ is called a~\emph{transverse family} if it is transversal to $\{ \{0 \} \}$, where $\{ 0 \} \subseteq V$ is the zero subspace of $V$.

\end{definition}

\begin{remark}
    Note that, a family $\{ W_i\}_{i=1}^n$ is transverse if and only if
    \[
    \dim \left ( \sum_{i=1}^n W_i \right ) = \sum_{i=1}^n \dim (W_i).
    \]
    Hence, one can see that~\cref{defn: transversity} generalizes the notion of transversity described in~\cite[Definition 2.1]{gpd-multi}. Note also that if two families $\{ W_i\}_{i=1}^n$ and $\{ U_j \}_{j=1}^m$ are transversal to each other, then the families $\{ W_i\}_{i=1}^n$ and $\{ U_j \}_{j=1}^m$ are transverse families.
\end{remark}

\begin{definition}[$1$-Parameter Grassmannian persistence diagram]\label{defn: grassmannian persistence diagrams}
    Let $(\lp,d_\lp)$ be a metric poset where $\lp$ is any finite linear poset. A function $\Mfunc : \Seg(\lp) \to \gr(V)$ is called a~\emph{$1$-parameter Grassmannian persistence diagram} whenever $\{ \Mfunc (I) \}_{I\in \Seg(\lp)}$ is a transverse family.
\end{definition}

In this paper, we will use the expression ``Grassmannian persistence diagrams'' to refer to $1$-parameter Grassmannian persistence diagrams as defined above in~\cref{defn: grassmannian persistence diagrams}. However, in~\cite{gpd-multi} the name ``Grassmannian persistence diagram'' will encompass a broader meaning as a notion of persistence diagrams for filtrations over arbitrary finite posets.

\begin{remark}[Caveats]
    Note that
    \begin{itemize}
        \item[(1)] The metric on the poset $\lp$ in the definition of ($1$-parameter) Grassmannian persistence diagrams will be utilized in order to assign a cost to a morphism between Grassmannian persistence diagrams. We define a morphism between Grassmannian persistence diagrams in~\cref{defn: trans preserving morphism}, and we define the cost of a morphism in~\cref{defn: cost of a trans preserving morphism}.
        \item[(2)] According to \Cref{defn: grassmannian persistence diagrams}, a Grassmannian persistence diagram is a function from $\Seg(\lp)$, the \emph{set} of segments of $\lp$, to $\gr(V)$. Even though a partial order on $\Seg(\lp)$ does not appear in the definition, we will make a slight abuse of notation and ocassionally write $\overline{\lp}^\times$ or $\overline{\lp}^\supseteq$ for the domain of Grassmannian persistence diagrams to indicate that the relevant Grassmannian persistence diagram is obtained from an invariant that is compatible with the specific partial order on $\Seg(\lp)$. For example, in~\cref{subsec: times orthogonal inversion functor}, we will assign a Grassmannian persistence diagram to every object in $\inn(V)$. As the objects in $\inn(V)$ are functions that are monotone with respect to the product order on $\Seg (\lp)$, we will write $\overline{\lp}^\times$ for the domain of Grassmannian persistence diagrams in~\cref{subsec: times orthogonal inversion functor}. Similarly, in~\cref{subsec: orthogonal inversion of Laplacian kernels}, we will use $\overline{\lp}^\times$ and $\overline{\lp}^\supseteq$ for the domain of Grassmannian persistence diagrams that are obtained from birth-death spaces and persistent Laplacians respectively. 
    \end{itemize}    
\end{remark}

\begin{example}[Trivial Grassmanian persistence diagrams]
    Let $\lp = \{ \ell_1 < \cdots < \ell_n \}$ be a linear poset and consider any classical persistence diagram, i.e., any function $m: \Seg (\lp) \to \N$. We will construct a ``trivial'' Grassmannian persistence diagram that extends $m$. Since $\Seg(\lp)$ is a finite set, we can enumerate it, say $\Seg(\lp) = \{ I_1, \ldots, I_k \}$. Let
    \[
    N := \sum_{i=1}^k m(I_i),
    \]
    and let $V = \R^N$ with the standard inner product. Let $\{ e_i \}_{i=1}^N$ denote the canonical basis elements for $\R^N$. We now construct a ``trivial'' Grassmannian persistence diagram $\Mfunc : \Seg(\lp) \to \gr(\R^N)$ as follows:
    \begin{equation*}
        \Mfunc(I_1) := 
            \begin{cases}
            \{ 0 \} & \text{if } m(I_1) =0 \\
            \spn \left\{ e_1,\ldots,e_{m(I_1)} \right\} & \text{if } m(I_1) > 0,
            \end{cases}
    \end{equation*}
    and, for $i > 1$, let $n_i := \sum_{j=1}^i m(I_j)$, and define
    \begin{equation*}
        \Mfunc(I_i) := 
            \begin{cases}
            \{ 0 \} & \text{if } m(I_i) =0 \\
            \spn \left\{ e_{n_{i-1}+1},e_{n_{i-1}+2},\ldots,e_{n_{i}} \right\} & \text{if } m(I_i) > 0.
            \end{cases}
    \end{equation*}
    Then, we have that $\Mfunc : \Seg(\lp) \to \gr(\R^N)$ is a Grassmannian persistence diagram with 
    \[
    \dim (\Mfunc (I_i)) = m(I_i)
    \]
    for all $i=1,\ldots,k$.
\end{example}

\begin{definition}[Transversity-preserving morphism]\label{defn: trans preserving morphism}
    A~\emph{transversity-preserving morphism} from a Grassmannian persistence diagram $\Mfunc : \Seg(\lp_1) \to \gr(V)$ to another Grassmannian persistence diagram $\Nfunc : \Seg(\lp_2) \to \gr(V)$ is a pair $(f, \zeta_{\lp_2})$ where $f = (\ladj{f}, \radj{f})$ is a Galois connection $\ladj{f} : \lp_1 \leftrightarrows \lp_2 : \radj{f}$ and $\zeta_{\lp_2} : \Seg(\lp_2) \to \gr(V)$ is a function supported on $\diag (\lp_2)$ such that
    \begin{enumerate}
        \item $ \{ \zeta_{\lp_2} (J) \}_{J\in \Seg(\lp_2)}$ is transversal to $\{ \Nfunc(J) \}_{J \in \Seg(\lp_2)} $,
        \item $\left(\overline{\ladj{f}}\right)_\sharp \Mfunc \mobeq (\Nfunc + \zeta_{\lp_2})$,
    \end{enumerate}
    where $\overline{\ladj{f}} : \Seg(\lp_1) \to \Seg(\lp_2)$ is the order-preserving map on the poset of segments (with the product order) induced by $\ladj{f}$ and $(\Nfunc + \zeta_{\lp_2}) (J) := \Nfunc(J) + \zeta_{\lp_2}(J)$.

\end{definition}

\begin{figure}[t]
    \centering
    \includegraphics[scale=12]{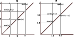}
    \caption{Two Grassmannian persistence diagrams $\Mfunc$ and $\Nfunc$ are shown. The values of $\Mfunc$ and $\Nfunc$ are displayed on top of the corresponding points on the diagrams.}
    \label{fig:transverse space functions}
\end{figure}

\begin{example}[Grassmannian persistence diagram]\label{ex: transverse space functions and  morphism}
    Let $V = \R^4$, $\lp_1 =\{ 1<2<3 \}$, $\lp_2 = \{ 1.5<2.5\}$, $f = (\ladj{f}, \radj{f})$ be as in~\cref{ex: monotone space functions and morphism}. In~\cref{fig:transverse space functions}, we illustrate two Grassmannian persistence diagrams $\Mfunc$ and $\Nfunc$. The diagram on the left of~\cref{fig:transverse space functions} is a visualization of the Grassmannian persistence diagram $\Mfunc :\Seg( \{1<2<3\}) \to \gr(\R^4)$. The subspace on each point on the diagram represents the value of $\Mfunc$ on the corresponding segment. The diagram on the right of~\cref{fig:transverse space functions} is another Grassmannian persistence diagram $\Nfunc : \Seg ( \{ 1.5<2.5 \} ) \to \gr(\R^4)$. Both $\{ \Mfunc(I) \}_{I \in \Seg (\lp_1)}$ and $\{ \Nfunc(J) \}_{J\in \Seg (\lp_2)}$ are transverse families. In this example, it holds that 
    \[
    (\overline{\ladj{f}})_\sharp \Mfunc \mobeq \Nfunc.
    \]
    Observe that $(\overline{\ladj{f}})_\sharp \Mfunc$ and $\Nfunc$ do not agree as functions because $(\overline{\ladj{f}})_\sharp \Mfunc ((2.5, 2.5)) = \spn\{ e_2\}$ whereas $ \Nfunc((2.5, 2.5)) = \spn\{ e_2 - e_3\}$; see~\cref{fig:push m vs n} for an illustration. However, these functions are M\"obius equivalent. Indeed, let $\zeta_{\lp_2} : \Seg(\lp_2) \to \gr(\R^4)$ be the zero map. That is, $\zeta_{\lp_2} (J) = \{ 0 \} \subseteq \R^4$ for all $J \in \Seg(\lp_2)$. Then, we can see that $(f, \zeta_{\lp_2})$ is a transversity-preserving morphism from $\Mfunc$ to $\Nfunc$.
\end{example}

\begin{figure}[t]
    \centering
    \includegraphics[scale=12]{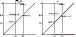}
    \caption{Two functions $\left(\overline{\ladj{f}}\right)_\sharp \Mfunc$ and $\Nfunc$ with domain $\Seg (\{ 1.5 < 2.5 \})$ and codomain $\gr(\R^4)$ are illustrated. These functions are not equal but they are M\"obius equivalent.}
    \label{fig:push m vs n}
\end{figure}

The following proposition shows that the class of Grassmannian persistence diagrams forms a category where the morphisms are given by transversity-preserving morphisms.

\begin{proposition}\label{prop: comp of transversity-pres.}
    The composition of transversity-preserving morphisms is a transversity-preserving morphism.
\end{proposition}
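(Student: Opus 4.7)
The plan is to define the composition of $(f, \zeta_{\lp_2}) : \Mfunc \to \Nfunc$ and $(g, \zeta_{\lp_3}) : \Nfunc \to \mathsf{P}$ as the pair $(g \circ f,\, \xi)$, where $g\circ f$ is the composite Galois connection (itself a Galois connection, by the remark in the preliminaries) and
$$
\xi \;:=\; \zeta_{\lp_3} \;+\; \bigl(\overline{\ladj{g}}\bigr)_\sharp\zeta_{\lp_2}.
$$
That $\xi$ is supported on $\diag(\lp_3)$ is immediate, since $\overline{\ladj{g}}$ carries a diagonal segment $(b,b)$ to the diagonal segment $(\ladj{g}(b),\ladj{g}(b))$, so the pushforward of a diagonally-supported function remains diagonally supported, while $\zeta_{\lp_3}$ is diagonally supported by hypothesis.

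For the M\"obius equivalence $\bigl(\overline{\ladj{g}\circ\ladj{f}}\bigr)_\sharp\Mfunc \mobeq \mathsf{P} + \xi$, I would rely on two facts. First, pushforward is functorial and distributes over the monoid operation of $\gr(V)$; in particular $\bigl(\overline{\ladj{g}\circ \ladj{f}}\bigr)_\sharp = \bigl(\overline{\ladj{g}}\bigr)_\sharp\circ\bigl(\overline{\ladj{f}}\bigr)_\sharp$. Second, pushforward along the left adjoint of a Galois connection preserves M\"obius equivalence, which is a direct consequence of the Monoidal RGCT (\cref{thm: monoidal rgct}): if $h_1\mobeq h_2$ then both are monoidal M\"obius inverses of the same function, hence so are $\bigl(\overline{\ladj{h}}\bigr)_\sharp h_1$ and $\bigl(\overline{\ladj{h}}\bigr)_\sharp h_2$. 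Chaining the hypotheses with these facts then gives
$$
\bigl(\overline{\ladj{g}\circ\ladj{f}}\bigr)_\sharp\Mfunc \;\mobeq\; \bigl(\overline{\ladj{g}}\bigr)_\sharp(\Nfunc + \zeta_{\lp_2}) \;\mobeq\; \mathsf{P} + \zeta_{\lp_3} + \bigl(\overline{\ladj{g}}\bigr)_\sharp\zeta_{\lp_2} \;=\; \mathsf{P} + \xi.
$$

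The hard part will be the transversity condition: showing $\{\xi(K)\}_K$ is transversal to $\{\mathsf{P}(K)\}_K$. The plan is to exploit that each $\overline{\lp_i}^\times$ has a unique maximum $(\ell_{n_i},\infty)$, so evaluating the two given M\"obius equivalences at the top collapses them into genuine equalities of subspaces, giving
$$
\sum_I\Mfunc(I) \;=\; \sum_J\Nfunc(J) + \sum_J\zeta_{\lp_2}(J) \;=\; \sum_K\mathsf{P}(K) + \sum_K\zeta_{\lp_3}(K) + \sum_J\zeta_{\lp_2}(J) \;=\; \sum_K\xi(K) + \sum_K\mathsf{P}(K).
$$
Taking dimensions of the first equality and invoking transversity of $\{\Mfunc(I)\}_I$ together with the two given transversity hypotheses chained on the right yields
$$
\dim\Bigl(\textstyle\sum_K\xi(K)+\sum_K\mathsf{P}(K)\Bigr) \;=\; \sum_I\dim\Mfunc(I) \;=\; \sum_K\dim\mathsf{P}(K) + \sum_K\dim\zeta_{\lp_3}(K) + \sum_J\dim\zeta_{\lp_2}(J).
$$
On the other hand, two applications of subadditivity $\dim(W_1+W_2)\leq \dim W_1 + \dim W_2$ (once at the top level and once inside each $\xi(K)$ via $\xi(K)=\zeta_{\lp_3}(K)+\sum_{\overline{\ladj{g}}(J)=K}\zeta_{\lp_2}(J)$) upper-bound the same quantity by the same expression. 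The chain must therefore collapse to equalities, and the top-level equality is precisely the required transversity. This squeezing via dimension counting is the conceptual core of the argument.
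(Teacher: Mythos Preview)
Your proposal is correct and takes essentially the same approach as the paper's proof: both define $\xi = \zeta_{\lp_3} + (\overline{\ladj{g}})_\sharp\zeta_{\lp_2}$, use that pushforward along a left adjoint preserves M\"obius equivalence (the paper's \cref{lem: mobeq respects pushforward}), and then establish transversality by a dimension count that exploits the equality of total sums coming from evaluating the M\"obius equivalences at the maximum element of $\overline{\lp}^\times$. The paper spreads this count across a few auxiliary lemmas (\cref{lem: subfamilies transversal}, \cref{lem: sum dim push}) and a longer chain of equalities, whereas your squeeze packaging is a bit more economical, but the substance is identical.
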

Proof of~\cref{prop: comp of transversity-pres.} can be found in~\cref{appendix: details for orthogonal inversion}.

\begin{notation}
    We denote by $\inndgm (V)$ the category where 
    \begin{itemize}
        \item Objects are Grassmannian persistence diagrams,
        \item Morphisms are transversity-preserving morphisms.
    \end{itemize}
\end{notation}
\nomenclature[28]{$\inndgm(\cdot)$}{Category of (1-parameter) Grassmannian persistence diagrams}

\begin{definition}[Cost of a morphism in $\inndgm(V)$]\label{defn: cost of a trans preserving morphism}
    The \emph{cost} of a morphism $(f, \zeta_{\lp})$ in $\inndgm(V)$, where $f = (\ladj{f},\radj{f})$, is defined to be $\cost_{\inndgm(V)} (f) := \dis(\ladj{f})$, the distortion of the left adjoint $\ladj{f}$. 
\end{definition}

\begin{remark}
    In both categories $\inn(V)$ and $\inndgm(V)$, morphisms are assigned a non-negative cost. Thus, each of these categories is endowed with an edit distance, $d_{\inn(V)}^E$ and $d_{\inndgm(V)}^E$, between their objects, see~\cref{sec: prelim}.
    Note that both in $\inn(V)$ and in $\inndgm(V)$, we have defined the cost of a morphism as the distortion of the left adjoint of the Galois connection that determines the morphism. Although employing right adjoints to define the cost of morphisms in both categories would not affect our stability result (\cref{thm: general stability}), we choose to use the left adjoints to be consistent with~\cite{edit}, where the edit distance between persistence diagrams is introduced. This way, we are able to compare our stability result with the stability result in~\cite[Theorem 8.4]{edit}; see~\cref{thm: classical pd lower bound} and~\cref{appendix: edit}.
\end{remark}

\begin{remark}\label{rem: diagonal blind edit}
    The edit distance in $\inndgm(V)$ is insensitive to the points on the diagonal. That is, if $\Mfunc_1 : \Seg({\lp}) \to \gr (V)$ and $\Mfunc_2 : \Seg({\lp}) \to \gr(V)$ are two Grassmannian persistence diagrams that are the same on $\Seg({\lp}) \setminus \diag({\lp})$, then $d_{\inndgm(V)}^E (\Mfunc_1, \Mfunc_2) = 0$. To see this, let $\Mfunc : \Seg({\lp}) \to \gr(V)$ be defined by 
    \begin{equation*}
        \Mfunc (I) =
        \begin{cases}
            \Mfunc_1 (I) := \Mfunc_2 (I) & \text{if } I\in \Seg({\lp}) \setminus \diag({\lp}) \\
            0            & \text{if } I \in \diag({\lp})
        \end{cases}
    \end{equation*}
    Also, let $\zeta_{\lp}^i : \Seg({\lp}) \to \gr(V)$ be defined by
    \begin{equation*}
        \zeta_{\lp}^i (I) :=
        \begin{cases}
            0 & \text{if } I\in \Seg({\lp}) \setminus \diag({\lp}) \\
            \Mfunc_i (I)            & \text{if } I \in \diag({\lp})
        \end{cases}
    \end{equation*}   
    for $i=1,2$. Observe that the identity pair $\mathrm{id} := (\mathrm{id}_{\lp}, \mathrm{id}_{\lp})$ is a Galois connection from ${\lp}$ to ${\lp}$. Therefore, the pair $(\mathrm{id}, \zeta_{\lp}^i)$ is a transversity-preserving morphism from $\Mfunc_i$ to $\Mfunc$ for $i=1,2$, with cost $\dis(\mathrm{id}_{\lp}) = 0$. Therefore, $d_{\inndgm(V)}^E (\Mfunc_i, \Mfunc) = 0$. Hence, by triangle inequality, $d_{\inndgm(V)}^E (\Mfunc_1, \Mfunc_2) = 0$.
\end{remark}

Note that a Galois connection $(\ladj{f}, \radj{f})$ is a part of the definition of a morphism in both $\inn(V)$ and $\inndgm(V)$. Moreover, the cost of a morphism, in each category, is determined by the distortion of the left adjoint, $\dis(\ladj{f})$. Recall from~\cref{sec: prelim} that the edit distance between two objects is obtained by infimizing the cost of paths between these two objects. Note that under a functor $\mathcal{O}: \inn(V) \to \inndgm(V)$, a path between two objects $\overline{F}$ and $\overline{G}$ in $\inn(V)$ determines a path between $\mathcal{O}(\overline{F})$ and $\mathcal{O}(\overline{\Gfunc})$ in $\inndgm(V)$. If the functor $\mathcal{O}$ maps a morphism in $\inn(V)$ determined by a Galois connection $(\ladj{f}, \radj{f})$ to a morphism in $\inndgm(V)$ determined by the same Galois connection, then, we would obtain stability. Namely, 
\[
d_{\inndgm(V)}^E \left(\mathcal{O}\left(\overline{\Ffunc}\right), \mathcal{O}\left(\overline{\Gfunc}\right)\right) \leq d_{\inn(V)}^E \left(\overline{\Ffunc}, \overline{\Gfunc}\right).
\]

This is because every path between $\overline{F}$ and $\overline{G}$ in $\inn(V)$ induces a path between $\mathcal{O}(\overline{\Ffunc})$ and $\mathcal{O}(\overline{\Gfunc})$ in $\inndgm(V)$ with the same cost. In the next section, we construct a functor, $\times$-Linear Orthogonal Inversion, from $\inn(V)$ to $\inndgm(V)$ that maps a morphism in $\inn(V)$ determined by a Galois connection $(\ladj{f}, \radj{f})$ to a morphism in $\inndgm(V)$ determined by the same Galois connection.

\subsection{The~\texorpdfstring{$\times$}{}-Linear Orthogonal Inversion Functor}\label{subsec: times orthogonal inversion functor}

In this section, we construct a functor that we call the \emph{$\times$-Linear Orthogonal Inversion}
\[
\prhi : \inn(V) \to \inndgm(V),
\]
see \cref{defn: x-harmonic inversion}. This functor, as the most fundamental construction in this paper, imitates the (algebraic) M\"obius inversion and outputs a monoidal M\"obius inverse for every object $\overline{\Ffunc} : \overline{\lp}^\times \to \gr(V)$ in $\inn(V)$ as shown in~\cref{thm: prhi is monoidal inversion}. Our main results in this section are
\begin{itemize}
    \item Functoriality (\cref{prop: prhi is functor}): $\times$-Linear Orthogonal Inversion 
    \[ \prhi : \inn(V) \to \inndgm(V)\] is a functor.
    \item Stability (\cref{thm: general stability}): For any two intersection-monotone space functions $\overline{\Ffunc}$ and $\overline{\Gfunc}$ in $\inn(V)$, we have
    \[
d_{\inndgm(V)}^E \left(\prhi\left(\overline{\Ffunc}\right), \prhi\left(\overline{\Gfunc}\right)\right) \leq d_{\inn(V)}^E \left(\overline{\Ffunc}, \overline{\Gfunc}\right).
\]

\end{itemize}

In~\cref{subsubsec: construction of prhi}, we present the $\times$-Linear Orthogonal Inversion construction,~\cref{defn: x-harmonic inversion}. We then prove its functoriality and stability in~\cref{subsubsec: functor and stable}.

\subsubsection{Construction of~\texorpdfstring{$\prhi$}{}}\label{subsubsec: construction of prhi}
Let $\mathcal{M}$ be a commutative monoid and let $\varphi_\mathcal{M} : \mathcal{M} \to \kappa(\mathcal{M})$ be the canonical map where $\kappa(\mathcal{M})$ is the group completion of $\mathcal{M}$. Let $\lp = \{ \ell_1 <\cdots<\ell_n\}$ and let $m : \overline{\lp}^\times \to \mathcal{M}$ be a function. The algebraic M\"obius inverse of $m$, $\partial_{\overline{\lp}^\times} (m) : \overline{\lp}^\times \to \kappa(\mathcal{M})$, is given by
\begin{equation}\label{eq: rearranged mobius inversion formula}
\footnotesize
    \partial_{\overline{\lp}^\times} (m) ((\ell_i, \ell_j)) = \Big ( \varphi_\mathcal{M} \big ( m ((\ell_i, \ell_j)) \big ) - \varphi_\mathcal{M} \big (m ((\ell_i, \ell_{j-1})) \big ) \Big ) - \Big ( \varphi_\mathcal{M} \big( m ((\ell_{i-1}, \ell_{j})) \big) - \varphi_\mathcal{M} \big ( m ((\ell_{i-1}, \ell_{j-1})) \big) \Big ),
\end{equation}
after rearranging terms appearing in~\cref{eqn: times algebraic mob inv1} in~\cref{prop: algebraic mobius inversion formulas}.
However, as the Grothendieck group completion of $\gr(V)$ is trivial, see~\cref{appendix:details}, the algebraic M\"obius inverse of an object $\overline{\Ffunc} : \overline{\lp}^\times \to \gr(V)$ in $\inn(V)$ is also trivial. This is exactly the motivation for considering the notion of monoidal M\"obius inversion. In order to construct this notion, we interpret the ``minus sign'' in~\cref{eq: rearranged mobius inversion formula} as the difference of subspaces which is described~\cref{defn: difference of spaces}.

\begin{definition}[$\times$-Linear Orthogonal Inversion]\label{defn: x-harmonic inversion}
    Let $\lp = \{ \ell_1<\cdots<\ell_n \}$ be a finite linearly ordered metric poset. For an object $\overline{\Ffunc} : \overline{\lp}^\times \to \gr(V)$ in $\inn(V)$, we define its \emph{$\times$-Linear Orthogonal Inverse}, denoted $\prhi \left(\overline{\Ffunc}\right)$, to be the function $\prhi \left(\overline{\Ffunc}\right) : \overline{\lp}^\times \to \gr(V)$ given by
    \begin{align*}
    \prhi \left(\overline{\Ffunc}\right) ((\ell_i, \ell_j)) &:= \big ( \overline{\Ffunc}((\ell_i, \ell_{j})) \ominus \overline{\Ffunc}((\ell_i, \ell_{j-1})) \big ) \ominus \big ( \overline{\Ffunc}((\ell_{i-1}, \ell_{j})) \ominus \overline{\Ffunc}((\ell_{i-1}, \ell_{j-1}))  \big ), \\
    \prhi \left(\overline{\Ffunc}\right) ((\ell_i, \infty)) &:= \big ( \overline{\Ffunc}((\ell_i, \infty)) \ominus \overline{\Ffunc}((\ell_i, \ell_{n})) \big ) \ominus \big ( \overline{\Ffunc}((\ell_{i-1}, \infty)) \ominus \overline{\Ffunc}((\ell_{i-1}, \ell_{n}))  \big ), \\
    \prhi \left(\overline{\Ffunc}\right) ((\ell_i, \ell_i)) &:= \overline{\Ffunc} ((\ell_i, \ell_i)) \ominus \overline{\Ffunc} ((\ell_{i-1}, \ell_{i})),
    \end{align*}
    for $1\leq i  < j \leq n$. 
\end{definition}
\nomenclature[29]{$\prhi$}{$\times$-Linear Orthogonal Inversion}

Notice that our $\times$-Linear Orthogonal Inversion definition is analogous to the algebraic M\"obius inversion formula in~\cref{prop: algebraic mobius inversion formulas} after rearranging terms appearing in~\cref{eqn: times algebraic mob inv1,eqn: times algebraic mob inv2,eqn: times algebraic mob inv3}. Also, we follow the same convention for the boundary cases as described in~\cref{remark: convention edge cases}. To be precise,
\begin{align*}
        \prhi \left(\overline{\Ffunc}\right)  ((\ell_1, \ell_1)) &:= \overline{\Ffunc}((\ell_1, \ell_{1})), \\
        \prhi \left(\overline{\Ffunc}\right)  ((\ell_1, \infty)) &:= \overline{\Ffunc}((\ell_1, \infty)) \ominus \overline{\Ffunc}((\ell_1 , \ell_n)), \\
        \prhi \left(\overline{\Ffunc}\right)  ((\ell_1, \ell_j)) &:= \overline{\Ffunc}((\ell_1, \ell_{j})) \ominus \overline{\Ffunc}((\ell_1, \ell_{j-1})) \text{ for } 1 < j \leq n.
\end{align*}

\begin{remark}
    Notice that since we define the $\times$-Linear Orthogonal Inversion analogously to \cref{eqn: times algebraic mob inv1,eqn: times algebraic mob inv2,eqn: times algebraic mob inv3}, we are indeed utilizing the product order on $\Seg(\lp)$. We will also introduce a variant of orthogonal inversion, $\supseteq$-Linear Orthogonal Inversion (\cref{defn: sup-harmonic inversion}), in which the reverse inclusion order on $\Seg(\lp) \setminus \diag(\lp)$ is utilized.
\end{remark}

\subsubsection{Functoriality and Stability of~\texorpdfstring{$\prhi$}{}}\label{subsubsec: functor and stable}

We will first show that $\prhi \left( \overline{\Ffunc}\right)$ is an object in $\inndgm(V)$,~\cref{prop: prhi in inndgm}. To do so, we will need the following facts described in~\cref{prop: linAlg Mobius}, \cref{cor: linAlg Mobius dim}, and~\cref{thm: prhi is monoidal inversion}. The proof of~\cref{prop: linAlg Mobius} is given in~\cref{appendix: details for orthogonal inversion}.

\begin{proposition}\label{prop: linAlg Mobius}
Let $A, B, C \subseteq V$ be subspaces of an inner product space $V$ such that $A \supseteq B,C$. Then, 
\[
((A\ominus B) \ominus (C \ominus (B\cap C) )) = A \ominus (B+C).
\]
\end{proposition}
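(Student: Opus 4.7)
My plan is to unpack both sides using the definition $W_1 \ominus W_2 = W_1 \cap W_2^\perp$ and then reduce the identity to a statement about orthogonal complements inside $C$. Write $D := C \ominus (B\cap C) = C \cap (B\cap C)^\perp$, so the left-hand side becomes $(A \cap B^\perp) \cap D^\perp$, while the right-hand side is $A \cap (B+C)^\perp = A \cap B^\perp \cap C^\perp$, using the standard identity $(B+C)^\perp = B^\perp \cap C^\perp$. Thus the whole statement boils down to the equality
\[
A \cap B^\perp \cap D^\perp \;=\; A \cap B^\perp \cap C^\perp.
\]

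The inclusion $\supseteq$ is immediate since $D \subseteq C$ implies $C^\perp \subseteq D^\perp$. For the inclusion $\subseteq$, I would take $x \in A \cap B^\perp \cap D^\perp$ and show $x \perp C$. The key tool is the orthogonal direct sum decomposition $C = (B\cap C) \oplus D$, which holds because $B \cap C$ is a subspace of the finite-dimensional space $C$ and $D$ is by definition its orthogonal complement inside $C$. For any $c \in C$, write $c = c_1 + c_2$ with $c_1 \in B \cap C \subseteq B$ and $c_2 \in D$. Then $\langle x, c \rangle = \langle x, c_1\rangle + \langle x, c_2\rangle = 0 + 0 = 0$, using $x \in B^\perp$ for the first term and $x \in D^\perp$ for the second.

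The main subtlety, and the only step requiring care, is ensuring that the orthogonal decomposition $C = (B \cap C) \oplus D$ is valid; this uses finite-dimensionality of $V$ (hence of $C$), which is in force throughout the paper. Everything else is routine rewriting. I expect the entire argument to fit in a few lines once these observations are recorded.
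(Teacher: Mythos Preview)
Your proof is correct and takes essentially the same approach as the paper: both reduce the claim to the identity $B^\perp \cap D^\perp = B^\perp \cap C^\perp$ (with $D = C \cap (B\cap C)^\perp$) and handle the nontrivial inclusion using the orthogonal splitting associated to $B\cap C \subseteq C$. The only cosmetic difference is that the paper decomposes $x$ via $D^\perp = C^\perp + (B\cap C)$ and shows the $(B\cap C)$-component vanishes, whereas you decompose $c \in C$ via $C = (B\cap C) \oplus D$; these are dual versions of the same finite-dimensional argument.
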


\begin{corollary}\label{cor: linAlg Mobius dim}
    Let $A, B, C \subseteq V$ be subspaces of an inner product space $V$ such that $A \supseteq B,C$. Then, 
    \[
    \dim ( (A\ominus B) \ominus (C \ominus (B\cap C) ) ) = (\dim A - \dim B) - (\dim C - \dim (B\cap C))
    \]
\end{corollary}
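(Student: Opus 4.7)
The plan is to derive the statement essentially as an immediate consequence of \cref{prop: linAlg Mobius}, combined with two elementary facts from linear algebra. First, I would apply \cref{prop: linAlg Mobius} directly to the expression $(A\ominus B) \ominus (C \ominus (B\cap C))$ to rewrite it as $A \ominus (B+C)$. This collapses the nested orthogonal differences into a single orthogonal complementation inside $A$.

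Next, since the hypothesis $A \supseteq B$ and $A \supseteq C$ implies $B+C \subseteq A$, I would invoke \cref{remark: dimension difference} (specifically the case where the subtracted subspace is contained in $A$) to conclude that
\[
\dim\left(A \ominus (B+C)\right) = \dim A - \dim(B+C).
\]
Finally, I would apply the standard inclusion-exclusion identity for dimensions of sums of subspaces, $\dim(B+C) = \dim B + \dim C - \dim(B \cap C)$, substitute it into the previous equality, and rearrange to obtain
\[
\dim A - \dim B - \dim C + \dim(B\cap C) = (\dim A - \dim B) - (\dim C - \dim(B\cap C)),
\]
which is exactly the claim.

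There is no real obstacle: the only nontrivial content has already been established in \cref{prop: linAlg Mobius}. The corollary is simply its numerical shadow, recording the fact that the subspace identity from the proposition is compatible with the inclusion-exclusion formula for sums of subspaces in the familiar way one would expect by analogy with the M\"obius inversion formula at the level of dimensions.
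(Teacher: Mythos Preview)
Your proposal is correct and follows essentially the same approach as the paper's proof: apply \cref{prop: linAlg Mobius} to rewrite the left-hand side as $A\ominus(B+C)$, use that $B+C\subseteq A$ to get $\dim A-\dim(B+C)$, and then expand via the inclusion-exclusion formula for $\dim(B+C)$.
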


\begin{proof}
    \begin{align*}
    \dim ( (A\ominus B) \ominus (C \ominus (B\cap C)) ) &= \dim (A \ominus (B+C)) \\
                                                &= \dim A - \dim (B+C) \\
                                                &= \dim A - (\dim B + \dim C - \dim (B\cap C)) \\
                                                &= (\dim A - \dim B) - (\dim C - \dim (B\cap C)).
\end{align*}
\end{proof}

\begin{remark}\label{rem: importance and generalization}
\cref{prop: linAlg Mobius} and \cref{cor: linAlg Mobius dim} play a fundamental role in this paper in the following ways:
\begin{enumerate}
\item In \cref{prop: loi and goi agrees}, we establish a connection between the notions of $\prhi$ and $\goi$ using \cref{prop: linAlg Mobius}.
\item This connection, combined with the fact that Orthogonal Inversion yields monoidal Möbius inverses (\cref{prop: goi is monoidal mi}), ensures that $\prhi$ also produces monoidal Möbius inverses, as shown in \cref{thm: prhi is monoidal inversion}.
\item Finally, \cref{cor: linAlg Mobius dim} guarantees that $\prhi$ maps objects from $\inn(V)$ to $\inndgm(V)$; see \cref{prop: prhi in inndgm}.
\end{enumerate}
\end{remark}

\begin{example}
    Consider the intersection-monotone space functions $\overline{\Ffunc}$ and $\overline{\Gfunc}$ introduced in~\cref{ex: monotone space functions and morphism} and depicted in~\cref{fig:monotone space functions}. The $\times$-Linear Orthogonal Inverses of $\overline{\Ffunc}$ and $\overline{\Gfunc}$ are the Grassmannian persistence diagrams $\Mfunc$ and $\Nfunc$ introduced in~\cref{ex: transverse space functions and  morphism} and depicted in~\cref{fig:transverse space functions}. That is, $\prhi \left(\overline{\Ffunc}\right) = \Mfunc$ and $\prhi \left(\overline{\Gfunc}\right) = \Nfunc$. Although this can be verified through the definition of $\times$-Linear Orthogonal Inversion (cf. \cref{defn: x-harmonic inversion}),~\cref{prop: linAlg Mobius} provides a more compact and easier way to do so. While~\cref{defn: x-harmonic inversion} requires employing the operation $\ominus$ three times, as a result of~\cref{prop: linAlg Mobius}, the $\times$-Linear Orthogonal Inversion can be computed by involving $\ominus$ only once. 
\end{example}

We now show that $\prhi$ aligns with the more general notion $\goi$ (\cref{defn: goi}), and as a result of this, we conclude that $\prhi$ produces monoidal M\"obius inverses.

\begin{proposition}[Equivalence of $\prhi$ and $\goi$]\label{prop: loi and goi agrees}
    Let $\lp = \{ \ell_1<\cdots<\ell_n \}$ be a finite linearly ordered metric poset. For an object $\overline{\Ffunc} : \overline{\lp}^\times \to \gr(V)$ in $\inn(V)$, we have that 
    \[
    \prhi \left( \overline{\Ffunc} \right) = \goi \left( \overline{\Ffunc} \right).
    \]
\end{proposition}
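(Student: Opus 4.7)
The plan is to reduce the identity $\prhi(\overline{\Ffunc})=\goi(\overline{\Ffunc})$ to an application of \cref{prop: linAlg Mobius}, using the intersection-monotonicity of $\overline{\Ffunc}$ to identify the term $B\cap C$ that appears in that proposition, and then to show that the pair of ``immediate predecessors'' of a segment already absorbs the full sum over all strict predecessors. Concretely, I would break the argument into three cases according to the type of segment $r\in\Seg(\lp)$: (i) a bounded non-diagonal segment $(\ell_i,\ell_j)$ with $i<j$; (ii) a diagonal segment $(\ell_i,\ell_i)$; and (iii) an unbounded segment $(\ell_i,\infty)$. In each case I treat the boundary subcase $i=1$ with the convention that $\overline{\Ffunc}((\ell_0,\cdot))=\{0\}$, so that $\ominus$-ing by it is harmless.

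For case (i), I set $A=\overline{\Ffunc}((\ell_i,\ell_j))$, $B=\overline{\Ffunc}((\ell_i,\ell_{j-1}))$, $C=\overline{\Ffunc}((\ell_{i-1},\ell_j))$. Condition \ref{cond: order-pres} of \cref{def: int-monotone space function} gives $A\supseteq B,C$, and condition \ref{cond: intersecion} gives $B\cap C=\overline{\Ffunc}((\ell_{i-1},\ell_{j-1}))$. By \cref{prop: linAlg Mobius},
\[
\prhi(\overline{\Ffunc})((\ell_i,\ell_j)) \;=\; (A\ominus B)\ominus(C\ominus(B\cap C)) \;=\; A\ominus(B+C).
\]
It then remains to show $B+C=\sum_{(\ell_k,\ell_l)<_\times(\ell_i,\ell_j)}\overline{\Ffunc}((\ell_k,\ell_l))$. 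The inclusion $\supseteq$ is immediate because $B$ and $C$ are themselves strict predecessors; the inclusion $\subseteq$ follows because any strict predecessor $(\ell_k,\ell_l)<_\times(\ell_i,\ell_j)$ satisfies either $l\leq j-1$ or $k\leq i-1$, hence $\overline{\Ffunc}((\ell_k,\ell_l))\subseteq B$ or $\subseteq C$ by order-preservation. This yields $\prhi(\overline{\Ffunc})((\ell_i,\ell_j))=\goi(\overline{\Ffunc})((\ell_i,\ell_j))$.

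Case (iii) is handled identically after invoking the convention $(\ell_i,\ell_{n+1})=(\ell_i,\infty)$: with $A=\overline{\Ffunc}((\ell_i,\infty))$, $B=\overline{\Ffunc}((\ell_i,\ell_n))$, $C=\overline{\Ffunc}((\ell_{i-1},\infty))$, the intersection condition applied with $j=n$ (and $i$ shifted) gives $B\cap C=\overline{\Ffunc}((\ell_{i-1},\ell_n))$, and the same decomposition of strict predecessors into ``bounded $\prodord(\ell_i,\ell_n)$'' versus ``unbounded $\prodord(\ell_{i-1},\infty)$'' finishes the argument. Case (ii) is even simpler, since $\prhi$ involves only a single $\ominus$: one directly observes that every strict predecessor of $(\ell_i,\ell_i)$ in $\overline{\lp}^\times$ is $\prodord(\ell_{i-1},\ell_i)$, so the sum over strict predecessors collapses to $\overline{\Ffunc}((\ell_{i-1},\ell_i))$, matching $\goi$.

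The only potential obstacle is bookkeeping around the boundary/edge cases ($i=1$ and the unbounded segments), but these are absorbed cleanly by the convention $\overline{\Ffunc}((\ell_0,\cdot))=\{0\}$ and the convention $(\ell_i,\ell_{n+1})=(\ell_i,\infty)$ already adopted in \cref{remark: convention edge cases}. Once those are in place, the proof in each case is a one-line application of \cref{prop: linAlg Mobius} followed by the ``two-predecessors-suffice'' observation described above.
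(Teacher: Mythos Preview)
Your approach is correct and essentially identical to the paper's: apply \cref{prop: linAlg Mobius} (using the intersection condition of \cref{def: int-monotone space function} to identify $B\cap C=\overline{\Ffunc}((\ell_{i-1},\ell_{j-1}))$), then observe that the two immediate predecessors $(\ell_i,\ell_{j-1})$ and $(\ell_{i-1},\ell_j)$ absorb the full sum over strict predecessors; the paper treats only the bounded non-diagonal case in detail and defers the remaining cases to ``similar arguments,'' whereas you spell out all three. One minor slip: in showing $B+C=\sum_{I<_\times(\ell_i,\ell_j)}\overline{\Ffunc}(I)$ you have the inclusion labels swapped---``$B$ and $C$ are themselves strict predecessors'' yields $B+C\subseteq\sum$, while your ``any strict predecessor lies below one of the two'' argument yields $\sum\subseteq B+C$.
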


\begin{proof}
Let $(\ell_i, \ell_j) \in \Seg(\lp)$ be a segment and assume that $\ell_i < \ell_j$ and $\ell_j \neq \infty$. By~\cref{prop: linAlg Mobius}, the $\times$-Linear Orthogoal Inverse of an intersection-monotone space function $\overline{\Ffunc}$ can be written as follows.
    \begin{align}
        \prhi \left(\overline{\Ffunc}\right)((\ell_i, \ell_j)) &= \overline{\Ffunc}((\ell_i,\ell_j)) \ominus \left( \overline{\Ffunc}((\ell_{i-1}, \ell_j)) + \overline{\Ffunc}((\ell_i, \ell_{j-1})) \right) \\
        &= \overline{\Ffunc}((\ell_i,\ell_j)) \ominus \left( \sum_{I< (\ell_i, \ell_j)} \overline{\Ffunc}(I) \right), \label{eqn: road to generalization}
    \end{align}
    where the last equality follows from the fact that for any $I < (\ell_i, \ell_j)$, we have that either $I\leq (\ell_{i-1}, \ell_j)$ or $I \leq (\ell_i, \ell_{j-1})$, and thus, $$\overline{\Ffunc}(I) + \overline{\Ffunc}((\ell_{i-1}, \ell_j)) + \overline{\Ffunc}((\ell_i, \ell_{j-1})) = \overline{\Ffunc}((\ell_{i-1}, \ell_j)) + \overline{\Ffunc}((\ell_i, \ell_{j-1})).$$   
    Therefore, 
    \[
    \prhi \left(\overline{\Ffunc}\right)((\ell_i, \ell_j)) = \overline{\Ffunc}((\ell_i,\ell_j)) \ominus \left( \sum_{I< (\ell_i, \ell_j)} \overline{\Ffunc}(I) \right) = \goi \left(\overline{\Ffunc} \right) ((\ell_i,\ell_j)).
    \]
    While the argument above is only presented with the assumption that $\ell_i < \ell_i \neq \infty$, similar arguments work when $\ell_i = \ell_j$ and $\ell_i < \ell_j =\infty$.
\end{proof}

Recall that our notion of $\times$-Linear Orthogonal Inversion, $\prhi$, is motivated by the algebraic M\"obius inversion formula on the poset of segments of a linear poset as shown in~\cref{prop: algebraic mobius inversion formulas}. We now present our result, \cref{thm: prhi is monoidal inversion}, that relates these two notions. This result also serves as the primary tool utilized in proving the functoriality of $\times$-Linear Orthogonal Inversion,~\cref{prop: prhi is functor}.

\begin{remark}
    For the following results~\cref{thm: prhi is monoidal inversion},~\cref{prop: prhi in inndgm}, and~\cref{prop: prhi is functor}, the metric on the poset $\lp = \{ \ell_1 < \cdots < \ell_n \}$ is indeed irrelevant. We only require a metric structure on $P$ to ensure that the relevant functions associated with these statements fall within the appropriate categories $\inn(V)$ and $\inndgm(V)$. We eventually exploit the metric structure in proving the stability result; see~\cref{thm: general stability}.
\end{remark}

\begin{theorem}\label{thm: prhi is monoidal inversion}
    Let $\lp = \{ \ell_1<\cdots<\ell_n \}$. For an object $\overline{\Ffunc} : \overline{\lp}^\times \to \gr(V)$ in $\inn(V)$, its $\times$-Linear Orthogonal Inverse $\prhi \left(\overline{\Ffunc}\right)$ is a monoidal M\"obius inverse of $\overline{\Ffunc}$, i.e., $\prhi \left(\overline{\Ffunc}\right) \in \mmi{\overline{\lp}^\times}{\overline{\Ffunc}}$. 
\end{theorem}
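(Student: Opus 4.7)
The plan is to leverage the two preceding propositions, \cref{prop: loi and goi agrees} and \cref{prop: goi is monoidal mi}, and observe that the statement follows by simple composition. Specifically, I would note that since $\overline{\Ffunc}$ is an object in $\inn(V)$, it is in particular order-preserving (by condition~1 of \cref{def: int-monotone space function}), so the general Orthogonal Inversion $\goi(\overline{\Ffunc})$ is well-defined on $\overline{\lp}^\times$.

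Next, I would invoke \cref{prop: loi and goi agrees}, which asserts the equality
\[
\prhi \left(\overline{\Ffunc}\right) = \goi \left(\overline{\Ffunc}\right)
\]
as functions $\overline{\lp}^\times \to \gr(V)$. This is precisely the place where the intersection-monotone condition (condition~2 of \cref{def: int-monotone space function}) is used, via \cref{prop: linAlg Mobius}, to rewrite the three nested $\ominus$'s of~\cref{defn: x-harmonic inversion} as a single $\ominus$ against the sum over all strict predecessors in $\overline{\lp}^\times$.

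Finally, I would apply \cref{prop: goi is monoidal mi} to the poset $R = \overline{\lp}^\times$ and the order-preserving function $\overline{\Ffunc}$, which yields $\goi(\overline{\Ffunc}) \in \mmi{\overline{\lp}^\times}{\overline{\Ffunc}}$, i.e.,
\[
\sum_{I' \prodord I} \goi\left(\overline{\Ffunc}\right)(I') = \overline{\Ffunc}(I) \qquad \text{for every } I \in \overline{\lp}^\times.
\]
Combining this with the equality $\prhi(\overline{\Ffunc}) = \goi(\overline{\Ffunc})$ from the previous step immediately gives the desired conclusion $\prhi(\overline{\Ffunc}) \in \mmi{\overline{\lp}^\times}{\overline{\Ffunc}}$.

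There is no real obstacle here since the substantive content has already been isolated in \cref{prop: linAlg Mobius} (the algebraic identity about orthogonal complements of sums) and in \cref{prop: goi is monoidal mi} (the monoidal inversion property of $\goi$). The only point worth double-checking is that the boundary cases (segments of the form $(\ell_1, \cdot)$, $(\ell_i, \ell_i)$, and $(\ell_i, \infty)$) are correctly absorbed by the conventions in \cref{remark: convention edge cases} and the analogous rewriting \eqref{eqn: road to generalization} in the proof of \cref{prop: loi and goi agrees}; this is straightforward case analysis and contains no surprises.
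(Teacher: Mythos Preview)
Your proposal is correct and matches the paper's own proof essentially verbatim: the paper simply invokes \cref{prop: loi and goi agrees} (that $\prhi$ and $\goi$ agree) together with \cref{prop: goi is monoidal mi} (that $\goi$ is a monoidal M\"obius inverse), which is precisely the composition you describe. Your additional remarks about where the intersection condition enters and about the boundary cases are accurate elaborations but not needed beyond what the cited propositions already establish.
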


\begin{proof}
    This result follows from the fact that $\goi$ and $\prhi$ agree (\cref{prop: loi and goi agrees}) and that $\goi$ produces monoidal M\"obius inverses (\cref{prop: goi is monoidal mi}). 
\end{proof}

As noted in \cref{rem: importance and generalization}, we now use \cref{cor: linAlg Mobius dim} to conclude that $\prhi\left( \overline{\Ffunc} \right)$ is an object in $\inndgm(V)$.

\begin{proposition}\label{prop: prhi in inndgm}
        Let $\lp = \{ \ell_1<\cdots<\ell_n \}$. For an object $\overline{\Ffunc} : \overline{\lp}^\times \to \gr(V)$ in $\inn(V)$, its $\times$-Linear Orthogonal Inverse $\prhi \left(\overline{\Ffunc}\right) : \overline{\lp}^\times \to \gr(V)$ is an object in $\inndgm(V)$. 
\end{proposition}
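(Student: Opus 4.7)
The plan is to reduce the transversality condition
\[
\dim\Bigl(\sum_{I\in \Seg(\lp)} \prhi(\overline{\Ffunc})(I)\Bigr) \;=\; \sum_{I\in \Seg(\lp)} \dim\bigl(\prhi(\overline{\Ffunc})(I)\bigr)
\]
to two separate computations, one on the left-hand side using \cref{thm: prhi is monoidal inversion}, and one on the right-hand side using \cref{cor: linAlg Mobius dim} in conjunction with the intersection condition from \cref{def: int-monotone space function}. The pivot of the argument is to recognize that, as a consequence of the intersection condition, the function $I \mapsto \dim(\prhi(\overline{\Ffunc})(I))$ is precisely the classical (algebraic) M\"obius inverse on $\overline{\lp}^\times$ of the scalar dimension function $m(I) := \dim \overline{\Ffunc}(I)$.

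For the right-hand side, I would apply \cref{cor: linAlg Mobius dim} case by case to the formula defining $\prhi$ in \cref{defn: x-harmonic inversion}. In the generic case $(\ell_i,\ell_j)$ with $1<i<j\leq n$, setting $A = \overline{\Ffunc}((\ell_i,\ell_j))$, $B = \overline{\Ffunc}((\ell_i,\ell_{j-1}))$ and $C = \overline{\Ffunc}((\ell_{i-1},\ell_j))$, condition~\ref{cond: intersecion} of \cref{def: int-monotone space function} gives exactly $B\cap C = \overline{\Ffunc}((\ell_{i-1},\ell_{j-1}))$, so that \cref{cor: linAlg Mobius dim} yields
\[
\dim\bigl(\prhi(\overline{\Ffunc})((\ell_i,\ell_j))\bigr) = m((\ell_i,\ell_j)) - m((\ell_i,\ell_{j-1})) - m((\ell_{i-1},\ell_j)) + m((\ell_{i-1},\ell_{j-1})),
\]
which matches \cref{eqn: times algebraic mob inv1}. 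The unbounded case is handled identically thanks to the convention $(\ell_i,\ell_{n+1}) := (\ell_i,\infty)$. The diagonal and $\ell_1$-boundary cases are checked directly from the convention $m((\ell_0,\cdot)) = 0$, and in each case the answer coincides with the corresponding formula in \cref{prop: algebraic mobius inversion formulas} and \cref{remark: convention edge cases}. Thus $\dim \circ \prhi(\overline{\Ffunc}) = \partial_{\overline{\lp}^\times}(m)$.

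Now, since $(\ell_n,\infty)$ is the maximum of $\overline{\lp}^\times$, the defining identity of algebraic M\"obius inversion gives
\[
\sum_{I \in \Seg(\lp)} \dim\bigl(\prhi(\overline{\Ffunc})(I)\bigr) = \sum_{I \in \Seg(\lp)} \partial_{\overline{\lp}^\times}(m)(I) = m((\ell_n,\infty)) = \dim\bigl(\overline{\Ffunc}((\ell_n,\infty))\bigr).
\]
On the left-hand side, \cref{thm: prhi is monoidal inversion} (monoidality of $\prhi$) applied at the maximum $(\ell_n,\infty)$ gives $\sum_{I \in \Seg(\lp)} \prhi(\overline{\Ffunc})(I) = \overline{\Ffunc}((\ell_n,\infty))$, so taking dimensions yields $\dim\bigl(\sum_I \prhi(\overline{\Ffunc})(I)\bigr) = \dim\overline{\Ffunc}((\ell_n,\infty))$. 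Comparing, we obtain transversality, and hence $\prhi(\overline{\Ffunc}) \in \inndgm(V)$.

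I do not anticipate a genuine obstacle in the main body of the argument: the crux is simply a careful bookkeeping to ensure that the intersection condition~\ref{cond: intersecion} gets used exactly where needed in \cref{cor: linAlg Mobius dim}. The only mildly delicate point is verifying the boundary cases ($(\ell_1,\cdot)$, $(\cdot,\infty)$ and the diagonal) align with the conventions of \cref{remark: convention edge cases}; this is routine case analysis but must be carried out explicitly to ensure that $\dim \circ \prhi(\overline{\Ffunc})$ really coincides with $\partial_{\overline{\lp}^\times}(m)$ on \emph{all} of $\Seg(\lp)$, not just the generic interior points.
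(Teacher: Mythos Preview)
Your proposal is correct and follows essentially the same argument as the paper: both use \cref{cor: linAlg Mobius dim} (together with the intersection condition to identify $B\cap C$ with $\overline{\Ffunc}((\ell_{i-1},\ell_{j-1}))$) to show that $\dim\circ\prhi(\overline{\Ffunc})$ equals the algebraic M\"obius inverse of $\dim\overline{\Ffunc}$, then evaluate both the monoidal sum (via \cref{thm: prhi is monoidal inversion}) and the scalar sum at the maximum $(\ell_n,\infty)$ to conclude transversality. If anything, you are more explicit than the paper about invoking condition~\ref{cond: intersecion} and about checking the boundary cases.
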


\begin{proof}
    We need to check that $\{ \prhi \left(\overline{\Ffunc}\right) (I) \}_{I\in \overline{\lp}^\times}$ is a transversal family. By~\cref{cor: linAlg Mobius dim}, for every $(\ell_i, \ell_j) \in \overline{\lp}^\times$ with $i<j$, we have that
    \[
    \dim \left(\prhi\left( \overline{\Ffunc}\right)((\ell_i, \ell_j))\right) = \dim \overline{\Ffunc} ((\ell_i,\ell_j)) - \dim \overline{\Ffunc} ((\ell_i,\ell_{j-1}))+\dim \overline{\Ffunc} ((\ell_{i-1},\ell_{j-1})) - \dim \overline{\Ffunc} ((\ell_{i-1},\ell_j)).
    \]
    This means that the function $\dim \left(\prhi \left(\overline{\Ffunc}\right)\right) : \overline{\lp}^\times \to \Z$ given by $(\ell_i, \ell_j)\mapsto \dim \left(\prhi \left(\overline{\Ffunc}\right)((\ell_i,\ell_j))\right)$ is the algebraic M\"obius inverse of the function $\dim (\overline{\Ffunc}) : \overline{\lp}^\times \to \Z$ given by $(\ell_i,\ell_j) \mapsto \dim (\overline{\Ffunc}((\ell_i,\ell_j)))$. Thus, as the segment $(\ell_n,\infty)$ is the maximum element of $\overline{\lp}^\times$, we have that
    \[
    \dim \left(\overline{\Ffunc} ((\ell_n,\infty)\right) = \sum_{I \in \overline{\lp}^\times} \dim\left(\prhi \left(\overline{\Ffunc}\right)(I)\right).
    \]
     On the other hand, by~\cref{thm: prhi is monoidal inversion}, we have that
    \[
    \sum_{I\in \overline{\lp}^\times} \prhi \left(\overline{\Ffunc}\right)(I) = \overline{\Ffunc}((\ell_n,\infty))
    \]
    Thus, we have that
    \[
    \dim \left ( \sum_{I \in \overline{\lp}^\times} \prhi \left(\overline{\Ffunc}\right) (I) \right) = \dim \left(\overline{\Ffunc} ((\ell_n,\infty))\right) = \sum_{I \in \overline{\lp}^\times} \dim \left(\prhi \left(\overline{\Ffunc}\right)(I)\right)
    \]
    Therefore, $\{ \prhi \left(\overline{\Ffunc}\right)(I) \}_{I\in \overline{\lp}^\times}$ is a transversal family. Hence $\prhi \left(\overline{\Ffunc}\right) : \overline{\lp}^\times \to \gr(V)$ is an object in $\inndgm(V)$. 
\end{proof}

Note that, in~\cref{prop: prhi in inndgm}, we have only shown that the family $\{ \prhi \left(\overline{\Ffunc}\right)(I) \}_{I\in \overline{\lp}^\times}$ is a transversal family as this is enough to conclude that $\prhi \left(\overline{\Ffunc}\right)$ is an object in $\inndgm(V)$. However, a finer property is true:  certain subspaces in the family $\{ \prhi \left(\overline{\Ffunc}\right)(I) \}_{I\in \overline{\lp}^\times}$ are orthogonal to each other, as we show in the following.

\begin{proposition}\label{prop: comparable orhtogonal}
    Let $\lp = \{ \ell_1 < \cdots < \ell_n\}$ and let $\overline{\Ffunc} : \overline{\lp}^\times \to \gr(V)$ be an object in $\inn(V)$. Let $(\ell_i,\ell_j) <_\times (\ell_k,\ell_l) \in \overline{\lp}^\times$ be two distinct comparable segments, i.e., $(\ell_i,\ell_j) \neq (\ell_k,\ell_l)$ and $(\ell_i,\ell_j) \prodord (\ell_k,\ell_l)$. Then, $\prhi \left(\overline{\Ffunc}\right) ((\ell_i, \ell_j))$ and $\prhi \left(\overline{\Ffunc}\right)( (\ell_k, \ell_l))$ are orthogonal to each other.
\end{proposition}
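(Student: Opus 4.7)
The plan is to leverage the equivalence $\prhi = \goi$ established in Proposition \ref{prop: loi and goi agrees}. Writing $\prhi$ as $\goi$ turns the statement into a transparent observation about the definition of $\goi$: for every $r$, the subspace $\goi(\overline{\Ffunc})(r) = \overline{\Ffunc}(r) \cap \left(\sum_{r'<r} \overline{\Ffunc}(r')\right)^\perp$ sits inside $\overline{\Ffunc}(r)$ and is simultaneously orthogonal to every $\overline{\Ffunc}(r')$ with $r' < r$.

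Concretely, set $r_1 := (\ell_i,\ell_j)$ and $r_2 := (\ell_k,\ell_l)$ with $r_1 \prodord r_2$ and $r_1 \neq r_2$, so $r_1 < r_2$ in $\overline{\lp}^\times$. First I would note, by definition of $\ominus$, that
\[
\prhi\left(\overline{\Ffunc}\right)(r_1) \;=\; \goi\left(\overline{\Ffunc}\right)(r_1) \;\subseteq\; \overline{\Ffunc}(r_1).
\]
Since $r_1 < r_2$, the subspace $\overline{\Ffunc}(r_1)$ is one of the summands appearing in $\sum_{r'<r_2} \overline{\Ffunc}(r')$, whence
\[
\prhi\left(\overline{\Ffunc}\right)(r_1) \;\subseteq\; \sum_{r' < r_2} \overline{\Ffunc}(r').
\]
On the other hand, again by definition,
\[
\prhi\left(\overline{\Ffunc}\right)(r_2) \;=\; \goi\left(\overline{\Ffunc}\right)(r_2) \;\subseteq\; \left(\sum_{r' < r_2} \overline{\Ffunc}(r')\right)^{\!\perp}.
\]
Combining the two inclusions, every vector of $\prhi\left(\overline{\Ffunc}\right)(r_1)$ is orthogonal to every vector of $\prhi\left(\overline{\Ffunc}\right)(r_2)$, which is the claim.

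There is no real obstacle here: the statement is essentially a reformulation of the defining property of $\goi$ once one knows $\prhi$ and $\goi$ coincide on $\inn(V)$. In particular, the intersection-monotone hypothesis of \cref{def: int-monotone space function} is not needed for this proposition; only the order-preserving condition is used, via the chain $r_1 < r_2 \Rightarrow \overline{\Ffunc}(r_1) \subseteq \overline{\Ffunc}(r_2)$, together with the inclusion $\overline{\Ffunc}(r_1) \subseteq \sum_{r'<r_2} \overline{\Ffunc}(r')$. The unbounded cases $\ell_j = \infty$ or $\ell_l = \infty$ require no additional work because \cref{prop: loi and goi agrees} handles them uniformly.
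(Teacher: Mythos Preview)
Your proof is correct and follows essentially the same route as the paper's: the paper invokes \cref{prop: linAlg Mobius} directly to write $\prhi(\overline{\Ffunc})((\ell_k,\ell_l)) = \overline{\Ffunc}((\ell_k,\ell_l)) \ominus \bigl(\overline{\Ffunc}((\ell_{k-1},\ell_l)) + \overline{\Ffunc}((\ell_k,\ell_{l-1}))\bigr)$, whereas you invoke the repackaged form \cref{prop: loi and goi agrees}; both then observe that $\prhi(\overline{\Ffunc})(r_1)\subseteq \overline{\Ffunc}(r_1)$ lies in the very subspace to which $\prhi(\overline{\Ffunc})(r_2)$ is orthogonal by construction.

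One small caveat on your closing remark: the claim that the intersection condition of \cref{def: int-monotone space function} is unnecessary is not quite right, because you rely on \cref{prop: loi and goi agrees}, whose proof uses that condition (it is precisely what makes $\overline{\Ffunc}((\ell_{k-1},\ell_{l-1}))$ equal to $\overline{\Ffunc}((\ell_k,\ell_{l-1}))\cap\overline{\Ffunc}((\ell_{k-1},\ell_l))$, so that \cref{prop: linAlg Mobius} applies). The orthogonality argument for $\goi$ indeed only needs order-preservation, but transferring it to $\prhi$ does not come for free.
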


\begin{proof}
    By~\cref{prop: linAlg Mobius}, we have that 
    \[
    \prhi \left(\overline{\Ffunc}\right) ((\ell_k, \ell_l)) = \overline{\Ffunc}((\ell_k, \ell_l)) \ominus \left ( \overline{\Ffunc}((\ell_{k-1}, \ell_l)) + \overline{\Ffunc}((\ell_k, \ell_{l-1})) \right ).
    \]
    Thus, $\prhi \left(\overline{\Ffunc}\right) ((\ell_k, \ell_l))$ is orthogonal to $\left ( \overline{\Ffunc}((\ell_{k-1}, \ell_l)) + \overline{\Ffunc}((\ell_k, \ell_{l-1})) \right )$. On the other hand, since $(\ell_i, \ell_j) \prodord (\ell_k, \ell_l)$, we have that $$\prhi \left(\overline{\Ffunc}\right) ((\ell_i, \ell_j)) \subseteq \overline{\Ffunc} ((\ell_i, \ell_j)) \subseteq \left ( \overline{\Ffunc}((\ell_{k-1}, \ell_l)) + \overline{\Ffunc}((\ell_k, \ell_{l-1})) \right ).$$ Hence, $\prhi \left(\overline{\Ffunc}\right) ((\ell_i, \ell_j))$ and $\prhi \left(\overline{\Ffunc} \right)((\ell_k, \ell_l))$ are orthogonal to each other.
\end{proof}

By~\cref{prop: prhi in inndgm}, we have that $\prhi$ maps an object in $\inn(V)$ to an object in $\inndgm(V)$. We now verify that this assignment is indeed a functor.

\begin{proposition}[Functoriality of $\prhi$]\label{prop: prhi is functor}
    $\prhi$ is a functor from $\inn(V)$ to $\inndgm(V)$.
\end{proposition}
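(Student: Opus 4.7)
The plan is to check that $\prhi$ satisfies the three requirements of a functor: mapping objects to objects, mapping morphisms to morphisms, and preserving identities and composition. The first requirement is already established by~\cref{prop: prhi in inndgm}, so the essential work is in defining $\prhi$ on morphisms and verifying the relevant Möbius-equivalence condition, for which the Monoidal RGCT (\cref{thm: monoidal rgct}) is the decisive tool.

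Given a morphism $f = (\ladj{f},\radj{f}) : \overline{\Ffunc} \to \overline{\Gfunc}$ in $\inn(V)$, where $\overline{\Ffunc} : \overline{\lp_1}^\times \to \gr(V)$ and $\overline{\Gfunc} : \overline{\lp_2}^\times \to \gr(V)$, I would define $\prhi(f) := (f, \mathbf{0})$, where $\mathbf{0} : \Seg(\lp_2) \to \gr(V)$ is the constant zero-subspace-valued function. Trivially, $\mathbf{0}$ is supported on $\diag(\lp_2)$ and $\{\mathbf{0}(J)\}$ is transversal to $\{\prhi(\overline{\Gfunc})(J)\}$, so the transversality condition in~\cref{defn: trans preserving morphism} is immediate.

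The core step is to prove $\left(\overline{\ladj{f}}\right)_\sharp \prhi\!\left(\overline{\Ffunc}\right) \mobeq \prhi\!\left(\overline{\Gfunc}\right)$. The strategy is to realize both sides as monoidal M\"obius inverses of the common function $\overline{\Gfunc}$, after which the M\"obius equivalence is automatic from the definition. By~\cref{thm: prhi is monoidal inversion}, $\prhi\!\left(\overline{\Ffunc}\right) \in \mmi{\overline{\lp_1}^\times}{\overline{\Ffunc}}$. The Galois connection $(\ladj{f},\radj{f}) : \lp_1 \leftrightarrows \lp_2$ induces a Galois connection $(\overline{\ladj{f}}, \overline{\radj{f}}) : \overline{\lp_1}^\times \leftrightarrows \overline{\lp_2}^\times$, so~\cref{thm: monoidal rgct} applied to this induced connection yields $\left(\overline{\ladj{f}}\right)_\sharp \prhi\!\left(\overline{\Ffunc}\right) \in \mmi{\overline{\lp_2}^\times}{\left(\overline{\radj{f}}\right)^\sharp \overline{\Ffunc}}$. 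The morphism condition $\overline{\Ffunc} \circ \overline{\radj{f}} = \overline{\Gfunc}$ identifies $\left(\overline{\radj{f}}\right)^\sharp \overline{\Ffunc} = \overline{\Gfunc}$, and a second application of~\cref{thm: prhi is monoidal inversion} gives $\prhi\!\left(\overline{\Gfunc}\right) \in \mmi{\overline{\lp_2}^\times}{\overline{\Gfunc}}$. Since any two monoidal M\"obius inverses of the same function must have the same partial sums, they are M\"obius equivalent.

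For the functoriality axioms, preservation of identities is straightforward: the identity on $\overline{\Ffunc}$ is given by the Galois connection $(\mathrm{id}_{\lp},\mathrm{id}_{\lp})$, which maps under $\prhi$ to the pair $((\mathrm{id}_{\lp},\mathrm{id}_{\lp}),\mathbf{0})$, the identity in $\inndgm(V)$. Preservation of composition follows from functoriality of pushforward, namely $(\overline{\ladj{g} \circ \ladj{f}})_\sharp = (\overline{\ladj{g}})_\sharp \circ (\overline{\ladj{f}})_\sharp$, together with the fact that our assignment always attaches the zero correction term, so the composite $\prhi(g) \circ \prhi(f)$ and $\prhi(g \circ f)$ both reduce to the composite Galois connection paired with $\mathbf{0}$. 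I expect the main obstacle to be the careful bookkeeping of the identifications needed to invoke~\cref{thm: monoidal rgct}; once the Galois connection at the poset-of-segments level and the equality $\left(\overline{\radj{f}}\right)^\sharp \overline{\Ffunc} = \overline{\Gfunc}$ are in place, the rest is bookkeeping.
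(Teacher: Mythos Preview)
Your proposal is correct and follows essentially the same approach as the paper: use \cref{prop: prhi in inndgm} for objects, assign the zero correction term $\zeta_{\lp_2}=0$ on morphisms, and obtain the M\"obius equivalence $\left(\overline{\ladj{f}}\right)_\sharp \prhi(\overline{\Ffunc}) \mobeq \prhi(\overline{\Gfunc})$ by combining \cref{thm: prhi is monoidal inversion} with the Monoidal RGCT (\cref{thm: monoidal rgct}) applied to the induced Galois connection on segments. The paper's proof is slightly terser and does not explicitly spell out the identity and composition checks, but your additional verification of those axioms is straightforward and in the same spirit.
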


\begin{proof}
    Let $ \overline{\Ffunc} : \overline{\lp_1}^\times \to \gr(V)$ be an object in $\inn(V)$. By~\cref{prop: prhi in inndgm}, we have that $\prhi \left(\overline{\Ffunc}\right)$ is an object in $\inndgm(V)$. Now, let $\overline{\Gfunc} : \overline{\lp_2}^\times \to \gr(V)$ be another object in $\inn(V)$ and let $(\ladj{f}, \radj{f})$ be a morphism from $\overline{\Ffunc}$ to $\overline{\Gfunc}$. This means that $ \left(\radj{\overline{f}}\right)^\sharp \overline{\Ffunc}=  \overline{\Ffunc} \circ \radj{\overline{f}} = \overline{\Gfunc}$. By~\cref{thm: prhi is monoidal inversion}, we have that $\prhi \left(\overline{\Ffunc}\right)$ and $\prhi \left(\overline{\Gfunc}\right)$ are monoidal M\"obius inverses of $\overline{\Ffunc}$ and $\overline{\Gfunc}$ respectively. Then, by the monoidal RGCT,~\cref{thm: monoidal rgct}, we have that 
    \[
    \left(\ladj{\overline{f}}\right)_\sharp \prhi \left(\overline{\Ffunc}\right) \mobeq \prhi \left(\overline{\Gfunc}\right).
    \]
    Therefore, the pair $(f, \zeta_{\lp_2} := 0)$, where $f := (\ladj{f}, \radj{f})$ is the Galois connection, is a morphism from $\prhi \left(\overline{\Ffunc}\right)$ to $\prhi \left(\overline{\Gfunc}\right)$.
\end{proof}

We now show that the functor $\times$-Linear Orthogonal Inversion is $1$-Lipschitz with respect to the edit distances in $\inn(V)$ and $\inndgm(V)$.

\begin{theorem}[Stability of $\prhi$]\label{thm: general stability}
    Let $\overline{\Ffunc}$ and $\overline{\Gfunc}$ be two intersection-monotone space functions. Then, 
    $$d_{\inndgm(V)}^E \left(\prhi \left(\overline{\Ffunc}\right), \prhi \left(\overline{\Gfunc}\right) \right) \leq d_{\inn(V)}^E \left(\overline{\Ffunc}, \overline{\Gfunc}\right).$$
\end{theorem}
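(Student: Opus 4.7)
The plan is to deduce stability directly from the functoriality of $\prhi$ established in \cref{prop: prhi is functor}, together with the careful observation that $\prhi$ preserves the Galois connection underlying a morphism, and therefore preserves the cost.

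First, I would unpack the definition of edit distance. Given any $\eps > 0$, choose a path
\[
\mathcal{P}: \overline{\Ffunc} \xleftrightarrow{f_1} \overline{\Hfunc}_1 \xleftrightarrow{f_2} \cdots \xleftrightarrow{f_k} \overline{\Gfunc}
\]
in $\inn(V)$ with $c_{\inn(V)}(\mathcal{P}) \leq d_{\inn(V)}^E(\overline{\Ffunc}, \overline{\Gfunc}) + \eps$. Each $f_i = (\ladj{f_i}, \radj{f_i})$ is a Galois connection underlying an intersection-monotone space preserving morphism, whose cost is $\dis(\ladj{f_i})$.

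Second, I would apply the functor $\prhi$ to this path. The proof of \cref{prop: prhi is functor} shows explicitly that if $f = (\ladj{f}, \radj{f})$ is a morphism in $\inn(V)$ from $\overline{\Ffunc}$ to $\overline{\Gfunc}$, then the pair $(f, \zeta := 0)$ is a morphism in $\inndgm(V)$ from $\prhi(\overline{\Ffunc})$ to $\prhi(\overline{\Gfunc})$ — that is, the \emph{same} Galois connection $f$ is retained, with a trivial diagonal correction. Applying this to each $f_i$ in $\mathcal{P}$ (respecting the direction of each arrow), we obtain a path
\[
\prhi(\mathcal{P}): \prhi(\overline{\Ffunc}) \xleftrightarrow{(f_1, 0)} \prhi(\overline{\Hfunc}_1) \xleftrightarrow{(f_2,0)} \cdots \xleftrightarrow{(f_k, 0)} \prhi(\overline{\Gfunc})
\]
in $\inndgm(V)$.

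Third, I would compare the costs. Since the cost of a morphism in each category is defined precisely as the distortion $\dis(\ladj{f_i})$ of the left adjoint (see \cref{defn: cost of a trans preserving morphism} and the corresponding definition in $\inn(V)$), we have
\[
c_{\inndgm(V)}(\prhi(\mathcal{P})) = \sum_{i=1}^k \dis(\ladj{f_i}) = c_{\inn(V)}(\mathcal{P}) \leq d_{\inn(V)}^E(\overline{\Ffunc}, \overline{\Gfunc}) + \eps.
\]
Taking the infimum over all paths in $\inndgm(V)$ between $\prhi(\overline{\Ffunc})$ and $\prhi(\overline{\Gfunc})$, we get $d_{\inndgm(V)}^E(\prhi(\overline{\Ffunc}), \prhi(\overline{\Gfunc})) \leq d_{\inn(V)}^E(\overline{\Ffunc}, \overline{\Gfunc}) + \eps$, and letting $\eps \to 0$ gives the desired inequality.

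The main subtlety — which I would not expect to be an obstacle but deserves a sentence of care — is that paths are zigzags, so for each arrow I must verify that the image under $\prhi$ points in the same direction with the same cost. This is clear because \cref{prop: prhi is functor} is a genuine (covariant) functoriality statement, so a morphism $\overline{\Ffunc} \to \overline{\Gfunc}$ in $\inn(V)$ is sent to a morphism $\prhi(\overline{\Ffunc}) \to \prhi(\overline{\Gfunc})$ in $\inndgm(V)$ with the identical Galois connection, hence identical cost. Once this observation is in place, the rest is a routine infimum argument.
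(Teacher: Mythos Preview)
Your proposal is correct and follows essentially the same approach as the paper: use the functoriality of $\prhi$ from \cref{prop: prhi is functor} to push any path in $\inn(V)$ to a path in $\inndgm(V)$ carrying the same Galois connections, observe that costs are therefore preserved, and conclude the edit-distance inequality. The only cosmetic difference is that you phrase the infimum via an explicit $\eps$-argument, whereas the paper argues directly that every path maps to a path of equal cost.
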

    
\begin{proof}
Recall that the edit distance, $d^E_{-}$, between two objects in a category is defined as the infimum of the cost of paths between the two objects in the category (see~\cref{defn: edit dist}). Any path $\mathcal{P}$ between $\overline{\Ffunc}$ and $\overline{\Gfunc}$ in $\inn(V)$ induces a path, $\prhi \left(\mathcal{P}\right)$, between $\prhi \left(\overline{\Ffunc}\right)$ and $\prhi \left(\overline{\Gfunc}\right)$ in $\inndgm (V)$ by~\cref{prop: prhi is functor}. Moreover, the cost of morphisms is preserved under $\prhi$. This is because if $(\ladj{f}, \radj{f})$ is a morphism from $\overline{\Hfunc}_1$ to $\overline{\Hfunc}_2$ in $\inn(V)$, then the same Galois connection $(\ladj{f}, \radj{f})$ determines a morphism from $\prhi \left(\overline{\Hfunc}_1\right)$ to $\prhi \left(\overline{\Hfunc}_2\right)$ as described in the proof of~\cref{prop: prhi is functor}. Therefore, the cost of $\mathcal{P}$ and the cost of the induced path $\prhi \left(\mathcal{P}\right)$ are the same. Thus, we conclude $d_{\inndgm(V)}^E \left(\prhi \left(\overline{\Ffunc}\right), \prhi \left(\overline{\Gfunc}\right)\right) \leq d_{\inn(V)}^E (\overline{\Ffunc}, \overline{\Gfunc})$.
\end{proof}

    \begin{remark}[Difference between $1$-parameter and general Grassmannian persistence diagrams]
    Note that there is an inherent difference between 1-parameter Grassmannian persistence diagrams, as  introduced in~\cref{sec: orthogonal inversion}, and downward transverse functions. In~\cref{sec: orthogonal inversion}, $1$-parameter Grassmannian persistence diagrams are functions defined on $\Seg(\lp)$, the poset of segments of a finite linear poset $\lp$. On the other hand, downward transverse functions are defined on any finite poset $R$. The poset of segments $\Seg(\lp)$ has a distinguished subset, the diagonal $\diag(\lp)$. Due to the way we define morphisms in the category of $1$-parameter Grassmannian persistence diagrams, $\inndgm(V)$, the edit distance between Grassmannian persistence diagrams is insensitive to the values of the Grassmannian persistence diagram on the diagonal $\diag(\lp)\subseteq \Seg(\lp)$, as explained in~\cref{rem: diagonal blind edit}. 
    
    On the other hand, in the category of downward transversal functions, the domain can be any finite poset $R$ without a distinguished subset such as a diagonal. As a result, the edit distance between two downward transversal function is sensitive to the values of these functions everywhere, even if the two downward transversal functions are defined on the set segments of some finite posets. It is important to note that both sensitivity or insensitivity of the distance could be desirable. For example, the bi-Lipschitz equivalence between the bottleneck distance and the edit distance between classical persistence diagrams~\cite[Proposition 9.1]{edit} requires the edit distance to be insensitive to the diagonal. On the other hand, diagonal-sensitive distances can be more discriminative as shown in~\cite[Examples 28 and 29]{ephemral}.
\end{remark}

\section{Grassmannian Persistence Diagrams of~\texorpdfstring{$1$}{}-Parameter Filtrations}\label{sec: applications}
In this section, we continue to build upon and expand the tools introduced in~\cref{sec: orthogonal inversion}. Here, we introduce the notion of \emph{degree-$\dgr$ Grassmannian persistence diagram of a $1$-parameter filtration}, an extension of the classical notion of persistence diagram \cite{cohen-steiner2007}, as $\times$-Linear Orthogonal Inverse of $\dgr$-th birth-death spaces in~\cref{defn: 1p gpd from zb}. 

In~\cref{subsec: orthogonal inversion of bd}, we provide a functorial way of obtaining degree-$\dgr$ Grassmannian persistence diagram by utilizing the results described in~\cref{sec: orthogonal inversion}. As a result of this functoriality, we establish the edit distance stability of such degree-$\dgr$ Grassmannian persistence diagrams in~\cref{thm: stability of prhi of zb}. 

In~\cref{subsec: inter and canon}, we explore the interpretation and canonicality of $1$-parameter Grassmannian persistence diagrams. For a $1$-parameter filtration $\Ffunc : \{ \ell_1 <\cdots < \ell_n \} \to \subcx(K)$, as shown in~\cref{prop: born and dies exactly}, every segment $(\ell_i, \ell_j)$ is assigned a subspace of the cycle space of $K$ consisting of cycles that are born at $\ell_i$ and die at $\ell_j$. Combining this fact with the later-explored relation of Grassmannian persistence diagrams and the classical persistence diagrams in~\cref{subsec: relation to classical pd}, we conclude that every segment $(\ell_i, \ell_j)$ is assigned a subspaces whose dimension is precisely the multiplicity of the segment $(\ell_i, \ell_j)$ in the classical persistence diagram of $\Ffunc$. Consequently, for segments with multiplicity one, the Grassmannian persistence diagram determines a canonical cycle representative (up to scalar multiplication) for that segment.

In~\cref{subsec: computation of loi zb}, we present~\cref{algo: gpd}, an algorithm for computing the 1-parameter Grassmannian persistence diagram of a given filtration and analyze its time complexity. The complexity result is formally stated in~\cref{prop: algo complexity} and proven in~\cref{subsec: computational complexity}.

\begin{remark}
    Let $K$ be a finite simplicial complex and let $\Ffunc = \{ K_i \}_{i=1}^n$ be a filtration of $K$. When defining the degree-$\dgr$ Grassmannian persistence diagram of $\Ffunc$, we utilize the standard inner product on $C_\dgr^K$, as described in~\cref{sec: prelim}. If one decides to choose another inner product on $C_\dgr^K$, the resulting Grassmannian persistence diagrams will be different. However, it is important to note that our results in this section  remain valid, regardless of the choice of the inner product. 
\end{remark}

\subsection{The~\texorpdfstring{$\times$}{}-Linear Orthogonal Inverse of Birth-Death Spaces}\label{subsec: orthogonal inversion of bd}
Let $K$ be a finite simplicial complex and recall that $\subcx (K)$ denotes the poset of subcomplexes of $K$ ordered by inclusion. 

\begin{definition}[Category of $1$-parameter filtrations]

We define $\Fil(K)$ to be the category where
\begin{itemize}
    \item Objects are $1$-parameter filtrations $\Ffunc : \lp  \to \subcx(K)$ where $\lp$ is a finite linearly ordered metric poset,
    \item Morphisms from $\Ffunc : \lp_1\to \subcx(K)$ to $\Gfunc : \lp_2 \to \subcx(K)$ are given by a Galois connections $\ladj{f} : \lp_1 \leftrightarrows \lp_2 : \radj{f}$,~\cref{defn: galois connections}, such that $\Ffunc \circ \radj{f} = \Gfunc$. That is, the solid arrows in the following diagram commute.
\end{itemize}
\begin{center}
        \begin{tikzcd}
P \arrow[rr, "f_\diamond", dashed, bend right] \arrow[rd, "\Ffunc"'] &          & Q \arrow[ld, "\Gfunc"] \arrow[ll, "f^\diamond"', bend right] \\
                                                                    & \subcx (K) &                                                            
\end{tikzcd}
    \end{center}   
    
\end{definition}
\nomenclature[30]{$\Fil(\cdot)$}{Category of $1$-parameter filtrations of a fixed simplicial complex}

\begin{definition}[Cost of a Morphism in $\Fil(K)$]
    The cost of a morphism $(\ladj{f}, \radj{f})$ in $\Fil(K)$ is given by $\dis(\ladj{f})$, the distortion of the left adjoint $\ladj{f}$.
\end{definition}

Recall from~\cref{defn: bd space} that, given a filtration $\Ffunc$, the birth-death spaces associated to $\Ffunc$ produces a map $\ZB_\dgr^\Ffunc : \overline{P}^\times \to \gr(C_\dgr^K)$. This assignment is actually a functor from $\Fil(K)$ to $\inn(C_\dgr^K)$. 

\begin{proposition}\label{prop: zb functor}
    For any degree $\dgr\geq 0$ and for any filtration $\Ffunc$ in $\Fil (K)$, $\ZB_\dgr^\Ffunc$ is an object in $\inn\left(C_\dgr^K\right)$. Moreover, the assignment
    \[
    \Ffunc \mapsto \ZB_\dgr^\Ffunc
    \]
    is a functor from $\Fil(K)$ to $\inn \left ( C_\dgr^K \right )$.
\end{proposition}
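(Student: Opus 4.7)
The plan is to verify the two requirements separately: first that $\ZB_\dgr^\Ffunc$ satisfies the defining conditions of an object in $\inn(C_\dgr^K)$ (order-preservation and the intersection condition of \cref{def: int-monotone space function}), and second that the assignment $\Ffunc \mapsto \ZB_\dgr^\Ffunc$ sends morphisms to morphisms compatibly with identities and composition.

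For the object part, order-preservation is immediate: if $(b_1,d_1) \prodord (b_2,d_2)$, then $b_1\leq b_2$ and $d_1\leq d_2$, so $\Ffunc(b_1)\subseteq \Ffunc(b_2)$ and $\Ffunc(d_1)\subseteq \Ffunc(d_2)$, which yields inclusions $\Zfunc_\dgr(\Ffunc(b_1))\subseteq \Zfunc_\dgr(\Ffunc(b_2))$ and $\Bfunc_\dgr(\Ffunc(d_1))\subseteq \Bfunc_\dgr(\Ffunc(d_2))$, and hence $\ZB_\dgr^\Ffunc((b_1,d_1))\subseteq \ZB_\dgr^\Ffunc((b_2,d_2))$ (the unbounded case is analogous, using only the cycle part).

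The intersection condition then reduces to a one-line absorption argument: by definition,
\[
\ZB_\dgr^\Ffunc((\ell_{i+1},\ell_j))\cap \ZB_\dgr^\Ffunc((\ell_i,\ell_{j+1})) = \bigl(\Zfunc_\dgr(\Ffunc(\ell_{i+1}))\cap \Zfunc_\dgr(\Ffunc(\ell_i))\bigr)\cap \bigl(\Bfunc_\dgr(\Ffunc(\ell_j))\cap \Bfunc_\dgr(\Ffunc(\ell_{j+1}))\bigr),
\]
and since $\Zfunc_\dgr(\Ffunc(\ell_i))\subseteq \Zfunc_\dgr(\Ffunc(\ell_{i+1}))$ and $\Bfunc_\dgr(\Ffunc(\ell_j))\subseteq \Bfunc_\dgr(\Ffunc(\ell_{j+1}))$, the two parenthesized intersections collapse to $\Zfunc_\dgr(\Ffunc(\ell_i))$ and $\Bfunc_\dgr(\Ffunc(\ell_j))$ respectively, giving $\ZB_\dgr^\Ffunc((\ell_i,\ell_j))$. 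I will also handle the boundary convention $(\ell_i,\ell_{n+1}):=(\ell_i,\infty)$ in the same way, using that $\Zfunc_\dgr(\Ffunc(\ell_i))\cap \Bfunc_\dgr(\Ffunc(\ell_n))=\Zfunc_\dgr(\Ffunc(\ell_i))\cap \Zfunc_\dgr(\Ffunc(\ell_{i+1}))\cap \Bfunc_\dgr(\Ffunc(\ell_n))$.

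For the functoriality part, I send a morphism $f=(\ladj{f},\radj{f})\colon \Ffunc\to \Gfunc$ in $\Fil(K)$ (so $\Ffunc\circ \radj{f}=\Gfunc$) to the same Galois connection viewed as a candidate morphism $\ZB_\dgr^\Ffunc \to \ZB_\dgr^\Gfunc$ in $\inn(C_\dgr^K)$. The only thing to check is $\ZB_\dgr^\Ffunc\circ \overline{\radj{f}} = \ZB_\dgr^\Gfunc$, which follows directly from unfolding the definitions on a bounded segment $(b,d)$:
\[
\ZB_\dgr^\Ffunc(\overline{\radj{f}}((b,d))) = \Zfunc_\dgr(\Ffunc(\radj{f}(b)))\cap \Bfunc_\dgr(\Ffunc(\radj{f}(d))) = \Zfunc_\dgr(\Gfunc(b))\cap \Bfunc_\dgr(\Gfunc(d)) = \ZB_\dgr^\Gfunc((b,d)),
\]
with the analogous one-line check on $(b,\infty)$. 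Identity Galois connections are visibly sent to identities, and composition is preserved because, given $\Ffunc\xrightarrow{f}\Gfunc \xrightarrow{g}\Hfunc$, the induced segment-level maps satisfy $\overline{(\radj{f}\circ \radj{g})} = \overline{\radj{f}}\circ \overline{\radj{g}}$, so the composite is carried along verbatim. None of the steps presents a substantive obstacle; the only delicate point is bookkeeping the $(\ell_i,\infty)$ convention in the intersection condition, which I will state once and then suppress.
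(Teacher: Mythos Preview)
Your proof is correct and follows essentially the same approach as the paper's: order-preservation via inclusions of cycle and boundary spaces, the intersection condition via the absorption argument (which the paper leaves as ``straightforward to check''), and functoriality by verifying $\ZB_\dgr^\Ffunc\circ\overline{\radj{f}}=\ZB_\dgr^\Gfunc$ directly from $\Ffunc\circ\radj{f}=\Gfunc$. You actually supply more detail than the paper does on the intersection condition and the functor axioms (identity and composition), but the underlying argument is identical.
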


\begin{proof}
    Let $\lp_1 = \{ \ell_1 < \cdots < \ell_n \}$ and let $\Ffunc : \lp_1 \to \subcx (K)$ be a filtration. For two segments $(\ell_i,\ell_j) \prodord (\ell_k,\ell_l) \in \overline{\lp_1}^\times$ we have that $\Zfunc_\dgr (K_i) \subseteq \Zfunc_\dgr(K_k)$ and $\Bfunc_\dgr(K_j)\subseteq \Bfunc_\dgr(K_l)$. Therefore, $\ZB_\dgr^\Ffunc((\ell_i,\ell_j)) \subseteq \ZB_\dgr^\Ffunc((\ell_k,\ell_l))$. Hence, $\ZB_\dgr^\Ffunc$ is order-preserving. It is straightforward to check that $\ZB_\dgr^\Ffunc ((\ell_{i+1}, \ell_j)) \cap \ZB_\dgr^\Ffunc  ((\ell_i, \ell_{j+1})) = \ZB_\dgr^\Ffunc  ((\ell_{i}, \ell_{j}))$ for every $1\leq i < j \leq n$. Thus, $\ZB_\dgr^\Ffunc$ is an object in $\inn\left(C_\dgr^K\right)$. 
    
    Now, let $\lp_2 = \{r_1 < \cdots < r_m \}$ and $\Gfunc : \lp_2 \to \Fil(K)$ be another filtration. Assume that $\ladj{f} : \lp_1 \leftrightarrows \lp_2 : \radj{f}$ is a morphism from $\Ffunc : \lp_1 \to \Fil(K)$ to $\Gfunc : \lp_2 \to \Fil(K)$. Then, for any $(r_i, r_j) \in \overline{\lp_2}^\times$, $\ZB_\dgr^\Gfunc((r_i, r_j)) = \Zfunc_\dgr(\Gfunc(r_i)) \; \cap \;\Bfunc_\dgr(\Gfunc(r_j)) = \Zfunc_\dgr(\Ffunc \circ \radj{f}(r_i)) \; \cap \; \Bfunc_\dgr(\Ffunc \circ \radj{f}(r_j)) = \ZB_\dgr^\Ffunc \left(\overline{\radj{f}}\right) ([r_i, r_j])$. Therefore, $\ZB_\dgr^\Gfunc = \ZB_\dgr^\Ffunc \circ \left(\overline{\radj{f}}\right)$. Hence, $(\ladj{f}, \radj{f})$ is a morphism from $\ZB_\dgr^\Ffunc$ to $\ZB_\dgr^\Gfunc$.
\end{proof}

We now apply $\times$-Linear Orthogonal Inversion to the intersection-monotone space function $\ZB_\dgr^\Ffunc : \overline{\lp}^\times \to \gr(C_\dgr^K)$ and obtain the Grassmannian persistence diagram $$\prhi \left(\ZB_\dgr^\Ffunc\right) : \overline{\lp}^\times \to \gr\left( C_\dgr^K \right).$$

\begin{definition}[Degree-$\dgr$ Grassmannian persistence diagram]\label{defn: 1p gpd from zb}
    Let $\Ffunc : \lp \to \subcx (K)$ be a filtration. For any $\dgr\geq 0$, the map $$\prhi \left(\ZB_\dgr^\Ffunc\right) : \overline{\lp}^\times \to \gr \left (C_\dgr^K \right)$$ is called \emph{the degree-$\dgr$ Grassmannian persistence diagram} of $\Ffunc$ (obtained from  birth-death spaces).
\end{definition}

Observe that the degree-$\dgr$ Grassmannian persistence diagram of a filtration $\Ffunc$ obtained from birth-death spaces is indeed a Grassmannian persistence diagram in the sense of~\cref{defn: grassmannian persistence diagrams}. This can be seen from the fact that $\ZB_\dgr^\Ffunc$ is an object in $\inn\left(C_\dgr^K\right)$ (\cref{prop: zb functor}) and $\prhi$ is a functor from $\inn\left(C_\dgr^K\right)$ to the category of Grassmannian persistence diagrams $\inndgm\left(C_\dgr^K\right)$ (\cref{prop: prhi is functor}).

By~\cref{thm: general stability} and~\cref{prop: zb functor}, we immediately conclude that degree-$\dgr$ Grassmannian persistence diagrams are edit distance stable. 

\begin{theorem}[Stability]{\label{thm: stability of prhi of zb}}
    Let $\Ffunc$ and $\Gfunc$ be two filtrations of a fixed finite simplicial complex $K$. Then, for any degree $\dgr \geq  0$, we have
    \[
    d_{\inndgm\left(C_\dgr^K\right)}^E \left(\prhi \left(\ZB_\dgr^\Ffunc\right), \prhi \left(\ZB_\dgr^\Gfunc\right) \right) \leq d_{\Fil(K)}^E (\Ffunc, \Gfunc).
    \]
\end{theorem}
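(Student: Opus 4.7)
The plan is to simply compose the two stability/functoriality results already established: \cref{prop: zb functor} shows $\ZB_\dgr^{(-)}$ is a functor $\Fil(K) \to \inn(C_\dgr^K)$, and \cref{thm: general stability} gives that $\prhi : \inn(C_\dgr^K) \to \inndgm(C_\dgr^K)$ is $1$-Lipschitz with respect to the edit distances. The composition $\prhi \circ \ZB_\dgr^{(-)}$ should therefore also be $1$-Lipschitz, which is precisely what the theorem asserts. The only thing to verify carefully is that $\ZB_\dgr^{(-)}$ itself preserves the cost of morphisms, so that it is $1$-Lipschitz between the edit distances on $\Fil(K)$ and $\inn(C_\dgr^K)$.

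First, I would recall the path-based formulation of the edit distance. Given any path
\[
\mathcal{P} : \; \Ffunc \xleftrightarrow{f_1} \Ffunc_1 \xleftrightarrow{f_2} \cdots \xleftrightarrow{f_k} \Gfunc
\]
between $\Ffunc$ and $\Gfunc$ in $\Fil(K)$, each morphism $f_i$ is determined by a Galois connection $(\ladj{f_i}, \radj{f_i})$ whose cost is $\dis(\ladj{f_i})$. The proof of \cref{prop: zb functor} shows that the same Galois connection $(\ladj{f_i}, \radj{f_i})$ witnesses a morphism $\ZB_\dgr^{\Ffunc_{i-1}} \to \ZB_\dgr^{\Ffunc_i}$ in $\inn(C_\dgr^K)$, and the cost of a morphism in $\inn(C_\dgr^K)$ is defined to be $\dis(\ladj{f_i})$, the same as in $\Fil(K)$. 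Thus $\ZB_\dgr^{(-)}$ induces a path
\[
\ZB_\dgr^\Ffunc \xleftrightarrow{f_1} \ZB_\dgr^{\Ffunc_1} \xleftrightarrow{f_2} \cdots \xleftrightarrow{f_k} \ZB_\dgr^\Gfunc
\]
in $\inn(C_\dgr^K)$ having the identical total cost as $\mathcal{P}$.

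Next, by applying $\prhi$ to this induced path and invoking the proof of \cref{thm: general stability} (or rather, the key mechanism inside it: the fact that $\prhi$ sends a morphism determined by $(\ladj{f_i}, \radj{f_i})$ to a morphism in $\inndgm(C_\dgr^K)$ determined by the same Galois connection, with matching cost $\dis(\ladj{f_i})$), I obtain a path in $\inndgm(C_\dgr^K)$ from $\prhi(\ZB_\dgr^\Ffunc)$ to $\prhi(\ZB_\dgr^\Gfunc)$ whose total cost again equals that of $\mathcal{P}$. Taking the infimum over all paths $\mathcal{P}$ between $\Ffunc$ and $\Gfunc$ in $\Fil(K)$ then yields
\[
d_{\inndgm(C_\dgr^K)}^E\!\left(\prhi(\ZB_\dgr^\Ffunc),\, \prhi(\ZB_\dgr^\Gfunc)\right) \;\leq\; d_{\Fil(K)}^E(\Ffunc, \Gfunc).
\]

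There is essentially no main obstacle: the heavy lifting has been carried out in \cref{prop: zb functor} and \cref{thm: general stability}. The only mild subtlety is bookkeeping — making sure the cost on each category is defined via the distortion of the left adjoint consistently, so that cost is literally preserved (not merely controlled) when passing a morphism through $\ZB_\dgr^{(-)}$ and then through $\prhi$. Because both categories use the same convention, this reduces to an application of the definitions, and the result follows immediately.
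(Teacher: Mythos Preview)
Your proposal is correct and follows essentially the same route as the paper: use \cref{prop: zb functor} to transport a path in $\Fil(K)$ to a path of equal cost in $\inn(C_\dgr^K)$, then apply \cref{thm: general stability} to pass to $\inndgm(C_\dgr^K)$ without increasing cost. The paper states these two steps as separate inequalities and chains them, whereas you track a single path through both functors, but the argument is the same.
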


\begin{proof}
    Let $\Ffunc : \lp_1 \to \subcx (K)$ and $\Gfunc : \lp_2 \to \subcx (K)$ be two filtrations. By~\cref{prop: zb functor}, any path between the filtrations $\Ffunc$ and $\Gfunc$ in the category $\Fil (K)$ induces a path between the intersection-monotone space functions $\ZB_\dgr^\Ffunc$ and $\ZB_\dgr^\Gfunc$ in the category $\inn\left(C_\dgr^K\right)$ with the same cost. Thus, 
    \[
    d_{\inn\left(C_\dgr^K\right)}^E \left(\ZB_\dgr^\Ffunc, \ZB_\dgr^\Gfunc\right) \leq d_{\Fil(K)}^E (\Ffunc, \Gfunc). 
    \]
    By~\cref{thm: general stability}, we have that
    \[
    d_{\inndgm\left(C_\dgr^K\right)}^E \big(\prhi \left(\ZB_\dgr^\Ffunc\right), \prhi \left(\ZB_\dgr^\Gfunc\right)\big) \leq d_{\inn\left(C_\dgr^K\right)}^E \left(\ZB_\dgr^\Ffunc, \ZB_\dgr^\Gfunc\right).
    \]
    Thus, we obtain the desired inequality:
    \[
    d_{\inndgm\left(C_\dgr^K\right)}^E \left(\prhi \left(\ZB_\dgr^\Ffunc\right), \prhi \left(\ZB_\dgr^\Gfunc\right) \right) \leq d_{\Fil(K)}^E (\Ffunc, \Gfunc).
    \]
\end{proof}

\subsection{Interpretation and Canonicality}\label{subsec: inter and canon}
Notice that $\prhi \left(\ZB_\dgr^\Ffunc\right)$ assigns a vector subspace of $C_\dgr^K$ to every segment in $\Seg(\lp)$. As will be proven in~\cref{prop: dim of loi is classical pd}, the dimension of the vector space $\prhi \left(\ZB_\dgr^\Ffunc \right)((\ell_i, \ell_j))$ is exactly the multiplicity of the segment $(\ell_i,\ell_j)$ in the classical degree-$\dgr$ persistence diagram of the filtration $\Ffunc: \lp \to \subcx(K)$. Since the multiplicity of the segment $(\ell_i, \ell_j)$ in the persistence diagram counts the number of topological features that are born at $\ell_i$ and die at $\ell_j$, we expect that every cycle in $\prhi \left(\ZB_\dgr^\Ffunc \right)((\ell_i, \ell_j))$ is born exactly at $\ell_i$ and dies exactly at $\ell_j$ in the sense of~\cref{def:bd-cycles}. Indeed, this is the case as we show now.

\begin{theorem}\label{prop: born and dies exactly}
    Let $\Ffunc : \lp =  \{\ell_1 < \cdots <\ell_n\} \to \Fil(K)$ be a $1$-parameter filtration and let $z\in \prhi \left(\ZB_\dgr^\Ffunc\right) ((\ell_i, \ell_j))$ be a nonzero cycle. Then, $z$ is born precisely at $\ell_i$ and dies precisely at $\ell_j$.
\end{theorem}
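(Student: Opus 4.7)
The plan is to unwind the definitions and use the characterization of $\prhi$ coming from \cref{prop: loi and goi agrees}. Recall that for any intersection-monotone space function $\overline{\Ffunc}$, that proposition (combined with \cref{eqn: road to generalization}) gives
\[
\prhi\left(\overline{\Ffunc}\right)((\ell_i,\ell_j)) \;=\; \overline{\Ffunc}((\ell_i,\ell_j)) \,\ominus\, \left(\sum_{I <_\times (\ell_i,\ell_j)} \overline{\Ffunc}(I)\right),
\]
and by the definition of $\ominus$ (\cref{defn: difference of spaces}) this means
\[
\prhi\left(\overline{\Ffunc}\right)((\ell_i,\ell_j)) \;=\; \overline{\Ffunc}((\ell_i,\ell_j)) \,\cap\, \Bigl(\textstyle\sum_{I <_\times (\ell_i,\ell_j)} \overline{\Ffunc}(I)\Bigr)^{\!\perp}.
\]

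Applying this with $\overline{\Ffunc} = \ZB_\dgr^\Ffunc$, a nonzero $z \in \prhi\left(\ZB_\dgr^\Ffunc\right)((\ell_i,\ell_j))$ satisfies two conditions at once: first, $z \in \ZB_\dgr^\Ffunc((\ell_i,\ell_j))$, which is precisely the first requirement of \cref{def:bd-cycles}. Second, $z$ is orthogonal to every element of $\sum_{(a,c) <_\times (\ell_i,\ell_j)} \ZB_\dgr^\Ffunc((a,c))$. The key observation is that a nonzero vector cannot lie in a subspace to which it is orthogonal, since then it would be orthogonal to itself and thus zero. Hence $z \notin \sum_{(a,c) <_\times (\ell_i,\ell_j)} \ZB_\dgr^\Ffunc((a,c))$, which is exactly the second requirement of \cref{def:bd-cycles}.

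The two conditions together say precisely that $z$ has lifetime $(\ell_i,\ell_j)$ in the sense of \cref{def:bd-cycles}, i.e.\ $z$ is born precisely at $\ell_i$ and dies precisely at $\ell_j$. There is no real obstacle here beyond correctly invoking \cref{prop: loi and goi agrees} to rewrite $\prhi$ as a single $\ominus$; the nontrivial algebraic content of \cref{thm: prhi is monoidal inversion} and \cref{prop: linAlg Mobius} has already been paid for in earlier sections, and the present statement falls out directly from the orthogonality built into the construction of $\prhi$.
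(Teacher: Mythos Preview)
Your proof is correct and follows essentially the same approach as the paper: both rewrite $\prhi\left(\ZB_\dgr^\Ffunc\right)((\ell_i,\ell_j))$ as $\ZB_\dgr^\Ffunc((\ell_i,\ell_j)) \cap \bigl(\sum_{I<_\times(\ell_i,\ell_j)}\ZB_\dgr^\Ffunc(I)\bigr)^\perp$ and then read off the two conditions of \cref{def:bd-cycles}. The only cosmetic difference is that the paper invokes \cref{prop: linAlg Mobius} directly and then observes that the sum over immediate predecessors equals the sum over all $I<_\times(\ell_i,\ell_j)$, whereas you cite \cref{prop: loi and goi agrees} (and \cref{eqn: road to generalization}), which already packages that step.
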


\begin{proof}
Let $z \in \prhi \left( \ZB_\dgr^\Ffunc \right)((\ell_i, \ell_j))$ be a nonzero cycle. As noted in~\cref{rem: importance and generalization}, by~\cref{prop: linAlg Mobius}, we have
\begin{align*}
    \prhi \left( \ZB_\dgr^\Ffunc \right)((\ell_i, \ell_j)) &= \ZB_\dgr^\Ffunc ((\ell_i, \ell_j)) \ominus \left( \ZB_\dgr^\Ffunc ((\ell_{i-1}, \ell_j)) + \ZB_\dgr^\Ffunc ((\ell_i, \ell_{j-1}))\right) \\
    &= \ZB_\dgr^\Ffunc ((\ell_i, \ell_j)) \ominus \left(\sum_{(\ell_{i'}, \ell_{j'}) <_\times (\ell_i, \ell_j)} \ZB_\dgr^\Ffunc ((\ell_{i'}, \ell_{j'})) \right)\\
    &= \ZB_\dgr^\Ffunc ((\ell_i, \ell_j)) \cap \left( \sum_{(\ell_{i'}, \ell_{j'}) <_\times (\ell_i, \ell_j)} \ZB_\dgr^\Ffunc ((\ell_{i'}, \ell_{j'})) \right)^\perp.
\end{align*}
As $z$ is nonzero, we conclude that $z \notin \sum_{(\ell_{i'}, \ell_{j'}) <_\times (\ell_i, \ell_j)} \ZB_\dgr^\Ffunc ((\ell_{i'}, \ell_{j'}))$ and $z\in \ZB_\dgr^\Ffunc ((\ell_i, \ell_j))$. Therefore, $z$ is born at $\ell_i$ and dies at $\ell_j$.
\end{proof}

\begin{remark}
    Note that the result above could also be deduced from the equivalence between $\prhi$ and $\goi$, along with the fact that $\goi$ produces cycle spaces consisting of cycles that are born precisely at $\ell_i$ and die precisely at $\ell_j$, as established in \cite[Theorem 5]{gpd-multi}.
\end{remark}

\begin{remark}[Canonicality] 
A key property of the degree-$\dgr$ Grassmannian persistence diagram is that it assigns a vector subspace of the chain space $C_\dgr^K$ to each segment in $\lp$. Furthermore,  this assignment provides a consistent choice of cycles in the sense of \cref{prop: born and dies exactly}.  Moreover, it is canonical in the sense that it remains independent of superfluous choices, such as relabeling (i.e., permuting) the vertices of $K$, as established in \cite[Proposition 4.4]{gpd-multi}.
\end{remark}

\begin{figure}
    \centering
    \includegraphics[scale=14.5]{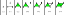}
    \caption{Filtration $\Ffunc : \{ 0 < 1 < 2 < 3 < 4 < 5 < 6 \} \to \subcx(K)$.}
    \label{fig:filtration}
\end{figure}

\begin{example}\label{ex: table 1p gpd}
    Let $\Ffunc$ be the filtration depicted in~\cref{fig:filtration}. For $\dgr=0,1$, we compute the degree-$\dgr$ Grassmannian persistence diagram of $\Ffunc$, i.e. the function $\prhi \left(\ZB_\dgr^\Ffunc\right)$. For the segments in the support of $\prhi \left(\ZB_\dgr^\Ffunc\right)$, we list the corresponding nonzero vector space below. 
    \begin{center}
    \begin{tabular}{|c|c|}
    \hline
    \multicolumn{2}{|c|}{$\dgr=0$} \\
    \hline
         $\prhi \left(\ZB_0^\Ffunc\right) ((1,1))$ & $\spn\{ b-a \}$ \\
         \hline
         $\prhi \left(\ZB_0^\Ffunc\right) ((1,2))$ & $\spn\{ 2c - (a+b) \}$  \\
         \hline
         $\prhi \left(\ZB_0^\Ffunc\right) ((3,4))$ & $\spn\{ 3d - (a+b+c) \}$  \\
         \hline
         $\prhi \left(\ZB_0^\Ffunc\right) ((0,\infty))$ & $\spn\{ a \}$  \\
         \hline
    \end{tabular}

    \begin{tabular}{|c|c|}
    \hline
    \multicolumn{2}{|c|}{$\dgr=1$} \\
    \hline
         $\prhi \left(\ZB_1^\Ffunc\right) ((2,2))$ & $\spn\{ ab - ac + bc \}$ \\ 
         \hline
         $\prhi \left(\ZB_1^\Ffunc\right) ((5,6))$ & $\spn\{ 3cd - 3bd + 2bc - ab + ac \}$ \\
         \hline
    \end{tabular}
    \end{center}
    Notice that the generators for an segment $(i,j)$ in the table above correspond to a cycle that is born precisely at $i$ and dies precisely at $j$, as claimed in~\cref{prop: born and dies exactly}.
\end{example}

\begin{remark}

    Note that, in their paper \cite{harmonicph} on \emph{Harmonic Persistent Homology}, Basu and Cox provide an alternative way to obtain a vector subspace of $C_\dgr^K$ for every segment in the persistence diagram of a filtration. Our Grassmannian persistence diagram has two main advantages over their construction:
    \begin{enumerate}
    \item[(1)] First, for every segment $(b,d)$ in the persistence diagram of a filtration, every nonzero cycle in the subspace assigned to this segment through the Grassmannian persistence diagram is guaranteed to be born at time $b$ and become a boundary at time $d$; see~\cref{prop: born and dies exactly}. This is not the case with the construction of Basu and Cox; see \cite[page 193]{harmonicph}. 
    \item[(2)] Second, their stability result requires certain genericity conditions on the persistence diagrams whereas our edit distance stability result,~\cref{thm: stability of prhi of zb}, assumes no such genericity condition.

    \end{enumerate}

    Despite these apparent dissimilarities, our Grassmannian persistence diagram construction can be related to the construction of Basu and Cox in~\cite{harmonicph}. In~\cref{subsec: harmonicph}, we provide an explicit isomorphism, defined as a projection, from $\prhi \left(\ZB_\dgr^\Ffunc\right)((\ell_i, \ell_j))$ to the construction from~\cite{harmonicph}. Note that item (1) above indicates that this isomorphism is, in general, not trivial.  

\end{remark}

\begin{remark}[Strength]
    Grassmannian persistence diagrams are stronger invariants than classical persistence diagrams. That the Grassmannian persistence diagram is at least as strong as the classical one follows from the fact that the degree-$\dgr$ persistence diagram of a filtration $\Ffunc$ can be recovered from the degree-$\dgr$ Grassmannian persistence diagram $\prhi \left(\ZB_\dgr^\Ffunc\right) : \overline{\lp}^\times \to \gr\left(C_\dgr^K\right)$ by recording the dimension $\dim \left(\prhi \left(\ZB_\dgr^\Ffunc\right) ((\ell_i, \ell_j))\right)$ for every segment $(\ell_i, \ell_j) \in \Seg(\lp) \setminus \diag (\lp)$. To justify that Grassmannian persistence diagrams are indeed strictly stronger than classical persistence diagrams, in~\cref{appendix: edit} (see \cref{fig:degree0example}), we exhibit two different filtrations whose degree-$0$ persistence diagrams coincide whereas their degree- $0$ Grassmannian persistence diagrams are distinct. Moreover, in~\cref{appendix: edit}, we also show that the edit distance stability of Grassmannian persistence diagrams improves upon the edit distance stability of persistence diagrams~\cite[Theorem 8.4]{edit} in a sense made precise by \cref{thm: classical pd lower bound} below.
\end{remark}

\subsection{Computation}\label{subsec: computation of loi zb}

    Note that the definition of degree-$\dgr$ Grassmannian persistence diagram of a filtration $\Ffunc:\{\ell_1<\cdots<\ell_m\}\to \subcx(K)$ directly yields an algorithm for its computation. That is, for every segment $(\ell_i,\ell_j)$ we compute and store $\ZB_\dgr^\Ffunc((\ell_i,\ell_j))$ and then apply the definition of $\times$-Linear Orthogonal Inversion,~\cref{defn: x-harmonic inversion}. An immediate improvement of this naive algorithm would be to make use of~\cref{prop: linAlg Mobius}, which states that the $\times$-Linear Orthogonal Inversion of birth-death spaces can be computed by employing the operation $\ominus$ once, as opposed to the definition of $\times$-Linear Orthogonal Inversion which requires involving the operation $\ominus$ three times. That is,
    \begin{align*}
        \prhi \left(\ZB_\dgr^\Ffunc\right) ((\ell_i, \ell_j)) :&= \big ( \ZB_\dgr^\Ffunc((\ell_i, \ell_{j})) \ominus \ZB_\dgr^\Ffunc((\ell_i, \ell_{j-1})) \big ) \ominus \big ( \ZB_\dgr^\Ffunc((\ell_{i-1}, \ell_{j})) \ominus \ZB_\dgr^\Ffunc((\ell_{i-1}, \ell_{j-1}))  \big ) \\
        &= \ZB_\dgr^\Ffunc ((\ell_i, \ell_j)) \ominus \left( \ZB_\dgr^\Ffunc((\ell_{i-1},\ell_j)) + \ZB_\dgr^\Ffunc((\ell_{i},\ell_{j-1})) \right).
    \end{align*}

More precisely, we have the following algorithm to compute the degree-$\dgr$ Grassmannian persistence diagram of a filtration $\Ffunc$.

\begin{algorithm}[H]
    \caption{Compute degree-$\dgr$ Grassmannian persistence diagram of a filtration}\label{algo: gpd}
    \begin{algorithmic}[1]
     \STATE \textbf{Input:} $\Ffunc : \lp = \{ \ell_1<\cdots <\ell_m \} \to \subcx(K)$
     \STATE \textbf{Output:} $\prhi \left(\ZB_\dgr^\Ffunc\right) : \Seg(\lp) \to \gr(C_\dgr^K)$
     \FOR{$i$ in $\{ 1,\ldots , m \}$}
        \FOR{$j$ in $\{i,\ldots , m , \infty \}$}
            \STATE Compute and store $\ZB_\dgr^\Ffunc((\ell_i, \ell_j))$
            \STATE Compute $\prhi \left(\ZB_\dgr^\Ffunc\right) ((\ell_i, \ell_j)) = \ZB_\dgr^\Ffunc ((\ell_i, \ell_j)) \ominus \left( \ZB_\dgr^\Ffunc((\ell_{i-1},\ell_j)) + \ZB_\dgr^\Ffunc((\ell_{i},\ell_{j-1})) \right)$
        \ENDFOR
    \ENDFOR
    \RETURN{}  $\prhi \left(\ZB_\dgr^\Ffunc\right) : \Seg(\lp) \to \gr(C_\dgr^K)$
    \end{algorithmic}
\end{algorithm}

    In~\cref{subsec: computational complexity}, we study this algorithm and conclude that its time complexity is
\[
O\left( m^2 \cdot \left( n_\dgr^K \cdot n_{\dgr-1}^K \cdot \min\left(n_\dgr^K, n_{\dgr-1}^K\right) + n_{\dgr+1}^K \cdot n_{\dgr}^K \cdot \min\left(n_{\dgr+1}^K, n_{\dgr}^K\right) +    \left(n_\dgr^K\right)^3 \right)    \right),
\]
where $n_\dgr^K$ denotes the number of $\dgr$-simplices of $K$. If we assume that $n_{\dgr-1}^K$ and $n_\dgr^{K}$ are bounded by $n_{\dgr+1}^K$, i.e., $n_{\dgr-1}^K = O\left( n_{\dgr+1}^K \right)$ and $n_\dgr^K = O\left( n_{\dgr+1}^K \right)$, then, the computational complexity boils down to $$O\left(m^2 \cdot \left ( n_{\dgr+1}^K \right )^3\right).$$ Note that the conditions $n_{\dgr-1}^K = O\left( n_{\dgr+1}^K \right)$ and $n_\dgr^K = O\left( n_{\dgr+1}^K \right)$ hold in many practical scenarios, especially for Vietoris-Rips and \v{C}ech complexes. For instance, considering Vietoris-Rips filtration of a finite metric space $(X,d_X)$, for $\dgr=1$, one finds that $n_{\dgr-1}^K = |X|$, $n_\dgr^K = O \left( |X|^2 \right)$ and $n_{\dgr+1}^K = O \left( |X|^3\right)$.

\section{Relations to Other Constructions}\label{sec: relation to other}

In this section we explore the relations between the Grassmannian persistence diagrams and other constructions. 

In~\cref{subsec: relation to classical pd}, we show that 1-parameter Grassmannian persistence diagrams generalize the classical notion of persistence diagrams. We establish this by proving that the classical persistence diagram of a 1-parameter filtration $\Ffunc$ can be derived from its Grassmannian persistence diagram; see~\cref{prop: dim of loi is classical pd}. Additionally, we demonstrate that the edit distance between classical persistence diagrams provides a lower bound for the edit distance between the Grassmannian persistence diagrams of the corresponding filtrations; see~\cref{thm: classical pd lower bound}. We illustrate the fact that Grassmannian persistence diagrams are strictly more discriminative than the classical persistence diagrams in~\cref{example: same pd different 0-prhi}.

In~\cref{subsec: harmonicph}, we examine the relationship between Grassmannian persistence diagrams and the notion of \emph{Harmonic Barcodes}, introduced by Basu and Cox in~\cite{harmonicph}. Harmonic Barcodes closely resemble Grassmannian persistence diagrams in that they also associate a subspace of the cycle space to each segment $(\ell_i, \ell_j)$. We prove that the subspaces determined by Grassmannian persistence diagrams and Harmonic Barcodes are isomorphic via a specific projection; see~\cref{thm: proj isom}.

In~\cref{subsec: orthogonal inversion of Laplacian kernels}, we establish a connection between Grassmannian persistence diagrams and persistence Laplacians. Specifically, we show in~\cref{thm: equality of different HIs} that the Grassmannian persistence diagram of a $1$-parameter filtration can also be constructed from persistent Laplacian kernels via another variant of orthogonal inversion, namely $\supseteq$-Linear Orthogonal Inversion (\cref{defn: sup-harmonic inversion}), which is tailored for invariants that are \emph{compatible} with the reverse inclusion order.

In~\cref{subsec: treegrams}, we demonstrate that the notion of treegrams, which generalizes dendrograms, is equivalent to degree-0 Grassmannian persistence diagrams. While we first establish this equivalence through a direct but non-constructive argument in~\cref{thm: equivalence of treegrams and degree 0 orthogonal inversions}, we later provide an algorithmic/constructive proof in~\cref{appendix: construction}.

\subsection{Classical Persistence Diagrams}\label{subsec: relation to classical pd}

In this section, we demonstrate that classical persistence diagrams can be recovered from $1$-parameter Grassmannian persistence diagrams; see~\cref{prop: dim of loi is classical pd}. Consequently, Grassmannian persistence diagrams constitute strictly stronger invariants than classical persistence diagrams. This enhanced discriminating power is further illustrated in~\cref{example: same pd different 0-prhi}. In addition, we show that the edit distance between classical persistence diagrams (as defined in~\cref{defn: cost of morphism between nonnegative functions}) provides a lower bound for the edit distance between Grassmannian persistence diagrams; see~\cref{thm: classical pd lower bound}.

\begin{definition}[Classical Persistence Diagrams~\cite{cohen-steiner2007}]\label{defn: classical 1-parameter pd}
    Let $\Ffunc : \lp  \to \subcx(K)$ be a $1$-parameter filtration. Let $\dgr\geq 0$ be an integer and write $\lp = \{ \ell_1<\cdots<\ell_n\}$. The \emph{classical degree-$\dgr$ persistence diagram} of $\Ffunc$ is defined to be the function $\pd_\dgr^\Ffunc : \Seg(\lp) \to \Z_{\geq 0}$ given by
    \begin{align*}
        \pd_\dgr^\Ffunc ((\ell_i, \ell_j)) &:= \beta_\dgr^{\ell_{i}, \ell_{j-1}} -\beta_\dgr^{\ell_{i-1}, \ell_{j-1}} + \beta_\dgr^{\ell_{i-1}, \ell_{j}} - \beta_\dgr^{\ell_{i}, \ell_{j}},\\
        \pd_\dgr^\Ffunc ((\ell_i, \infty)) &:= \beta_\dgr^{\ell_{i}, \ell_{n}} - \beta_\dgr^{\ell_{i-1}, \ell_{n}}, \\
        \pd_\dgr^\Ffunc ((\ell_i, \ell_i)) &:= 0,
    \end{align*}
    where $\beta_q^{\cdot, \cdot}$ denote the persistent Betti numbers as defined in~\cref{defn: persistent betti rank inv}.
\end{definition}

We now present our result that Grassmannian persistence diagrams recover classical persistence diagrams.

\begin{proposition}\label{prop: dim of loi is classical pd}
    Let $\Ffunc : \lp  \to \subcx(K)$ be a $1$-parameter filtration. Let $\dgr\geq 0$ be an integer and write $\lp = \{ \ell_1<\cdots<\ell_n\}$. Then, for any $(\ell_i, \ell_j) \in \Seg(\lp) \setminus \diag(\lp)$, we have
    \[
    \dim \left( \prhi \left( \ZB_\dgr^\Ffunc \right) ((\ell_i, \ell_j))  \right) = \pd_\dgr^\Ffunc((\ell_i, \ell_j)).
    \]
\end{proposition}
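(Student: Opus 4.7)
The plan is to reduce the statement to a dimension counting argument using \cref{cor: linAlg Mobius dim} (which gives the dimension of the $\times$-Linear Orthogonal Inverse as an alternating sum of dimensions of birth-death spaces) and then to substitute the classical identity relating birth-death spaces to persistent Betti numbers.

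First, I would observe that for every $(\ell_i,\ell_j) \in \Seg(\lp)$ with $\ell_i \leq \ell_j$, the image of $\iota_\dgr^{\ell_i,\ell_j}: H_\dgr(\Ffunc(\ell_i)) \to H_\dgr(\Ffunc(\ell_j))$ is isomorphic to the quotient $\Zfunc_\dgr(\Ffunc(\ell_i)) / \left(\Zfunc_\dgr(\Ffunc(\ell_i)) \cap \Bfunc_\dgr(\Ffunc(\ell_j))\right)$. Thus
\[
\beta_\dgr^{\ell_i,\ell_j} = \dim \Zfunc_\dgr(\Ffunc(\ell_i)) - \dim \ZB_\dgr^\Ffunc((\ell_i,\ell_j)),
\]
so $\dim \ZB_\dgr^\Ffunc((\ell_i,\ell_j)) = \dim \Zfunc_\dgr(\Ffunc(\ell_i)) - \beta_\dgr^{\ell_i,\ell_j}$. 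Note also that $\dim \ZB_\dgr^\Ffunc((\ell_i,\infty)) = \dim \Zfunc_\dgr(\Ffunc(\ell_i))$, which is consistent with the convention $\beta_\dgr^{\ell_i,\infty} = 0$ being implicit if we think of $\ZB_\dgr^\Ffunc((\ell_i, \infty))$ as the full cycle space.

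Next, for a bounded off-diagonal segment $(\ell_i,\ell_j)$ with $i<j$, \cref{cor: linAlg Mobius dim} applied to $A = \ZB_\dgr^\Ffunc((\ell_i,\ell_j))$, $B = \ZB_\dgr^\Ffunc((\ell_i,\ell_{j-1}))$, $C = \ZB_\dgr^\Ffunc((\ell_{i-1},\ell_j))$ (using the intersection property of intersection-monotone space functions established in \cref{prop: zb functor} to identify $B \cap C = \ZB_\dgr^\Ffunc((\ell_{i-1},\ell_{j-1}))$) yields
\[
\dim \prhi(\ZB_\dgr^\Ffunc)((\ell_i,\ell_j)) = \dim \ZB_\dgr^\Ffunc((\ell_i,\ell_j)) - \dim \ZB_\dgr^\Ffunc((\ell_i,\ell_{j-1})) - \dim \ZB_\dgr^\Ffunc((\ell_{i-1},\ell_j)) + \dim \ZB_\dgr^\Ffunc((\ell_{i-1},\ell_{j-1})).
\]
Substituting the identity from the first paragraph, the $\dim \Zfunc_\dgr(\Ffunc(\ell_i))$ and $\dim \Zfunc_\dgr(\Ffunc(\ell_{i-1}))$ terms cancel pairwise, leaving
\[
\dim \prhi(\ZB_\dgr^\Ffunc)((\ell_i,\ell_j)) = \beta_\dgr^{\ell_i,\ell_{j-1}} - \beta_\dgr^{\ell_{i-1},\ell_{j-1}} + \beta_\dgr^{\ell_{i-1},\ell_j} - \beta_\dgr^{\ell_i,\ell_j},
\]
which is precisely $\pd_\dgr^\Ffunc((\ell_i,\ell_j))$ by \cref{defn: classical 1-parameter pd}.

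For unbounded segments $(\ell_i,\infty)$, I would run the same computation with $\ZB_\dgr^\Ffunc((\ell_i,\infty)) = \Zfunc_\dgr(\Ffunc(\ell_i))$, obtaining cancellation of the $\dim \Zfunc_\dgr$ terms and leaving $\beta_\dgr^{\ell_i,\ell_n} - \beta_\dgr^{\ell_{i-1},\ell_n}$, again matching \cref{defn: classical 1-parameter pd}. The boundary cases when $i=1$ are handled by the same computation under the convention of \cref{remark: convention edge cases}, since the $\ell_0$-indexed terms vanish on both sides. The only mild subtlety is bookkeeping to ensure that the identity $B \cap C = \ZB_\dgr^\Ffunc((\ell_{i-1},\ell_{j-1}))$ used in applying \cref{cor: linAlg Mobius dim} is indeed the intersection condition of \cref{def: int-monotone space function}, which was verified for birth-death spaces in \cref{prop: zb functor}; this is the only place where the structure of $\ZB_\dgr^\Ffunc$ beyond order-preservation is used.
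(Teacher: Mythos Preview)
Your proof is correct and follows essentially the same approach as the paper: both apply \cref{cor: linAlg Mobius dim} to express $\dim\prhi(\ZB_\dgr^\Ffunc)((\ell_i,\ell_j))$ as the alternating sum of dimensions of birth-death spaces, and then identify this with the persistence-diagram multiplicity. The only difference is that the paper outsources the last identification to \cite[Section~9.1]{edit}, whereas you supply it directly via the identity $\beta_\dgr^{\ell_i,\ell_j} = \dim \Zfunc_\dgr(\Ffunc(\ell_i)) - \dim \ZB_\dgr^\Ffunc((\ell_i,\ell_j))$, making your argument self-contained.
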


\begin{proof}
    By~\cref{cor: linAlg Mobius dim}, for every segment $(\ell_i, \ell_j) \in \Seg(\lp) \setminus \diag(\lp)$, the dimension of the vector space $\prhi \left(\ZB_\dgr^\Ffunc \right)((\ell_i, \ell_j))$ is given by 
\[
\dim \left(\ZB_\dgr^\Ffunc ((\ell_i, \ell_j))\right) - \dim \left(\ZB_\dgr^\Ffunc ((\ell_i, \ell_{j-1}))\right) +\dim \left(\ZB_\dgr^\Ffunc ((\ell_{i-1}, \ell_{j-1}))\right) - \dim \left(\ZB_\dgr^\Ffunc ((\ell_{i-1}, \ell_j))\right).
\]

As shown in~\cite[Section 9.1]{edit}, this number is precisely the multiplicity of the segment $(\ell_i,\ell_j)$ in the classical degree-$\dgr$ persistence diagram of the filtration $\Ffunc : \lp \to \Fil(K)$. 
\end{proof}

We now present the edit distance stability of classical persistence diagrams and present our result that the edit distance between classical persistence diagrams is a lower bound the edit distance between Grassmannian persistence diagrams; see~\cref{thm: classical pd lower bound}.

\begin{definition}[Charge-preserving morphisms]
    Let $\lp_1$ and $\lp_2$ be two finite linearly ordered metric posets. Let $\omega_1 : \Seg\left(\lp_1\right) \to \Z_{\geq 0}$ and $\omega_2: \Seg\left(\lp_2\right) \to \Z_{\geq 0}$ be two (not necessarily order-preserving) non-negative integral functions. A~\emph{charge-preserving morphism} from $\omega_1$ to $\omega_2$ is any Galois connection $\ladj{f} : \lp_1 \leftrightarrows \lp_2 : \radj{f}$ such that
\[
\omega_2(J) = \sum_{I \in \left(\overline{\ladj{f}}\right)^{-1} (J)} \omega_1(I)
\]
for every $J \in \Seg(\lp_2) \setminus \diag(\lp_2)$.
\end{definition} 

\begin{notation}
    We denote by $\fnc_{\geq 0}$ the category where
    \begin{itemize}
        \item Objects are non-negative integral functions $\omega : \Seg(\lp)\to \Z_{\geq 0}$ where $\lp$ is any finite linearly ordered metric poset,
        \item Morphisms are charge-preserving morphisms.
    \end{itemize}
\end{notation}

\begin{definition}[Cost of a morphism in $\fnc_{\geq 0}$~{\cite[Section 7.3]{edit}}]\label{defn: cost of morphism between nonnegative functions}
    The \emph{cost} of a morphism $f = (\ladj{f},\radj{f})$ in $\fnc_{\geq 0}$, denoted $\cost_{\fnc_{\geq 0}} (f)$, is defined to be $\cost_{\fnc_{\geq 0}} (f) := \dis(\ladj{f})$, the distortion of the left adjoint $\ladj{f}$. 
\end{definition}

As the classical persistence diagrams are non-negative integral functions, they are objects in $\fnc_{\geq 0}$. Moreover, for a filtration $\Ffunc : \lp \to \subcx(K)$, $\partial_{(\Seg(\lp),\prodord)} \left( \ZB_\dgr^\Ffunc\right) = \pd_\dgr^\Ffunc$ on $\Seg(L) \setminus \diag(\lp)$ as shown in~\cite[Section 9.1]{edit}. Hence, the functorial pipeline of obtaining persistence diagrams, as outlined in~\cite{edit}, leads to the following stability result. 

\begin{theorem}[Edit distance stability of classical persistence diagrams]
    Let $\Ffunc : \lp_1 \to \subcx(K)$ and $\Gfunc : \lp_2 \to \subcx(K)$ be two $1$-parameter filtrations of a finite simplicial complex $K$ indexed by finite linearly ordered metric posets and let $\pd_\dgr^\Ffunc : \Seg(\lp_1) \to \Z_{\geq 0}$ and $\pd_\dgr^\Gfunc : \Seg(\lp_2) \to \Z_{\geq 0}$ be their respective degree-$\dgr$ persistence diagrams. Then, 
    \[
    d_{\fnc_{\geq 0}}^E \left( \pd_\dgr^\Ffunc ,\pd_\dgr^\Gfunc \right) \leq d_{\Fil(K)}^E (\Ffunc, \Gfunc).
    \]
\end{theorem}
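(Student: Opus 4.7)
The plan is to mirror the argument used for \cref{thm: stability of prhi of zb}: establish functoriality of the assignment $\Ffunc \mapsto \pd_\dgr^\Ffunc$ from $\Fil(K)$ to $\fnc_{\geq 0}$ in a way that preserves the cost of morphisms, after which the edit-distance inequality follows from the definition of edit distance as an infimum over paths.

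Concretely, I would first check that any morphism $f = (\ladj{f}, \radj{f})$ in $\Fil(K)$ from $\Ffunc : \lp_1 \to \subcx(K)$ to $\Gfunc : \lp_2 \to \subcx(K)$ induces a charge-preserving morphism from $\pd_\dgr^\Ffunc$ to $\pd_\dgr^\Gfunc$. By \cref{prop: zb functor} we already have $\ZB_\dgr^\Gfunc = \ZB_\dgr^\Ffunc \circ \overline{\radj{f}}$, and by taking dimensions this gives the pointwise equality $\dim \ZB_\dgr^\Gfunc = \left(\overline{\radj{f}}\right)^\sharp \dim \ZB_\dgr^\Ffunc$ as integer-valued functions on $\overline{\lp_2}^\times$. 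Since $\pd_\dgr^\Ffunc$ agrees with the algebraic M\"obius inverse $\partial_{\overline{\lp_1}^\times}\!\left(\dim \ZB_\dgr^\Ffunc\right)$ off the diagonal (by \cite[Section~9.1]{edit}, i.e.\ the very identity used in \cref{prop: dim of loi is classical pd} together with the formulas in \cref{prop: algebraic mobius inversion formulas}), an application of Rota's Galois Connection Theorem (\cref{thm:rgct}) with the Galois connection $\overline{\ladj{f}} : \overline{\lp_1}^\times \leftrightarrows \overline{\lp_2}^\times : \overline{\radj{f}}$ yields
\[
\left(\overline{\ladj{f}}\right)_\sharp \pd_\dgr^\Ffunc \;=\; \left(\overline{\ladj{f}}\right)_\sharp \partial_{\overline{\lp_1}^\times}\!\bigl(\dim \ZB_\dgr^\Ffunc\bigr) \;=\; \partial_{\overline{\lp_2}^\times}\!\bigl(\dim \ZB_\dgr^\Gfunc\bigr) \;=\; \pd_\dgr^\Gfunc
\]
on $\Seg(\lp_2)\setminus \diag(\lp_2)$, which is precisely the condition that $(\ladj{f},\radj{f})$ defines a charge-preserving morphism from $\pd_\dgr^\Ffunc$ to $\pd_\dgr^\Gfunc$ in $\fnc_{\geq 0}$.

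With functoriality established, the costs match tautologically: both $\cost_{\Fil(K)}(f)$ and $\cost_{\fnc_{\geq 0}}(f)$ are defined as $\dis(\ladj{f})$. Therefore any path $\mathcal P$ between $\Ffunc$ and $\Gfunc$ in $\Fil(K)$ is transported, morphism by morphism, to a path between $\pd_\dgr^\Ffunc$ and $\pd_\dgr^\Gfunc$ in $\fnc_{\geq 0}$ with identical total cost. Taking the infimum over all such paths yields the desired inequality
\[
d_{\fnc_{\geq 0}}^E\!\left(\pd_\dgr^\Ffunc, \pd_\dgr^\Gfunc\right) \;\leq\; d_{\Fil(K)}^E(\Ffunc, \Gfunc).
\]

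The only genuinely non-routine step is the RGCT computation identifying the pushforward of $\pd_\dgr^\Ffunc$ with $\pd_\dgr^\Gfunc$ off the diagonal; everything else is bookkeeping. A small wrinkle to handle carefully is the behavior on $\diag(\lp_2)$: the charge-preserving morphism condition is only imposed on non-diagonal segments, which is exactly the range where the classical persistence diagram agrees with the algebraic M\"obius inverse, so the diagonal values of $\pd_\dgr^\Ffunc$ and $\pd_\dgr^\Gfunc$ (which are both $0$ by \cref{defn: classical 1-parameter pd}) cause no issue. This is analogous to, but strictly simpler than, \cref{thm: general stability}, since here we invert dimensions (in an abelian group) rather than subspaces (in a monoid), so the classical RGCT suffices and no auxiliary function $\zeta_{\lp_2}$ is required.
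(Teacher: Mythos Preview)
Your proposal is correct and is precisely an unpacking of the ``functorial pipeline'' from \cite[Section~8]{edit} that the paper invokes by citation: functoriality of $\Ffunc\mapsto\pd_\dgr^\Ffunc$ via RGCT applied to $\dim\ZB_\dgr^\Ffunc$, together with the observation that the preimage under $\overline{\ladj{f}}$ of an off-diagonal segment contains only off-diagonal segments, so the diagonal discrepancy between $\pd_\dgr^\Ffunc$ and $\partial_{\overline{\lp_1}^\times}(\dim\ZB_\dgr^\Ffunc)$ is harmless. The paper's own proof is simply the one-line reference to that same argument.
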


\begin{proof}
    This result directly follows from the functorial pipeline established in~\cite[Section 8]{edit}.
\end{proof}

\begin{remark}
    It is important to emphasize the differences in the setup used here compared to~\cite[Section 8]{edit}. A key distinction is that here $\fnc_{\geq 0}$ consists of nonnegative functions defined over segments of finite linearly ordered posets, whereas the setup in~\cite[Section 8]{edit} was more general, considering functions valued in $\mathbb{Z}$ (i.e., signed persistence diagrams) defined over segments of finite lattices.

    In the more general setting, it was later observed that the edit distance between signed persistence diagrams could become trivial even when the diagrams differ. To address this issue, the authors proposed a modification in~\cite[Erratum]{edit-arxiv}. Nevertheless, their original approach remains valid and produces a meaningful, nontrivial distance when restricted to linearly ordered posets and nonnegative functions. In fact, the nontriviality of the edit distance between classical persistence diagrams is implicitly established in~\cite[Theorem 9.1]{edit}, where it is shown that the edit distance between classical persistence diagrams is bi-Lipschitz equivalent to the well-known bottleneck distance between these diagrams.
\end{remark}

We now show that the edit distance between classical persistence diagrams is a lower bound for the edit distance between $1$-parameter Grassmannian persistence diagrams.

\begin{theorem}[Lower bound]\label{thm: classical pd lower bound}
    Let $\Ffunc : \lp_1 \to \subcx(K)$ and $\Gfunc : \lp_2 \to \subcx(K)$ be two filtrations of a finite simplicial complex $K$ indexed by finite linearly ordered metric posets. Then, for any degree $\dgr\geq 0$, we have
    \begin{align*}
    d_{\fnc_{\geq 0}}^E \left( \pd_\dgr^\Ffunc, \pd_\dgr^\Gfunc\right) \leq d_{\inndgm\left(C_\dgr^K\right)}^E \left(\prhi \left(\ZB_\dgr^\Ffunc\right), \prhi \left(\ZB_\dgr^\Gfunc\right)\right).
    \end{align*}
\end{theorem}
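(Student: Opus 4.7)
The plan is to exhibit a cost-preserving functor $\mathcal{D} : \inndgm(C_\dgr^K) \to \fnc_{\geq 0}$ that sends $\prhi(\ZB_\dgr^\Ffunc)$ to $\pd_\dgr^\Ffunc$, at which point the inequality becomes automatic: any path in $\inndgm(C_\dgr^K)$ from $\prhi(\ZB_\dgr^\Ffunc)$ to $\prhi(\ZB_\dgr^\Gfunc)$ is sent to a path in $\fnc_{\geq 0}$ from $\pd_\dgr^\Ffunc$ to $\pd_\dgr^\Gfunc$ of equal cost, so the infima satisfy the claimed inequality. On objects, $\mathcal{D}$ sends a Grassmannian persistence diagram $\Mfunc : \Seg(\lp) \to \gr(C_\dgr^K)$ to the function that assigns $\dim \Mfunc(I)$ to each non-diagonal $I$ and $0$ to each diagonal segment. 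On morphisms, $\mathcal{D}$ forgets the diagonal correction term and sends $(f, \zeta_{\lp})$ to the Galois connection $f$ itself. That $\mathcal{D}\bigl(\prhi(\ZB_\dgr^\Ffunc)\bigr) = \pd_\dgr^\Ffunc$ is immediate from \cref{prop: dim of loi is classical pd} off-diagonal, and from the definitions on the diagonal. That costs are preserved holds by construction, since both $\cost_{\inndgm(C_\dgr^K)}$ and $\cost_{\fnc_{\geq 0}}$ are defined as $\dis(\ladj{f})$.

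The substantive point, which I expect to be the only real obstacle, is verifying that $\mathcal{D}$ is well-defined on morphisms: given a transversity-preserving morphism $(f, \zeta_{\lp_2}) : \Mfunc \to \Nfunc$, one must show that $f = (\ladj{f}, \radj{f})$ is a charge-preserving morphism from $\mathcal{D}(\Mfunc)$ to $\mathcal{D}(\Nfunc)$, i.e. that for every non-diagonal $J \in \Seg(\lp_2)$,
\[
\dim \Nfunc(J) \;=\; \sum_{I \in (\overline{\ladj{f}})^{-1}(J)} \dim \Mfunc(I).
\]
To extract this from the defining M\"obius equivalence $\left(\overline{\ladj{f}}\right)_\sharp \Mfunc \mobeq \Nfunc + \zeta_{\lp_2}$, I would first establish that both families $\{\left(\overline{\ladj{f}}\right)_\sharp \Mfunc(J)\}_J$ and $\{\Nfunc(J) + \zeta_{\lp_2}(J)\}_J$ are transverse. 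Transversity of the second follows from the transversity of $\{\Nfunc(J)\}_J$, the transversity of $\{\zeta_{\lp_2}(J)\}_J$ relative to $\{\Nfunc(J)\}_J$ (built into \cref{defn: trans preserving morphism}), and additivity of dimension under internal direct sums; transversity of the first follows from transversity of $\{\Mfunc(I)\}_I$ by partitioning $\Seg(\lp_1)$ along the fibers of $\overline{\ladj{f}}$.

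With both families transverse, applying $\dim$ to the identity $\sum_{J' \prodord J} \left(\overline{\ladj{f}}\right)_\sharp \Mfunc(J') = \sum_{J' \prodord J} \bigl(\Nfunc(J') + \zeta_{\lp_2}(J')\bigr)$ gives two cumulative dimension functions that agree on every $J$. By uniqueness of algebraic M\"obius inversion on $\mathbb{Z}$, it follows that $\dim \left(\overline{\ladj{f}}\right)_\sharp \Mfunc(J) = \dim\bigl(\Nfunc(J) + \zeta_{\lp_2}(J)\bigr)$ for every $J$. For non-diagonal $J$ the correction $\zeta_{\lp_2}(J)$ vanishes, and $\overline{\ladj{f}}$ preserves the diagonal, so the fiber $(\overline{\ladj{f}})^{-1}(J)$ contains only non-diagonal segments; another application of transversity of $\Mfunc$ on this fiber rewrites $\dim \left(\overline{\ladj{f}}\right)_\sharp \Mfunc(J)$ as $\sum_{I \in (\overline{\ladj{f}})^{-1}(J)} \dim \Mfunc(I)$, yielding exactly the charge-preserving condition. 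Functoriality (composition) then follows from the corresponding composition of Galois connections, completing the construction of $\mathcal{D}$ and hence the proof.
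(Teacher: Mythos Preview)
Your proposal is correct and follows essentially the same approach as the paper: the paper also constructs a cost-preserving functor $\Mfunc \mapsto \dim(\Mfunc)$ from $\inndgm(V)$ to $\fnc_{\geq 0}$ (\cref{prop: dim is functor}), with the key step being exactly the verification that a transversity-preserving morphism induces a charge-preserving one, handled there via \cref{lem: sum of transverse is transverse} and \cref{lem: push mobeq implies push dim}. The only cosmetic differences are that the paper proves the pointwise dimension identity by induction rather than by invoking uniqueness of M\"obius inversion over $\Z$, and that the paper does not zero out the diagonal (relying instead on the fact that charge-preservation is only tested off-diagonal), whereas you do so to match $\pd_\dgr^\Ffunc$ exactly.
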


The proof of~\cref{thm: classical pd lower bound} is given in~\cref{appendix: edit}.

\begin{remark}\label{rmk: gpd more discriminative}
    Combining the stability of $1$-parameter Grassmannian persistence diagrams (\cref{thm: stability of prhi of zb}) with the lower bound result (\cref{thm: classical pd lower bound}), one can see that 
    \[
    d_{\fnc_{\geq 0}}^E \left( \pd_\dgr^\Ffunc, \pd_\dgr^\Gfunc\right) \leq d_{\inndgm\left(C_\dgr^K\right)}^E \left(\prhi \left(\ZB_\dgr^\Ffunc\right), \prhi \left(\ZB_\dgr^\Gfunc\right)\right) \leq d_{\Fil(K)}^E (\Ffunc, \Gfunc).
    \]

In other words, the edit distance between the degree-$\dgr$ Grassmannian persistence diagrams mediates between the edit distance between classical degree-$\dgr$ persistence diagrams and the edit distance between filtrations. We provide an example to further illustrate that $d_{\fnc_{\geq 0}}^E \left( \pd_\dgr^\Ffunc, \pd_\dgr^\Gfunc\right)$ can be $0$ while the term $d_{\inndgm\left(C_\dgr^K\right)}^E \left(\prhi \left(\ZB_\dgr^\Ffunc\right), \prhi \left(\ZB_\dgr^\Gfunc\right)\right)$ is positive; see~\cref{example: same pd different 0-prhi}.
    
\end{remark}

\subsection{Harmonic Barcodes}\label{subsec: harmonicph}

In this section, we connect our construction of Grassmannian persistence diagrams with a similar construction—harmonic barcodes—introduced in~\cite{harmonicph}. The main result in this section is presented in~\cref{thm: proj isom}, which states that Grassmannian persistence diagrams and harmonic barcodes are related through a particular projection. We start by recalling the definition of \emph{persistent homology group} from~\cite{robins1999towards,edelsbrunner2010computational} and some related definitions from~\cite{harmonicph}.

\begin{definition}[Persistent homology group]\label{defn: m n and p spaces}
    Let $\Ffunc : \lp = \{ \ell_1 < \cdots < \ell_n \} \to \subcx(K)$ be a filtration. For any $(\ell_i,\ell_j) \in \Seg(P)$, let $\iota_\dgr^{i,j} : H_\dgr (K_i) \to H_\dgr (K_j)$ denote the homomorpshim induced by the inclusion $K_i \hookrightarrow K_j$. The~\emph{persistent homology group}, $H_\dgr^{i,j}(\Ffunc)$, of $\Ffunc$ is defined by 
    \[
    H_\dgr^{i,j} (\Ffunc) := \Ima \left(\iota_\dgr^{i,j}\right).
    \]
For $(\ell_i,\ell_j)\in \Seg(\lp)$, also define
    \begin{align*}
        M_\dgr^{i,j}(\Ffunc) &:= \left(\iota_\dgr^{i,j}\right)^{-1} \left(H_\dgr^{i-1, j} (\Ffunc)\right)\subseteq H_\dgr(K_i).
    \end{align*}
For $(\ell_i,\ell_j) \in \Seg(\lp) \setminus \diag(\lp)$, i.e., $i < j$, define
    \begin{align*}
        N_\dgr^{i,j}(\Ffunc) &:= M_\dgr^{i,j-1}(\Ffunc) = \left(\iota_\dgr^{i,j-1}\right)^{-1} \left(H_\dgr^{i-1, j-1} (\Ffunc)\right)\subseteq H_\dgr(K_i)\\
        &\,\,\text{and}\\
        P_\dgr^{i,j}(\Ffunc) &:= \frac{M_\dgr^{i,j}(\Ffunc)}{ N_\dgr^{i,j}(\Ffunc)}.
    \end{align*}
    where $K_0$ is taken to be equal to $K_1$ by convention. 
\end{definition}

As shown in~\cite[Proposition 3.8]{harmonicph}, the interpretation of the subspaces $M^{i,j}_\dgr(F),N^{i,j}_\dgr(F)$ and $P^{i,j}_\dgr(F)$ are as follows.
    \begin{itemize}
        \item $M_\dgr^{i,j}(\Ffunc)$ is a subspace of $H_\dgr(K_i)$ consisting of homology classes in $H_\dgr(K_i)$ which  
        \begin{center}
            ``$\big($are born before $\ell_i \big)$ or $\big($(born at $\ell_i$) and (die at $\ell_j$ or earlier)$\big)$.''
        \end{center}
        \item $N_\dgr^{i,j}(\Ffunc)$ is a subspace of $H_\dgr(K_i)$ consisting of homology classes in $H_\dgr(K_i)$ which  
        \begin{center}
            ``$\big($are born before $\ell_i\big)$ or $\big($(born at $\ell_i$) and (die strictly earlier than $\ell_j$)$\big)$.''
        \end{center}
        \item $P_\dgr^{i,j}(\Ffunc)$ is the space of equivalence classes of $\dgr$-dimensional cycles which 
        \begin{center}
            ``are born exactly at $\ell_i$ and die exactly at $\ell_j$.''
        \end{center}
    \end{itemize}

\begin{definition}[{\cite[Definition 4.12]{gpd-multi}}]
    For a simplicial complex $K$ and any degree $ \dgr \geq 0$, let $$\phi_\dgr^K : \Zfunc_\dgr(K) \to \frac{\Zfunc_\dgr(K)}{  \Bfunc_\dgr(K)} = H_\dgr(K)$$ denote the canonical quotient map. For a filtration $\Ffunc : \lp = \{\ell_1 < \cdots <\ell_n \} \to \subcx(K)$, we define, for $\ell_i<\ell_j\in \lp$,
    \begin{align*}
        \tilde{M}_\dgr^{i,j}(\Ffunc) &:= \left(\phi_\dgr^{K_i}\right)^{-1} \left(M_\dgr^{i,j}(\Ffunc)\right) \subseteq \Zfunc_\dgr (K), \\
        \tilde{N}_\dgr^{i,j}(\Ffunc) &:= \left(\phi_\dgr^{K_i}\right)^{-1} \left(N_\dgr^{i,j}(\Ffunc)\right) \subseteq \Zfunc_\dgr (K).
    \end{align*}
    Observe that $\tfrac{\tilde{M}_\dgr^{i,j}(\Ffunc)}{ \Bfunc_\dgr (K)} = M_\dgr^{i,j}(\Ffunc)$ and $\tfrac{\tilde{N}_\dgr^{i,j}(\Ffunc)}{ \Bfunc_\dgr (K)} = N_\dgr^{i,j}(\Ffunc)$.
\end{definition}

\begin{definition}[Harmonic homology space~{\cite[Definition 2.6]{harmonicph}}]
    The~\emph{harmonic homology space} of $K$ is the subspace $\mathcal{H}_\dgr(K) \subseteq C_\dgr^K$ defined by
    \[
    \mathcal{H}_\dgr(K) := \Zfunc_\dgr(K) \cap \Bfunc_\dgr(K)^\perp,
    \]
    where $C_\dgr^K$ is endowed with the standard inner product as described in~\cref{sec: prelim}.
\end{definition}

\begin{proposition}[{\cite[Proposition 2.7]{harmonicph}}]
    The map $\mathfrak{f}_\dgr(K) : H_\dgr(K) \to \mathcal{H}_\dgr(K)$ defined by 
    \[
    z + \Bfunc_\dgr(K) \mapsto \proj_{\left(\Bfunc_\dgr(K)\right)^\perp} (z)
    \]
    is an isomorphism of vector spaces.
\end{proposition}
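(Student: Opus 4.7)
The plan is to verify the four standard requirements for a linear isomorphism between a quotient and a subspace: well-definedness (independence of the chosen cycle representative and landing in $\mathcal{H}_\dgr(K)$), linearity, injectivity, and surjectivity. The one structural fact I want to exploit throughout is that $\Bfunc_\dgr(K) \subseteq \Zfunc_\dgr(K)$, so that the orthogonal decomposition $C_\dgr^K = \Bfunc_\dgr(K) \oplus \Bfunc_\dgr(K)^\perp$ restricts to $\Zfunc_\dgr(K) = \Bfunc_\dgr(K) \oplus \mathcal{H}_\dgr(K)$. This restricted decomposition is really the whole content of the proposition, and once it is in hand the four verifications are almost automatic.

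First I would show well-definedness. For any cycle representative $z \in \Zfunc_\dgr(K)$, write
\[
z = \proj_{\Bfunc_\dgr(K)}(z) + \proj_{\Bfunc_\dgr(K)^\perp}(z).
\]
Since $\proj_{\Bfunc_\dgr(K)}(z) \in \Bfunc_\dgr(K) \subseteq \Zfunc_\dgr(K)$, the complementary component $\proj_{\Bfunc_\dgr(K)^\perp}(z) = z - \proj_{\Bfunc_\dgr(K)}(z)$ is a difference of cycles and hence a cycle. By construction it lies in $\Bfunc_\dgr(K)^\perp$, so it belongs to $\mathcal{H}_\dgr(K) = \Zfunc_\dgr(K) \cap \Bfunc_\dgr(K)^\perp$. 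Independence of the representative follows because any two cycles representing the same homology class differ by an element of $\Bfunc_\dgr(K)$, which lies in $\ker\bigl(\proj_{\Bfunc_\dgr(K)^\perp}\bigr)$. Linearity is immediate from linearity of the orthogonal projection and of the quotient map.

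For injectivity, if $\proj_{\Bfunc_\dgr(K)^\perp}(z) = 0$ then $z = \proj_{\Bfunc_\dgr(K)}(z) \in \Bfunc_\dgr(K)$, so $z + \Bfunc_\dgr(K) = 0$ in $H_\dgr(K)$. For surjectivity, any $h \in \mathcal{H}_\dgr(K)$ is in particular a cycle satisfying $h \perp \Bfunc_\dgr(K)$, so $\proj_{\Bfunc_\dgr(K)^\perp}(h) = h$; thus $h + \Bfunc_\dgr(K)$ is a preimage. There is no serious obstacle here: the only subtle point is the initial observation that projecting a cycle onto $\Bfunc_\dgr(K)^\perp$ preserves the cycle condition, and this hinges precisely on the inclusion $\Bfunc_\dgr(K) \subseteq \Zfunc_\dgr(K)$.
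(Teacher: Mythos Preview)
Your argument is correct and is the standard verification of this well-known isomorphism. The paper does not actually supply its own proof of this proposition: it is stated as a citation to \cite[Proposition~2.7]{harmonicph} and used without further justification, so there is nothing to compare against beyond noting that your approach is exactly the expected one.
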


\begin{proposition}[{\cite[Proposition 2.11]{harmonicph}}]\label{prop: commutative harmonic}
    Let $\Ffunc = \{ K_i \}_{i=1}^n$ be a filtration (i.e., $\Ffunc : \lp = \{\ell_1< \cdots <\ell_n\} \to \subcx(K)$)). For $ i \leq j$, the restriction of $\proj_{(\Bfunc_\dgr(K_j))^\perp}$ to $\mathcal{H}_\dgr(K_i)$ gives a linear map
    \[
    \gamma_\dgr^{i,j} := \left. \left(\proj_{(\Bfunc_\dgr(K_j))^\perp}\right) \right\vert_{\mathcal{H}_\dgr(K_i)} : \mathcal{H}_\dgr(K_i) \to \mathcal{H}_\dgr(K_j)
    \]
    which makes the following diagram commute.
    \begin{center}
        \begin{tikzcd}
H_\dgr(K_i) \arrow[rr, "{\iota_\dgr^{i,j}}"] \arrow[dd, "\mathfrak{f}_\dgr(K_i)"] &  & H_\dgr(K_j) \arrow[dd, "\mathfrak{f}_\dgr(K_j)"] \\
                                                                         &  &                                            \\
\mathcal{H}_\dgr(K_i) \arrow[rr, "{\gamma_\dgr^{i,j}}"]                  &  & \mathcal{H}_\dgr(K_j)                        
\end{tikzcd}
    \end{center}
\end{proposition}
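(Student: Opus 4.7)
The plan is to prove two things in order: first, that the proposed map $\gamma_\dgr^{i,j}$ is well-defined, meaning it actually lands in $\mathcal{H}_\dgr(K_j)$ and not merely in $C_\dgr^{K_j}$; and second, that the square commutes. Linearity of $\gamma_\dgr^{i,j}$ is immediate from the definition as a restriction of an orthogonal projection, so no work is required there.

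For well-definedness, I would fix $z \in \mathcal{H}_\dgr(K_i) = \Zfunc_\dgr(K_i) \cap \Bfunc_\dgr(K_i)^\perp$, and decompose
\[
z = \proj_{(\Bfunc_\dgr(K_j))^\perp}(z) + \proj_{\Bfunc_\dgr(K_j)}(z).
\]
Since the filtration is an inclusion, $\Zfunc_\dgr(K_i) \subseteq \Zfunc_\dgr(K_j)$, so $z \in \Zfunc_\dgr(K_j)$. The second summand lies in $\Bfunc_\dgr(K_j) \subseteq \Zfunc_\dgr(K_j)$. Therefore the first summand, namely $\gamma_\dgr^{i,j}(z)$, is a difference of two cycles in $K_j$, hence itself a cycle in $K_j$; it is orthogonal to $\Bfunc_\dgr(K_j)$ by construction, so it belongs to $\mathcal{H}_\dgr(K_j)$.

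For commutativity, I would trace a class $\alpha = z + \Bfunc_\dgr(K_i) \in H_\dgr(K_i)$ around both sides of the square. Going right then down yields $\mathfrak{f}_\dgr(K_j)\bigl(\iota_\dgr^{i,j}(\alpha)\bigr) = \proj_{(\Bfunc_\dgr(K_j))^\perp}(z)$, using that $\iota_\dgr^{i,j}$ is induced by inclusion so the representative $z$ is unchanged. Going down then right yields
\[
\gamma_\dgr^{i,j}\bigl(\mathfrak{f}_\dgr(K_i)(\alpha)\bigr) = \proj_{(\Bfunc_\dgr(K_j))^\perp}\!\left(\proj_{(\Bfunc_\dgr(K_i))^\perp}(z)\right).
\]
The equality of these two expressions reduces to showing that $z - \proj_{(\Bfunc_\dgr(K_i))^\perp}(z) = \proj_{\Bfunc_\dgr(K_i)}(z)$ is killed by $\proj_{(\Bfunc_\dgr(K_j))^\perp}$, which is automatic once one observes that $\Bfunc_\dgr(K_i) \subseteq \Bfunc_\dgr(K_j)$ (another consequence of $K_i \subseteq K_j$), so $\proj_{\Bfunc_\dgr(K_i)}(z) \in \Bfunc_\dgr(K_j)$ and is therefore annihilated by the projection onto its orthogonal complement.

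There is no real obstacle here; the entire argument hinges on the two monotonicities $\Zfunc_\dgr(K_i) \subseteq \Zfunc_\dgr(K_j)$ and $\Bfunc_\dgr(K_i) \subseteq \Bfunc_\dgr(K_j)$, together with the elementary fact that if $W \subseteq W'$ then $\proj_{(W')^\perp} \circ \proj_{W^\perp} = \proj_{(W')^\perp}$. The only mild subtlety worth flagging explicitly is that $\mathfrak{f}_\dgr(K_i)(\alpha)$ is independent of the choice of representative $z$ (since changing $z$ by an element of $\Bfunc_\dgr(K_i)$ changes $\proj_{(\Bfunc_\dgr(K_i))^\perp}(z)$ by zero), which is already built into the statement that $\mathfrak{f}_\dgr(K_i)$ is a well-defined isomorphism.
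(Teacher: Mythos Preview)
Your proof is correct. The paper does not give its own proof of this proposition; it is quoted directly from \cite[Proposition 2.11]{harmonicph} and used as an input to the proof of \cref{thm: proj isom}. Your argument is the standard one and matches what one would expect from the cited source: it rests precisely on the two monotonicity facts $\Zfunc_\dgr(K_i) \subseteq \Zfunc_\dgr(K_j)$ and $\Bfunc_\dgr(K_i) \subseteq \Bfunc_\dgr(K_j)$ and the resulting identity $\proj_{(\Bfunc_\dgr(K_j))^\perp} \circ \proj_{(\Bfunc_\dgr(K_i))^\perp} = \proj_{(\Bfunc_\dgr(K_j))^\perp}$.
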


\begin{definition}[Harmonic persistent homology group~{\cite[Definitions 3.11 and 3.12]{harmonicph}}]
    Let $\Ffunc = \{ K_i\}_{i=1}^n$ be a filtration. Let $\gamma_\dgr^{i,j} : \mathcal{H}_\dgr (K_i) \to \mathcal{H}_\dgr (K_j)$ denote the maps defined in~\cref{prop: commutative harmonic}. The~\emph{harmonic persistent homology group}, $\mathcal{H}_\dgr^{i,j}(\Ffunc)$, of $\Ffunc$ is defined by 
    \[
    \mathcal{H}_\dgr^{i,j} (\Ffunc) := \Ima \left(\gamma_\dgr^{i,j}\right).
    \]
For $i \leq j$, also define
    \begin{align*}
        \mathcal{M}_\dgr^{i,j}(\Ffunc) &:= \left(\gamma_\dgr^{i,j}\right)^{-1} \left(\mathcal{H}_\dgr^{i-1, j} (\Ffunc)\right).
    \end{align*}
For $i < j$, define
    \begin{align*}
        \mathcal{N}_\dgr^{i,j}(\Ffunc) &:= \left(\gamma_\dgr^{i, j-1}\right)^{-1} \left(\mathcal{H}_\dgr^{i-1, j-1} (\Ffunc)\right) \\
        \mathcal{P}_\dgr^{i,j}(\Ffunc) &:= \mathcal{M}_\dgr^{i,j}(\Ffunc) \cap \left(\mathcal{N}_\dgr^{i,j}(\Ffunc)\right)^\perp,
    \end{align*}
    where $K_0$ is taken to be equal to $K_1$ by convention. The map 
    \begin{align*}
        \HB_\dgr^\Ffunc : \Seg(\lp)\setminus \diag(\lp) &\to \gr(C_\dgr^K) \\
                                    (\ell_i,\ell_j)  &\mapsto \mathcal{P}_\dgr^{i,j}(\Ffunc)
    \end{align*}
    is called the degree-$\dgr$ \emph{Harmonic barcode} of $\Ffunc$.
\end{definition}

We now establish the connection between the degree-$\dgr$ Grassmannian persistence diagram and the degree-$\dgr$ Harmonic barcode of a filtration $\Ffunc$.

\begin{theorem}\label{thm: proj isom}
    Let $\Ffunc : \lp \to \subcx(K)$ be a $1$-parameter filtration. For $i<j$, $$\proj_{\left(\mathcal{N}_\dgr^{i,j}\right)^\perp} : \prhi \left(\ZB_\dgr^\Ffunc\right) ((\ell_i,\ell_j)) \to \mathcal{P}_\dgr^{i,j}(\Ffunc)$$ is an isomorphism.
\end{theorem}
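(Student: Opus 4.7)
The plan is to prove this in three steps: first show that the projection is well-defined, i.e., its image lies in $\mathcal{P}_\dgr^{i,j}(\Ffunc)$; then show injectivity; and finally conclude by a dimension count, using that both source and target have dimension $\pd_\dgr^\Ffunc((\ell_i,\ell_j))$ (for the source, by \cref{prop: dim of loi is classical pd}; for the target, by the results of Basu and Cox~\cite{harmonicph} identifying $\mathcal{P}_\dgr^{i,j}(\Ffunc)$ with $P_\dgr^{i,j}(\Ffunc)$).

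For well-definedness, I would fix $z \in \prhi(\ZB_\dgr^\Ffunc)((\ell_i,\ell_j))$ and use the simplified formula from \cref{prop: linAlg Mobius}, according to which $z$ is a cycle in $K_i$ that becomes a boundary in $K_j$ and is orthogonal to both $\ZB_\dgr^\Ffunc((\ell_{i-1},\ell_j))$ and $\ZB_\dgr^\Ffunc((\ell_i,\ell_{j-1}))$. Since $\Bfunc_\dgr(K_i) \subseteq \Bfunc_\dgr(K_{j-1})$ (using $i \le j-1$), the orthogonality to $\Zfunc_\dgr(K_i)\cap \Bfunc_\dgr(K_{j-1})$ yields $z \perp \Bfunc_\dgr(K_i)$, so $z \in \mathcal{H}_\dgr(K_i)$. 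Because $z \in \Bfunc_\dgr(K_j)$, we have $\gamma_\dgr^{i,j}(z)=0$, placing $z \in \mathcal{M}_\dgr^{i,j}(\Ffunc)$. A short direct calculation (using that elements of $\Bfunc_\dgr(K_{j-1})\subseteq \Bfunc_\dgr(K_j)$ are killed by $\proj_{\Bfunc_\dgr(K_j)^\perp}$) establishes the functoriality identity $\gamma_\dgr^{i,j}=\gamma_\dgr^{j-1,j}\circ \gamma_\dgr^{i,j-1}$, from which $\mathcal{N}_\dgr^{i,j}(\Ffunc)\subseteq \mathcal{M}_\dgr^{i,j}(\Ffunc)$ follows. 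The orthogonal decomposition $\mathcal{M}_\dgr^{i,j}=\mathcal{N}_\dgr^{i,j}\oplus \mathcal{P}_\dgr^{i,j}$ then gives $\proj_{(\mathcal{N}_\dgr^{i,j})^\perp}(z)\in \mathcal{P}_\dgr^{i,j}(\Ffunc)$.

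For injectivity, suppose $\proj_{(\mathcal{N}_\dgr^{i,j})^\perp}(z)=0$; then $z\in \mathcal{N}_\dgr^{i,j}(\Ffunc)$. Since $z\perp \Bfunc_\dgr(K_{j-1})$, one has $\gamma_\dgr^{i,j-1}(z)=z$, so $z\in \mathcal{H}_\dgr^{i-1,j-1}(\Ffunc)$. Write $z=\gamma_\dgr^{i-1,j-1}(w)=w-b$ for some $w\in \mathcal{H}_\dgr(K_{i-1})\subseteq \Zfunc_\dgr(K_{i-1})$ and $b\in \Bfunc_\dgr(K_{j-1})\subseteq \Bfunc_\dgr(K_j)$. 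Then $w=z+b\in \Zfunc_\dgr(K_{i-1})\cap \Bfunc_\dgr(K_j)=\ZB_\dgr^\Ffunc((\ell_{i-1},\ell_j))$, and the orthogonality $z\perp \ZB_\dgr^\Ffunc((\ell_{i-1},\ell_j))$ together with $z\perp \Bfunc_\dgr(K_{j-1})$ forces $0=\langle z,w\rangle=\|z\|^2+\langle z,b\rangle=\|z\|^2$, so $z=0$.

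The main obstacle is matching the two orthogonality conditions built into $\prhi(\ZB_\dgr^\Ffunc)((\ell_i,\ell_j))$ with the subspaces $\mathcal{N}_\dgr^{i,j}$ and $\mathcal{M}_\dgr^{i,j}$ appearing in Basu and Cox's harmonic construction. Once the inclusion $\mathcal{N}_\dgr^{i,j}\subseteq \mathcal{M}_\dgr^{i,j}$ (via $\gamma_\dgr^{i,j}=\gamma_\dgr^{j-1,j}\circ \gamma_\dgr^{i,j-1}$) is in hand, the decomposition $\mathcal{M}_\dgr^{i,j}=\mathcal{N}_\dgr^{i,j}\oplus \mathcal{P}_\dgr^{i,j}$ delivers well-definedness for free, the injectivity reduces to a single inner-product computation, and the equal dimensions provided by \cref{prop: dim of loi is classical pd} and~\cite{harmonicph} close the argument.
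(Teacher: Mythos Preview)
Your approach (well-definedness, injectivity, dimension count) is more direct than the paper's, which instead composes a chain of isomorphisms through $\tilde{M}_\dgr^{i,j}/\tilde{N}_\dgr^{i,j}$, $M_\dgr^{i,j}/N_\dgr^{i,j}$, and $\mathcal{M}_\dgr^{i,j}/\mathcal{N}_\dgr^{i,j}$ (via \cref{lem: linear varphi isom}) and then traces the composite back to the single projection $\proj_{(\mathcal{N}_\dgr^{i,j})^\perp}$. Your well-definedness argument is correct.

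There is, however, a gap in the injectivity step: you assert $z\perp\Bfunc_\dgr(K_{j-1})$, but you have only established $z\perp\Zfunc_\dgr(K_i)\cap\Bfunc_\dgr(K_{j-1})$. The stronger orthogonality fails in general. For instance, with $K_1=\{a,b,c,ab,bc,ca\}$, $K_2=K_1\cup\{d,ad,bd,abd\}$, $K_3=K_2\cup\{abc\}$, the cycle $z=ab-ac+bc$ spans $\prhi(\ZB_1^\Ffunc)((1,3))$, yet $\langle z,\,ab-ad+bd\rangle=1$ with $ab-ad+bd\in\Bfunc_1(K_2)$. So $\gamma_\dgr^{i,j-1}(z)=z$ is not available, and your concluding identity $\langle z,b\rangle=0$ is left unjustified. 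The repair is short and keeps your structure intact: from $z\in\mathcal{N}_\dgr^{i,j}$ you get $\gamma_\dgr^{i,j-1}(z)=\gamma_\dgr^{i-1,j-1}(w)$ for some $w\in\mathcal{H}_\dgr(K_{i-1})$, hence $b:=z-w\in\Bfunc_\dgr(K_{j-1})$ directly (no need for $\gamma_\dgr^{i,j-1}(z)=z$). Since also $b=z-w\in\Zfunc_\dgr(K_i)$, in fact $b\in\ZB_\dgr^\Ffunc((i,j-1))$, so $\langle z,b\rangle=0$; likewise $w=z-b\in\ZB_\dgr^\Ffunc((i-1,j))$ gives $\langle z,w\rangle=0$, and $\|z\|^2=\langle z,w+b\rangle=0$ follows.
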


\begin{remark}\label{rmk: non-isometry}
    Note that the linear isomorphism stated in the theorem above is not necessarily an isometry between the two subspaces of $C_\dgr^K$, as it is defined by a projection. A projection would be an isometry if the subspaces were identical, but this is not generally the case for $\prhi \left(\ZB_\dgr^\Ffunc\right) ((\ell_i,\ell_j))$ and $\mathcal{P}_\dgr^{i,j}(\Ffunc)$; see~\cite[Example 1.1]{harmonicph}. 
\end{remark}

Note that the dimensions of the vector spaces $\prhi \left(\ZB_\dgr^\Ffunc \right)((i,j))$ and $\mathcal{P}_\dgr^{i,j}(\Ffunc)$ are the same as both are equal to the number of linearly independent cycles that are born at $i$ and die at $j$. So, it is already known that $\prhi \left(\ZB_\dgr^\Ffunc\right) ((i,j))$ and $\mathcal{P}_\dgr^{i,j}(\Ffunc)$ are isomorphic.  \cref{thm: proj isom} shows that this isomorphism can be written explicitly as a projection.

We will need the following proposition to prove~\cref{thm: proj isom}.

\begin{proposition}\label{lem: linear varphi isom}
    Let $\Ffunc : \lp = \{\ell_1 < \cdots < \ell_n \} \to \subcx(K)$ be a $1$-parameter filtration. Then, the map
    \begin{align}
        \varphi : \frac{\ZB_\dgr^\Ffunc((\ell_i,\ell_j))}{\ZB_\dgr^\Ffunc((\ell_{i-1}, \ell_j)) + \ZB_\dgr^\Ffunc ((\ell_i, \ell_{j-1}))} &\to \frac{\tilde{M}_\dgr^{i,j}(\Ffunc)}{\tilde{N}_\dgr^{i,j}(\Ffunc)} \\
        &\\
        z + \left( \ZB_\dgr^\Ffunc((\ell_{i-1}, \ell_j)) + \ZB_\dgr^\Ffunc ((\ell_i, \ell_{j-1})) \right) &\mapsto z + \tilde{N}_\dgr^{i,j}(\Ffunc)
    \end{align}
    is an isomorphism.
\end{proposition}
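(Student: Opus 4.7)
The plan is to first derive explicit descriptions of $\tilde{M}_\dgr^{i,j}(\Ffunc)$ and $\tilde{N}_\dgr^{i,j}(\Ffunc)$ purely in terms of cycle spaces and birth-death spaces. Specifically, I would prove
\[
\tilde{M}_\dgr^{i,j}(\Ffunc) \;=\; \Zfunc_\dgr(K_{i-1}) + \ZB_\dgr^\Ffunc((\ell_i,\ell_j))
\quad\text{and}\quad
\tilde{N}_\dgr^{i,j}(\Ffunc) \;=\; \Zfunc_\dgr(K_{i-1}) + \ZB_\dgr^\Ffunc((\ell_i,\ell_{j-1})).
\]
Both identities follow from a direct unwinding of the definitions: a cycle $z \in \Zfunc_\dgr(K_i)$ lies in $\tilde{M}_\dgr^{i,j}(\Ffunc)$ iff $\iota_\dgr^{i,j}([z])$ is in the image of $\iota_\dgr^{i-1,j}$, i.e.\ iff there exists $z' \in \Zfunc_\dgr(K_{i-1})$ with $z - z' \in \Bfunc_\dgr(K_j)$; the difference $w := z - z'$ automatically lies in $\Zfunc_\dgr(K_i)$, hence in $\ZB_\dgr^\Ffunc((\ell_i,\ell_j))$, yielding the decomposition $z = z' + w$. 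The same argument with $j$ replaced by $j-1$ gives the formula for $\tilde{N}_\dgr^{i,j}(\Ffunc)$ (with the convention $\ZB_\dgr^\Ffunc((\ell_i,\ell_i)) = \Bfunc_\dgr(K_i)$ handling the edge case $j-1 = i$).

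Given these descriptions, well-definedness of $\varphi$ reduces to checking that both summands of the source denominator land inside $\tilde{N}_\dgr^{i,j}(\Ffunc)$: one has $\ZB_\dgr^\Ffunc((\ell_{i-1},\ell_j)) \subseteq \Zfunc_\dgr(K_{i-1}) \subseteq \tilde{N}_\dgr^{i,j}(\Ffunc)$, while $\ZB_\dgr^\Ffunc((\ell_i,\ell_{j-1}))$ appears directly as a summand. Surjectivity is then immediate from the description of $\tilde{M}_\dgr^{i,j}(\Ffunc)$: any $z = z' + w$ with $z' \in \Zfunc_\dgr(K_{i-1})$ and $w \in \ZB_\dgr^\Ffunc((\ell_i,\ell_j))$ satisfies $z \equiv w \pmod{\tilde{N}_\dgr^{i,j}(\Ffunc)}$ since $z' \in \tilde{N}_\dgr^{i,j}(\Ffunc)$, so the class of $w$ in the source maps to the class of $z$ in the target.

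The step that I expect to require the most care is injectivity, since it is where one must use both pieces of the denominator in the source in an essential way. Suppose $z \in \ZB_\dgr^\Ffunc((\ell_i,\ell_j))$ satisfies $z \in \tilde{N}_\dgr^{i,j}(\Ffunc)$. Using the description of $\tilde{N}_\dgr^{i,j}(\Ffunc)$, write $z = z' + w$ with $z' \in \Zfunc_\dgr(K_{i-1})$ and $w \in \ZB_\dgr^\Ffunc((\ell_i,\ell_{j-1}))$. Then $z' = z - w$. Since $z \in \Bfunc_\dgr(K_j)$ by hypothesis and $w \in \Bfunc_\dgr(K_{j-1}) \subseteq \Bfunc_\dgr(K_j)$, we get $z' \in \Zfunc_\dgr(K_{i-1}) \cap \Bfunc_\dgr(K_j) = \ZB_\dgr^\Ffunc((\ell_{i-1},\ell_j))$. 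Therefore
\[
z = z' + w \in \ZB_\dgr^\Ffunc((\ell_{i-1},\ell_j)) + \ZB_\dgr^\Ffunc((\ell_i,\ell_{j-1})),
\]
showing that the class of $z$ in the source vanishes, which completes the injectivity argument and hence the proof.
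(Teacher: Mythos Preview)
Your proposal is correct and follows essentially the same approach as the paper's proof. The paper cites its companion article for the characterizations of $\tilde{M}_\dgr^{i,j}(\Ffunc)$ and $\tilde{N}_\dgr^{i,j}(\Ffunc)$ and for surjectivity, whereas you derive the clean identities $\tilde{M}_\dgr^{i,j}(\Ffunc) = \Zfunc_\dgr(K_{i-1}) + \ZB_\dgr^\Ffunc((\ell_i,\ell_j))$ and $\tilde{N}_\dgr^{i,j}(\Ffunc) = \Zfunc_\dgr(K_{i-1}) + \ZB_\dgr^\Ffunc((\ell_i,\ell_{j-1}))$ directly, making the argument self-contained; the injectivity step (decomposing $z = z' + w$ and observing $z' = z - w \in \ZB_\dgr^\Ffunc((\ell_{i-1},\ell_j))$) is identical in both proofs.
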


\begin{proof}
    For notational simplicity, we will use $(i, j)$ to denote the segment $(\ell_i, \ell_j) \in \Seg(\lp)$.
    Observe that we have 
    \[
    \ZB_\dgr^\Ffunc((i-1, j)) + \ZB_\dgr^\Ffunc ((i, j-1)) = \sum_{(a,c)<_\times (i,j)} \ZB_\dgr^\Ffunc((a,c)).
    \]
    Therefore, by~\cite[Lemma 4.14]{gpd-multi}, we have that the map $\varphi$ is a surjection. Observe that the surjectivity of $\varphi$ in~\cite[Lemma 4.14]{gpd-multi} is proved by showing that 
    \[
    \sum_{(a,c)<_\times (i,j)} \ZB_\dgr^\Ffunc((a,c)) \subseteq \ker(\psi),
    \]
    where $\psi : \ZB_\dgr^\Ffunc((i,j)) \to \frac{\tilde{M}_\dgr^{i,j}(\Ffunc)}{\tilde{N}_\dgr^{i,j}(\Ffunc)}$ is defined through $\psi(z) := z + \tilde{N}_\dgr^{i,j}(\Ffunc)$. Therefore, to show that $\varphi$ is an isomorphism, it suffices to prove 
    \[
    \ker(\psi) \subseteq \ZB_\dgr^\Ffunc((i-1, j)) + \ZB_\dgr^\Ffunc ((i, j-1)),
    \]
    which would imply injectivity. Observe that, since $\Ffunc$ is a $1$-parameter filtration, \cite[Lemma 4.13]{gpd-multi} boils down to following:
    \begin{align*}
        \tilde{M}_\dgr^{i,j}(\Ffunc) &= \left\{ z \in \Zfunc_\dgr (K_i) \mid \exists z' \in \Zfunc_\dgr(K_{i-1})  \text{ such that } z-z' \in \Bfunc_\dgr(K_j) \right\} \\
        \tilde{N}_\dgr^{i,j}(\Ffunc) &= \{ z \in \Zfunc_\dgr (K_i) \mid \exists z' \in \Zfunc_\dgr(K_{i-1})  \text{ such that } z-z' \in \Bfunc_\dgr(K_{j-1}) \} = \tilde{M}_\dgr^{i,j-1}(\Ffunc) 
    \end{align*}
    Let $x\in \ker(\psi)$. Then, $x\in \tilde{N}_\dgr^{i,j}(\Ffunc)$. Therefore, there exists $z' \in \Zfunc_\dgr(K_{i-1})$ and $\eta \in \Bfunc_\dgr(K_{j-1})$ such that $x -z' =\eta$, i.e., $x = z' + \eta$. Observe that $z' = x-\eta \in \ZB_\dgr^\Ffunc((i-1,j))$ as $z\in \Zfunc_\dgr(K_{i-1})$ and $x,\eta \in \Bfunc_\dgr(K_j)$. Observe also that $\eta = x-z' \in \ZB_\dgr^\Ffunc((i, j-1))$ as $\eta \in \Bfunc_\dgr(K_{j-1})$ and $x, z' \in \Zfunc_\dgr(K_i)$. Therefore, $x\in \ZB_\dgr^\Ffunc ((i-1, j)) +\ZB_\dgr^\Ffunc((i,j-1))$. Thus, $\ker(\psi) \subseteq \ZB_\dgr^\Ffunc ((i-1, j)) +\ZB_\dgr^\Ffunc((i,j-1))$.
\end{proof}

\begin{proof}[Proof of~\cref{thm: proj isom}]
    Let
    \[
    \mu_\dgr^{i,j} : \frac{M_\dgr^{i,j}(\Ffunc)}{N_\dgr^{i,j}(\Ffunc)} \to \frac{\mathcal{M}_\dgr^{i,j}(\Ffunc)}{\mathcal{N}_\dgr^{i,j}(\Ffunc)}
    \]
    be defined by 
    $$\mu_\dgr^{i,j} ([z] + N_\dgr^{i,j}(\Ffunc)) := \proj_{(\Bfunc_\dgr(K_i))^\perp} (z) + \mathcal{N}_\dgr^{i,j}(\Ffunc)$$ 
    for $[z] + N_\dgr^{i,j}(\Ffunc) \in \frac{M_\dgr^{i,j}(\Ffunc)}{N_\dgr^{i,j}(\Ffunc)}$. By~\cref{prop: commutative harmonic}, $\mu_\dgr^{i,j}$ is a well-defined isomorphism. Moreover, we have the following isomorphism
    \[
    \proj_{\left(\mathcal{N}_\dgr^{i,j}(\Ffunc)\right)^\perp} : \frac{\mathcal{M}_\dgr^{i,j}(\Ffunc)}{\mathcal{N}_\dgr^{i,j}(\Ffunc)} \to \mathcal{M}_\dgr^{i,j}(\Ffunc) \cap (\mathcal{N}_\dgr^{i,j}(\Ffunc))^\perp.
    \]
    Let 
    \[
    \theta : \frac{\tilde{M}_\dgr^{i,j}(\Ffunc)}{\tilde{N}_\dgr^{i,j}(\Ffunc)} \to \frac{\tilde{M}_\dgr^{i,j}(\Ffunc) / \Bfunc_\dgr (\Ffunc(i))}{\tilde{N}_\dgr^{i,j}(\Ffunc) / \Bfunc_\dgr(\Ffunc(i))} = \frac{M_\dgr^{i,j}(\Ffunc)}{N_\dgr^{i,j}(\Ffunc)}
    \]
    be the canonical isomorphism.
    Combining the fact that 
    \begin{align*}
         \prhi \left(\ZB_\dgr^\Ffunc\right) ((i,j)) = \goi \left( \ZB_\dgr^\Ffunc\right)((i,j)) &= \ZB_\dgr^\Ffunc((i,j)) \ominus \left( \ZB_\dgr^\Ffunc((i-1,j)) + \ZB_\dgr^\Ffunc ((i,j-1)) \right)   \\
         &\simeq \frac{\ZB_\dgr^\Ffunc((i,j))}{\ZB_\dgr^\Ffunc((i-1,j)) + \ZB_\dgr^\Ffunc ((i,j-1)) }
    \end{align*}
    with the isomorphisms $\mu_\dgr^{i,j}$, $\theta$ and the one described in~\cref{lem: linear varphi isom} we obtain the isomorphism
    \begin{align*}
    \prhi \left(\ZB_\dgr^\Ffunc\right) ((i,j)) &\to \mathcal{P}_\dgr^{i,j} (\Ffunc) \\
                          z  &\mapsto \proj_{\left(\mathcal{N}_\dgr^{i,j}(\Ffunc)\right)^\perp} \circ \proj_{\left(\Bfunc_\dgr(K_i)\right)^\perp} (z).
    \end{align*}
Observe that, as $z \in \prhi \left(\ZB_\dgr^\Ffunc\right) ((i,j))$, we have that $z\in (\Bfunc_\dgr(K_i))^\perp$. Therefore, $\proj_{\left(\Bfunc_\dgr(K_i)\right)^\perp} (z) = z$. Thus, the isomorphism above is given by
    \begin{align*}
     \\
                          z  &\mapsto \proj_{\left(\mathcal{N}_\dgr^{i,j}(\Ffunc)\right)^\perp}  (z)
    \end{align*}
    as stated in~\cref{thm: proj isom}.

\end{proof}

\subsection{Persistent Laplacians}\label{subsec: orthogonal inversion of Laplacian kernels}
Recall that our $\times$-Linear Orthogonal Inversion definition (\cref{defn: x-harmonic inversion}) was inspired by the algebraic M\"obius inversion formula (with respect to product order; see~\cref{eqn: times algebraic mob inv1,eqn: times algebraic mob inv2,eqn: times algebraic mob inv3} in~\cref{prop: algebraic mobius inversion formulas}). Indeed, we applied the $\times$-Linear Orthogonal Inversion to $\ZB_\dgr^\Ffunc$ in order to give rise to the degree-$\dgr$ Grassmannian persistence diagram of $\Ffunc$ (obtained from birth-death spaces). Recall also that the classical definition of (generalized) persistence diagrams~\cite{cohen-steiner2007, Patel2018} arises when applying the algebraic M\"obius inversion formula (with respect to the reverse inclusion order)
to the persistent Betti numbers. Different choices of orders in these scenarios are made in order to render each invariant compatible with the chosen order.
Persistent Laplacians enjoy a notion of functoriality that is compatible with the reverse inclusion order $\supseteq$, see~\cite[Section 5.3]{persLap}.

In this section, we introduce the notion of $\supseteq$-Linear Orthogonal Inversion, and apply it to persistent Laplacian kernels of a filtration $\Ffunc$. We refer to the resulting objects as the \emph{degree-$\dgr$ Grassmannian persistence diagram} of $\Ffunc$ (obtained from persistent Laplacian kernels). We show that the degree-$\dgr$ Grassmannian persistence diagram obtained from the persistent Laplacian kernels coincides with the degree-$\dgr$ Grassmannian persistence diagram obtain from the  birth-death spaces away from the diagonal (\cref{thm: equality of different HIs}). As a result of this correspondence, we establish stability for degree-$\dgr$ Grassmannian persistence diagrams obtained from persistent Laplacian kernels; see~\cref{thm: stability of rhi lap}.

\begin{definition}[$\supseteq$-Linear Orthogonal Inversion]\label{defn: sup-harmonic inversion}
    Let $\lp = \{ \ell_1<\cdots<\ell_n \}$ be a finite linearly ordered metric poset and let $\Lfunc : \overline{\lp}^\supseteq \to \gr(V)$ be a function. We define its \emph{$\supseteq$-Linear Orthogonal Inverse}, denoted $\rhi \left(\Lfunc\right)$, to be the function $\rhi \left(\Lfunc\right) : \overline{\lp}^\supseteq \to \gr(V)$ given by
    \begin{align*}
       \rhi \left(\Lfunc\right) ((\ell_i, \ell_j)) &:= \big ( \Lfunc((\ell_i,\ell_j)) \ominus \Lfunc((\ell_i, \ell_{j+1})) \big ) \ominus \big ( \Lfunc((\ell_{i-1},\ell_j)) \ominus \Lfunc((\ell_{i-1}, \ell_{j+1})) \big) \\
       \rhi \left(\Lfunc\right) ((\ell_i, \infty)) &:=  \Lfunc((\ell_i,\infty)) \ominus \Lfunc((\ell_{i-1}, \infty)),
    \end{align*}  
    for $1\leq i  < j \leq n$. 
\end{definition}
\nomenclature[31]{$\rhi$}{$\supseteq$-Linear Orthogonal Inversion}

Notice that after rearranging terms appearing in~\cref{eqn: times algebraic mob inv4,eqn: times algebraic mob inv5} in~\cref{prop: algebraic mobius inversion formulas}, our $\supseteq$-Linear Orthogonal Inversion definition is analogous to the algebraic M\"obius inversion formula with respect to the reverse inclusion order. Moreover, we follow the same convention for the boundary cases as described in~\cref{remark: convention edge cases}. To be precise,
\begin{align*}
        \rhi \left(\Lfunc\right) ((\ell_1, \infty)) &:= \Lfunc((\ell_1, \infty)), \\
        \rhi \left(\Lfunc\right)  ((\ell_1, \ell_n)) &:= \Lfunc ((\ell_1, \ell_{n})) \ominus \Lfunc((\ell_1,\infty)), \\
        \rhi \left(\Lfunc\right)  \Lfunc ((\ell_1, \ell_j)) &:= \Lfunc((\ell_1, \ell_{j})) \ominus \Lfunc((\ell_1, \ell_{j+1})) \text{ for } j < n,
\end{align*}

We now recall the definition of persistent Laplacians. Let $K$ be a finite simplicial complex and suppose that we have a simplicial filtration $\Ffunc = \{K_i \}_{i=1}^n$ of $K$. For $1\leq i \leq j \leq n$ and $\dgr\geq 0$, consider the subspace
\[
C_\dgr^{K_j, K_i}:= \left \{c \in  C_\dgr^{K_j} \mid \partial_\dgr^{K_j}(c) \in C_{\dgr-1}^{K_i}\right \} \subseteq C_\dgr^{K_j}
\]
consisting of $\dgr$-chains such that their image under the boundary map $\partial_\dgr^{K_j}$ lies in the subspace $C_{\dgr-1}^{K_i} \subseteq C_{\dgr-1}^{K_j}$. Let $\partial_\dgr^{K_j,K_i} $ denote the restriction of $\partial_\dgr^{K_j}$ onto $C_\dgr^{K_j,K_i}$ and let $\left ( \partial_\dgr^{K_j,K_i}\right )^*$ denote its adjoint with respect to the standard inner products on $C_\dgr^K$ and $C_{\dgr-1}^K$, as introduced in~\cref{sec: prelim}. 

\begin{center}
\begin{tikzcd}
     C_{\dgr+1}^{K_i}\arrow{rr}{\partial_{\dgr+1}^{K_i}}\arrow[hookrightarrow,dashed,gray]{dd} && C_\dgr^{K_i} \arrow["{\Delta_{\dgr}^{K_i, K_j}}", loop, distance=2.25em, in=55, out=125, blue, thick]\arrow[rr,shift left=.75ex,blue,"\partial_\dgr^{K_i}"]\arrow[hookrightarrow,dashed,gray]{dd}\arrow[dl,shift left=.75ex,blue,"\left(\partial_{\dgr+1}^{K_j,K_i}\right)^*"] && C_{\dgr-1}^{K_i}\arrow[hookrightarrow,dashed,gray]{dd}\arrow[ll,shift left=.75ex,blue,"\left(\partial_\dgr^{K_i}\right)^*"]\\
      &C_{\dgr+1}^{K_j,K_i}\arrow[hookrightarrow,dashed,gray]{dl}\arrow[ur,shift left=.75ex,blue,"\partial_{\dgr+1}^{K_j,K_i}"]&& \,\,\,\,\,\,\,\,\,\, & \\
        C_{\dgr+1}^{K_j}\arrow{rr}{\partial_{\dgr+1}^{K_j}}  && C_\dgr^{K_j}\arrow{rr}{\partial_\dgr^{K_j}} && C_{\dgr-1}^{K_j}
\end{tikzcd}
\end{center}

\noindent One can define the $\dgr$-th~\emph{persistent Laplacian}~\cite{persSpecGraph, lieutier-pdf} $\Delta_\dgr^{K_i, K_j} : C_\dgr^{K_i} \to C_\dgr^{K_i}$ by
\[
\Delta_\dgr^{K_i,K_j} := \partial_{\dgr+1}^{K_j,K_i} \circ \left(\partial_{\dgr+1}^{K_j,K_i}\right)^* + \left(\partial_\dgr^{K_i}\right)^* \circ \partial_\dgr^{K_i}.
\]
\nomenclature[32]{$\Delta_\dgr^{\cdot,\cdot}$}{$\dgr$-th persistent Laplacian}

It was proved~\cite[Theorem 2.7]{persLap} that $$\dim \left(\ker\left(\Delta_\dgr^{K_i, K_j}\right)\right) = \rank \left( H_\dgr(K_i) \to H_\dgr(K_j) \right) = \beta_\dgr^{i, j},$$ where $\beta_\dgr^{i, j}$ is the $\dgr$-th persistent Betti number for the segment $(i,j)$, see~\cref{defn: persistent betti rank inv}. Thus, the kernel of the persistent Laplacian $\Delta_\dgr^{K_i, K_j}$ provides canonical representatives of the cycle classes that persist through the inclusion $K_i \hookrightarrow K_j$. Hence, we now introduce the function that records the kernel of the $\dgr$-th persistent Laplacian for every segment.

\begin{definition}[Laplacian kernel]\label{defn: laplacian kernel}
    Let $\Ffunc = \{ K_{i}\}_{i=1}^n $ be a filtration. For any degree $\dgr\geq 0$, the $\dgr$-th~\emph{Laplacian kernel} of $\Ffunc$ is defined as the function $\LK_\dgr^\Ffunc :\overline{\lp}^\supseteq \to \gr (C_\dgr^K)$ given by

    \begin{align*}
        \LK_\dgr^\Ffunc ((\ell_i, \ell_j)) &:= \ker \left(\Delta_\dgr^{K_i, K_{j-1}}\right) \text{ for } 1\leq i < j \leq n, \\
        \LK_\dgr^\Ffunc ((\ell_i, \infty)) &:= \ker \left(\Delta_\dgr^{K_i, K_{n}}\right).
    \end{align*}
\end{definition}
\nomenclature[33]{$\LK_\dgr^\Ffunc$}{$\dgr$-th Laplacian kernel of a filtration}

\begin{remark}
    Observe that there is a shift in the second coordinate when defining $\LK_\dgr^\Ffunc$. This shift is analogous to the one used when defining the rank function in~\cite[Section 7]{Patel2018} and it ensures the equality in~\cref{thm: equality of different HIs} without requiring any additional shifts.
\end{remark}

As noted in~\cite{persLap}, the kernel of the persistent Laplacian is the intersection of two subspaces, a fact which we recall in the following proposition.

\begin{proposition}[{\cite[Claim A.1]{persLap}}]\label{prop: kernel of persistent laplacians}
    Let $\{ K_i\}_{i=1}^n$ be a simplicial filtration. Then, for any degree $\dgr\geq 0$ and $1\leq i \leq j \leq n$, \[
    \ker \left(\Delta_\dgr^{K_i, K_j}\right) = \ker\left(\partial_\dgr^{K_i}\right) \cap \Ima \left(\partial_{\dgr+1}^{K_j, K_i}\right)^\perp.
    \]
\end{proposition}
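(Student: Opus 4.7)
The plan is to use the standard variational characterization of the kernel of a positive semi-definite self-adjoint operator. First I would observe that the persistent Laplacian $\Delta_\dgr^{K_i, K_j}$, as defined in the text, is built out of two blocks $\partial_{\dgr+1}^{K_j, K_i} \circ \left(\partial_{\dgr+1}^{K_j, K_i}\right)^*$ and $\left(\partial_\dgr^{K_i}\right)^* \circ \partial_\dgr^{K_i}$, each of which is manifestly self-adjoint and positive semi-definite with respect to the standard inner product on $C_\dgr^{K_i}$. Hence so is their sum $\Delta_\dgr^{K_i, K_j}$.

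Next I would exploit the fact that for any positive semi-definite self-adjoint operator $T$ on a finite-dimensional inner product space, $Tc=0$ if and only if $\langle Tc,c\rangle = 0$. Applying this to $\Delta_\dgr^{K_i,K_j}$ and moving the adjoints across the inner product, one obtains
\[
\left\langle \Delta_\dgr^{K_i, K_j} c,\, c \right\rangle \;=\; \bigl\|\partial_\dgr^{K_i} c\bigr\|^2 \;+\; \Bigl\|\bigl(\partial_{\dgr+1}^{K_j, K_i}\bigr)^* c\Bigr\|^2,
\]
where the two norms are taken in $C_{\dgr-1}^{K_i}$ and $C_{\dgr+1}^{K_j, K_i}$ respectively. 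Since this is a sum of non-negative terms, it vanishes if and only if each term vanishes, i.e.\ $c \in \ker\bigl(\partial_\dgr^{K_i}\bigr)$ and $c \in \ker\bigl(\bigl(\partial_{\dgr+1}^{K_j, K_i}\bigr)^*\bigr)$.

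Finally I would invoke the elementary identity $\ker(A^*) = \Ima(A)^\perp$ (valid for any linear map $A$ between finite-dimensional inner product spaces) applied to $A = \partial_{\dgr+1}^{K_j, K_i} : C_{\dgr+1}^{K_j, K_i} \to C_\dgr^{K_i}$ in order to rewrite $\ker\bigl(\bigl(\partial_{\dgr+1}^{K_j, K_i}\bigr)^*\bigr) = \Ima\bigl(\partial_{\dgr+1}^{K_j, K_i}\bigr)^\perp$, where the orthogonal complement is taken inside $C_\dgr^{K_i}$. Combining the two kernel conditions yields the desired equality. The argument is entirely mechanical: the only potential pitfall is bookkeeping the domains and codomains of the restricted boundary operators $\partial_{\dgr+1}^{K_j, K_i}$ and of their adjoints, and confirming that the standard inner products on $C_\dgr^K$ restrict compatibly to $C_\dgr^{K_i}$ and to $C_{\dgr+1}^{K_j, K_i}$ so that the adjoints used in the definition of $\Delta_\dgr^{K_i, K_j}$ agree with the adjoints appearing in the identity $\ker(A^*) = \Ima(A)^\perp$.
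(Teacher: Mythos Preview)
Your proof is correct. The paper does not actually prove this proposition; it is quoted directly from \cite[Claim~A.1]{persLap} without argument, so there is nothing in the paper to compare against. Your approach---writing $\langle \Delta_\dgr^{K_i,K_j}c,c\rangle$ as a sum of two squared norms and invoking $\ker(A^*)=\Ima(A)^\perp$---is the standard one and is exactly what one finds in the cited reference.
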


We now apply $\supseteq$-Linear Orthogonal Inversion to the map $\LK_\dgr^\Ffunc : \overline{\lp}^\supseteq \to \gr (C_\dgr^K)$ to obtain the Grassmannian persistence diagram $\rhi\left( \LK_\dgr^\Ffunc\right)$.

\begin{definition}[Degree-$\dgr$ Grassmannian persistence diagram from Laplacian kernels]\label{defn: 1p gpd lk}
    Let $\Ffunc : \lp \to \subcx(K)$ be a filtration. For any $\dgr\geq 0$, the map
    \[
    \rhi \left(\LK_\dgr^\Ffunc\right) : \overline{\lp}^\supseteq \to \gr\left(C_\dgr^K\right)
    \]
    is called \emph{the degree-$\dgr$ Grassmannian persistence diagram} of $\Ffunc$ (obtained from the Laplacian kernels).
\end{definition}

Recall that, as shown in~\cite[Section 9.1]{edit}, the algebraic M\"obius inverse (with respect to reverse inclusion order) of persistent Betti numbers coincides with the algebraic M\"obius inverse (with respect to product order) of the dimensions of the birth-death spaces for every $(\ell_i, \ell_j) \in \Seg(\lp) \setminus \diag(\lp)$. The following analogous result relates the functions $\prhi \left(\ZB_\dgr^\Ffunc\right)$ and $\rhi \left(\LK_\dgr^\Ffunc\right)$.

\begin{theorem}\label{thm: equality of different HIs}
    Let $\lp = \{ \ell_1<\cdots<\ell_n \}$. Let $\Ffunc = \{ K_i \}_{i=1}^n$ be a filtration over $\lp$. Then, for any degree $\dgr\geq 0$ and for every segment $(\ell_i, \ell_j) \in \Seg(\lp) \setminus \diag (\lp)$, we have
    \[
    \rhi\left(\LK_\dgr^\Ffunc\right) ((\ell_i,\ell_j)) = \prhi \left(\ZB_\dgr^\Ffunc\right)((\ell_i,\ell_j)).
    \]
\end{theorem}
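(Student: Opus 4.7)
The plan is to express the persistent Laplacian kernels directly in terms of birth--death spaces and then show that under this substitution the definition of $\rhi$ reduces to the definition of $\prhi$ applied to $\ZB_\dgr^\Ffunc$. By \cref{prop: kernel of persistent laplacians},
\[
\ker\bigl(\Delta_\dgr^{K_i,K_j}\bigr) = \Zfunc_\dgr(K_i) \cap \Ima\bigl(\partial_{\dgr+1}^{K_j,K_i}\bigr)^{\perp}.
\]
A short verification shows $\Ima(\partial_{\dgr+1}^{K_j,K_i}) = C_\dgr^{K_i}\cap \Bfunc_\dgr(K_j) = \Zfunc_\dgr(K_i)\cap \Bfunc_\dgr(K_j) = \ZB_\dgr^\Ffunc((\ell_i,\ell_j))$, since any boundary in $K_j$ supported in $K_i$ is automatically a cycle in $K_i$. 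Combining this with \cref{defn: laplacian kernel} and the fact that $\ZB_\dgr^\Ffunc((\ell_i,\infty)) = \Zfunc_\dgr(K_i)$ yields, for $i<j$,
\[
\LK_\dgr^\Ffunc((\ell_i,\ell_j)) \;=\; \ZB_\dgr^\Ffunc((\ell_i,\infty)) \ominus \ZB_\dgr^\Ffunc((\ell_i,\ell_{j-1})),
\]
with $\ell_{j-1}$ replaced by $\ell_n$ when $j=\infty$. The shift by one built into \cref{defn: laplacian kernel} is precisely what makes these indices match the ones demanded by the $\prhi$ formula.

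The algebraic heart of the proof is a single orthogonal decomposition. Fix $i$ and write $Z := \ZB_\dgr^\Ffunc((\ell_i,\infty))$, $B := \ZB_\dgr^\Ffunc((\ell_i,\ell_{j-1}))$, $B' := \ZB_\dgr^\Ffunc((\ell_i,\ell_j))$, so that $B\subseteq B'\subseteq Z$. Then
\[
Z \;=\; B \,\oplus\, (B'\ominus B) \,\oplus\, (Z\ominus B')
\]
as an orthogonal direct sum, from which it is immediate that $(Z\ominus B)\ominus(Z\ominus B') = B'\ominus B$. Translating back via the identity from the previous paragraph gives
\[
\LK_\dgr^\Ffunc((\ell_i,\ell_j)) \ominus \LK_\dgr^\Ffunc((\ell_i,\ell_{j+1})) \;=\; \ZB_\dgr^\Ffunc((\ell_i,\ell_j)) \ominus \ZB_\dgr^\Ffunc((\ell_i,\ell_{j-1})),
\]
and the identical argument with $i-1$ in place of $i$ yields the analogous identity for the second pair appearing in the definition of $\rhi$.

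Substituting these two identities into \cref{defn: sup-harmonic inversion} produces exactly the expression for $\prhi(\ZB_\dgr^\Ffunc)((\ell_i,\ell_j))$ given by \cref{defn: x-harmonic inversion}, which handles all non-diagonal bounded segments. The unbounded case is even more direct: by \cref{defn: sup-harmonic inversion}, $\rhi(\LK_\dgr^\Ffunc)((\ell_i,\infty)) = \LK_\dgr^\Ffunc((\ell_i,\infty))\ominus\LK_\dgr^\Ffunc((\ell_{i-1},\infty))$, which after the substitution $\LK_\dgr^\Ffunc((\ell_a,\infty)) = \ZB_\dgr^\Ffunc((\ell_a,\infty))\ominus \ZB_\dgr^\Ffunc((\ell_a,\ell_n))$ matches $\prhi(\ZB_\dgr^\Ffunc)((\ell_i,\infty))$ verbatim. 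The boundary cases at $i=1$ and $j=n$ are then read off by applying the conventions of \cref{remark: convention edge cases} uniformly to both sides. I expect the main obstacle to be purely bookkeeping, namely, cleanly tracking the one-step shift in the second coordinate together with the boundary conventions (in particular the interplay between $(\ell_i,\ell_n)$ and $(\ell_i,\infty)$) so that each term of the $\rhi$ formula lines up with its counterpart in $\prhi$; the underlying algebraic content is captured entirely by the orthogonal decomposition displayed above.
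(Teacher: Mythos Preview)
Your proposal is correct and follows essentially the same approach as the paper's proof: both express $\LK_\dgr^\Ffunc((\ell_i,\ell_j))$ as $\Zfunc_\dgr(K_i)\ominus \ZB_\dgr^\Ffunc((\ell_i,\ell_{j-1}))$, invoke the identity $(A\ominus C)\ominus(A\ominus B)=B\ominus C$ for $C\subseteq B\subseteq A$ (the paper isolates this as \cref{lem: cancellation} and proves it by choosing bases, whereas you derive it from the orthogonal decomposition $A=C\oplus(B\ominus C)\oplus(A\ominus B)$), and then substitute into the defining formula for $\rhi$ to recover $\prhi(\ZB_\dgr^\Ffunc)$.
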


We will need the following Lemma in order to prove~\cref{thm: equality of different HIs}

\begin{lemma}\label{lem: cancellation}
    Let $C \subseteq B \subseteq A \subseteq V$ be subspaces of an inner product space $V$. Then, $(A \ominus C) \ominus (A \ominus B) = B \ominus C$.
\end{lemma}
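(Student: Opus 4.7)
The plan is to unfold $W_1\ominus W_2 = W_1\cap W_2^\perp$ and deduce the identity from an orthogonal direct sum decomposition of $A\cap C^\perp$, using nothing more than the nested inclusions $C\subseteq B\subseteq A$.

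First, I would record two standard orthogonal decompositions: from $B\subseteq A$ we have $A = B\oplus (A\ominus B)$, and from $C\subseteq B$ we have $B = C\oplus (B\ominus C)$. Substituting the second into the first yields the threefold orthogonal decomposition
\[
A = C\;\oplus\; (B\ominus C)\;\oplus\;(A\ominus B).
\]

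Next, since $A\ominus C = A\cap C^\perp$ is the orthogonal complement of $C$ inside $A$, the decomposition above gives
\[
A\ominus C = (B\ominus C)\;\oplus\;(A\ominus B).
\]
In particular, $A\ominus B$ sits as an orthogonal summand of $A\ominus C$, which is also consistent with the inclusion $A\ominus B\subseteq A\ominus C$ coming from $B^\perp\subseteq C^\perp$.

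Finally, the orthogonal complement of $A\ominus B$ inside $A\ominus C$ is, by the display just obtained, exactly $B\ominus C$, which gives $(A\ominus C)\ominus(A\ominus B) = B\ominus C$ as claimed. I do not foresee any genuine obstacle; the only subtlety is confirming that the expression $(A\ominus C)\ominus(A\ominus B)$, which by definition is $(A\cap C^\perp)\cap(A\cap B^\perp)^\perp$ in the ambient $V$, genuinely coincides with the orthogonal complement of $A\ominus B$ within the subspace $A\ominus C$, and this is a direct consequence of the inclusion $A\ominus B\subseteq A\ominus C$ noted above together with the observation that intersecting a subspace with the ambient orthogonal complement of one of its subspaces recovers the relative orthogonal complement.
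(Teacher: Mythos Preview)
Your proof is correct and follows essentially the same approach as the paper. The paper's argument picks explicit bases for $C$, $B\ominus C$, and $A\ominus B$ and then reads off the conclusion, which is precisely the threefold orthogonal decomposition $A = C\oplus(B\ominus C)\oplus(A\ominus B)$ that you use, just phrased in coordinates rather than abstractly; your basis-free formulation is arguably cleaner, but the underlying idea is identical.
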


\begin{proof}
    Let $\mathcal{B}_C := \{ c_1,\ldots,c_k \}$ be a basis for $C$, $\mathcal{B}_{B\ominus C} :=\{ b_1,\ldots,b_l \}$ be a basis for $B \ominus C$ and $\mathcal{B}_{A \ominus B} := \{a_1,\ldots,a_m \}$ be a basis for $A \ominus B$. Then, $\{a_1,\ldots,a_m,b_1,\ldots,b_l \}$ is a basis for $A \ominus C$. Thus, $\{ b_1,\ldots,b_l\}$ is a basis for $(A\ominus C)\ominus (A \ominus B)$. On the other hand, $\{ b_1,\ldots,b_l\}$ is also a basis for $B \ominus C$. Thus, $(A \ominus C) \ominus (A \ominus B) = B \ominus C$.
\end{proof}

\begin{proof}[Proof of~\cref{thm: equality of different HIs}]
    For notational simplicity, we will write
    \begin{itemize}
        \item $(i,j):= (\ell_i, \ell_j)$,
        \item $\Delta_\dgr^{i,j} := \Delta_\dgr^{K_i, K_j}$,
        \item $\partial_\dgr^i := \partial_\dgr^{K_i}$ and $\partial_\dgr^{j,i}:= \partial_\dgr^{K_j, K_i}$,
        \item $\Zfunc_\dgr (i) := \Zfunc_\dgr (K_i)$ and $\Bfunc_\dgr(i) := \Bfunc_\dgr (K_i)$.
        
    \end{itemize}

    Recall that by~\cref{prop: kernel of persistent laplacians}, $\LK_\dgr^\Ffunc((i,j)) = \ker\left(\Delta_\dgr^{i,j-1}\right) = \ker\left(\partial_\dgr^i\right)\cap \Ima\left(\partial_{\dgr+1}^{j-1,i}\right)^\perp = \ker\left(\partial_\dgr^i\right) \ominus \Ima\left(\partial_{\dgr+1}^{j-1,i}\right)$. Observe that $\Ima\left(\partial_{\dgr+1}^{j-1,i}\right) = \ker\left(\partial_\dgr^i\right)\cap \Ima\left(\partial_{\dgr+1}^{j-1}\right)$. Thus, we can write $$\LK_\dgr^\Ffunc((i,j)) = Z_\dgr(i) \ominus \left(Z_\dgr(i)\cap B_\dgr(j-1)\right).$$ Then, by~\cref{lem: cancellation}
    \begin{align*}
        \LK_\dgr^\Ffunc((i,j)) \ominus \LK_\dgr((i,j+1))  &= \big(\Zfunc_\dgr(i) \ominus (\Zfunc_\dgr(i)\cap \Bfunc_\dgr(j-1))\big) \ominus \big(\Zfunc_\dgr(i) \ominus (\Zfunc_\dgr(i)\cap \Bfunc_\dgr(j)) \big)\\
                                                             &= \big(\Zfunc_\dgr(i)\cap \Bfunc_\dgr(j)\big) \ominus \big(\Zfunc_\dgr(i)\cap \Bfunc_\dgr(j-1)\big) \\
                                                             &= \ZB_\dgr^\Ffunc((i,j)) \ominus \ZB_\dgr^\Ffunc((i,j-1)).
    \end{align*}
    Similarly, we have that 
    \[
    \LK_\dgr^\Ffunc((i-1,j)) \ominus \LK_\dgr((i-1,j+1)) = \ZB_\dgr^\Ffunc((i-1,j)) \ominus \ZB_\dgr^\Ffunc((i-1,j-1)).
    \]
    Then, 
    \begin{align*}
        \rhi \left(\LK_\dgr^\Ffunc\right)((i,j)) =& \big ( \LK_\dgr^\Ffunc((i,j)) \ominus \LK_\dgr^\Ffunc((i,j+1)) \big ) \ominus \big ( \LK_\dgr^\Ffunc ((i-1,j)) \ominus \LK_\dgr^\Ffunc((i-1,j+1)) \big ) \\
                                              =& \big(\ZB_\dgr^\Ffunc((i,j)) \ominus \ZB_\dgr^\Ffunc((i,j-1))\big) \ominus \big(\ZB_\dgr^\Ffunc((i-1,j)) \ominus \ZB_\dgr^\Ffunc((i-1,j-1))\big) \\
                                              =& \prhi \left(\ZB_\dgr^\Ffunc\right)((i,j)).
    \end{align*}
    For the boundary cases when $i=1$ or $j=\infty$, similar arguments show that we obtain the desired result.
\end{proof}

Now, we can regard $\rhi \left(\LK_\dgr^\Ffunc\right)$ as an object in $\inndgm(C_\dgr^K)$ by extending its domain from $\overline{\lp}^\supseteq$ to $\overline{\lp}^\times$ by defining $\rhi \left(\LK_\dgr^\Ffunc \right)((\ell_i,\ell_i)) = \{ 0\}$ for every diagonal segment $(\ell_i, \ell_i) \in \diag(\lp)$. Then, we have that $\prhi \left(\ZB_\dgr^\Ffunc\right)$ and $\rhi \left(\LK_\dgr^\Ffunc\right)$ are two objects in $\inndgm(C_\dgr^K)$ that only differ along the diagonal. Therefore, as explained in~\cref{rem: diagonal blind edit}, we get that $$d_{\inndgm\left(C_\dgr^K\right)}^E \left(\prhi \left(\ZB_\dgr^\Ffunc\right), \rhi \left(\LK_\dgr^\Ffunc\right)\right) = 0.$$ Therefore, we have the following stability result.

\begin{theorem}[Stability]\label{thm: stability of rhi lap}
    Let $\Ffunc, \Gfunc \in \Fil(K)$ be two filtrations. Then, for any degree $\dgr \geq  0$, we have
    \[
    d_{\inndgm\left(C_\dgr^K\right)}^E \left(\rhi \left(\LK_\dgr^\Ffunc\right), \rhi \left(\LK_\dgr^\Gfunc\right) \right) \leq d_{\Fil(K)}^E (\Ffunc, \Gfunc).
    \]
\end{theorem}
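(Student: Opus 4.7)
The plan is to reduce this directly to the already-established stability result for $\prhi \left(\ZB_\dgr^\Ffunc\right)$ (\cref{thm: stability of prhi of zb}) via the triangle inequality, using \cref{thm: equality of different HIs} to bridge between the two constructions. The essential observation has in fact already been made in the paragraph immediately preceding the statement: by \cref{thm: equality of different HIs}, the Grassmannian persistence diagrams $\prhi \left(\ZB_\dgr^\Ffunc\right)$ and $\rhi \left(\LK_\dgr^\Ffunc\right)$, viewed as objects of $\inndgm\left(C_\dgr^K\right)$ (after extending the latter by zero on the diagonal), agree on every non-diagonal segment in $\Seg(\lp)$. So the first step is to invoke \cref{rem: diagonal blind edit} to conclude
\[
d_{\inndgm\left(C_\dgr^K\right)}^E \left(\prhi \left(\ZB_\dgr^\Ffunc\right), \rhi \left(\LK_\dgr^\Ffunc\right)\right) = 0,
\]
and likewise with $\Gfunc$ in place of $\Ffunc$.

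Second, I would verify that the edit distance $d_{\inndgm\left(C_\dgr^K\right)}^E$ satisfies the triangle inequality. This is standard for categorical edit distances defined as infima of path costs (as in \cref{defn: edit dist}), since concatenating a path from $A$ to $B$ with a path from $B$ to $C$ yields a path from $A$ to $C$ whose cost is the sum of the two individual costs. Applying this with $A = \rhi \left(\LK_\dgr^\Ffunc\right)$, $B = \prhi \left(\ZB_\dgr^\Ffunc\right)$, $C = \prhi \left(\ZB_\dgr^\Gfunc\right)$, and then once more with $\rhi \left(\LK_\dgr^\Gfunc\right)$, gives
\[
d_{\inndgm\left(C_\dgr^K\right)}^E \left(\rhi \left(\LK_\dgr^\Ffunc\right), \rhi \left(\LK_\dgr^\Gfunc\right)\right) \leq d_{\inndgm\left(C_\dgr^K\right)}^E \left(\prhi \left(\ZB_\dgr^\Ffunc\right), \prhi \left(\ZB_\dgr^\Gfunc\right)\right),
\]
since the two bracketing terms are zero.

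Finally, I would combine this with the already-proved stability of $\prhi \left(\ZB_\dgr^\Ffunc\right)$ (\cref{thm: stability of prhi of zb}), which bounds the right-hand side above by $d_{\Fil(K)}^E(\Ffunc, \Gfunc)$, yielding the desired inequality. I do not anticipate any substantive obstacle here: once one accepts the diagonal-blindness of the edit distance (\cref{rem: diagonal blind edit}) and the coincidence theorem (\cref{thm: equality of different HIs}), the argument is essentially a one-line chain of inequalities. The only point worth double-checking is that the reinterpretation of $\rhi \left(\LK_\dgr^\Ffunc\right)$ as an object of $\inndgm\left(C_\dgr^K\right)$ (by setting the diagonal values to $\{0\}$) is legitimate, i.e., that the extended family remains transverse; but this is immediate because adding zero subspaces does not alter the dimension sum in \cref{defn: transversity}.
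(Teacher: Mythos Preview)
Your proposal is correct and follows essentially the same approach as the paper: reduce to \cref{thm: stability of prhi of zb} via the triangle inequality, using the zero-distance identification from \cref{thm: equality of different HIs} and \cref{rem: diagonal blind edit}. The paper actually notes that the triangle inequality in both directions gives equality of the two edit distances, but your one-sided inequality is all that is needed for the stated bound.
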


\begin{proof}
    By the triangle inequality, we have that 
    \[
    d_{\inndgm\left(C_\dgr^K\right)}^E \left(\rhi \left(\LK_\dgr^\Ffunc\right), \rhi \left(\LK_\dgr^\Gfunc\right)\right) = d_{\inndgm\left(C_\dgr^K\right)}^E \left(\prhi \left(\ZB_\dgr^\Ffunc\right), \prhi \left(\ZB_\dgr^\Gfunc\right)\right),
    \]
    as $d_{\inndgm\left(C_\dgr^K\right)}^E \left(\prhi \left(\ZB_\dgr^\Ffunc\right), \rhi \left(\LK_\dgr^\Ffunc\right)\right) = 0 = d_{\inndgm\left(C_\dgr^K\right)}^E \left(\prhi \left(\ZB_\dgr^\Gfunc\right), \rhi \left(\LK_\dgr^\Gfunc\right)\right)$.
    Then, by~\cref{thm: stability of prhi of zb}, we conclude that 
    \[
    d_{\inndgm\left(C_\dgr^K\right)}^E \left(\rhi \left(\LK_\dgr^\Ffunc\right), \rhi \left(\LK_\dgr^\Gfunc\right)\right) \leq d_{\Fil(K)}^E (\Ffunc, \Gfunc).
    \]
\end{proof}

\subsection{Treegrams}\label{subsec: treegrams}
The degree-$0$ persistence diagram of a filtration is incapable of tracking the evolution of the clustering structure throughout the filtration (see the two filtrations depicted in~\cref{fig:degree0example} have the same degree-$0$ persistence diagrams but the hierarchical clustering structures are different). In the case of Vietoris-Rips filtration of a finite metric space, the clustering structure is captured by the notion of \emph{dendrograms}, which represents a hierarchy of clusters. In a more general filtration, \emph{treegrams}, a generalization of dendrograms, can be used to represent the clustering structure of the filtration. In this subsection, we show that the degree-$0$ Grassmannian persistence diagram of a filtration is equivalent to the treegram of the filtration; see~\cref{thm: equivalence of treegrams and degree 0 orthogonal inversions}. Namely, they can be obtained from each other. This equivalence also shows that Grassmannian persistence diagrams are stronger than the persistence diagrams. For a more thorough discussion about dendrograms/treegrams and hierarchical clustering, see~\cite{cluster, treegrampaper}.

Given a finite set $X$, a~\emph{partition} of $X$ is any collection $\pi = \{ B_1, \ldots, B_k \}$ such that
\begin{itemize}
    \item $B_i \cap B_j = \emptyset $ for $i\neq j$.
    \item $\cup_{i=1}^k B_i = X$.
\end{itemize}

We denote the set of all partitions of $X$ by $\parti (X)$. A~\emph{sub-partition} of $X$ is a pair $(X', \pi')$ such that $X' \subseteq X$ and $\pi'$ is a partition of $X'$. We denote by $\spart (X)$ the set of all sub-partitions of $X$. For two sets $A' \subseteq A$ and $\pi = \{ B_1,\ldots,B_k \} \in \parti (A)$, the restricted partition $\pi |_{A'} := \cup _{i=1}^k (B_i \cap A')$ is a partition of $A'$. We refer to the elements $B_1 ,\ldots,B_k$ of the partition as the blocks of the partition.

\begin{figure}
    \centering
    \includegraphics[scale=20]{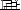}
    \caption{A graphical representation of a treegram. The slanted line segments emerging from $u$ and $v$ indicate the singletons $\{ u \}$ and $\{ v \}$ are never blocks of the treegram. The points $u$ and $v$ appear for the first time as elements of a strictly larger block.}
    \label{fig:treegram example}
\end{figure}

\begin{definition}[Treegrams {\cite{treegrampaper}}]\label{defn: treegram}
    Let $X = \{ x_1,x_2,\ldots ,x_\ell \}$ be a finite set. A~\emph{treegram} over $X$ is a function 
    \begin{align*}
        T_X : \R &\to \spart (X) \\
               t &\mapsto (X_t, \pi_t)
    \end{align*}
    such that
    \begin{enumerate}
        \item For $t\leq t'$, $X_t \subseteq X_{t'}$ and $\pi_{t'} |_{X_t}$ is coarser than $\pi_t$,
        \item $\exists t_F \in \R$ such that for all $t \geq t_F$, $X_t = X$ and $\pi_t = \{ X \}$,
        \item $\exists t_I<t_F \in \R$ such that for all $t < t_I$, $X_t = \emptyset$,
        \item\label{property: min guarantee} For all $t\in \R$, there exists $\varepsilon > 0$ such that $T_X(t) = T_X(t')$ for all $t' \in [t, t+\varepsilon]$.
    \end{enumerate}
    The parameter $t$ is referred to as \emph{time}. A treegram is called a \emph{dendrogram} if $t_I = 0$, $X_0 = X$, and $\pi_0 = \{ \{x_1\}, \{x_2\},\ldots , \{ x_\ell \}   \}$ is the finest partition of $X_0 = X$.
\end{definition}

\begin{definition}[Birth time]
    Let $T_X$ be a treegram. For $x\in X$, we define, the~\emph{birth time} of x as
    \[
    b_x := \min \{ t\in \R \mid x \in X_t \}.
    \]
    Note that the minimum exists by~\cref{property: min guarantee} in~\cref{defn: treegram}.
\end{definition}

\begin{example}
    Treegrams can be graphically represented. In~\cref{fig:treegram example}, we illustrate a treegram, $T_X$, over the set $X = \{ x,y,z,u,v \}$. For $t\in (-\infty, b_x)$ we have that $T_X(t) = \emptyset$. Also, $T_X (b_x) = \{ \{ x \} \}$, $T_X (b_z) = \{ \{ x \}, \{ z \} \}$, $T_X (b_y) = \{ \{ x\} ,\{y \} , \{ z \}\}$, $T_X (t_{xy}) = \{ \{ x ,y \}, \{z \} \}$ and $T_X (t) = \{ \{ x,y,z,u,v \} \}$ for $t \in (b_{u}, \infty)$. Notice that we use a shorthand notation here by only recording the partition component, $\pi_t$, of the sub-partition $T_X(t) = (X_t, \pi_t)$. In this example, at time $t = b_{u}=b_v$, the blocks $\{ x,y \}$ and $\{ z \}$ merge together and the points $u$ and $v$ appears for the first time and immediately merge with $x$, $y$, and $z$.
\end{example}

Let $K$ be a finite connected simplicial complex and let $\Ffunc = \{ K_i \}_{i=1}^n$ be a filtration of $K$ over a linearly ordered metric poset $P = \{ p_1,\ldots,p_n \} \subseteq \R$. Let $V(K_i)$ denote the set of vertices of $K_i$. 
Then, the filtration $\Ffunc$ determines a treegram $T_\Ffunc : \R \to \spart (V(K))$ as follows:
\begin{enumerate}
    \item For $t<p_1$, $T_\Ffunc (t) := (\emptyset, \emptyset)$,
    \item For $p_i \leq t < p_{i+1}$, $T_\Ffunc (t) := (V(K_i), \conn (K_i))$,
    \item For $t \geq p_n$, $T_\Ffunc (t) = (V(K_n), \conn (K_n)) := (V(K), \{ V(K) \})$,
\end{enumerate}
where $\conn (K_i)$ is the partition of $V(K_i)$ whose blocks consist of vertices that are in the same connected component of $K_i$.

\begin{definition}[Treegram of a filtration]
    The treegram $T_\Ffunc$ constructed from a filtration $\Ffunc$ is called the~\emph{treegram of $\Ffunc$}.
\end{definition}
\nomenclature[34]{$T_\Ffunc$}{Treegram of a filtration $\Ffunc$}

The main result in this subsection is the equivalence of treegrams and degree-$0$ Grassmannian persistence diagrams. Here, we use the term ``equivalence'' to indicate that they can be obtained from each other.

\begin{theorem}\label{thm: equivalence of treegrams and degree 0 orthogonal inversions}
    For a filtration, $\Ffunc := \{ K_i \}_{i=1}^n$, of a finite connected simplicial complex $K$, $\prhi \left(\ZB_0^\Ffunc\right)$ and the treegram $T_\Ffunc$ are equivalent.
\end{theorem}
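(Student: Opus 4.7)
The plan is to establish the equivalence by observing that both the treegram $T_\Ffunc$ and the degree-$0$ Grassmannian persistence diagram $\prhi\left(\ZB_0^\Ffunc\right)$ encode precisely the same data, namely the sequence of vertex sets $V(K_i)$ together with the connected component partitions $\conn(K_i)$ for $i=1,\ldots,n$. Since the treegram is defined directly from this data, it suffices to show (a) that this data determines $\prhi\left(\ZB_0^\Ffunc\right)$, and (b) that $\prhi\left(\ZB_0^\Ffunc\right)$ determines it back.

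For direction (a), I would first observe that in degree $0$ we have $\Zfunc_0(K_i) = C_0^{K_i}$ and that $\Bfunc_0(K_j)$ consists exactly of those $0$-chains whose coefficient sum is zero on each connected component of $K_j$. Consequently
\[
\ZB_0^\Ffunc((\ell_i, \ell_j)) = \left\{ c \in C_0^{K_i} \,\Big|\, \sum_{v \in V(K_i)\cap C} [c]_v = 0 \text{ for each } C \in \conn(K_j) \right\},
\]
which is visibly determined by $V(K_i)$ and $\conn(K_j)$ (both readable from $T_\Ffunc$). Applying the $\times$-Linear Orthogonal Inversion formula (\cref{defn: x-harmonic inversion}) then yields $\prhi\left(\ZB_0^\Ffunc\right)$ from $T_\Ffunc$.

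For direction (b), I would use \cref{thm: prhi is monoidal inversion}, which states that $\prhi\left(\ZB_0^\Ffunc\right)$ is a monoidal M\"obius inverse of $\ZB_0^\Ffunc$, so
\[
\ZB_0^\Ffunc((\ell_i, \ell_j)) \;=\; \sum_{I \prodord (\ell_i, \ell_j)} \prhi\left(\ZB_0^\Ffunc\right)(I).
\]
From this, $V(K_i)$ is recovered as the set of standard basis vectors $v$ appearing with nonzero coefficient in some element of $\ZB_0^\Ffunc((\ell_i, \infty)) = \Zfunc_0(K_i) = C_0^{K_i}$ (well-defined because $\{[v] : v \in V(K)\}$ is an orthonormal basis of $C_0^K$). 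Once $V(K_i)$ is known, the partition $\conn(K_i)$ is recovered by the criterion that $v,w \in V(K_i)$ lie in the same component of $K_i$ if and only if $[v]-[w] \in \ZB_0^\Ffunc((\ell_i,\ell_i))$, since $\ZB_0^\Ffunc((\ell_i,\ell_i)) = C_0^{K_i}\cap \Bfunc_0(K_i)$ is precisely the space of $0$-chains on $V(K_i)$ with zero component sums.

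The main obstacle is the bookkeeping in direction (b): one must confirm that the vertex set $V(K_i)$ can be unambiguously extracted from the subspace $C_0^{K_i} \subseteq C_0^K$, which relies essentially on the choice of standard inner product for which vertices form an orthonormal basis; then the equivalence relation defining $\conn(K_i)$ (same component iff $[v]-[w] \in \Bfunc_0(K_i)$) recovers the partition. With these two reconstruction procedures in hand, the data $\{V(K_i), \conn(K_i)\}_{i=1}^n$ recovered from $\prhi\left(\ZB_0^\Ffunc\right)$ exactly reproduces $T_\Ffunc$ via the definition of the treegram of a filtration, and the two constructions are mutually invertible, proving the equivalence. (The fully constructive/algorithmic version of this reconstruction is deferred to \cref{appendix: construction}.)
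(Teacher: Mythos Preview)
Your proposal is correct and follows essentially the same approach as the paper: both directions pass through the intermediate data $\{V(K_i),\conn(K_i)\}_{i=1}^n$ (equivalently, $\ZB_0^\Ffunc$), using \cref{thm: prhi is monoidal inversion} for the reverse direction. The only cosmetic difference is that the paper recovers $V(K_i)$ as the union of the blocks of $\conn(K_i)$ (extracted from $\Bfunc_0(K_i)$), whereas you read it off from $\ZB_0^\Ffunc((\ell_i,\infty))=C_0^{K_i}$; both work.
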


Note that since, by~\cref{thm: prhi is monoidal inversion}, $\prhi \left(\ZB_0^\Ffunc\right)$ is a monoidal M\"obius inverse of $\ZB_0^\Ffunc$, the information we gain from $\prhi \left(\ZB_0^\Ffunc\right)$ is the same as the information we gain from the $0$-th birth-death spaces $\ZB_0^\Ffunc$. Similarly, the information gained from the treegram $T_\Ffunc$ is equivalent to the information gained from $\ZB_0^\Ffunc$. This is because, for each $i=1,\ldots, n$, $T_\Ffunc (i) = (V(K_i), \conn (K_i))$ and $V(K_i)$ determines the cycles and $\conn(K_i)$ determines the boundaries at $t=i$. So, the equivalence of $\prhi \left(ZB_0^\Ffunc\right)$ and $T_\Ffunc$ is obtained through the fact that both are equivalent to $\ZB_0^\Ffunc$. More formally, the proof of the~\cref{thm: equivalence of treegrams and degree 0 orthogonal inversions} follows from the next two propositions.

\begin{proposition}\label{prop: treegram to zb0}
    Let $\Ffunc := \{ K_i \}_{i=1}^n$ be a filtration. Then, $\prhi \left(\ZB_0^\Ffunc\right)$ can be recovered from the treegram $T_\Ffunc$.
\end{proposition}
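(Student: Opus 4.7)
The plan is to factor the recovery through the birth-death space function: first I will show that $\ZB_0^\Ffunc$ can be read off the treegram $T_\Ffunc$, and then I will invoke the defining formula of $\prhi$ (\cref{defn: x-harmonic inversion}), which expresses $\prhi\left(\ZB_0^\Ffunc\right)$ as a purely algebraic expression in the subspaces $\ZB_0^\Ffunc((\ell_k,\ell_l))$.

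For the first step, I would recall that in degree $0$ one has $\Zfunc_0(K_i) = C_0^{K_i}$, that is, the vector subspace of $C_0^K$ spanned by the vertex set $V(K_i)$. Moreover, $\Bfunc_0(K_j)$ admits a natural decomposition
\[
\Bfunc_0(K_j) \;=\; \bigoplus_{C \in \conn(K_j)} \Bigl\{ \textstyle\sum_{v \in C} c_v\, v \;\Big|\; \sum_{v \in C} c_v = 0 \Bigr\},
\]
since the image of $\partial_1^{K_j}$ within each connected component of $K_j$ is precisely the hyperplane of coefficient-sum-zero chains on that component. Both of the pieces of data $V(K_i)$ and $\conn(K_j)$ are, by the very definition of $T_\Ffunc$, visible in the treegram at parameters $p_i$ and $p_j$ respectively.

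For the second step, I would intersect: since $\Zfunc_0(K_i) = \spn(V(K_i))$ restricts any chain to be supported on $V(K_i)$, distributing the intersection over the block decomposition of $\Bfunc_0(K_j)$ gives
\[
\ZB_0^\Ffunc((\ell_i,\ell_j)) \;=\; \bigoplus_{C \in \conn(K_j)} \Bigl\{ \textstyle\sum_{v \in C \cap V(K_i)} c_v\, v \;\Big|\; \sum_{v \in C \cap V(K_i)} c_v = 0 \Bigr\},
\]
and analogously $\ZB_0^\Ffunc((\ell_i,\infty)) = \spn(V(K_i))$. Thus the entire function $\ZB_0^\Ffunc : \overline{\lp}^\times \to \gr(C_0^K)$ is determined by the sequences $\{V(K_i)\}_{i=1}^n$ and $\{\conn(K_j)\}_{j=1}^n$, both encoded in $T_\Ffunc$.

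For the final step, \cref{defn: x-harmonic inversion} (or, more compactly, the equivalent form from \cref{prop: linAlg Mobius} via \cref{prop: loi and goi agrees}) expresses $\prhi\left(\ZB_0^\Ffunc\right)((\ell_i,\ell_j))$ as an orthogonal difference between $\ZB_0^\Ffunc((\ell_i,\ell_j))$ and $\ZB_0^\Ffunc((\ell_{i-1},\ell_j)) + \ZB_0^\Ffunc((\ell_i,\ell_{j-1}))$, with the analogous boundary formulas for $j=\infty$ and for diagonal segments. Since each subspace appearing on the right-hand side has already been reconstructed from $T_\Ffunc$, we obtain $\prhi\left(\ZB_0^\Ffunc\right)$. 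No real obstacle is anticipated, since every step is essentially definitional; the only bookkeeping one needs to keep straight is the coefficient-sum condition within each connected component of each $K_j$, which is exactly what the sub-partition data of the treegram supplies.
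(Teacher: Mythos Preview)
Your proposal is correct and follows essentially the same route as the paper's proof: both recover $\Zfunc_0(K_i)$ from $V(K_i)$ and $\Bfunc_0(K_j)$ from $\conn(K_j)$, hence $\ZB_0^\Ffunc$ by intersection, and then apply the defining formula of $\prhi$. The only difference is cosmetic---you spell out the coefficient-sum-zero decomposition of $\Bfunc_0(K_j)$ and the resulting explicit formula for $\ZB_0^\Ffunc((\ell_i,\ell_j))$, whereas the paper leaves these implicit.
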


\begin{proof}
    Let $T_\Ffunc$ be given. For any $i = 1,\ldots,n$, we have that $T_\Ffunc (i) = (V(K_i), \conn (K_i))$. Then, $V(K_i)$ determines $\Zfunc_0 (K_i)$ (as $V(K_i)$ is the canonical basis of $\Zfunc_0 (K_i)$) and $\conn(K_i)$ determines $\Bfunc_0 (K_i)$ (as two vertices $v_1,v_2 \in V(K_i)$ are in the same connected component of $K_i$ if and only if $v_1-v_2 \in \Bfunc_0 (K_i)$). Thus, the birth-death spaces $\ZB_0^\Ffunc ((i,j)) = \Zfunc_0 (K_i) \cap \Bfunc_0 (K_j)$ can be recovered for every $i$ and $j$. Hence, $\prhi \left(\ZB_0^\Ffunc\right)$ can also be recovered.
\end{proof}

\begin{remark}
    Note that the proof we provided above is nonconstructive. With the goal of having an algorithm for computing the degree-$0$ Grassmannian persistence diagram (i.e. $\prhi \left(\ZB_0^\Ffunc\right)$) from the treegram $T_\Ffunc$, we provide a constructive proof of~\cref{prop: treegram to zb0} in~\cref{appendix: construction}.
\end{remark}

\begin{proposition}
    Let $\Ffunc := \{ K_i \}_{i=1}^n$ be a filtration. Then, the treegram $T_\Ffunc$ can be recovered from $\prhi \left(\ZB_0^\Ffunc\right)$.
\end{proposition}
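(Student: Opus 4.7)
The plan is to invert the two steps used in the proof of \cref{prop: treegram to zb0}: first, recover the birth-death spaces $\ZB_0^\Ffunc$ from $\prhi\left(\ZB_0^\Ffunc\right)$, and then read off the vertex sets $V(K_i)$ and the component partitions $\conn(K_i)$ directly from $\ZB_0^\Ffunc$. The first step is essentially free. By \cref{thm: prhi is monoidal inversion}, $\prhi\left(\ZB_0^\Ffunc\right)$ is a monoidal M\"obius inverse of $\ZB_0^\Ffunc$, so for every segment $(\ell_i,\ell_j)$,
\[
\ZB_0^\Ffunc((\ell_i,\ell_j)) \;=\; \sum_{(\ell_{i'},\ell_{j'}) \prodord (\ell_i,\ell_j)} \prhi\left(\ZB_0^\Ffunc\right)((\ell_{i'},\ell_{j'})),
\]
which exhibits $\ZB_0^\Ffunc$ as a functional of $\prhi\left(\ZB_0^\Ffunc\right)$.

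The second step is the translation between $\ZB_0^\Ffunc$ and the treegram data. For each $i=1,\ldots,n$, the unbounded segment gives $\ZB_0^\Ffunc((\ell_i,\infty)) = \Zfunc_0(K_i) = \spn(V(K_i))$, and since the standard inner product on $C_0^K$ distinguishes $V(K)$ as the canonical orthonormal basis, I would recover $V(K_i)$ as the set of elements of $V(K)$ lying in $\ZB_0^\Ffunc((\ell_i,\infty))$. For the partition $\conn(K_i)$, observe that $\Bfunc_0(K_i) \subseteq \Zfunc_0(K_i)$, hence the diagonal segment carries
\[
\ZB_0^\Ffunc((\ell_i,\ell_i)) \;=\; \Zfunc_0(K_i) \cap \Bfunc_0(K_i) \;=\; \Bfunc_0(K_i).
\]
Two vertices $v,w \in V(K_i)$ lie in the same connected component of $K_i$ if and only if $v - w \in \Bfunc_0(K_i)$, so $\conn(K_i)$ is fully determined by the pair $\left(V(K_i),\; \ZB_0^\Ffunc((\ell_i,\ell_i))\right)$. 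Assembling $(V(K_i),\conn(K_i))$ for $i=1,\ldots,n$ and inserting them at the corresponding parameter values of $\lp \subseteq \R$, along with the convention that $T_\Ffunc$ is constant on half-open intervals between consecutive values of $\lp$ and trivial below $\ell_1$ and equal to $(V(K),\{V(K)\})$ above $\ell_n$, reconstructs $T_\Ffunc$.

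I expect no substantive obstacle: this direction is a definitional unpacking made possible by the fact that $\prhi$ produces a true monoidal M\"obius inverse. The one point I would flag in the write-up is that the recovery of $V(K_i)$ from $\Zfunc_0(K_i)$ uses the privileged standard basis of $C_0^K$; this is consistent with the convention already fixed in \cref{sec: prelim} and is the degree-$0$ analogue of the canonicality discussion for general $\dgr$.
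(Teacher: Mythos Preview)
Your proposal is correct and follows essentially the same approach as the paper: invoke \cref{thm: prhi is monoidal inversion} to recover $\ZB_0^\Ffunc$, then read off $\Bfunc_0(K_i)$ from the diagonal segments and reconstruct $\conn(K_i)$ via the criterion $v-w\in\Bfunc_0(K_i)$. The only cosmetic difference is that you extract $V(K_i)$ from the unbounded segment $\ZB_0^\Ffunc((\ell_i,\infty))=\Zfunc_0(K_i)$, whereas the paper recovers $V(K_i)$ afterward as the union of the blocks of $\conn(K_i)$; your route is slightly more explicit about isolated vertices, but the two arguments are interchangeable.
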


\begin{proof}
    By~\cref{thm: prhi is monoidal inversion}, we can recover the birth-death spaces, $\ZB_0^\Ffunc$, of the filtration $\Ffunc$ from $\prhi \left(\ZB_0^\Ffunc\right)$. In particular, we can recover $\Zfunc_0 (K_i) \cap \Bfunc_0 (K_i) = \Bfunc_0 (K_i)$ for all $i$. Then, the connected components of $K_i$, $\conn (K_i)$, can be reconstructed from  $ \Bfunc_0 (K_i)$. Observe that $V(K_i) = \cup_{B \in \conn(K_i)} B$ because $\conn(K_i)$ is a partition of $V(K_i)$. Hence, $V(K_i)$ is also recovered. Thus, the treegram $T_\Ffunc$, which is defined by the collections $\{ V(K_i)\}_{i=1}^n$ and $\{ \conn (K_i) \}_{i=1}^n$, can also be recovered.
\end{proof}

\begin{remark}
    Let $(X, u_X)$ be a finite ultrametric space. That is, $u_X : X\times X \to \R_{\geq 0}$ is a metric and $u_X$ satisfies the ultrametric inequality: $u_X (x,z) \leq \max \{ u_X(x,y), u_X(y,z) \}$ for all $x,y,z \in X$. As discussed in~\cite[Section 3.3]{cluster}, the dendrograms over $X$ and the ultrametrics on $X$ are equivalent, i.e. there is a bijection between the set of all ultrametrics on $X$ and the set of all dendrograms over $X$ such that the corresponding ultrametrics and dendrograms generates the same hierarchical decomposition~\cite[Theorem 9]{cluster}. Let $D_{u_X} : \R \to \parti(X)$ be the dendrogram corresponding to $u_X$ that is determined by this equivalence. Let $\VR(X,u_X)$ be the Vietoris-Rips filtration of the metric space $(X,u_X)$ and let $T_{\VR(X,u_X)}$ be the treegram of $F$. One can see that $T_{\VR(X,u_X)}$ is indeed a dendrogram. Indeed, $T_{\VR(X,u_X)} = D_{u_X}$. Note that, by~\cref{thm: equivalence of treegrams and degree 0 orthogonal inversions}, the degree-$0$ Grassmannian persistence diagram of the filtration $\VR(X,u_X)$ is equivalent to $T_{\VR(X,u_X)} = D_{u_X}$. 
    Hence, by combining these facts: $T_{\VR(X,u_X)} = D_{u_X}$, \cite[Theorem 9]{cluster} and \cref{thm: equivalence of treegrams and degree 0 orthogonal inversions}, we conclude that the degree-$0$ Grassmannian persistence diagram of the Vietoris-Rips filtration of a finite ultrametric space $(X,u_X)$ recovers the ultrametric $u_X$. This also highlights the superior discriminating power of Grassmannian persistence diagrams compared to classical persistence diagrams.
\end{remark}

The key insight from the previous remark is summarized in the following Corollary.

\begin{corollary}\label{cor: finite ums recovered}
    $\VR(X,u_X)$ be a finite ultrametric space. Then, degree-$0$ Grassmannian persistence diagram of the Vietoris-Rips filtration of $(X,u_X)$ recovers the ultrametric $u_X$.
\end{corollary}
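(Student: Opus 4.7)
The plan is to chain together the equivalences already established in this section with the well-known bijection between dendrograms and ultrametrics due to Carlsson and M\'emoli. In particular, by~\cref{thm: equivalence of treegrams and degree 0 orthogonal inversions}, the degree-$0$ Grassmannian persistence diagram $\prhi \left(\ZB_0^{\VR(X,u_X)}\right)$ is equivalent to the treegram $T_{\VR(X,u_X)}$ of the Vietoris-Rips filtration $\VR(X,u_X)$. So it suffices to show that $T_{\VR(X,u_X)}$ determines $u_X$.

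First I would verify that $T_{\VR(X,u_X)}$ is in fact a \emph{dendrogram} over $X$, not merely a treegram. Since every singleton $\{x\}$ is a simplex of $\VR(X,u_X)$ at time $0$ (because $u_X(x,x)=0$), all points of $X$ are present from time $0$ onwards, and at $t=0$ each point forms its own connected component. Hence $T_{\VR(X,u_X)}(0) = (X, \{ \{x\} \mid x \in X \})$, fulfilling the conditions of a dendrogram in~\cref{defn: treegram}. Next, I would identify $T_{\VR(X,u_X)}$ with the dendrogram $D_{u_X}$ canonically associated to the ultrametric $u_X$ via~\cite[Theorem 9]{cluster}: two points $x,y$ belong to the same block of $T_{\VR(X,u_X)}(t)$ iff they are connected by a sequence of edges of the Vietoris-Rips complex at scale $t$, and by the ultrametric (strong triangle) inequality, this is equivalent to $u_X(x,y) \le t$, which is precisely the defining condition of the block structure of $D_{u_X}(t)$.

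Finally, I would invoke the Carlsson-M\'emoli bijection~\cite[Theorem 9]{cluster} between dendrograms over $X$ and ultrametrics on $X$ to conclude that $u_X$ can be reconstructed from $D_{u_X} = T_{\VR(X,u_X)}$, and hence from $\prhi \left(\ZB_0^{\VR(X,u_X)}\right)$. Explicitly, $u_X(x,y)$ is the smallest $t$ for which $x$ and $y$ lie in the same block of $T_{\VR(X,u_X)}(t)$.

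The only nontrivial step is the identification $T_{\VR(X,u_X)} = D_{u_X}$, which relies on the ultrametric inequality to ensure that Vietoris-Rips connectivity at scale $t$ agrees with the equivalence relation ``$u_X(x,y)\le t$''. Everything else is a routine application of prior results in the paper and the classical cluster-theoretic correspondence.
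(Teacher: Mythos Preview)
Your proposal is correct and follows essentially the same approach as the paper: the argument in the remark immediately preceding the corollary chains \cref{thm: equivalence of treegrams and degree 0 orthogonal inversions} with the identification $T_{\VR(X,u_X)} = D_{u_X}$ and the Carlsson--M\'emoli bijection~\cite[Theorem 9]{cluster}, exactly as you do. If anything, your justification of $T_{\VR(X,u_X)} = D_{u_X}$ via the ultrametric inequality is slightly more explicit than the paper's, which simply asserts it.
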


\section{Discussion}\label{sec:disc}

When comparing two filtrations and their Grassmannian persistence diagrams, we are required that there is a fixed simplicial complex $K$ that each filtration eventually stabilizes at. It is a natural question to ask for a framework that can handle filtrations over different vertex sets. Moreover, while the motivation behind the concept of Orthogonal Inversions primarily stems from its applications in TDA, there is an inherent interest in broadening the utility of orthogonal inversions beyond the scope of TDA.

The equivalence of treegrams and degree-$0$ $\times$-Linear Orthogonal Inverses of birth-death spaces (\cref{subsec: treegrams}) suggests that for dimensions $\dgr\geq 1$, $\prhi \left(\ZB_\dgr^\Ffunc\right)$ can be thought of as a higher dimensional generalization of treegrams. This raises the question of whether there is a useful graphical description of $\prhi \left(\ZB_\dgr^\Ffunc\right)$ in that case.  In a similar vein, one wonders whether multidimensional dendrograms, that is functors $D:P\to \mathrm{Part}(X)$, where $\mathrm{Part}(X)$ is the set of partitions of a finite set $X$ \cite{dynamic-kim2023}, might be perfectly encoded (up to isomorphism) by Grassmanian persistence diagrams. In relation to this thread, we expect that the equivalence of Grassmannian persistence diagrams and treegrams can be extended to \emph{merge trees} under suitable assumptions. More generally, it is interesting to explore to what extent Reeb graphs, in the cosheaf representation of \cite{de2016categorified}, can be recovered from appropriately induced Grassmannian persistence diagrams.

We studied the Orthogonal Inversion of two different combinations of invariants and partial orders. Namely, birth-death spaces with the product order and persistent Laplacians with the reverse inclusion order. We expect to see Orthogonal Inversions of other combinations of invariants and partial orders will lead to interesting constructions.

Finally, we demonstrated that the $\supseteq$-Linear Orthogonal Inverse of $0$-eigenspace (i.e. kernel) of persistent Laplacians boils down to $\times$-Linear Orthogonal Inverse of birth-death spaces. However, both nonzero eigenvalues and the corresponding eigenspaces of the Laplacian have applications in general, such as partitioning~\cite{specGraphTh, ng2002sca, uvon, LeeGT12} and shape matching~\cite{Reuter_2005, articulated}. This suggests further investigation of Orthogonal Inversion(s) of other eigenspaces of the persistent Laplacian.

\bibliographystyle{alpha}
 \bibliography{ref}{}

\appendix

\section{Grothendieck Group Completion}\label{appendix:details}

Let $(\mathcal{M}, + , 0)$ be a commutative monoid. Consider the equivalence relation $\sim$ defined on $\mathcal{M}\times \mathcal{M}$ given by
\[
(m_1, n_1) \sim (m_2, n_2) \iff \text{ there exists } k\in \mathcal{M} \text{ such that } m_1 + n_2 + k = m_2 + n_1 +k.
\]
We denote by $[(m_1,n_1)]$ the equivalence class containing $(m_1, n_1)$. Let $\kappa (\mathcal{M}) := \mathcal{M} \times \mathcal{M} / \sim$ be the set of equivalence classes of $\sim$. $\kappa(\mathcal{M})$ inherits the binary operation of $\mathcal{M}$
\[
+ : \kappa(\mathcal{M}) \times \kappa (\mathcal{M}) \to \kappa(\mathcal{M})
\]
by applying it component-wisely  
\begin{align*}
        [(m_1, n_1)] + [(m_2, n_2)] := [(m_1+m_2, n_1 +n_2)].
\end{align*}
The tuple $(\kappa(\mathcal{M}), +, [(0,0)])$ determines an abelian group, called 
the~\emph{Grothendieck group completion of $\mathcal{M}$}. Observe that there is a canonical morphism 
\begin{align*}
    \varphi_\mathcal{M} : \mathcal{M} &\to \kappa (\mathcal{M}) \\
    m           &\mapsto [(m,0)].
\end{align*}
\nomenclature[35]{$\kappa(\cdot)$}{Grothendieck group completion of a commutative monoid}

\begin{definition}[Absorbing element]
    An element $\infty_\mathcal{M} \in \mathcal{M}$ is called an~\emph{absorbing element} if $m+\infty_\mathcal{M} = \infty_\mathcal{M}$ for every $m\in \mathcal{M}$.
\end{definition}

\begin{proposition}\label{prop: absorbing implies trivial}
    Let $\mathcal{M}$ be a commutative monoid with an absorbing element $\infty_\mathcal{M}$. Then, the Grothendieck group completion of $\mathcal{M}$ is the trivial group.
\end{proposition}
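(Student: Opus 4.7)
The plan is to show directly that every equivalence class $[(m,n)] \in \kappa(\mathcal{M})$ coincides with the identity class $[(0,0)]$, which will force $\kappa(\mathcal{M})$ to be trivial. By the definition of the equivalence relation $\sim$ recalled just before the statement, we have $(m,n) \sim (0,0)$ if and only if there exists a witness $k \in \mathcal{M}$ such that
\[
m + 0 + k = 0 + n + k, \qquad \text{i.e.,} \qquad m + k = n + k.
\]

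First, I would take $k := \infty_\mathcal{M}$, the absorbing element guaranteed by hypothesis. By the defining property of an absorbing element, $m + \infty_\mathcal{M} = \infty_\mathcal{M}$ and $n + \infty_\mathcal{M} = \infty_\mathcal{M}$ for every $m,n \in \mathcal{M}$. Therefore $m + k = \infty_\mathcal{M} = n + k$, which shows that $(m,n) \sim (0,0)$.

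Since $(m,n)$ was arbitrary, every element of $\mathcal{M} \times \mathcal{M}$ lies in the class of $(0,0)$, so $\kappa(\mathcal{M}) = \{[(0,0)]\}$, the trivial group. There is no real obstacle here; the content of the proposition is essentially the observation that an absorbing element serves as a universal ``cancellation witness'' in the Grothendieck construction, collapsing the entire monoid to a single class.
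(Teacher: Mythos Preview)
Your proof is correct and essentially identical to the paper's: both use $k = \infty_{\mathcal{M}}$ as the witness in the equivalence relation, with the paper showing directly that any two pairs $(m_1,n_1)\sim(m_2,n_2)$ while you show each pair is equivalent to $(0,0)$, which amounts to the same thing.
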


\begin{proof}
    Let $(m_1, n_1), (m_2, n_2) \in \mathcal{M} \times \mathcal{M}$. Observe that $(m_1, n_1) \sim (m_2, n_2)$ because
    \[
    m_1 + n_2 + \infty_\mathcal{M} = \infty_\mathcal{M} = m_2 + n_1 + \infty_\mathcal{M}.
    \]
    As $(m_1, n_1), (m_2, n_2) \in \mathcal{M} \times \mathcal{M}$ were arbitrary, we conclude that there is only one equivalence class. Namely, $\kappa(\mathcal{M}) = \{ [(0,0)] \}$.
\end{proof}

\begin{corollary}
    The Grothendieck group completion of $\gr(V)$ is the trivial group.
\end{corollary}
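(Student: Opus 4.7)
The plan is to invoke the preceding proposition (\cref{prop: absorbing implies trivial}), which reduces the claim to exhibiting an absorbing element in the commutative monoid $(\gr(V), +, \{0\})$. The natural candidate is $V$ itself, viewed as an element of $\gr(V)$, since every subspace $W \subseteq V$ satisfies $W + V = V$ by definition of the sum of subspaces (every element of $V$ decomposes trivially as $0 + v$ with $v \in V$, and conversely $W + V \subseteq V$ since $V$ is closed under addition).

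First I would verify that $V$ is indeed an absorbing element. For any $W \in \gr(V)$, the inclusion $V \subseteq W + V$ holds because $V = \{0\} + V \subseteq W + V$, and the reverse inclusion $W + V \subseteq V$ holds because both $W, V \subseteq V$ and $V$ is a subspace of itself. Hence $W + V = V$ for every $W \in \gr(V)$, confirming that $V$ is absorbing.

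Then, applying \cref{prop: absorbing implies trivial} directly with $\mathcal{M} = \gr(V)$ and $\infty_{\mathcal{M}} = V$ yields $\kappa(\gr(V)) = \{[(\{0\}, \{0\})]\}$, which is the trivial group. There is no real obstacle here: the argument is a one-line verification followed by citation of the previous proposition, and the finite dimensionality of $V$ is not even needed for the statement (only that $V$ itself is an element of $\gr(V)$, i.e., a subspace of itself, which holds trivially).
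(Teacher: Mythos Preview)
Your proof is correct and follows exactly the same approach as the paper: you identify $V$ as an absorbing element of $\gr(V)$ and apply \cref{prop: absorbing implies trivial}. The paper's own proof is simply ``$V \in \gr(V)$ is an absorbing element. Thus, the result follows from~\cref{prop: absorbing implies trivial}.''
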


\begin{proof}
    $V \in \gr(V)$ is an absorbing element. Thus, the result follows from~\cref{prop: absorbing implies trivial}.
\end{proof}

\section{Details from~\texorpdfstring{\cref{sec: orthogonal inversion}}{}}\label{appendix: details for orthogonal inversion}

In this section, we present the missing details and proofs from \cref{sec: orthogonal inversion}.

\begin{lemma}\label{lem: mobeq respects pushforward}
    Let $\ladj{f} : P \leftrightarrows Q : \radj{f}$ be a Galois connection between two finite posets $P$ and $Q$. Let $\alpha \mobeq \beta : P \to \mathcal{M}$ be two M\"obius equivalent functions from $P$ to a commutative monoid $M$. Then, $$(\ladj{f})_\sharp \alpha \mobeq (\ladj{f})_\sharp \beta.$$
\end{lemma}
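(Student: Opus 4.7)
The plan is to verify the defining equation of M\"obius equivalence for the two pushforwards, that is, to show that for every $q \in Q$,
\[
\sum_{q' \leq q} (\ladj{f})_\sharp \alpha(q') = \sum_{q' \leq q} (\ladj{f})_\sharp \beta(q').
\]
The key manoeuvre is a change of order of summation combined with the defining adjunction identity of a Galois connection. Unfolding the definition of pushforward and interchanging the two sums gives
\[
\sum_{q' \leq q} (\ladj{f})_\sharp \alpha(q') \;=\; \sum_{q' \leq q}\; \sum_{p \in \ladj{f}^{-1}(q')} \alpha(p) \;=\; \sum_{p\,:\,\ladj{f}(p) \leq q} \alpha(p).
\]
Now I invoke the adjunction: $\ladj{f}(p) \leq q$ if and only if $p \leq \radj{f}(q)$. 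Therefore the right-hand side collapses to $\sum_{p \leq \radj{f}(q)} \alpha(p)$.

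At this point the M\"obius equivalence hypothesis $\alpha \mobeq \beta$ applied at the single element $\radj{f}(q) \in P$ yields
\[
\sum_{p \leq \radj{f}(q)} \alpha(p) = \sum_{p \leq \radj{f}(q)} \beta(p).
\]
Running the same chain of equalities in reverse for $\beta$ rewrites the right-hand side as $\sum_{q' \leq q} (\ladj{f})_\sharp \beta(q')$, which establishes the desired identity and hence $(\ladj{f})_\sharp \alpha \mobeq (\ladj{f})_\sharp \beta$.

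There is no real obstacle here: the only thing to be careful about is the finiteness of $P$ and $Q$, which is given, so the swap of the two finite sums is unconditionally valid in the commutative monoid $\mathcal{M}$. Notice moreover that this argument is essentially a monoidal refinement of one half of RGCT (\cref{thm:rgct}): the same swap-plus-adjunction trick underlies the identity $(\ladj{f})_\sharp \circ \partial_P = \partial_Q \circ (\radj{f})^\sharp$ in the group-completed setting, and here we are simply extracting the ``underlying'' equality of partial sums, which does not require group completion.
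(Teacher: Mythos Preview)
Your proof is correct and follows essentially the same approach as the paper's own proof: unfold the pushforward, collapse the double sum to $\sum_{p:\ladj{f}(p)\leq q}\alpha(p)$, apply the Galois adjunction to rewrite the index set as $\{p\leq \radj{f}(q)\}$, and then invoke $\alpha\mobeq\beta$ at $\radj{f}(q)$. The paper's version is slightly more spelled out but otherwise identical.
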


\begin{proof}
    Let $q\in Q$. Then, 
    \begin{align*}
        \sum_{q'\leq q} (\ladj{f})_\sharp \alpha (q') &= \sum_{q'\leq q} \sum_{\substack{p \in P \\ \ladj{f}(p)=q'}} \alpha(p) \\
        &= \sum_{\substack{p\in P \\ \ladj{f}(p)\leq q}} \alpha (p)  = \sum_{\substack{p\in P \\ p \leq \radj{f}(q)}} \alpha (p)
    \end{align*}
    Similarly, we have that 
    \[
    \sum_{q'\leq q} (\ladj{f})_\sharp \beta (q') = \sum_{\substack{p\in P \\ p \leq \radj{f}(q)}} \beta (p)
    \]
    By our assumption that $\alpha \mobeq \beta$, we have that
    \[
    \sum_{\substack{p\in P \\ p \leq \radj{f}(q)}} \alpha (p) = \sum_{\substack{p\in P \\ p \leq \radj{f}(q)}} \beta (p).
    \]
    Thus, it follows that 
    \[
    \sum_{q'\leq q} (\ladj{f})_\sharp \alpha (q') = \sum_{\substack{p\in P \\ p \leq \radj{f}(q)}} \alpha (p) = \sum_{\substack{p\in P \\ p \leq \radj{f}(q)}} \beta (p) = \sum_{q'\leq q} (\ladj{f})_\sharp \beta (q').
    \]
    As $q\in Q$ was arbitrary, we conclude that $(\ladj{f})_\sharp \alpha \mobeq (\ladj{f})_\sharp \beta$.
\end{proof}

\begin{lemma}\label{lem: subfamilies transversal}
    Assume that two families $\{W_i \}_{i\in \mathcal{I}}$ and $\{ U_j \}_{j \in \mathcal{J}}$ are transversal to each other where $\mathcal{I}$ and $\mathcal{J}$ are finite sets. Then, for any $\mathcal{K} \subseteq \mathcal{I}$ and $\mathcal{L} \subseteq \mathcal{J}$, the subfamilies $\{W_k \}_{k\in \mathcal{K}}$ and $\{U_\ell \}_{\ell\in \mathcal{L}}$ are also transversal to each other.
\end{lemma}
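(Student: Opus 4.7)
The plan is to exploit the sub-additivity of dimension for sums of subspaces, together with the fact that full transversality forces an equality in a long chain of inequalities, which in turn forces equality in every intermediate step. The underlying idea is that transversality of $\{W_i\}_{i\in\mathcal{I}}\cup\{U_j\}_{j\in\mathcal{J}}$ means the total sum is actually internal direct, and any internal direct sum decomposition restricts to an internal direct sum decomposition on subfamilies.

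First, I would split the index sets as $\mathcal{I}=\mathcal{K}\sqcup\mathcal{K}^c$ and $\mathcal{J}=\mathcal{L}\sqcup\mathcal{L}^c$, and introduce the three subspaces
\[
A:=\sum_{k\in\mathcal{K}}W_k+\sum_{\ell\in\mathcal{L}}U_\ell,\qquad B:=\sum_{i\in\mathcal{K}^c}W_i+\sum_{j\in\mathcal{L}^c}U_j,\qquad T:=A+B.
\]
Next, I would chain together the standard inequalities $\dim(A+B)\le\dim(A)+\dim(B)$ and $\dim\bigl(\sum X_\alpha\bigr)\le\sum\dim(X_\alpha)$ to get
\[
\dim(T)\;\le\;\dim(A)+\dim(B)\;\le\;\sum_{i\in\mathcal{I}}\dim(W_i)+\sum_{j\in\mathcal{J}}\dim(U_j).
\]

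The hypothesis that $\{W_i\}_{i\in\mathcal{I}}$ and $\{U_j\}_{j\in\mathcal{J}}$ are transversal to each other says precisely that the right-hand side equals $\dim(T)$. Hence both inequalities above are in fact equalities. From equality in the second one, I would read off that $\dim(A)=\sum_{k\in\mathcal{K}}\dim(W_k)+\sum_{\ell\in\mathcal{L}}\dim(U_\ell)$ (and symmetrically for $B$), which is exactly the transversality of the subfamily $\{W_k\}_{k\in\mathcal{K}}\cup\{U_\ell\}_{\ell\in\mathcal{L}}$.

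The only thing that might require a moment's care is the very first step, namely ensuring the two generic inequalities I use are tight enough that, when combined with the transversality hypothesis, they pin everything down. I do not expect any genuine obstacle here: once the three-piece decomposition $T=A+B$ is written out, the argument is a purely formal dimension count. No finiteness beyond that of $\mathcal{I}$ and $\mathcal{J}$ (already assumed) is needed.
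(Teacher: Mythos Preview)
Your proof is correct and follows essentially the same approach as the paper: both arguments split the index sets into the chosen subfamily and its complement, and use subadditivity of dimension to chain the inequality $\dim(T)\le\dim(A)+\dim(B)\le\sum\dim(W_i)+\sum\dim(U_j)$. The only cosmetic difference is that the paper argues by contrapositive (a strict inequality for the subfamily forces a strict inequality for the full family), whereas you argue directly (equality for the full family squeezes equality for the subfamily); the content is identical.
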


\begin{proof}
    Assume that there are two subfamilies $\{W_k \}_{k\in \mathcal{K}}$ and $\{U_\ell \}_{\ell\in \mathcal{L}}$ that are not transversal to each other. That is, 
    \[
    \dim \left ( \sum_{k\in \mathcal{K}} W_k + \sum_{\ell \in \mathcal{L}} U_\ell \right ) < \sum_{k\in \mathcal{K}} \dim (W_k) + \sum_{\ell \in \mathcal{L}} \dim (U_\ell).
    \]
    Then, it follows that 
    \begin{align*}
        \dim \left (\sum_{i\in \mathcal{I}}W_i + \sum_{j\in \mathcal{J}} U_j \right ) &= \dim \left ( \sum_{k\in \mathcal{K}}W_k + \sum_{i\in \mathcal{I}\setminus \mathcal{K}}W_i + \sum_{\ell\in \mathcal{L}} U_\ell + \sum_{j \in \mathcal{J}\setminus \mathcal{L}} U_j \right ) \\
        &\leq \dim \left ( \sum_{k\in \mathcal{K}}W_k + \sum_{\ell\in \mathcal{L}} U_\ell \right )  + \dim \left ( \sum_{i\in \mathcal{I}\setminus \mathcal{K}}W_i + \sum_{j \in \mathcal{J}\setminus \mathcal{L}} U_j \right ) \\
        &< \sum_{k\in \mathcal{K}} \dim (W_k) + \sum_{\ell\in \mathcal{L}} \dim (U_\ell) + \sum_{i\in \mathcal{I} \setminus \mathcal{K}} \dim (W_i) + \sum_{j\in \mathcal{J} \setminus \mathcal{L}} \dim (U_\ell) \\
        &= \sum_{i\in \mathcal{I}} \dim(W_i) + \sum_{j \in \mathcal{J}} \dim (U_j).
    \end{align*}
    Therefore, $\{ W_i \}_{i\in \mathcal{I}}$ and $\{U_j \}_{j\in \mathcal{J}}$ are not transversal to each other.
\end{proof}

\begin{corollary}\label{cor: subfamily transversal}
    Let $\{W_i \}_{i\in \mathcal{I}}$ be a transverse family. Then, for any $\mathcal{J} \subseteq \mathcal{I}$, the subfamily $\{W_j \}_{j\in \mathcal{J}}$ is also a transverse family.    
\end{corollary}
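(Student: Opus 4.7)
The plan is to reduce the corollary to a direct application of \cref{lem: subfamilies transversal}. The key observation is that the definition of a transverse family (\cref{defn: transversity}) is precisely the statement that $\{W_i\}_{i\in\mathcal{I}}$ is transversal to the one-element family $\{\{0\}\}$ consisting solely of the zero subspace of $V$. Thus the hypothesis of the corollary immediately places us in the setting of \cref{lem: subfamilies transversal}.

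More concretely, I would first invoke the hypothesis to conclude that the two families $\{W_i\}_{i\in\mathcal{I}}$ and $\{U_1\} := \{\{0\}\}$ are transversal to each other. Then I would apply \cref{lem: subfamilies transversal} with the subsets $\mathcal{K} := \mathcal{J}\subseteq \mathcal{I}$ and $\mathcal{L} := \{1\}$ (i.e. keeping the whole $\{U_1\}$ family). This yields that $\{W_j\}_{j\in\mathcal{J}}$ is transversal to $\{\{0\}\}$, which by \cref{defn: transversity} is exactly the definition of $\{W_j\}_{j\in\mathcal{J}}$ being a transverse family. This completes the argument.

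There is essentially no obstacle here since \cref{lem: subfamilies transversal} already does all the work; the corollary is merely the specialization to the singleton target family $\{\{0\}\}$. The only small point worth being explicit about is reconciling the slight notational clash between the $\mathcal{J}$ appearing in the corollary (indexing a subset of $\mathcal{I}$) and the $\mathcal{J}$ appearing in \cref{lem: subfamilies transversal} (indexing the second family), but this is purely cosmetic.
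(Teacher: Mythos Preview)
Your proposal is correct and essentially identical to the paper's own proof, which also applies \cref{lem: subfamilies transversal} to $\{W_i\}_{i\in\mathcal{I}}$ and the singleton family $\{U := \{0\}\}$.
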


\begin{proof}
    Apply~\cref{lem: subfamilies transversal} to $\{W_i \}_{i\in \mathcal{I}}$ and $\{ U := \{ 0 \}\}$
\end{proof}

\begin{lemma}\label{lem: sum dim push}
    Let $\mathcal{I}$ be a finite set and $\Mfunc : \mathcal{I} \to \gr(V)$ be any function such that $\{ \Mfunc(i) \}_{i\in \mathcal{I}}$ is a transverse family. Let $\mathcal{J}$ be any finite set and $h : \mathcal{I} \to \mathcal{J}$ be any function. Then, 
    \[
    \sum_{j \in \mathcal{J}} \dim (h_\sharp \Mfunc (j) ) = \sum_{i \in \mathcal{I}} \dim (\Mfunc (i)).
    \]
    In particular, the family $\{ h_\sharp \Mfunc (i) \}_{j \in J}$ is a transverse family.
\end{lemma}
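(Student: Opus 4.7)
The plan is to unpack the pushforward definition, apply the subfamily-transversality corollary (\cref{cor: subfamily transversal}) fiberwise, and then re-sum using the fact that the fibers of $h$ partition $\mathcal{I}$.

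First I would fix $j \in \mathcal{J}$ and note that $h_\sharp \Mfunc(j) = \sum_{i \in h^{-1}(j)} \Mfunc(i)$ by definition of pushforward. Since $\{\Mfunc(i)\}_{i \in \mathcal{I}}$ is transverse and $h^{-1}(j) \subseteq \mathcal{I}$, \cref{cor: subfamily transversal} gives that $\{\Mfunc(i)\}_{i \in h^{-1}(j)}$ is also a transverse family, hence
\[
\dim(h_\sharp \Mfunc(j)) = \dim\!\left(\sum_{i \in h^{-1}(j)} \Mfunc(i)\right) = \sum_{i \in h^{-1}(j)} \dim(\Mfunc(i)).
\]
Summing this equality over $j \in \mathcal{J}$ and using that $\{h^{-1}(j)\}_{j \in \mathcal{J}}$ is a partition of $\mathcal{I}$ yields the first claim:
\[
\sum_{j \in \mathcal{J}} \dim(h_\sharp \Mfunc(j)) = \sum_{j \in \mathcal{J}} \sum_{i \in h^{-1}(j)} \dim(\Mfunc(i)) = \sum_{i \in \mathcal{I}} \dim(\Mfunc(i)).
\]

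For the ``in particular'' statement, I would observe that
\[
\sum_{j \in \mathcal{J}} h_\sharp \Mfunc(j) = \sum_{j \in \mathcal{J}} \sum_{i \in h^{-1}(j)} \Mfunc(i) = \sum_{i \in \mathcal{I}} \Mfunc(i),
\]
again because the fibers partition $\mathcal{I}$. Combined with the transversality of $\{\Mfunc(i)\}_{i \in \mathcal{I}}$ and the identity proved above, this gives
\[
\dim\!\left(\sum_{j \in \mathcal{J}} h_\sharp \Mfunc(j)\right) = \dim\!\left(\sum_{i \in \mathcal{I}} \Mfunc(i)\right) = \sum_{i \in \mathcal{I}} \dim(\Mfunc(i)) = \sum_{j \in \mathcal{J}} \dim(h_\sharp \Mfunc(j)),
\]
which is exactly the definition of $\{h_\sharp \Mfunc(j)\}_{j \in \mathcal{J}}$ being a transverse family.

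There is no real obstacle here: the proof is essentially bookkeeping once \cref{cor: subfamily transversal} is available, since transversality of $\{\Mfunc(i)\}_{i \in \mathcal{I}}$ ensures ``no dimensional cancellation'' either within a fiber $h^{-1}(j)$ or when summing across fibers. The only subtle point worth emphasizing is that transversality is inherited by subfamilies (which is what \cref{cor: subfamily transversal} provides), without which the local equality $\dim(h_\sharp \Mfunc(j)) = \sum_{i \in h^{-1}(j)} \dim(\Mfunc(i))$ could fail.
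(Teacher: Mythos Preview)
Your proposal is correct and follows essentially the same approach as the paper: unpack the pushforward, apply \cref{cor: subfamily transversal} fiberwise to convert dimensions of sums into sums of dimensions, re-sum over the partition $\{h^{-1}(j)\}_{j\in\mathcal{J}}$, and then observe that $\sum_j h_\sharp \Mfunc(j) = \sum_i \Mfunc(i)$ to deduce transversality of the pushforward family. The paper presents this as a single chain of equalities, but the content is identical to what you wrote.
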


\begin{proof}
    The claim follows from the following calculation:
    \begin{align*}
        \sum_{j\in \mathcal{J}} \dim (h_\sharp \Mfunc (j)) =& \sum_{j\in \mathcal{J}} \dim \left ( \sum_{i\in h^{-1}(j)} \Mfunc(i) \right ) && \\
        &= \sum_{j\in \mathcal{J}} \sum_{i\in h^{-1}(j)} \dim (\Mfunc(i)) && \text{by~\cref{cor: subfamily transversal}} \\
        &= \sum_{i\in \mathcal{I}} \dim (\Mfunc (i)) && \\
        &= \dim \left ( \sum_{i \in \mathcal{I}} \Mfunc(i)\right ) = \dim \left ( \sum_{j \in \mathcal{J}} h_\sharp \Mfunc(j) \right ).
    \end{align*}
\end{proof}

\begin{proof}[Proof of \cref{prop: comp of transversity-pres.}]
     For $i=1,2,3$, let $\Mfunc_i : \overline{\lp}_i^\times \to \gr(V)$ be Grassmannian persistence diagrams. Let $(f, \zeta_{\lp_2})$ be a morphism from $\Mfunc_1$ to $\Mfunc_2$ and let $(g, \zeta_{\lp_3})$ be a morphism from $\Mfunc_2$ to $\Mfunc_3$. Thus, we have that $\left(\overline{\ladj{f}}\right)_\sharp \Mfunc_1 \mobeq (\Mfunc_2 + \zeta_{\lp_2})$ and $\left(\overline{\ladj{g}}\right)_\sharp \Mfunc_2 \mobeq (\Mfunc_3 + \zeta_{\lp_3})$. Then, it follows that
    \begin{align*}
        \left(\overline{\ladj{(g \circ f)}}\right)_\sharp \Mfunc_1 &= \left(\overline{\ladj{g}\circ \ladj{f}}\right)_\sharp \Mfunc_1 \\
                                                        &= \left(\overline{\ladj{g}} \circ \overline{\ladj{f}}\right)_\sharp \Mfunc_1 \\
                                                        &= \left(\overline{\ladj{g}}\right)_\sharp \left(\left(\overline{\ladj{f}}\right)_\sharp \Mfunc_1\right) \\
                                                        &\mobeq \left(\overline{\ladj{g}}\right)_\sharp \left(\Mfunc_2 + \zeta_{\lp_2}\right) && \text{by~\cref{lem: mobeq respects pushforward}}\\
                                                        &= \left(\overline{\ladj{g}}\right)_\sharp \left(\Mfunc_2\right) + \left(\overline{\ladj{g}}\right)_\sharp \left(\zeta_{\lp_2}\right) \\
                                                        &\mobeq \Mfunc_3 + \zeta_{\lp_3} + \left(\overline{\ladj{g}}\right)_\sharp \left(\zeta_{\lp_2}\right).
    \end{align*}
    Observe that $\zeta_{{\lp_3}'}:= \zeta_{\lp_3} + \left(\overline{\ladj{g}}\right)_\sharp (\zeta_{\lp_2})$ is supported on $\diag(\lp_3)$ and the families $\left \{ \zeta_{\lp_3} (J) \right \}_{J \in \overline{\lp}_3^\times}$ and $\{ (\overline{\ladj{g}})_\sharp (\zeta_{\lp_2} (J)) \}_{J \in \overline{\lp}_3^\times}$ are transversal to each other. The latter can be seen from the following argument. The families $\{ \zeta_{\lp_2} (I) \}_{I \in \overline{\lp}_2^\times}$ and $\{ \Mfunc_2 (I) \}_{I \in \overline{\lp}_2^\times}$ are transversal to each other. Thus, $\{ (\overline{\ladj{g}})_\sharp (\zeta_{\lp_2} (J)) \}_{J \in \overline{\lp}_3^\times}$ and $\{ \Mfunc_2 (I) \}_{I \in \overline{\lp}_2^\times}$ are transversal to each other. Moreover, we have that $\sum_{I \in \overline{\lp}_2^\times} \Mfunc_2 (I) = \sum_{J \in \overline{\lp}_3^\times} \Mfunc_3 (J) + \zeta_{\lp_3} (J)$ and $\{ \Mfunc_3 (J) \}_{J \in \overline{\lp}_3^\times} \cup \{ \zeta_{\lp_3} (J) \}_{J \in \overline{\lp}_3^\times}$ is a transversal family. Hence, $\{ (\overline{\ladj{g}})_\sharp (\zeta_{\lp_2} (J)) \}_{J \in \overline{\lp}_3^\times}$ and $\{ \Mfunc_3 (J) \}_{J \in \overline{\lp}_3^\times} \cup \{ \zeta_{\lp_3} (J) \}_{J \in \overline{\lp}_3^\times}$ are transversal to each other. Thus, $\{ (\overline{\ladj{g}})_\sharp (\zeta_{\lp_2} (J)) \}_{J \in \overline{\lp}_3^\times}$ and $\{ \zeta_{\lp_3} (J) \}_{J \in \overline{\lp}_3^\times}$ are transversal to each other.
    
    It remains to show that $\{ \Mfunc_3 (J)\}_{J \in \overline{\lp}_3^\times} $ and $ \{ \zeta_{{\lp_3}'} (J)\}_{J \in \overline{\lp}_3^\times}$ are transversal to each other. That is, we need to show that
    \[
    \dim \left ( \sum_{J \in \overline{\lp}_3^\times} \Mfunc_3 (J) + \sum_{J\in \overline{\lp}_3^\times} \zeta_{{\lp_3}'} (J) \right ) = \sum_{J \in \overline{\lp}_3^\times} \dim (\Mfunc_3 (J)) + \sum_{J \in \overline{\lp}_3^\times} \dim (\zeta_{{\lp_3}'} (J)).
    \]
    We have that LHS is equal to
    \begin{align*}
        &=\dim \left ( \sum_{J \in \overline{\lp}_3^\times} \Mfunc_3 (J) + \sum_{J\in \overline{\lp}_3^\times} \zeta_{{\lp_3}'} (J) \right ) &&\\
        &= \dim \left ( \sum_{J \in \overline{\lp}_3^\times} \Mfunc_3 (J) + \sum_{J\in \overline{\lp}_3^\times} \zeta_{{\lp_3}} (J) + \sum_{J\in \overline{\lp}_3^\times}(\overline{\ladj{g}})_\sharp (\zeta_{\lp_2})  (J) \right ) &&\\
        &= \dim \left (  \sum_{J \in \overline{\lp}_3^\times} \big ( \Mfunc_3 (J) +  \zeta_{{\lp_3}} (J) \big ) +  \sum_{J\in \overline{\lp}_3^\times}(\overline{\ladj{g}})_\sharp (\zeta_{\lp_2})  (J)  \right ) &&\\
        &= \dim \left (  \sum_{J \in \overline{\lp}_3^\times} (\overline{\ladj{g}})_\sharp (\Mfunc_2) (J) + \sum_{J\in \overline{\lp}_3^\times}(\overline{\ladj{g}})_\sharp (\zeta_{\lp_2})  (J)        \right ) && \text{as $(\overline{\ladj{g}})_\sharp \Mfunc_2 \mobeq (\Mfunc_3 + \zeta_{\lp_3})$ } \\
        &= \dim \left ( \sum_{I \in \overline{\lp}_2^\times} \Mfunc_2 (I) + \sum_{I\in \overline{\lp}_2^\times} \zeta_{\lp_2} (I) \right ) &&\\
        &= \sum_{I \in \overline{\lp}_2^\times} \dim(\Mfunc_2 (I)) + \sum_{I\in \overline{\lp}_2^\times} \dim (\zeta_{\lp_2}(I)) &&\\
        &= \dim \left ( \sum_{I\in \overline{\lp}_2^\times} \Mfunc_2 (I) \right ) + \sum_{J \in \overline{\lp}_3^\times} \dim ((\overline{\ladj{g}})_\sharp (\zeta_{\lp_2}) (J)) && \text{by~\cref{lem: sum dim push}}\\
        &= \dim \left ( \sum_{J\in \overline{\lp}_3^\times} \Mfunc_3 (J) + \sum_{J \in \overline{\lp}_3^\times} \zeta_{\lp_3} (J) \right ) + \sum_{J \in \overline{\lp}_3^\times} \dim ((\overline{\ladj{g}})_\sharp (\zeta_{\lp_2}) (J)) &&\\
        &= \sum_{J\in \overline{\lp}_3^\times} \dim (\Mfunc_3 (J)) + \sum_{J \in \overline{\lp}_3^\times} \dim (\zeta_{\lp_3} (J) ) + \sum_{J \in \overline{\lp}_3^\times} \dim ((\overline{\ladj{g}})_\sharp (\zeta_{\lp_2}) (J)) &&\\
        &= \sum_{J \in \overline{\lp}_3^\times} \dim (\Mfunc_3 (J)) + \sum_{J \in \overline{\lp}_3^\times} \dim (\zeta_{{\lp_3}'} (J)).&&
    \end{align*}
    The last equality follows from~\cref{lem: subfamilies transversal} as we have that the families $\{ \zeta_{\lp_3} (J) \}_{J \in \overline{\lp}_3^\times}$ and $\{ (\overline{\ladj{g}})_\sharp (\zeta_{\lp_2} (J) \}_{J \in \overline{\lp}_3^\times}$ are transversal to each other, therefore,
    \[
    \dim \left(\zeta_{\lp_3} (J)\right) + \dim \left((\overline{\ladj{g}})_\sharp (\zeta_{\lp_2}) (J)\right) = \dim (\zeta_{\lp_3} (J) + (\overline{\ladj{g}})_\sharp (\zeta_{\lp_2}) (J)) = \dim (\zeta_{{\lp_3}'} (J))
    \]
    for all $J \in \overline{\lp}_3^\times$.
\end{proof}

\begin{lemma}\label{lemma: minusProjection}
    If $W_1, W_2 \subseteq V$, are subspaces of an inner product space $V$, then $$W_1 \ominus W_2 = W_1 \ominus \proj_{W_1}(W_2).$$
\end{lemma}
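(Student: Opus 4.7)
The plan is to unwind both sides using the definition $W_1 \ominus W = W_1 \cap W^\perp$, and then establish the key pointwise identity: for every $w_1 \in W_1$ and every $w_2 \in W_2$, one has $\langle w_1, w_2 \rangle = \langle w_1, \proj_{W_1}(w_2)\rangle$. This is the only real content of the lemma; once it is in hand, the equality of the two subspaces follows by a two-line set-theoretic argument.

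First I would write $w_2 = \proj_{W_1}(w_2) + (w_2 - \proj_{W_1}(w_2))$, where by the defining property of the orthogonal projection the second summand lies in $W_1^\perp$. Then for any $w_1 \in W_1$,
\[
\langle w_1, w_2\rangle = \langle w_1,\proj_{W_1}(w_2)\rangle + \langle w_1, w_2-\proj_{W_1}(w_2)\rangle = \langle w_1,\proj_{W_1}(w_2)\rangle,
\]
since $w_1 \perp (w_2 - \proj_{W_1}(w_2))$. This is the only lemma I need; it is elementary and requires no extra hypotheses.

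With this identity, the equivalence $w_1 \perp W_2 \iff w_1 \perp \proj_{W_1}(W_2)$ holds for every $w_1 \in W_1$, because $\proj_{W_1}(W_2) = \{\proj_{W_1}(w_2) : w_2 \in W_2\}$ and orthogonality to a set is the same as orthogonality to all its elements. Intersecting with $W_1$ on both sides yields
\[
W_1 \cap W_2^\perp = W_1 \cap (\proj_{W_1}(W_2))^\perp,
\]
which by \cref{defn: difference of spaces} is exactly $W_1 \ominus W_2 = W_1 \ominus \proj_{W_1}(W_2)$.

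There is essentially no obstacle here; the only subtlety worth flagging is that $\proj_{W_1}(W_2)$ need not be contained in $W_2$, so one cannot appeal to any monotonicity of $\ominus$ in its second argument. The identity is forced precisely because elements of $W_1$ cannot distinguish $w_2$ from $\proj_{W_1}(w_2)$ under the inner product, and this is what the above computation makes explicit.
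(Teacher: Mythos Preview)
Your proof is correct and follows essentially the same approach as the paper: both arguments hinge on the decomposition $w_2 = \proj_{W_1}(w_2) + (w_2 - \proj_{W_1}(w_2))$ with the second summand in $W_1^\perp$, yielding $\langle w_1, w_2\rangle = \langle w_1, \proj_{W_1}(w_2)\rangle$ for all $w_1 \in W_1$. The only cosmetic difference is that you isolate this identity once and deduce both inclusions from it, whereas the paper verifies each inclusion separately using the same decomposition.
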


\begin{proof}
    Let $u \in W_1 \cap W_2^\perp $ and $ w_2 \in W_2 $. Then,  
    \begin{align*}
        0 =& \langle u , w_2 \rangle \\
          =& \langle u , \proj_{W_1}(w_2) + (w_2 - \proj_{W_1}(w_2)) \rangle \\
          =& \langle u , \proj_{W_1}(w_2) \rangle + \langle u , (w_2 - \proj_{W_1}(w_2)) \rangle \\
          =& \langle u , \proj_{W_1}(w_2) \rangle + 0.
    \end{align*}
    Thus, $\langle u , \proj_{W_1}(w_2) \rangle = 0$. Therefore, $u \in W_1 \cap (\proj_{W_1}(W_2))^\perp = W_1 \ominus \proj_{W_1}(W_2).$ Let $s \in W_1 \cap (\proj_{W_1}(W_2))^\perp$ and let $w_2 \in W_2$. Then, 
    \begin{align*}
        \langle s, w_2 \rangle =& \langle s , \proj_{W_1}(w_2) + (w_2 - \proj_{W_1}(w_2)) \rangle \\
                               =& \langle s , \proj_{W_1}(w_2) \rangle + \langle s , (w_2 - \proj_{W_1}(w_2)) \rangle \\
                               =& 0 + 0 =0
    \end{align*}
    Thus, $s\in W_1\cap W_2^\perp = W_1 \ominus W_2$. Therefore, $W_1 \ominus W_2 = W_1 \ominus \proj_{W_1}(W_2)$.
\end{proof}

\begin{lemma}\label{lem: b perp c perp}
    Let $V$ be a finite-dimensional inner product space. Let $B,C \subseteq V$ be two linear subspaces. Then, 
    \[
    B^\perp \cap (C\cap (B\cap C)^\perp)^\perp = B^\perp \cap C^\perp.
    \]
\end{lemma}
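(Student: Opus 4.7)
The plan is to denote $D := C \cap (B\cap C)^\perp$ and observe that $D$ is by construction the orthogonal complement of $B\cap C$ inside $C$, so that we have an orthogonal direct sum decomposition
\[
C = (B\cap C) \oplus D.
\]
Given this decomposition, proving the stated equality $B^\perp \cap D^\perp = B^\perp \cap C^\perp$ reduces to two straightforward inclusions.

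For the inclusion $B^\perp \cap C^\perp \subseteq B^\perp \cap D^\perp$, I would simply note that $D \subseteq C$, so any vector orthogonal to $C$ is automatically orthogonal to $D$. For the reverse inclusion $B^\perp \cap D^\perp \subseteq B^\perp \cap C^\perp$, I would take $v \in B^\perp \cap D^\perp$ and decompose an arbitrary $c \in C$ as $c = b' + d$ with $b' \in B \cap C \subseteq B$ and $d \in D$; then $\langle v, c\rangle = \langle v, b'\rangle + \langle v, d\rangle = 0 + 0 = 0$, showing $v \in C^\perp$.

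There is no real obstacle here — the only subtle point is the orthogonal direct sum decomposition $C = (B\cap C) \oplus D$, which is a direct consequence of the definition of $D$ as the orthogonal complement of $B\cap C$ inside the finite-dimensional inner product space $C$ (with the induced inner product). Once this decomposition is in hand, both inclusions are immediate.
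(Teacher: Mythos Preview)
Your proposal is correct and takes essentially the same approach as the paper: both arguments hinge on the orthogonal decomposition $C = (B\cap C) \oplus D$ (the paper uses its equivalent dual form $D^\perp = C^\perp + (B\cap C)$) together with the fact that membership in $B^\perp$ handles the $B\cap C$ component. The only cosmetic difference is that the paper decomposes the vector $x \in B^\perp \cap D^\perp$ itself as $x = w + y$ with $w \in C^\perp$, $y \in B\cap C$ and then shows $y = 0$, whereas you decompose a test vector $c \in C$ and verify orthogonality directly; your version is arguably a touch cleaner.
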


\begin{proof}
    The containment $B^\perp \cap (C\cap (B\cap C)^\perp)^\perp \supseteq B^\perp \cap C^\perp$ is clear because $(C\cap (B\cap C)^\perp)^\perp \supseteq C^\perp$ as $C\cap (B\cap C)^\perp \subseteq C$. To see the other containment  $ B^\perp \cap (C\cap (B\cap C)^\perp)^\perp \subseteq B^\perp \cap C^\perp$, let $x \in B^\perp \cap (C\cap (B\cap C)^\perp)^\perp$. Then, $x \in B^\perp$ and $x \in (C\cap (B\cap C)^\perp)^\perp = C^\perp + (B\cap C)$. Thus, we can write $x = w + y$ where $w \in C^\perp$ and $y\in B\cap C$. Then, we have
    \begin{align*}
        0 &= \langle x , y \rangle && \text{as $x \in B^\perp$ and $y \in B$} \\
          &= \langle w + y , y \rangle \\
          &= \langle w , y \rangle + \langle y , y \rangle \\
          &= 0 + \langle y , y \rangle && \text{as $w \in C^\perp$ and $y\in C$} \\
          &= \langle y , y \rangle .
    \end{align*}
    Thus, $y=0$ and $x = w \in C^\perp$. Therefore, $x\in B^\perp \cap C^\perp$. Hence, $B^\perp \cap (C\cap (B\cap C)^\perp)^\perp \subseteq B^\perp \cap C^\perp$.
\end{proof}

\begin{proof}[Proof of~\cref{prop: linAlg Mobius}]

Unraveling the definition of $\ominus$, we see that the desired equality is equivalent to
\[
(A\cap B^\perp) \cap (C\cap (B\cap C)^\perp)^\perp = A\cap  (B + C)^\perp.
\]

By~\cref{lem: b perp c perp}, we have that
\[
B^\perp \cap (C\cap (B\cap C)^\perp)^\perp =(B + C)^\perp
\]
as $ B^\perp \cap C^\perp = (B + C)^\perp$. Intersecting both sides with $A$ provides the desired equality.

\end{proof} 

\section{Computational Complexity of~\texorpdfstring{\cref{algo: gpd}}{}}\label{subsec: computational complexity}

In this section, we analyze the computational complexity of~\cref{algo: gpd} which computes the degree-$\dgr$ Grassmannian persistence diagram of a filtration $\Ffunc : \lp = \{ \ell_1<\cdots <\ell_m \} \to \subcx(K)$.

\begin{proposition}\label{prop: algo complexity}
    The time complexity of~\cref{algo: gpd} is
    \[
    O\left( m^2 \cdot \left( n_\dgr^K \cdot n_{\dgr-1}^K \cdot \min\left(n_\dgr^K, n_{\dgr-1}^K\right) + n_{\dgr+1}^K \cdot n_{\dgr}^K \cdot \min\left(n_{\dgr+1}^K, n_{\dgr}^K\right) +    \left(n_\dgr^K\right)^3 \right)    \right),
    \]
    where $n_\dgr^K$ is the number of $\dgr$-simplices of $K$.
\end{proposition}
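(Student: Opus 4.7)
The plan is to bound the cost of a single iteration of the nested for-loop and then multiply by the number of iterations. The outer loop runs over $i \in \{1,\ldots,m\}$ and the inner loop over $j \in \{i,\ldots,m,\infty\}$, giving $O(m^2)$ iterations in total. The work done inside each iteration decomposes into four linear-algebra subroutines, each on matrices of sizes dictated by $n_{q-1}^K$, $n_q^K$, and $n_{q+1}^K$: (i) computing $\Zfunc_\dgr(\Ffunc(\ell_i))$ as the kernel of $\partial_\dgr^{\Ffunc(\ell_i)}$, (ii) computing $\Bfunc_\dgr(\Ffunc(\ell_j))$ as the image of $\partial_{\dgr+1}^{\Ffunc(\ell_j)}$, (iii) intersecting these to form $\ZB_\dgr^\Ffunc((\ell_i,\ell_j))$, and (iv) combining with the previously stored subspaces $\ZB_\dgr^\Ffunc((\ell_{i-1},\ell_j))$ and $\ZB_\dgr^\Ffunc((\ell_i,\ell_{j-1}))$ via addition and $\ominus$ to obtain $\prhi(\ZB_\dgr^\Ffunc)((\ell_i,\ell_j))$.

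For step (i), computing the kernel of a linear map $\partial_\dgr^{\Ffunc(\ell_i)} : C_\dgr^{\Ffunc(\ell_i)} \to C_{\dgr-1}^{\Ffunc(\ell_i)}$ amounts to Gaussian elimination on a matrix with at most $n_{\dgr-1}^K$ rows and at most $n_\dgr^K$ columns, which takes
\[
O\!\left(n_\dgr^K \cdot n_{\dgr-1}^K \cdot \min\!\left(n_\dgr^K, n_{\dgr-1}^K\right)\right).
\]
Analogously, step (ii) amounts to Gaussian elimination on an $n_\dgr^K \times n_{\dgr+1}^K$ matrix, contributing
\[
O\!\left(n_{\dgr+1}^K \cdot n_\dgr^K \cdot \min\!\left(n_{\dgr+1}^K, n_\dgr^K\right)\right).
\]

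Steps (iii) and (iv) are subspace operations — intersection, sum, and the orthogonal-complement step underlying $\ominus$ (see \cref{defn: difference of spaces} and the rewritings given in \cref{prop: linAlg Mobius} / \cref{lemma: minusProjection}) — performed inside the ambient inner product space $C_\dgr^K$ of dimension $n_\dgr^K$. Each of these reduces to standard row reduction / orthogonalization on a bounded number of $O(n_\dgr^K) \times O(n_\dgr^K)$ matrices, and therefore runs in $O((n_\dgr^K)^3)$. Summing the three contributions gives a per-iteration cost of
\[
O\!\left(n_\dgr^K \cdot n_{\dgr-1}^K \cdot \min(n_\dgr^K,n_{\dgr-1}^K) + n_{\dgr+1}^K \cdot n_\dgr^K \cdot \min(n_{\dgr+1}^K,n_\dgr^K) + (n_\dgr^K)^3\right),
\]
and multiplying by the $O(m^2)$ iterations yields the claimed bound.

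I do not anticipate any serious obstacle: the routine is a straightforward bookkeeping of standard Gaussian-elimination / orthogonalization costs, and the storage of $\ZB_\dgr^\Ffunc((\ell_{i-1},\ell_j))$ and $\ZB_\dgr^\Ffunc((\ell_i,\ell_{j-1}))$ ensures that step (iv) needs no recomputation of the ``neighboring'' birth-death spaces. The one minor subtlety is verifying that the cost analysis holds regardless of whether $j$ is the symbol $\infty$ or an index in $\{i,\ldots,m\}$; this is immediate because the unbounded case $(\ell_i,\infty)$ only requires a kernel computation for $Z_\dgr(\Ffunc(\ell_i))$ (no image step is needed) and is therefore strictly cheaper, so the bound above absorbs it.
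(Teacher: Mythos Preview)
Your proposal is correct and follows essentially the same approach as the paper: bound the per-segment work by the cost of computing $\Zfunc_\dgr$ (kernel via Gaussian elimination), $\Bfunc_\dgr$ (image via Gaussian elimination), their intersection, and the sum/$\ominus$ steps inside $C_\dgr^K$, then multiply by the $O(m^2)$ segments. The paper packages the subroutine costs into three auxiliary lemmas (for $\ominus$, for sums, and for computing $\ZB_\dgr^\Ffunc$ via the null space of a concatenated $[Z\,|\,B]$ matrix), but the decomposition and the resulting bounds are identical to yours.
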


The remainder of this section will be dedicated to proving~\cref{prop: algo complexity}. We begin with the following auxiliary lemmas.

\begin{lemma}\label{lem: ominus complexity}
    Let $A,B\subseteq \R^d$ be subspaces such that $B \subseteq A$. Let $ \{\overrightarrow{a}_1, \ldots, \overrightarrow{a}_k \}$ and $ \{ \overrightarrow{b}_1 \ldots, \overrightarrow{b}_\ell \}$ be bases for $A$ and $B$ respectively. Then, an orthonormal basis for $A \ominus B$ can be computed in $O\left(d^3\right)$ time.
\end{lemma}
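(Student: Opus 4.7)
The plan is to use a standard Gram--Schmidt argument. Since $B\subseteq A$, the concatenated ordered list
\[
\vec{b}_1,\ldots,\vec{b}_\ell,\vec{a}_1,\ldots,\vec{a}_k
\]
consists of $k+\ell$ vectors of $\mathbb{R}^d$ all lying in $A$, and clearly spans $A$ (already the tail $\vec{a}_1,\ldots,\vec{a}_k$ does). I would run the (modified) Gram--Schmidt process on this list in its given order: at each step, subtract from the current vector its orthogonal projection onto the span of the previously accepted orthonormal vectors, normalize, and discard whenever the residual is zero.

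Next, I would argue correctness in two steps. Since $\vec{b}_1,\ldots,\vec{b}_\ell$ are linearly independent and span $B$, the first $\ell$ outputs $\vec{u}_1,\ldots,\vec{u}_\ell$ form an orthonormal basis of $B$. For the remaining inputs $\vec{a}_1,\ldots,\vec{a}_k$, each residual lies in $A$ (as it is a linear combination of vectors already in $A$) and is orthogonal, by construction, to the span of $\vec{u}_1,\ldots,\vec{u}_\ell$, i.e. to $B$; after discarding zero residuals, the resulting orthonormal set $\vec{u}_{\ell+1},\ldots,\vec{u}_{k}$ lies in $A\cap B^\perp=A\ominus B$. Finally, the full collection $\{\vec{u}_1,\ldots,\vec{u}_k\}$ spans $A$ (Gram--Schmidt preserves cumulative spans), so by a dimension count the tail $\{\vec{u}_{\ell+1},\ldots,\vec{u}_k\}$ must be an orthonormal basis of $A\ominus B$.

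For the complexity analysis, running Gram--Schmidt on $m$ vectors of $\mathbb{R}^d$ is standard: each of the $m$ iterations performs at most $m$ inner products and $m$ axpy-type updates, each costing $O(d)$, giving $O(m^2 d)$ total. Here $m=k+\ell\le 2k\le 2d$, so the total cost is $O(d^3)$, as claimed. No obstacle is expected; the only thing to be careful about is to confirm that the complexity is indeed dominated by the Gram--Schmidt step (rather than, say, output size), which is immediate since the output has at most $k\le d$ vectors in $\mathbb{R}^d$, each writable in $O(d)$ time.
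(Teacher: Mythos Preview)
Your proof is correct and follows essentially the same approach as the paper: run Gram--Schmidt on the list $\vec{b}_1,\ldots,\vec{b}_\ell,\vec{a}_1,\ldots,\vec{a}_k$, observe that the first $\ell$ outputs orthonormalize $B$ and the surviving tail orthonormalizes $A\ominus B$, and bound the cost by $O(d^3)$ since $k,\ell\le d$. The paper phrases this as two separate Gram--Schmidt passes (first on $\{\vec{b}_i\}$, then on the combined list), but that is operationally identical to your single pass.
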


\begin{proof}
    By applying the Gram–Schmidt process to the basis $\{\overrightarrow{b}_1, \ldots , \overrightarrow{b}_\ell \}$, we obtain an orthonormal basis $ \{ \overrightarrow{u}_1 \ldots , \overrightarrow{u}_\ell \}$ for $B$ in $O \left ( d^3 \right)$ time. Now, applying the Gram-Schmidt process to the set $\{ \overrightarrow{u}_1 ,\ldots , \overrightarrow{u}_\ell, \overrightarrow{a}_1, \ldots, \overrightarrow{a}_k \}$, we obtain an orthonormal basis $\{ \overrightarrow{u}_1\ldots , \overrightarrow{u}_\ell, \overrightarrow{v}_1, \ldots, \overrightarrow{v}_{k-t} \}$ for $A$. This is achieved in $O \left ( d^3 \right)$ time because $t, k\leq d$. Then, by construction, $\{ \overrightarrow{v}_1 , \ldots, \overrightarrow{v}_{k-t}\}$ is an orthonormal basis for $A\ominus B$ and it is computed in $O \left( d^3 \right)$ time.
\end{proof}

\begin{lemma}\label{lem: basis for sum complexity}
    Let $V,W\subseteq \R^d$ be two subspaces. Let $ \{\overrightarrow{v}_1, \ldots, \overrightarrow{v}_k \}$ and $ \{ \overrightarrow{w}_1 \ldots, \overrightarrow{w}_\ell \}$ be bases for $V$ and $W$ respectively. Then, a basis for $V+W$ can be computed in $O\left(d^3\right)$ time.
\end{lemma}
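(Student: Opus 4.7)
The plan is to reduce the computation to a single Gram--Schmidt pass on the concatenated list of basis vectors, essentially mirroring the argument used in \cref{lem: ominus complexity}. Concretely, I would concatenate the given bases into the ordered list
\[
\overrightarrow{v}_1, \ldots, \overrightarrow{v}_k,\ \overrightarrow{w}_1, \ldots, \overrightarrow{w}_\ell,
\]
run Gram--Schmidt on this list, and discard any vector whose residual (after subtracting its projection onto the running orthonormal set) is zero. The surviving orthonormal vectors span exactly $V+W$, and their pre-images in the original list therefore constitute a basis of $V+W$ drawn from $\{\overrightarrow{v}_i\} \cup \{\overrightarrow{w}_j\}$.

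For correctness, I would note that every vector produced at an intermediate stage of Gram--Schmidt lies in the span of the inputs processed so far, and conversely every input vector lies in the span of the running orthonormal set together with its residual. Hence the span of the surviving orthonormal vectors equals $\mathrm{span}(\overrightarrow{v}_1, \ldots, \overrightarrow{v}_k, \overrightarrow{w}_1, \ldots, \overrightarrow{w}_\ell) = V+W$, and the surviving vectors are linearly independent by construction of Gram--Schmidt, so they form a basis.

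For the complexity analysis, the key observation is that $k, \ell \leq d$ since $\{\overrightarrow{v}_i\}$ and $\{\overrightarrow{w}_j\}$ are bases of subspaces of $\mathbb{R}^d$, so the concatenated list has length at most $2d$. At each of these at most $2d$ steps, Gram--Schmidt computes at most $2d$ inner products and vector subtractions in $\mathbb{R}^d$, each of which takes $O(d)$ time, giving $O(d^2)$ per step and $O(d^3)$ in total. No step in the procedure is expected to be a nontrivial obstacle: this is essentially a repackaging of \cref{lem: ominus complexity}, with the only subtlety being to record which input vectors survive (equivalently, which orthogonalized vectors are nonzero) so as to return a basis of $V+W$ selected from the original list, if that form of the output is desired; returning the orthonormal basis itself is also immediate and has the same complexity.
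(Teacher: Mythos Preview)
Your proof is correct. The paper's approach is slightly different in implementation but identical in spirit: it concatenates the given bases into the matrix $[\overrightarrow{v}_1 \cdots \overrightarrow{v}_k \ \overrightarrow{w}_1 \cdots \overrightarrow{w}_\ell] \in \mathbb{R}^{d\times(k+\ell)}$, computes its reduced row echelon form via Gaussian elimination, and reads off the pivot columns as a basis for $V+W$; since $k,\ell \le d$, this is $O(d^3)$. You instead apply Gram--Schmidt to the same concatenated list and keep the nonzero residuals. Both methods process at most $2d$ vectors in $\mathbb{R}^d$ with a standard $O(d^3)$ algorithm, and both extract a basis by detecting linear dependencies in the concatenated list. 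Your route has the minor bonus of producing an orthonormal basis and of dovetailing directly with \cref{lem: ominus complexity}; the paper's route avoids any square roots or normalization and is perhaps the more commonly cited primitive for this task. Either argument is entirely adequate here.
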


\begin{proof}
    By computing the reduced row echelon form of the following matrix
    \[
    \begin{bmatrix}
\overrightarrow{v}_1 \cdots \overrightarrow{v}_\ell \;  \; \overrightarrow{w}_1 \cdots \overrightarrow{w}_k 
\end{bmatrix}
\in \R^{d\times(\ell + k)},
    \]
    one can obtain a basis for $V+W$ by picking the pivot columns. Since $k,\ell \leq d$, this computation can be done via Guassian elimination in $O(d^3)$ time.
\end{proof}

For the rest of this section, we assume that, for every $\dgr\geq 0$, an ordering is fixed on the set of oriented $\dgr$-simplices of $K$, $\mathfrak{s}_\dgr^K$, in order to make $\mathfrak{s}_\dgr^K$ an ordered basis for $C_\dgr^K$ and the ordering on $\mathfrak{s}_\dgr^{K_i}$ is obtained by restricting the ordering on $\mathfrak{s}_\dgr^K$ onto $\mathfrak{s}_\dgr^{K_i}$. We identify $C_\dgr^K$ with $\R^{n_\dgr^K}$, where $n_\dgr^K := |\mathfrak{s}_\dgr^K| = \dim_{\R} C_\dgr^K$, and $C_\dgr^{K_i}$ with $\R^{n_\dgr^{K_i}} \subseteq \R^{n_\dgr^K}$, where $K_i := F(\ell_i)$. As the input of~\cref{algo: gpd} is the filtration $\Ffunc : \lp \to \subcx(K)$, we assume that we are given the matrix representation, denoted $\mathtt{B}_\dgr^{K_i} \in \R^{n_{\dgr-1}^K \times n_\dgr^{K_i}}$, of the boundary map
\[
\partial_\dgr^{K_i} : C_\dgr^{K_i} \to C_{\dgr-1}^{K_i} \subseteq C_{\dgr-1}^K
\]
with respect to the ordered bases $\mathfrak{s}_\dgr^{K_i}$ and $\mathfrak{s}_{\dgr-1}^K$ for every degree $\dgr\geq 0$ and for every $i=1,\ldots,m$.

\begin{lemma}\label{lem: zb time complexity}
    For every $\dgr\geq 0$ and every segment $(\ell_i, \ell_j) \in \Seg(\lp)$, a basis for $\ZB_\dgr^\Ffunc((\ell_i, \ell_j))$ can be computed in
\[
O\left( n_\dgr^K \cdot n_{\dgr-1}^K \cdot \min\left(n_\dgr^K, n_{\dgr-1}^K\right) + n_{\dgr+1}^K \cdot n_{\dgr}^K \cdot \min\left(n_{\dgr+1}^K, n_{\dgr}^K\right) +    \left(n_\dgr^K\right)^3     \right).
\]
\end{lemma}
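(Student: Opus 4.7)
The plan is to unwind the definition $\ZB_\dgr^\Ffunc((\ell_i,\ell_j)) = \Zfunc_\dgr(K_i)\cap \Bfunc_\dgr(K_j)$ (or $\Zfunc_\dgr(K_i)$ in the unbounded case) into three standard linear-algebra subroutines on the input boundary matrices $\mathtt{B}_\dgr^{K_i}$ and $\mathtt{B}_{\dgr+1}^{K_j}$, and add up the costs. Concretely, I will:
(i) compute a basis for $\Zfunc_\dgr(K_i) = \ker(\partial_\dgr^{K_i})$;
(ii) compute a basis for $\Bfunc_\dgr(K_j) = \Ima(\partial_{\dgr+1}^{K_j})$;
(iii) compute a basis for the intersection of these two subspaces of $C_\dgr^K \cong \R^{n_\dgr^K}$.

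For (i), a basis of the null space of the $n_{\dgr-1}^K \times n_\dgr^{K_i}$ matrix $\mathtt{B}_\dgr^{K_i}$ is obtained by reducing it to reduced row echelon form via Gaussian elimination and reading off the kernel vectors associated with non-pivot columns. Since $n_\dgr^{K_i}\leq n_\dgr^K$, the cost is $O\!\left(n_\dgr^K\cdot n_{\dgr-1}^K\cdot \min(n_\dgr^K,n_{\dgr-1}^K)\right)$. For (ii), $\Bfunc_\dgr(K_j)$ is the column span of the $n_\dgr^K\times n_{\dgr+1}^{K_j}$ matrix $\mathtt{B}_{\dgr+1}^{K_j}$, whose basis is obtained analogously (pivot columns after Gaussian elimination), at cost $O\!\left(n_{\dgr+1}^K\cdot n_\dgr^K\cdot \min(n_{\dgr+1}^K,n_\dgr^K)\right)$. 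Note that in the unbounded case $j=\infty$ the entire second step is skipped, which only improves the bound.

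For (iii), given bases of $\Zfunc_\dgr(K_i)$ and $\Bfunc_\dgr(K_j)$ as column matrices $A\in \R^{n_\dgr^K\times k}$ and $B\in \R^{n_\dgr^K\times \ell}$ with $k,\ell\leq n_\dgr^K$, I will extract a basis of $\Ima(A)\cap \Ima(B)$ by solving for the nullspace of the concatenated matrix $[A\mid -B]\in \R^{n_\dgr^K\times (k+\ell)}$: each null vector $(x,y)$ with $Ax=By$ yields an intersection vector $Ax$, and picking a basis of the null space yields a basis of the intersection (after quotienting by relations, which is handled automatically by the echelon reduction). Since $k+\ell\leq 2n_\dgr^K$, this Gaussian elimination costs $O\!\left((n_\dgr^K)^3\right)$, followed by $O\!\left((n_\dgr^K)^2\right)$ matrix-vector multiplications to recover the actual intersection vectors. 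Summing (i)--(iii) yields the claimed bound. The only subtle point is step (iii): one must argue that the map $(x,y)\mapsto Ax$ from $\ker([A\mid -B])$ onto $\Ima(A)\cap \Ima(B)$ sends a basis to a spanning set and that a basis of the image can be extracted in the stated time; this is routine since $A$ has full column rank (it is itself a basis), so the map is injective on $\ker([A\mid -B])$ and hence a basis on the left maps to a basis on the right, no further work required. I do not anticipate any serious obstacle; the proof is essentially a bookkeeping of standard Gaussian-elimination costs.
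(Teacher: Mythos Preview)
Your proposal is correct and follows essentially the same approach as the paper: compute a basis for the kernel $\Zfunc_\dgr(K_i)$ and for the image $\Bfunc_\dgr(K_j)$ via Gaussian elimination on the respective boundary matrices, then extract the intersection by computing the null space of the concatenated matrix and pushing through one of the factors. The only cosmetic difference is the sign convention ($[A\mid -B]$ versus the paper's $[Z\mid B]$), and you make explicit the injectivity argument (full column rank of $A$) that the paper leaves implicit when asserting the output is a basis rather than merely a spanning set.
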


\begin{proof}
By~\cref{defn: bd space}, we have that
\[
\ZB_\dgr^\Ffunc ((\ell_i, \ell_j)) := \Zfunc_\dgr(K_i)\cap \Bfunc_\dgr(K_j). 
\]

We compute $\Zfunc_\dgr(K_i)$ as the null space of $\mathtt{B}_\dgr^{K_i}\in \R^{n_{\dgr-1}^K \times n_\dgr^{K_i}}$ via Gaussian elimination. Since $n_\dgr^{K_i} \leq n_\dgr^K$, this process can be computed in $O\left(n_\dgr^K \cdot n_{\dgr-1}^K \cdot \min\left(n_\dgr^K, n_{\dgr-1}^K\right)\right)$ time. Note that, as the output of this Gaussian elimination process, we obtain a set of column vectors $\{ \overrightarrow{u}_1,\ldots,\overrightarrow{u}_{\ell} \} \subseteq \R^{n_\dgr^{K_i}}$ that serves as a basis for $\Zfunc_\dgr(K_i)$. The rows of these column vectors are indexed by oriented $\dgr$-simplices of $K_i$. We extend these column vectors $\overrightarrow{u}_1,\ldots,\overrightarrow{u}_\ell \in \R^{n_\dgr^{K_i}}$ to vectors $\overrightarrow{z}_1,\ldots,\overrightarrow{z}_\ell \in \R^{n_\dgr^{K}}$ by padding zeros to the indices that corresponds to oriented $\dgr$-simplices in $\mathfrak{s}_\dgr^K \setminus \mathfrak{s}_\dgr^{K_i}$.

We compute $\Bfunc_\dgr(K_i)$ as the column space of $\mathtt{B}_{\dgr+1}^{K_i}\in \R^{n_{\dgr}^K \times n_{\dgr+1}^{K_i}}$ via Gaussian elimination. Since $n_{\dgr+1}^{K_i} \leq n_{\dgr+1}^K$, this process can be computed in $O\left(n_{\dgr+1}^K \cdot n_{\dgr}^K \cdot \min\left(n_{\dgr+1}^K, n_{\dgr}^K\right)\right)$ time. As the output of this Gaussian elimination process, we obtain a set of column vectors $\{ \overrightarrow{b}_1,\ldots,\overrightarrow{b}_{k} \} \subseteq \R^{n_\dgr^{K}}$ that serves as a basis for $\Bfunc_\dgr(K_i)$.

By combining $\{ \overrightarrow{z}_1,\ldots,\overrightarrow{z}_\ell \}$ and $\{ \overrightarrow{b}_1,\ldots,\overrightarrow{b}_{k} \}$, we now compute a basis for $\ZB_\dgr^\Ffunc((\ell_i, \ell_j)) = \Zfunc_\dgr(K_i)\cap \Bfunc_\dgr(K_j)$ as follows. Form the following matrix
\[
\begin{bmatrix}
    Z\;\;| \;\; B
\end{bmatrix}
:= 
\begin{bmatrix}
\overrightarrow{z}_1 \cdots \overrightarrow{z}_\ell \; \big| \; \overrightarrow{b}_1 \cdots \overrightarrow{b}_k 
\end{bmatrix}
\in \R^{n_\dgr^K\times(\ell + k)}.
\]

Observe that the null space of $\begin{bmatrix}
    Z\;\;| \;\;B
\end{bmatrix}$ determines $\Zfunc_\dgr(K_i)\cap \Bfunc_\dgr(K_j)$. This is because for any $u \in \R^{\ell } $ and $w \in \R^{k}$,
\begin{gather*}
    \begin{bmatrix}
    u \\
    w
    \end{bmatrix} \text{ is in the null space of } \begin{bmatrix}
    Z\;\;| \;\;B
\end{bmatrix} \\
    \iff \\
    \exists v\in \Zfunc_\dgr(K_i) \cap \Bfunc_\dgr(K_j) \text{ such that } 
    \begin{bmatrix}
    \overrightarrow{z}_1\cdots \overrightarrow{z}_\ell 
    \end{bmatrix}
    u = v = 
    -\begin{bmatrix}
    \overrightarrow{b}_1\cdots \overrightarrow{b}_k 
    \end{bmatrix}
    w.
\end{gather*}
Therefore, in order to obtain a basis for $\ZB_\dgr^\Ffunc((\ell_i, \ell_j))$, we first compute a basis for the null space of $\begin{bmatrix}
    Z\;\;| \;\;B
\end{bmatrix}$ via Gaussian elimination. Let $$\mathcal{B}_{[Z,B]}:=\left\{ \begin{bmatrix}
    u_s \\
    w_s
\end{bmatrix} \in \R^{(\ell+k)} \; \Big| \; u_s \in \R^{\ell}, w_s\in \R^{k} \text{ for } s=1,\ldots,r \right\}$$
be the basis of the null space of $\begin{bmatrix}
    Z\;\;| \;\;B
\end{bmatrix}$ that is obtained from the Gaussian elimination process. Then, the set
\[
\mathcal{B}_{\ZB_\dgr^\Ffunc((\ell_i, \ell_j))}:=\left \{
v_s \in \R^{n_\dgr^K} \Big | \begin{bmatrix}
    \overrightarrow{z_1}\ldots\overrightarrow{z_\ell}
\end{bmatrix}u_s = v_s = -
 \begin{bmatrix}
    \overrightarrow{b_1}\ldots\overrightarrow{b_k}
\end{bmatrix}w_s \text{ for } s=1,\ldots,r
\right\}
\]
is a basis for $\ZB_\dgr^\Ffunc((\ell_i,\ell_j)) = \Zfunc_\dgr(K_i)\cap \Bfunc_\dgr(K_j)$. Since $\ell, k\leq n_\dgr^K$, the Gaussian elimination process for computing the basis $\mathcal{B}_{[Z,B]}$ takes $O\left(\left(n_\dgr^K\right)^3\right)$ time. Similarly, since $r\leq n_\dgr^K$, computing the basis $\mathcal{B}_{\ZB_\dgr^\Ffunc((\ell_i, \ell_j))}$ takes $O\left(\left(n_\dgr^K\right)^3\right)$ time.
Hence, the overall time complexity for computing a basis for $\ZB_\dgr^\Ffunc((\ell_i, \ell_j))$ is
\[
O\left( n_\dgr^K \cdot n_{\dgr-1}^K \cdot \min\left(n_\dgr^K, n_{\dgr-1}^K\right) + n_{\dgr+1}^K \cdot n_{\dgr}^K \cdot \min\left(n_{\dgr+1}^K, n_{\dgr}^K\right) +    \left(n_\dgr^K\right)^3     \right).
\]

\end{proof}

\begin{proof}[Proof of~\cref{prop: algo complexity}]
    In order to compute an (orthonormal) basis for $$\prhi \left(\ZB_\dgr^\Ffunc\right) ((\ell_i, \ell_j)) = \ZB_\dgr^\Ffunc((\ell_i, \ell_j)) \ominus \left( \ZB_\dgr^\Ffunc((\ell_{i-1}, \ell_j)) + \ZB_\dgr^\Ffunc((\ell_i, \ell_{j-1})) \right),$$
we first compute bases for $\ZB_\dgr^\Ffunc((\ell_i, \ell_j))$, $\ZB_\dgr^\Ffunc((\ell_{i-1}, \ell_j))$, and $\ZB_\dgr^\Ffunc((\ell_i, \ell_{j-1}))$ in
\[
O\left( n_\dgr^K \cdot n_{\dgr-1}^K \cdot \min\left(n_\dgr^K, n_{\dgr-1}^K\right) + n_{\dgr+1}^K \cdot n_{\dgr}^K \cdot \min\left(n_{\dgr+1}^K, n_{\dgr}^K\right) +    \left(n_\dgr^K\right)^3     \right),
\]
by~\cref{lem: zb time complexity}. We then compute a basis for 
$\left(\ZB_\dgr^\Ffunc((\ell_{i-1}, \ell_j)) +\ZB_\dgr^\Ffunc((\ell_i, \ell_{j-1}))\right)$ in $O\left( \left(n_\dgr^K\right)^3 \right)$ time by~\cref{lem: basis for sum complexity}. Finally, we compute a basis for
\[
\ZB_\dgr^\Ffunc((\ell_i, \ell_j)) \ominus \left( \ZB_\dgr^\Ffunc((\ell_{i-1}, \ell_j)) + \ZB_\dgr^\Ffunc((\ell_i, \ell_{j-1})) \right)
\]
in $O \left( \left(n_\dgr^K\right)^3  \right)$ by~\cref{lem: ominus complexity}.

Since there are $O\left( m^2 \right)$ segments in $\Seg(P) = \Seg (\{ \ell_1 < \cdots < \ell_m \})$, the total time complexity for computing the function $\prhi \left(\ZB_\dgr^\Ffunc\right) : \Seg(P) \to \gr(C_\dgr^K)$ is 
\[
O\left( m^2 \cdot \left( n_\dgr^K \cdot n_{\dgr-1}^K \cdot \min\left(n_\dgr^K, n_{\dgr-1}^K\right) + n_{\dgr+1}^K \cdot n_{\dgr}^K \cdot \min\left(n_{\dgr+1}^K, n_{\dgr}^K\right) +    \left(n_\dgr^K\right)^3 \right)    \right).
\]
\end{proof}

\section{Edit Distance Stability of Classical Persistence Diagrams and 1-Parameter Grassmannian Persistence Diagrams}\label{appendix: edit}

In this section, we present an example that illustrates how Grassmannian persistence diagrams are more discriminative than classical persistence diagrams. Additionally, we provide the proof of~\cref{thm: classical pd lower bound}.

\begin{example}\label{example: same pd different 0-prhi}
    Consider the filtrations $\Ffunc$ and $\Gfunc$ depicted in~\cref{fig:degree0example}. Their degree-$0$ persistence diagrams are the same, thus
    \[
    d_{\fnc_{\geq 0}}^E \left(\pd_0^\Ffunc, \pd_0^\Gfunc\right) = 0.
    \]
    On the other hand, $\prhi \left(\ZB_0^\Ffunc\right)$ and $\prhi \left(\ZB_0^\Gfunc\right)$ permit distinguishing the two filtrations as we have that
    \[
    d_{\inndgm\left(C_0^K\right)}^E \left(\prhi \left(\ZB_0^\Ffunc\right), \prhi \left(\ZB_0^\Gfunc\right)\right) > 0.
    \]
    Indeed, one can see that $d_{\inndgm\left(C_0^K\right)}^E \left(\prhi \left(\ZB_0^\Ffunc\right), \prhi \left(\ZB_0^\Gfunc\right)\right) > 0$ by using the equivalence of treegrams and degree-$0$ Grassmannian persistence diagrams,~\cref{thm: equivalence of treegrams and degree 0 orthogonal inversions}. Note that the two filtrations $\Ffunc$ and $\Gfunc$ yield two different treegrams, therefore $\prhi \left(\ZB_0^\Ffunc\right)$ and $\prhi \left(\ZB_0^\Gfunc\right)$ are two different Grassmannian persistence diagrams.
\end{example}

\begin{figure}
    \centering
    \includegraphics[scale=17]{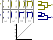}
    \caption{Two filtrations $\Ffunc$ and $\Gfunc$, their treegrams and degree-$0$ persistence diagrams; see~\cref{example: same pd different 0-prhi}}
    \label{fig:degree0example}
\end{figure}

We will need the following lemmas and proposition for proving~\cref{thm: classical pd lower bound}.

\begin{lemma}\label{lem: sum of transverse is transverse}
    Let $\mathcal{I}$ be a finite set and let the two families $\{W (I)\}_{I \in \mathcal{I}}$ and $\{U (I)\}_{I \in \mathcal{I}}$ be transversal to each other. Then, the family $\{ W (I) + U (I) \}_{I\in \mathcal{I}}$ is a transverse family.
\end{lemma}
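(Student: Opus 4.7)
The plan is to unwind the definition of transversality and reduce the claim to a direct dimension-counting argument. First I would observe the following reformulations of the hypothesis via \cref{lem: subfamilies transversal}:
\begin{itemize}
\item Applying \cref{lem: subfamilies transversal} with $\mathcal{K} = \{I\}$ and $\mathcal{L} = \{I\}$ for each individual index $I \in \mathcal{I}$, the singleton families $\{W(I)\}$ and $\{U(I)\}$ are transversal to each other, which means $\dim(W(I) + U(I)) = \dim W(I) + \dim U(I)$ (equivalently, $W(I) \cap U(I) = \{0\}$).
\item Applying \cref{cor: subfamily transversal} to each of the two transversal-to-each-other families shows that $\{W(I)\}_{I\in\mathcal{I}}$ and $\{U(I)\}_{I\in\mathcal{I}}$ are each, separately, transverse families.
\end{itemize}

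Next I would perform the computation. Using that $\sum_{I\in\mathcal{I}}(W(I)+U(I)) = \sum_{I\in\mathcal{I}} W(I) + \sum_{I\in\mathcal{I}} U(I)$, the transversality hypothesis between the two families gives
\[
\dim\!\left(\sum_{I\in\mathcal{I}}(W(I)+U(I))\right) = \sum_{I\in\mathcal{I}} \dim W(I) + \sum_{I\in\mathcal{I}} \dim U(I).
\]
By the first bullet above, the right-hand side equals $\sum_{I\in\mathcal{I}}\dim(W(I)+U(I))$, which is precisely the condition that the family $\{W(I)+U(I)\}_{I\in\mathcal{I}}$ is transverse (in the sense of \cref{defn: transversity} applied to the trivial partner family $\{\{0\}\}$).

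There is no real obstacle here: the statement is essentially a bookkeeping consequence of \cref{defn: transversity} once one notes that the ambient transversality upgrades automatically to (i) the singleton-pair transversality $W(I)\cap U(I) = \{0\}$ and (ii) the transversality of each family individually. The only point worth being careful about is making sure we are using the hypothesis in the strong form that bounds the dimension of the full sum $\sum_I W(I) + \sum_I U(I)$, rather than only the individual pairwise bounds; this is exactly what the definition of ``transversal to each other'' provides.
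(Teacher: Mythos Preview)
Your proposal is correct and follows essentially the same approach as the paper: both proofs rewrite $\sum_I (W(I)+U(I))$ as $\sum_I W(I) + \sum_I U(I)$, apply the transversality hypothesis to split the dimension, and then invoke \cref{lem: subfamilies transversal} at the singleton level to conclude $\dim(W(I)+U(I)) = \dim W(I) + \dim U(I)$. Your second bullet (that each family is separately transverse) is not actually used in the computation, but it does no harm.
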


\begin{proof}
    We have
    \begin{align*}
        \dim \left ( \sum_{i\in \mathcal{I}} ( W(i) + U (i)) \right ) &= \dim \left ( \sum_{i\in \mathcal{I}} W(i) + \sum_{i\in \mathcal{I}}  U(i) \right) \\
        &= \sum_{i\in \mathcal{I}} \dim ( W (i)) + \sum_{i\in \mathcal{I}} \dim (U (i)) \\
        &= \sum_{i\in \mathcal{I}} (\dim (W (i)) + \dim (U (i))) \\
        &= \sum_{i\in \mathcal{I}} \dim (W (i)+U (i)).
    \end{align*}
    The last equality follows from the fact that $\dim (W (i)+U (i)) = \dim (W (i)) + \dim (U (i))$, which can be derived from~\cref{lem: subfamilies transversal}.
\end{proof}

\begin{lemma}\label{lem: push mobeq implies push dim}
    Let $R$ and $S$ be two finite posets. Let $\Mfunc_1 : R \to \gr(V)$ and $\Mfunc_2 : S \to \gr(V)$ be two functions such that $\{ \Mfunc_1 (r) \}_{r\in R}$ and $\{ \Mfunc_2 (s) \}_{s \in S}$ are transversal families. Assume that $h : R \to S$ is an order-preserving map such that $h_\sharp \Mfunc_1 \mobeq \Mfunc_2$. Then, for every $s \in S$, it holds that
    \[
    \dim (\Mfunc_2 (s)) = \sum_{r \in h^{-1}(s)} \dim (\Mfunc_1 (r)).
    \]
\end{lemma}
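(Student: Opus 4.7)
The plan is to reduce the statement to the uniqueness of the algebraic Möbius inverse on the poset $S$, after first passing from the vector-space-valued identity furnished by Möbius equivalence to a numerical identity involving dimensions.

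First, I would unwind the hypothesis $h_\sharp \Mfunc_1 \mobeq \Mfunc_2$. By definition of Möbius equivalence and of the pushforward, for every $s \in S$ we have
\[
\sum_{s' \leq s}\,\sum_{r \in h^{-1}(s')} \Mfunc_1(r) \;=\; \sum_{s' \leq s} \Mfunc_2(s').
\]
Since $h$ is order-preserving, the left-hand sum is indexed exactly by those $r \in R$ with $h(r) \leq s$, so this reads $\sum_{r:\,h(r)\leq s} \Mfunc_1(r) = \sum_{s' \leq s} \Mfunc_2(s')$.

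Next, I would take dimensions on both sides and argue that the dimension function is additive on each sum. For the right-hand side this is immediate from the transversity of $\{\Mfunc_2(s)\}_{s \in S}$ together with \Cref{cor: subfamily transversal}, yielding $\dim\bigl(\sum_{s'\leq s}\Mfunc_2(s')\bigr) = \sum_{s'\leq s} \dim(\Mfunc_2(s'))$. For the left-hand side, \Cref{cor: subfamily transversal} applied to the subfamily $\{\Mfunc_1(r) : h(r)\leq s\}$ of the transverse family $\{\Mfunc_1(r)\}_{r\in R}$ gives
\[
\dim\Bigl(\sum_{r:\,h(r)\leq s}\Mfunc_1(r)\Bigr) \;=\; \sum_{r:\,h(r)\leq s}\dim(\Mfunc_1(r)) \;=\; \sum_{s'\leq s}\,\sum_{r\in h^{-1}(s')}\dim(\Mfunc_1(r)).
\]
Equating the two we obtain, for every $s\in S$,
\[
\sum_{s'\leq s} \dim(\Mfunc_2(s')) \;=\; \sum_{s'\leq s}\Bigl(\sum_{r\in h^{-1}(s')}\dim(\Mfunc_1(r))\Bigr).
\]

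Finally, the two $\Z$-valued functions $s \mapsto \dim(\Mfunc_2(s))$ and $s \mapsto \sum_{r\in h^{-1}(s)}\dim(\Mfunc_1(r))$ on the finite poset $S$ have identical cumulative sums on every down-set $\{s'\leq s\}$. By uniqueness of the algebraic Möbius inverse over the abelian group $\Z$ (they are both Möbius inverses of the common cumulative function), they must coincide pointwise, giving the desired equality $\dim(\Mfunc_2(s)) = \sum_{r\in h^{-1}(s)}\dim(\Mfunc_1(r))$ for every $s\in S$.

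The main subtlety is the step where dimensions are moved inside the sums: it is essential here that the two ambient families $\{\Mfunc_1(r)\}_{r\in R}$ and $\{\Mfunc_2(s)\}_{s\in S}$ are each transverse, since otherwise dimension is not additive on sums of subspaces. Once the transverse-subfamily property from \Cref{cor: subfamily transversal} is invoked, the remaining argument is the standard Möbius uniqueness on a finite poset and is purely formal.
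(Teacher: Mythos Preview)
Your proof is correct and follows essentially the same approach as the paper's: both unwind the M\"obius equivalence, apply transversality (via \Cref{cor: subfamily transversal}) to pass to dimensions, and then deduce pointwise equality from equality of cumulative sums. The only difference is packaging: the paper carries out the final step by induction on $s\in S$, whereas you invoke uniqueness of the algebraic M\"obius inverse over $\Z$ directly---these are the same argument, yours being slightly more succinct.
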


\begin{proof}
    We will proceed by induction on $s\in S$. For the base cases, let $0_s$ be a minimal element of $S$ (note that there could be more than one minimal element of $S$). As we have that $h_\sharp \Mfunc_1 \mobeq \Mfunc_2$, it follows that
    \[
        \Mfunc_2 (0_s) = \sum_{r\in h^{-1}(0_s)} \Mfunc_1 (r).
    \]
    Considering the dimension of both sides, we obtain
    \begin{align*}
        \dim ( \Mfunc_2 (0_s) ) &= \dim \left ( \sum_{r\in h^{-1}(0_s)} \Mfunc_1 (r) \right ) \\
                                &= \sum_{r\in h^{-1}(0_s)} \dim (\Mfunc_1 (r)),
    \end{align*}        
    where the last equality follows from~\cref{cor: subfamily transversal}. Now, let $s \in S$ be fixed and assume that for every $q \in S$ such that $q < s$, it holds that
    \begin{equation}\label{eqn: induction hypo}
        \dim (\Mfunc_2 (q)) = \sum_{r \in h^{-1}(q)} \dim (\Mfunc_1 (r)).
    \end{equation}
    Using the M\"obius equivalence $\Mfunc_2 \mobeq h_\sharp \Mfunc_1$, we see that
    \begin{align*}
        \dim \left ( \sum_{q\leq s} \Mfunc_2 (q) \right ) &= \dim \left ( \sum_{q\leq s} h_\sharp \Mfunc_1 (q) \right ) &&\\
        &= \dim \left ( \sum_{\substack{r \in R \\ h(r) \leq s}} \Mfunc_1 (r) \right ) &&\\
        &= \sum_{\substack{r \in R \\ h(r) \leq s}} \dim (\Mfunc_1 (r)) && \text{by~\cref{cor: subfamily transversal}}\\
        &= \sum_{\substack{r \in R \\ h(r) < s}} \dim (\Mfunc_1 (r)) + \sum_{r\in h^{-1}(s)} \dim (\Mfunc_1 (s)) &&\\ 
        &= \sum_{q< s} \dim (\Mfunc_2 (q)) + \sum_{r\in h^{-1}(s)} \dim (\Mfunc_1 (s)), && 
    \end{align*}
    where the last equality follows from our induction hypothesis~\cref{eqn: induction hypo}. On the other hand, by~\cref{cor: subfamily transversal}, we have that 
    \begin{align*}
        \dim \left ( \sum_{q\leq s} \Mfunc_2 (q) \right ) &= \sum_{q\leq s} \dim (\Mfunc_2 (q)) \\
                                            &= \sum_{q< s} \dim (\Mfunc_2 (q)) + \dim (\Mfunc_2(s)).
    \end{align*}
    Hence, we conclude that
    \[
    \dim (\Mfunc_2 (s)) = \sum_{r \in h^{-1}(s)} \dim (\Mfunc_1 (r))
    \]
    for every $s\in S$.
\end{proof}

\begin{proposition}\label{prop: dim is functor}
    Let $V$ be a finite-dimensional inner product space and let $\Mfunc : \overline{\lp}^\times \to \gr(V)$ be an object in $\inndgm(V)$. The assignment
    \[
    \Mfunc \mapsto \dim (\Mfunc)
    \]
    is a functor from $\inndgm(V)$ to $\fnc_{\geq 0}$, where $\dim (\Mfunc) : \overline{\lp}^\times \to \Z_{\geq 0} $ is defined by $$\dim (\Mfunc) (I) := \dim (\Mfunc (I))$$ for every $I\in \overline{\lp}^\times$.
\end{proposition}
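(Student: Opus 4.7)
The plan is to verify the two standard functoriality axioms. For objects, the map $\Mfunc \mapsto \dim(\Mfunc)$ clearly sends a function $\Mfunc : \Seg(\lp)\to \gr(V)$ to a function $\Seg(\lp)\to \Z_{\geq 0}$, giving an object of $\fnc_{\geq 0}$. The real content is on morphisms: given a transversity-preserving morphism $(f,\zeta_{\lp_2})$ from $\Mfunc_1:\overline{\lp_1}^\times\to \gr(V)$ to $\Mfunc_2:\overline{\lp_2}^\times\to \gr(V)$, with $f=(\ladj{f},\radj{f})$, we must show that $f$ alone is a charge-preserving morphism from $\dim(\Mfunc_1)$ to $\dim(\Mfunc_2)$; that is, for every $J\in \Seg(\lp_2)\setminus\diag(\lp_2)$,
\[
\dim(\Mfunc_2(J))=\sum_{I\in (\overline{\ladj{f}})^{-1}(J)} \dim(\Mfunc_1(I)).
\]

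To establish this, first set $\Nfunc:=\Mfunc_2+\zeta_{\lp_2}$. Since $\{\zeta_{\lp_2}(J)\}_J$ is transversal to $\{\Mfunc_2(J)\}_J$ by the definition of a transversity-preserving morphism, \cref{lem: sum of transverse is transverse} guarantees that $\{\Nfunc(J)\}_J$ is a transverse family. Moreover $\{\Mfunc_1(I)\}_I$ is a transverse family because $\Mfunc_1$ is an object of $\inndgm(V)$. By hypothesis, $\left(\overline{\ladj{f}}\right)_\sharp \Mfunc_1 \mobeq \Nfunc$, so \cref{lem: push mobeq implies push dim} (applied with $h=\overline{\ladj{f}}$, $R=\Seg(\lp_1)$, $S=\Seg(\lp_2)$, and $\Nfunc$ in place of $\Mfunc_2$) yields
\[
\dim(\Nfunc(J))=\sum_{I\in (\overline{\ladj{f}})^{-1}(J)} \dim(\Mfunc_1(I))
\qquad\text{for every }J\in \Seg(\lp_2).
\]
Because $\zeta_{\lp_2}$ is supported on $\diag(\lp_2)$, for any $J\notin \diag(\lp_2)$ we have $\zeta_{\lp_2}(J)=\{0\}$, whence $\Nfunc(J)=\Mfunc_2(J)$ and $\dim(\Nfunc(J))=\dim(\Mfunc_2(J))$. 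Substituting this into the displayed identity gives exactly the charge-preservation equation above, so $f$ is a morphism in $\fnc_{\geq 0}$.

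Functoriality (preservation of composition and identities) is then automatic: the assignment on morphisms simply forgets $\zeta_{\lp_2}$ and retains the Galois connection $f$, and composition in both $\inndgm(V)$ and $\fnc_{\geq 0}$ amounts to composing Galois connections. Identity morphisms in $\inndgm(V)$ use $f=(\mathrm{id}_\lp,\mathrm{id}_\lp)$ and $\zeta_\lp=0$, which goes to the identity morphism in $\fnc_{\geq 0}$.

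The only subtlety, and the reason the construction works so cleanly, is that the diagonal-supported corrector $\zeta_{\lp_2}$ is invisible to the charge-preservation condition (which is only required off the diagonal). This is precisely why the definition of transversity-preserving morphisms restricts $\zeta_{\lp_2}$ to $\diag(\lp_2)$, and it is the one point the reader should check carefully when reading the argument.
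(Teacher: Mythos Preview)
Your proof is correct and follows essentially the same approach as the paper: you apply \cref{lem: sum of transverse is transverse} to see that $\Mfunc_2+\zeta_{\lp_2}$ is transverse, invoke \cref{lem: push mobeq implies push dim} to obtain the dimension identity, and then use that $\zeta_{\lp_2}$ vanishes off the diagonal to conclude charge preservation. Your added remark about composition and identities being automatic is a nice explicit touch that the paper leaves implicit.
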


\begin{proof}
    Let $\Mfunc : \overline{\lp_1}^\times \to \gr(V)$ and $\Nfunc : \overline{\lp_2}^\times \to \gr(V)$ be two objects in $\inndgm(V)$ and let $\ladj{f} : \lp_1 \leftrightarrows \lp_2: \radj{f}$ be a morphism from $\Mfunc$ to $\Nfunc$. This means that there is $\zeta_{\lp_2} : \overline{\lp_2}^\times \to \gr(V)$ supported  on $\diag(\lp_2)$ such that the families $\{ \Nfunc(J) \}_{J\in \overline{\lp_2}^\times} $ and $ \{ \zeta_{\lp_2} (J) \}_{J\in \overline{\lp_2}^\times}$ are transversal to each other and 
    \[
    \left(\overline{\ladj{f}}\right)_\sharp \Mfunc \mobeq \Nfunc + \zeta_{\lp_2}.
    \]
    Let $\tilde{\Nfunc}(J) := \Nfunc(J) + \zeta_{\lp_2} (J)$ for every $J \in \overline{\lp_2}^\times$. Then, by~\cref{lem: sum of transverse is transverse}, we have that $\{ \tilde{\Nfunc} (J) \}_{J \in \overline{\lp_2}^\times}$ is a transverse family. Moreover, we have that $\left(\overline{\ladj{f}}\right)_\sharp \Mfunc \mobeq \tilde{\Nfunc}$. Hence, by~\cref{lem: push mobeq implies push dim}, we conclude that 
    \[
    \dim\left (\tilde{\Nfunc}(J)\right) = \sum_{I \in \left(\overline{\ladj{f}}\right)^{-1} (J)} \dim ( \Mfunc(I)).
    \]
    In particular, for every $J \in \Seg(\lp_2) \setminus \diag(\lp_2)$, we have
    \[
    \dim\left(\Nfunc(J)\right)  = \sum_{I \in \left(\overline{\ladj{f}}\right)^{-1} (J)} \dim ( \Mfunc(I) ).
    \]
    Thus, the Galois connection $\ladj{f} : \lp_1 \leftrightarrows \lp_2 : \radj{f}$ is a morphism from $\dim(\Mfunc)$ to $\dim(\Nfunc)$.
\end{proof}

\begin{proof}[Proof of~\cref{thm: classical pd lower bound}]
    By~\cref{prop: dim is functor}, every path in $\inndgm (C_\dgr^K)$ induces a path in $\fnc_{\geq 0}$ with the same cost. Moreover, as already shown in~\cref{prop: dim of loi is classical pd}, we have $$\dim \left(\prhi \left(\ZB_\dgr^\Ffunc\right)\right) = \pd_\dgr^\Ffunc$$ on $\Seg(\lp) \setminus \diag(\lp)$. Thus, the result follows.
\end{proof}

\section{Algorithm for Reconstructing the Degree-\texorpdfstring{$0$}{} Grassmannian Persistence Diagram from the Treegram}\label{appendix: construction}

Let $\Ffunc : \{1 < \cdots < n \} \to \subcx(K)$ be a filtration of a connected finite simplicial complex $K$ and let $T_\Ffunc$ be the treegram of $\Ffunc$. In this section, we describe an algorithm for reconstructing $\prhi \left(\ZB_0^\Ffunc\right)$ from the treegram $T_\Ffunc$. 

Using the treegram $T_\Ffunc$, we first form a subspace $\mathcal{S}((b,d)) \subseteq C_0^K$ for every $1\leq b\leq d\leq n$. Then, we show that $\mathcal{S}((b,d)) = \prhi \left(\ZB_0^\Ffunc\right) ((b,d))$ for all $b\leq d$ (\cref{prop: treegram to degree 0 inversion subspaces}). The construction of $\mathcal{S}((b,d))$ involves multiple steps. We proceed through the following steps.

\begin{enumerate}
    \item Fix $1<d\leq n$. (to be treated as a death time)
    \item Fix a block of the sub-partition $T_\Ffunc (d)$, say $B_i$.
    \item Detect the blocks of $T_\Ffunc(d-1)$ that merge into $B_i$ at time $d$, and define a notion of birth times for these blocks, say $b_{i,1},\ldots, b_{i,m_i}$.
    \item For each birth time $b_{i,j}$ form a subspace $S_i((b_{i,j}, d)) \subseteq C_0^K$, which captures the connected components that are born at $b_{i,j}$ and die at $d$ in by merging into the block $B_i$.
    \item Repeat step $3$ and step $4$ for every block $B_i$ of $T_\Ffunc (d)$ and for every $1<d\leq n$.
    \item Form subspaces $\mathcal{S}((b,d))$ by appropriately organizing $S_i((b_{i,j},d))$s.
\end{enumerate}

\noindent\underline{Step $1$:} Let $1<d\leq n$ be fixed. 

\medskip
\noindent
\noindent\underline{Step $2$:} Let $B_1,\ldots,B_{N}$ be the blocks of $T_\Ffunc (d)$. Note that $N$ depends on $d$, i.e. N=N(d). Fix $1\leq i \leq N(d)$.

\medskip
\noindent
\noindent Step $1$ and Step $2$ should be seen as initiating two for loops:

\begin{lstlisting}
    for $1<d\leq n$:
        for $1\leq i \leq N(d)$:
            $\ldots$
\end{lstlisting}

\noindent where Step $3$ and Step $4$ describe what should be done inside the for loops.

\medskip
\noindent
\noindent\underline{Step $3$:} Assume that there are blocks $B_{i,1},\ldots,B_{i,m_{i}}$ at time $t=d-1$ that merge into $B_i$ at time $d$. That is, $\cup_{j=1}^{m_{i}} B_{i,j} \subseteq B_i$. Notice that $B_i$ might be strictly larger than the union since there might be \emph{ephemeral} points, i.e. points with the same birth and death time. We let $\{ v_{i,1},\ldots,v_{i,l_{i}} \}=B_i \setminus (\cup_{j=1}^{m_i} B_{i,j})$ denote all such ephemeral points. In~\cref{fig:treegram}, we illustrate the scenario described here. In the example in~\cref{fig:treegram}, the set $\{ v_{i,1},\ldots,v_{i,l_{i}} \} $ is given by $ \{ a_1, a_2\}$.
Recall that we defined the birth time of a point $x \in X$ in a treegram $T_X$ as
\[
b_x := \min \{ t\in \R \mid x\in X_t \}.
\]
Let $V := V(K)$ be the vertex set of $K$. For a non-empty subset $Y \subseteq V$, we define
\begin{align*}
    b(Y) &:= \min \{ b_y \mid y\in Y \}, \\
    c(Y) &:= \frac{1}{|Y|} \sum_{y\in Y} y  \; \; \in C_0^K.
\end{align*}
Let $b_{i,j} := b(B_{i,j})$. Without loss of generality, we may assume that 
\[
b_{i,1} = b_{i,2} = \cdots = b_{i,k_i} < b_{i, k_i+1} \leq \cdots \leq b_{i,m_i}.
\]

\medskip
\noindent
\noindent\underline{Step $4$:} For $1\leq j \leq m_i$ we define
\begin{align*}
    R_{i,j} &:= \{ x \in B_{i,j} \mid b_x = b_{i,j} \} \\
    c_{i,j} &:= c(R_{i,j}).
\end{align*}
For any $0$-chain $c\in C_0^K$, let $\spn\{ c \}\subseteq C_0^K$ denote the subspace generated by $c$. Now, we define
\[
    S_i((b_{i,k_i}, d)) := \sum_{l=2}^{k_i} \spn\{ c_{i,l} - c_{i,1}\} \; \; \subseteq C_0^K.
\]
When $k_i =1$, then the sum above is an empty sum. In that case, we let $S_i((b_{i,j}, d)) := \{ 0 \}$. Notice also that $\dim(S_i((b_{i,k_i}, d))) = k_i-1$ , which is the number of connected components that are born at $b_{k_i}$ and dead at $d$ by merging together into $B_i$.

\noindent For $k_i +1 \leq j \leq m_i$, we define
\begin{align*}
    {R'}_{i,j} &:= \{ x \in B_{i} \mid  \exists B_{i,j'} \ni x \text{ such that }b_x \leq b_{i,j} \text{ and } b_{i,j'}<b_{i,j}\}, \\
    {c'}_{i,j} &:= c({R'}_{i,j}), \\
    S_i((b_{i,j}, d))) &:= \spn\{ c_{i,j} - {c'}_{i,j}\} \; \; \subseteq C_0^K.
\end{align*}

See~\cref{example: construction-explicit treegram} for an explicit construction of what is described here in step $4$.

\medskip
\noindent
\noindent\underline{Step $5$:} Repeat steps $3$ and $4$ for every block of $T_\Ffunc (d)$ and for every $1<d\leq n$. That is, we iterate through the for loops which were initiated in steps $1$ and $2$.

\begin{lstlisting}
    for $1<d\leq n$:
        for $1\leq i \leq N(d)$:
            do Step $3$ and Step $4$.
\end{lstlisting}

\medskip
\noindent
\noindent\underline{Step $6$:}
In this final step, we are out of the nested for loops, and we construct $\mathcal{S}((b,d))$ for each segment $(b,d) \in \Seg ([n])$ by utilizing $S_i$s that were computed inside the for loops.
For any $b < d$, we define 
\[
\mathcal{S}((b,d)) := \sum_{\substack{1\leq i \leq N, \\ 1\leq j \leq m_i; \\ b_{i,j}=b}} S_i((b_{i,j}, d)) \; \; \subseteq C_0^K.
\]
Notice that
\[
\dim\left(S_i((b_{i,k_i}, d)) + \sum_{j=k_i+1}^{m_i} S_i ((b_{i,j}, d))\right) = m_i -1. 
\]
This follows from the fact the family $\{ S_i((b_{i,k_i}, d)) \} \cup \{S_i ((b_{i,j}, d)) \}_{j=k_i + 1}^{m_i}$ is transversal by construction. Moreover, observe that $S_{i_1}(I)$ and $S_{i_2}(J)$ are orthogonal to each other whenever $i_1 \neq i_2$ for every segment $I$ and $J$. Thus, we conclude that $\dim (\mathcal{S}((b,d)))$ is the number of connected components that are born at $b$ and died at $d$.

Recall that we have $\{ v_{i,1},\ldots,v_{i,l_{i}} \} = B_i \setminus (\cup_{j=1}^{m_i} B_{i,j})$. Let $B^{\mathrm{o}}_{i}:=(\cup_{j=1}^{m_i} B_{i,j})$. We define
\begin{align*}
    S_i ((d,d)) &:= \sum_{j=1}^{l_i} \spn\{ v_{i,j} - c(B^\mathrm{o}_{i})\}, \\
    \mathcal{S}((d,d)) &:= \sum_{i=1}^N S_i((d,d)).
\end{align*}

\begin{example}\label{example: construction-explicit treegram}
    We illustrate the constructions of $S((b_{i,j}, d))$ through an explicit treegram. Consider the part of a treegram depicted in~\cref{fig:treegram}. In this case, we have
    \begin{align*}
        B_{i,1} &= \{ x, y, z, u \}, \\
        B_{i,2} &= \{ v, w \}, \\
        B_{i,3} &= \{ g, h \}, \\
        B_{i,4} &= \{ k \}, \\
        B_{i,5} &= \{ l,m, n \}, \\
        B_{i,6} &= \{ p, q, r \}, \\
        B^\mathrm{o}_{i} &= (\cup_{j=1}^6 B_{i,j}), \\
        B_{i} &= B^\mathrm{o}_{i} \cup \{ a_1, a_2\}.
    \end{align*}
    In this example, it holds that $b_1 := b_{i,1} = b_{i,2} = b_{i,3} = b_{i,4} < b_{i, 5} = b_{i,6} =: b_2 $. Following the definitions of $R_{i,j}$ and ${R'}_{i,j}$ we compute
    \begin{align*}
        R_{i,1} &= \{ y \}, \\
        R_{i,2} &= \{ v, w \}, \\
        R_{i,3} &= \{ g \}, \\
        R_{i,4} &= \{ k \}, \\
        R_{i,5} &= \{ l,n \}, \\
        R_{i,6} &= \{ p,q,r \}, \\
        {R'}_{i,5} &= {R'}_{i,6} = \{ x, y, z, v, w, g, h, k \}.
    \end{align*}
    Then, we see that ${c'}_{i,5} = {c'}_{i,6} =  \frac{1}{8}(x+y+z+v+w+g+h+k)$ and $c(B^\mathrm{o}_i) = \frac{1}{15}(x+y+z+u+v+w+g+h+k+l+m+n+p+q+r)$. Hence, we have that
    \begin{align*}
        S_i ((b_1,d)) &= \spn\{\frac{1}{2}(v+w) - y\}+ \spn\{ g-y\} + \spn\{ k-y\}, \\
        S_i ((b_2, d)) &= \spn\{\frac{1}{2}(l+n) - {c'}_{i,5}\}+ \spn\{\frac{1}{3}(p+q+r) - {c'}_{i,6}\}, \\
        S_i ((d,d)) &= \spn\{ a_1 - c(B^\mathrm{o}_i)\} + \spn \{  a_2 - c(B^\mathrm{o}_i)\}
    \end{align*}
\end{example}

\begin{figure}
    \centering
    \includegraphics[scale=21]{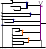}
    \caption{A treegram for illustrating the steps $3$ and $4$ of the algorithm and an explicit construction of $\mathcal{S}$; see~\cref{example: construction-explicit treegram}}
    \label{fig:treegram}
\end{figure}

\begin{proposition}\label{prop: treegram to degree 0 inversion subspaces}
    Let $\Ffunc$ be a filtration and let $T_\Ffunc$ be its treegram. Let $\mathcal{S} $ be constructed as explained above. Then, for any $b\leq d$, $\mathcal{S}((b,d)) = \prhi \left(\ZB_0^\Ffunc\right) ((b,d))$.
\end{proposition}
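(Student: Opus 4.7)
The plan is to identify $\mathcal{S}((b,d))$ with the orthogonal-complement description of $\prhi(\ZB_0^\Ffunc)((b,d))$ furnished by \cref{prop: linAlg Mobius}, namely
\[
\prhi(\ZB_0^\Ffunc)((b,d)) \;=\; \ZB_0^\Ffunc((b,d)) \;\cap\; \bigl(\ZB_0^\Ffunc((b-1,d)) + \ZB_0^\Ffunc((b,d-1))\bigr)^{\perp},
\]
with the usual conventions that $\ZB_0^\Ffunc((0,d)) = 0$ and $\ZB_0^\Ffunc((b,b-1)) = 0$. I will verify three things: (i) $\mathcal{S}((b,d)) \subseteq \ZB_0^\Ffunc((b,d))$, (ii) $\mathcal{S}((b,d))$ is orthogonal to $\ZB_0^\Ffunc((b-1,d)) + \ZB_0^\Ffunc((b,d-1))$, and (iii) $\dim \mathcal{S}((b,d)) = \dim \prhi(\ZB_0^\Ffunc)((b,d))$. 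The ephemeral case $b = d$ is treated separately but by the same three-step pattern applied to the subspaces $S_i((d,d))$.

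For (i), fix a generator of $\mathcal{S}((b,d))$; it has the form $c_{i,l} - c_{i,1}$ (when $b = b_{i,1}$) or $c_{i,j} - c'_{i,j}$ (when $b = b_{i,j}$, $j > k_i$). In both cases, the averaging sets $R_{i,\cdot}$ and $R'_{i,\cdot}$ consist of vertices whose birth times do not exceed $b$, so the chain lies in $\Zfunc_0(K_b)$. Both averages are unit-weight, so the difference has coefficient-sum zero and is supported inside the single connected component $B_i$ of $K_d$, hence is a boundary in $K_d$; thus the generator belongs to $\Zfunc_0(K_b) \cap \Bfunc_0(K_d) = \ZB_0^\Ffunc((b,d))$. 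For (iii), by \cref{prop: dim of loi is classical pd} the right-hand side equals $\pd_0^\Ffunc((b,d))$, which under the elder rule is the number of linearly independent connected components born at $b$ and dying at $d$; fixing $d$, each block $B_i$ at time $d$ absorbs $m_i$ blocks at $d-1$ and contributes exactly $m_i - 1$ dying classes, split as $k_i - 1$ at birth time $b_{i,1}$ and one at each of $b_{i,k_i+1},\ldots,b_{i,m_i}$. The construction produces these exact counts, and since generators coming from different blocks $B_i$ of $T_\Ffunc(d)$ have disjoint supports inside $V(K_d)$, the subfamilies $\{S_i((b_{i,j},d))\}_i$ are manifestly transversal, giving the required dimension.

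Step (ii) is the technical heart. Any $z \in \ZB_0^\Ffunc((b,d-1))$ is a boundary in $K_{d-1}$ supported on $V(K_b)$, so splitting by connected components of $K_{d-1}$ restricted to $B_i$ gives $z|_{B_i} = \sum_{j'=1}^{m_i} z_{j'}$ where each $z_{j'}$ is supported on $B_{i,j'} \cap V(K_b)$ and has total coefficient-sum zero; any $z \in \ZB_0^\Ffunc((b-1,d))$ is moreover supported on $V(K_{b-1})$. Orthogonality across distinct $i$'s is automatic because the blocks $B_i$ of $T_\Ffunc(d)$ are disjoint in $V(K_d)$, so I fix a single $B_i$. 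For a first-batch generator $c_{i,l} - c_{i,1}$, both $R_{i,l}$ and $R_{i,1}$ consist of vertices born exactly at $b$, hence disjoint from $V(K_{b-1})$, yielding orthogonality to $\ZB_0^\Ffunc((b-1,d))$; pairing with the block-decomposed $z$ reduces the inner product to uniform averages of the zero-sum chain $z_{i,l}$ over $R_{i,l}$ and of $z_{i,1}$ over $R_{i,1}$, each of which vanishes by the zero-sum property. For a later-batch generator $c_{i,j} - c'_{i,j}$, the term $c_{i,j}$ is supported outside $V(K_{b-1})$ as before; the crucial verification is that $c'_{i,j}$, chosen as the uniform average over $R'_{i,j}$ (precisely the $V(K_b)$-vertices in the earlier-born blocks $B_{i,j'}$, $j' \leq k_i$), is engineered so that its pairing with any element of $\ZB_0^\Ffunc((b-1,d)) + \ZB_0^\Ffunc((b,d-1))$ exactly matches that of $c_{i,j}$. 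This balance follows by a direct inner-product computation that again reduces to the block-wise coefficient-sum-zero condition on $z$.

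The main obstacle will be the orthogonality check in step (ii) for the later-batch generators: showing that the ``correction'' term $c'_{i,j}$, whose support genuinely overlaps with $V(K_{b-1})$ (unlike $c_{i,j}$), is tuned to kill the pairing with every $z$ in $\ZB_0^\Ffunc((b-1,d)) + \ZB_0^\Ffunc((b,d-1))$. This reduces, after block decomposition, to verifying a combinatorial identity tying the uniform weights on $R'_{i,j}$ to the zero-sum structure of boundaries on the earlier blocks $B_{i,j'}$ with $b_{i,j'} < b$; once this identity is established by induction on $d$ (or directly from the definition of $R'_{i,j}$), steps (i) and (iii) combine with (ii) and the containment $\mathcal{S}((b,d)) \subseteq \prhi(\ZB_0^\Ffunc)((b,d))$ upgrades to the desired equality via the dimension match.
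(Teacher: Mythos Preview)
Your approach is essentially the same as the paper's: use \cref{prop: linAlg Mobius} to rewrite $\prhi(\ZB_0^\Ffunc)((b,d))$ as $\ZB_0^\Ffunc((b,d))\ominus\bigl(\ZB_0^\Ffunc((b-1,d))+\ZB_0^\Ffunc((b,d-1))\bigr)$, verify the generators of $\mathcal{S}((b,d))$ lie in $\ZB_0^\Ffunc((b,d))$ and are orthogonal to the smaller sum, and then conclude equality by a dimension count. The paper only writes out the first-batch case $1\le j\le k_i$ and declares the later-batch case ``similar,'' so your outline of the later-batch orthogonality (splitting $z$ along the blocks $B_{i,j'}$ of $T_\Ffunc(d-1)$ and using that each block-restriction has coefficient-sum zero) is in fact more detailed than what the paper provides. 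One small slip: your parenthetical describing $R'_{i,j}$ as ranging over ``$j'\le k_i$'' is not quite right---the construction takes all $j'$ with $b_{i,j'}<b_{i,j}$, which can include indices beyond $k_i$ when there are several distinct birth values below $b_{i,j}$; you use the correct condition later in your final paragraph, and the block-wise zero-sum argument goes through unchanged with it. No induction on $d$ is needed: the identity you flag as the ``main obstacle'' follows directly once you note $R'_{i,j}=\bigcup_{j'\,:\,b_{i,j'}<b}\bigl(B_{i,j'}\cap V(K_b)\bigr)$, so that pairing $c'_{i,j}$ with any $z\in\ZB_0^\Ffunc((b,d-1))$ is a sum of uniform averages of zero-sum chains.
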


\begin{proof}
    Assume $b< d$ and let $i,j$ be such that $b_{i,j} = b$. Observe that it is enough to check $S_i ((b_{i,j}, d)) \subseteq \prhi \left(\ZB_0^\Ffunc\right) ((b_{i,j}, d))$. The equality would then follow from the fact that the sum in the definition of $\mathcal{S}([b, d])$ is a direct sum by the construction of $S_i$s and the fact that the dimensions of $\mathcal{S}((b,d))$ and $\prhi \left(\ZB_0^\Ffunc\right) ([b, d])$ have to be the same as they are both equal to the number of connected components that are born at time $b$ and die at time $d$, as explained during the construction of $\mathcal{S}$. 
    
    Recall that by~\cref{prop: linAlg Mobius} we have $$\prhi \left(\ZB_0^\Ffunc\right) ((b_{i,j}, d)) = \ZB_0^\Ffunc ((b_{i,j},d)) \ominus (\ZB_0^\Ffunc ((b_{i,j}-1, d)) + \ZB_0^\Ffunc((b_{i,j}, d-1))).$$ So, we need to check that $S_i ((b_{i,j}, d))$ is orthogonal to both $\ZB_0^\Ffunc ((b_{i,j}-1, d))$ and $\ZB_0((b_{i,j}, d-1))$. There are two cases to be checked, namely $1\leq j \leq k_i$ and $k_i +1 \leq j \leq m_i$. As the processes, in either case, are similar to each other, we provide details for the case  $1\leq j \leq k_i$.
    
    Assume that $1\leq j \leq k_i$. In this case, we have that 
    \[
    S_i ((b_{i,j},d)) = S_i ((b_{i,k_i},d)) = \sum_{l=2}^k \spn\{ c_{i,l} - c_{i,1}\}.
    \]
    Thus, it is enough to check that $c_{i,l} - c_{i,1}$ is orthogonal to both $\ZB_0^\Ffunc ((b_{i,j}-1, d))$ and $\ZB_0^\Ffunc((b_{i,j}, d-1))$ for $l=2,\ldots,k_i$. The support of the chain $c_{i,l} - c_{i,1}$ is a subset of $B_i$. On the other hand, support of any chain in $\ZB_0^\Ffunc ((b_{i,j}-1, d))$ is in the complement of $B_i$. Thus, $c_{i,l} - c_{i,1}$ is orthogonal to $\ZB_0^\Ffunc ((b_{i,j}-1, d))$. The subspace $\ZB_0^\Ffunc((b_{i,j}, d-1))$ is generated by elements of the form $x-y$ where $x$ and $y$ belong to the same block of $T_\Ffunc (d-1)$. Thus, either support of $c_{i,l} - c_{i,1}$ and the set $\{ x,y\}$ are disjoint or $x$ and $y$ are both in the support of $c_{i,l} - c_{i,1}$ with the same coefficients. In both scenarios, we get that $c_{i,l} - c_{i,1}$ is orthogonal to $\ZB_0^\Ffunc((b_{i,j}, d-1))$. Thus, $c_{i,l} - c_{i,1}$ is orthogonal to $\ZB_0^\Ffunc ((b_{i,j}-1, d))$ and $\ZB_0^\Ffunc((b_{i,j}, d-1))$. 
    
    When $b=d$, the proof is similar.
\end{proof}

\end{document}